\newcommand{\N}{\mathbb{N}}
\newcommand{\Z}{\mathbb{Z}}
\newcommand{\R}{\mathbb{R}}
\newcommand{\C}{\mathbb{C}}
\newcommand{\D}{\mathbb{D}}
\newcommand{\M}{\mathbb{M}}
\newcommand{\bP}{\mathbb{P}}
\newcommand{\Sum}{\displaystyle\sum}
\newcommand{\Int}{\displaystyle\int}
\newcommand\norm[1]{\left\lVert#1\right\rVert}
\newcommand{\Imp}{\mathrm{Im}}
\newcommand{\information}{{
  \bigskip
  \footnotesize
    
    \textbf{Jamerson Bezerra}:
    \textsc{Faculty of Mathematics and Computer Science, Nicolaus Co\-pernicus University, ul. Chopina 12/18, 87-100 Toruń, Poland.} \par\nopagebreak
    \textit{E-mail:} \texttt{jdouglas@impa.br}
  
	\textbf{Pedro Duarte}:
    \textsc{FCUL-Faculdade de Ci\^encias da Universidade de Lisboa, Campo grande 1749-016, Lisboa.} \par\nopagebreak
    \textit{E-mail:} \texttt{pedromiguel.duarte@gmail.com}
}}
\def\supp{\operatorname{supp}}
\def\dim{\operatorname{dim}}
\def\tr{\operatorname{tr}}
\def\max{\operatorname{max}}
\def\min{\operatorname{min}}
\def\spec{\operatorname{Spec}}
\def\quand{\quad\text{and}\quad}
\def\SL{SL}
\def\GL{GL}
\newcommand{\hv}{\hat{v}}
\newcommand{\hw}{\hat{w}}
\newcommand{\Le}{L}
\newcommand{\muRegExp}{\alpha_{\mu}}
\newcommand{\prodSpace}{\Omega}
\newcommand{\prodMeasure}{\tilde{\mu}}
\newcommand{\shift}{\sigma}
\newcommand{\lcCocycle}{\textbf{A}}
\newcommand{\plusStat}{\eta^+}
\newcommand{\minusStat}{\eta^-}
\newcommand{\spaceProb}{\mathcal{P}}
\newcommand{\Prob}{\spaceProb} 
\newcommand{\wconv}{\stackrel{\ast}{\rightharpoonup}}
\newcommand{\ids}{\mathcal{N}}
\newcommand{\schrMatrix}{S}
\newcommand{\towerSymbols}{\Lambda}
\newcommand{\towerKernel}{K}
\newcommand{\towerStat}{\nu}
\newcommand{\prodTower}{\Sigma}
\newcommand{\ellipticDelta}{\delta_0}
\newcommand{\bvec}{\mathbf{v}}
 \newcommand{\Ball}{\mathrm{B}}
\newcommand{\vfrak}{\mathfrak{v}}
\newcommand{\dual}[1]{{#1}^\flat}
\newtheorem{theorem}{Theorem}
\newtheorem{corollary}[theorem]{Corollary}
\newtheorem{proposition}[theorem]{Proposition}
\newtheorem{definition}[theorem]{Definition}
\newtheorem{lemma}[theorem]{Lemma}
\newtheorem{remark}[]{Remark}
\newtheorem{question}[]{Question}
\newtheorem*{question*}{Question}
\newtheorem{ltheorem}{Theorem}
\newtheorem{lcorollary}{Corollary}
\numberwithin{theorem}{section}
\begin{document}

\title{Upper bound on the regularity of the Lyapunov exponent for random products of matrices}
\author{Jamerson Bezerra and Pedro Duarte}
\date{}

\maketitle

\begin{abstract}
    We prove that if $\mu$ is a finitely supported measure on $\SL_2(\R)$ with positive Lyapunov exponent but not uniformly hyperbolic, then the Lyapunov exponent function is not $\alpha$-H\"older around $\mu$ for any $\alpha$ exceeding the Shannon entropy of $\mu$ over the Lyapunov exponent of $\mu$.
\end{abstract}

\setcounter{tocdepth}{1}
\tableofcontents

\section{Introduction}
The law of large numbers, stating that on average an i.i.d. process is close to its theoretical mean, often used to describe the typical statistical behavior of a random sample, is the basis for understanding general additive processes with applications in various branches of mathematics such as probability, combinatorics or ergodic theory.
 
The multiplicative version of the law of large numbers for products of random matrices is the classical theorem of Furstenberg and Kesten~\cite{FKe60}, which asserts that with probability $1$  the logarithmic growth rate of products of random  matrices equals its mean growth rate.
More formally, a special case of this theorem states   that for an i.i.d. sequence of random matrices $L_1, \, L_2, \,\ldots\,$, with common law given by a compactly supported probability  measure $\mu$ on $\GL_d(\R)$, the following asymptotic equality holds almost surely
\begin{align*}
   \lim_{n\to\infty}\frac{1}{n}\log\norm{L_n\ldots L_1}
   = \lim_{n\to\infty}\frac{1}{n}\mathbb{E}[\, \log\norm{L_n\ldots L_1} \, ] ,
\end{align*}
where the right-hand-side, denoted by $\Le(\mu)$,  is the so called \emph{Lyapunov exponent} of the law $\mu$.
The investigation of how the Lyapunov exponent changes as a function of the underlying measure $\mu$  lies at the core of the multiplicative ergodic theory,  with many fundamental contributions during the last 60 years.

The continuity of the Lyapunov exponent $\Le(\mu)$
as a function of the measure $\mu$, with respect to the weak* topology, was established by Furstenberg and Kifer~\cite{FKi83} under a generic  \emph{irreducibility} assumption. A measure $\mu$ is called \emph{irreducible} if there exists no proper subspace of $\R^d$  which is $\mu$-invariant, i.e., invariant under all matrices in the support of $\mu$. Otherwise $\mu$ is called \emph{reducible}, and any proper $\mu$-invariant subspace $S\subset \R^d$
determines the Lyapunov exponent $\Le(\mu\vert_S)$ corresponding to the logarithmic growth rate of the norms
$\norm{(L_n\vert_S)\, \ldots\, (L_1\vert_S)}$.
The continuity of Furstenberg and Kifer actually holds under the weaker \emph{quasi-irreduciblity} assumption. A measure $\mu$ is called \emph{quasi-irreducible} if $\Le(\mu\vert_S)=\Le(\mu)$ for every proper $\mu$-invariant subspace $S\subset \R^d$. See~\cite[Theorem 1.46]{Fur02}.

In~\cite{BV16}, Bocker and Viana proved that for measures supported in $\GL_2(\R)$ the Lyapunov exponent $\mu\mapsto\Le(\mu)$ is continuous  with respect the weak* topology and the Hausdorff distance between their supports. Avila, Eskin and Viana announced that the same result  holds   for measures supported in $\GL_d(\R)$, any $d\geq 2$. See the remark after Theorem 10.1 in~\cite{Viana-book}.

It is then natural to raise the question about the precise modulus of continuity of this map. 

A lower bound for this regularity was provided by Le Page in~\cite{LePage89}.
The Lyapunov exponent is locally H\"older continuous  over an open and dense set of  compactly supported measures on $\GL_d(\R)$, namely the set of quasi-irreducible measures $\mu$ with a gap between the first and second Lyapunov exponents. See also~\cite[Theorem 1]{BaD19}. Recall that a function $E\mapsto f(E)$ is said to be H\"older with exponent $\alpha$, or $\alpha$-H\"older, if there exists a constant $C<\infty$ such that for all $E,E'$,
$$ |f(E)-f(E')|  \leq C\, |E-E'|^\alpha . $$

A function which is H\"older in a neighborhood of each point of its domain is called locally H\"older.
Alternatively, we say that a  function $E\mapsto f(E)$ is point-wisely $\alpha$-H\"older 
if for every $E_0$ there exists a constant
$C<\infty$ and a neighborhood of $E_0$ where for all $E$,
$$ |f(E)-f(E_0)|  \leq C\, |E-E_0|^\alpha . $$
Notice that point-wise  H\"older  is weaker than locally H\"older. In fact the modulus of continuity around a point of
a point-wisely H\"older function can be arbitrary bad.

In~\cite{TV2020} E. Tall and M. Viana  proved that for random $GL_2(\R)$ cocycles the Lyapunov exponents are always point-wisely log-H\"older, and even
point-wisely H\"older when the Lyapunov exponents are distinct.

In the same direction, the quasi-irreducibility hypothesis was discarded in~\cite{DKl20}, where it was established that for finitely supported measures in $\GL_2(\R)$ with distinct Lyapunov exponents, the function $\mu\mapsto \Le(\mu)$ is either locally H\"older or else locally weak-H\"older. Given positive constants $\alpha, \beta\leq 1$, a function $E\mapsto f(E)$ is said to be    $(\alpha,\beta)$-weak H\"older  if there exists a constant $C<\infty$ such that for all $E,E'$,
$$ |f(E)-f(E')|  \leq C\, e^{-\alpha\, \left( \log |E-E'|^{-1}\right)^\beta} . $$
Notice that $(\alpha,1)$-weak H\"older is equivalent to $\alpha$-H\"older.

In the reverse direction, an example due to Halperin~\cite[Appendix 3A]{SiTa85} provides an upper bound on this regularity. The  example consists of the following $1$-parameter family of measures supported in $\SL_2(\R)$,
$$\mu_{a,b,E} :=\frac{1}{2}\, \delta_{A_E} + 
	\frac{1}{2}\, \delta_{B_E},\quad \text{ where  } \quad  A_E=\begin{pmatrix}
		a-E & -1\\ 1 & \phantom{+}0
	\end{pmatrix}, \; 
	B_E=\begin{pmatrix}
		b-E & 0\\ 1 & \phantom{+}0
	\end{pmatrix} .$$
It follows from~\cite[Theorem A.3.1]{SiTa85} (see also Proposition \ref{prop:120522.1}) that the function $E\mapsto \Le(\mu_{a,b,E})$ can not be   $\alpha$-H\"older continuous for  any $\alpha>\frac{2\,\log 2}{\mathrm{arccosh}(1+|a-b|/2)}$.
On the other hand, it is not difficult to see 	that the  measures $\mu_{a,b,E}$ satisfy  the assumptions of Le Page's theorem, which implies that the function $E\mapsto \Le(\mu_{a,b,E})$
 is indeed H\"older continuous, but with a very small H\"older exponent $\alpha$ when $a-b$ is large.
 
In the same spirit, in~\cite{DKS19}, the authors provide the following example where the Lyapunov exponent is not even weak-H\"older continuous. They consider the measure
$$\mu :=\frac{1}{2}\, \delta_{A} + 
	\frac{1}{2}\, \delta_{B},\quad A=\begin{pmatrix}
		0 & -1\\ 1 & \phantom{+}0
	\end{pmatrix}, \; B=\begin{pmatrix}
		e & 0\\ 0 & e^{-1}
	\end{pmatrix}$$
and prove that there exists a curve $\tilde \mu_t=\frac{1}{2}\,\delta_{A_t} +\frac{1}{2}\,\delta_{B_t}$ through $\tilde \mu_0=\mu$ such that $t\mapsto L_1(\tilde\mu_t)$ is not  weak-H\"older  around $t=0$. Notice that $\mu$ is not quasi-irreducible and $\Le(\mu)=0$ so that $\mu$ does not satisfy any of the assumptions of Le Page's theorem.

In contrast with this low regularity, a classical theorem of  Ruelle proves the  analiticity of the Lyapunov exponent for  uniformly hyperbolic measures with $1$-dimensional unstable direction, see~\cite[Theorem 3.1]{Rue79}. A compactly supported measure $\mu$ on $\GL_d(\R)$ is said to be \emph{uniformly hyperbolic} if the linear cocycle generated by $\mu$  is uniformly hyperbolic (see Section \ref{260122.5}).

From now on we focus on the class of finitely supported measures in   $\SL_2(\R)$, where the lack of regularity of the Lyapunov exponent can only occur outside of the class of uniformly hyperbolic measures.
In~\cite{ABY10}, Avila, Bochi and Yoccoz gave a characterization of the uniformly hyperbolic cocycles generated by a finitely supported measure in $\SL_2(\R)$  in terms of  existence of an invariant \emph{multicone}. With this characterization they prove that the complement of the closure of the uniformly hyperbolic measures is the set of \emph{elliptic} measures, meaning the finitely supported measures such that the semigroup $\Gamma_{\mu}$ generated by the support of $\mu$ contains a \emph{elliptic element}, i.e., a matrix conjugated to a rotation.

Given a hyperbolic matrix $A\in\SL_2(\R)$
we denote by $\hat s(A)$, respectively $\hat u(A)$, the stable direction, respectively the unstable direction of $A$ in the projective space $\bP^1$. We say that $\mu$ has a heteroclinic tangency if there are matrices $A,B,C\in\Gamma_\mu$ such that $A$ and $B$ are hyperbolic and $C\, \hat u(B)=\hat s(A)$. In this case we also say that $(B,\, C,\, A)$ is a tangency for $\mu$. If moreover $A=B$, we say that $\mu$ has a homoclinic tangency.
Heteroclinic tangencies are referred to  as heteroclinic connections in~\cite{ABY10}\footnote{In this work   Avila, Bochi and Yoccoz characterize the boundary of uniformly hyperbolic cocycles. By~\cite[Remark 4.2]{ABY10}, heteroclinic connections can never be homoclinic connections for cocycles at the boundary of the uniformly hyperbolic ones.}.
If $\mu$ is not uniformly hyperbolic but $\Le(\mu)>0$, i.e., if $\mu$ is non-uniformly hyperbolic, then $\Gamma_\mu$ contains hyperbolic matrices. By Theorem 4.1 of~\cite{ABY10}, in this case the semigroup $\Gamma_\mu$ contains either a heteroclinic tangency or else a non hyperbolic matrix, i.e., an elliptic or parabolic matrix. In each of these two cases we can produce heteroclinic tangencies with an arbitrary small perturbation. See Proposition
~\ref{heteroclinic tangencies  existence}. 
Hence measures with heteroclinic tangencies are dense in the class of non-uniformly hyperbolic measures.

\subsection{Results}
Let $H(\mu)$ be the \emph{Shannon's entropy} (see Section \ref{sec:260722.1}) of the finitely supported measure $\mu$.
\begin{ltheorem}\label{mainThm}
    Let $\mu$ be a finitely supported measure on $\SL_2(\R)$. Assume that $\Le(\mu)>0$, $\mu$ is irreducible and that $\mu$ has a heteroclinic tangency. Then, there exists an analytic one parameter family of finitely supported measures $\{\mu_E\}_E$ such that $\mu_0 = \mu$ and for any $\alpha > {H(\mu)}/{L(\mu)}$, the function $E\mapsto L(\mu_E)$ is not locally $\alpha$-H\"older at any neighborhood of $E=0$.
\end{ltheorem}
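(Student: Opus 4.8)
The plan is to construct the family $\{\mu_E\}_E$ by perturbing the tangency: start with matrices $A,B,C\in\Gamma_\mu$ realizing a heteroclinic tangency $C\,\hat u(B)=\hat s(A)$, and deform $C$ (or insert a new parameter-dependent matrix built from $C$) so that at $E=0$ the stable and unstable directions coincide but for $E\neq 0$ they separate at a controlled rate. Concretely, one expects to take $\mu_E$ to be a small perturbation of $\mu$ whose support includes a matrix $C_E$ with $C_E\,\hat u(B)$ moving transversally across $\hat s(A)$ as $E$ varies, so that $\angle(C_E\,\hat u(B),\hat s(A))\asymp |E|$. The analyticity of the family is then immediate from choosing $C_E$ to depend analytically (e.g. affinely) on $E$.

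The core of the argument is a lower bound showing that near a tangency the Lyapunov exponent drops by a definite amount on a set of parameters of definite size, and that this forces the failure of $\alpha$-H\"older for $\alpha>H(\mu)/L(\mu)$. The mechanism: for a word $w=g_n\cdots g_1$ of length $n$ in the support that ends by mapping a neighborhood of $\hat u(B)$ through $C_E$ near $\hat s(A)$ and then applies a long power of $A$, the image direction gets contracted toward $\hat u(A)$, and cancellations in the cocycle cause $\tfrac1n\log\|w\|$ to be strictly smaller than $L(\mu)$ unless $|E|$ is not too small compared to $e^{-cn}$. Quantitatively, I would fix a scale $n$, identify the "bad" parameter window $I_n=\{E:|E|\lesssim e^{-c\,n\,L(\mu)}\}$ on which a positive proportion (in $\mu^{\N}$-measure, of order $e^{-H(\mu)\,n}$ up to subexponential factors, by Shannon–McMillan–Breiman) of length-$n$ words exhibit the tangency-induced cancellation, and conclude $L(\mu_E)\le L(\mu)-\delta$ for some $\delta>0$ and all $E\in I_n$ — or more precisely, that $|L(\mu_E)-L(\mu_0)|\gtrsim \tfrac1n$ while $|E|^\alpha\lesssim e^{-\alpha c n L(\mu)}$, and the comparison $\tfrac1n \gg e^{-\alpha c n L(\mu)}$ fails the H\"older bound once $\alpha c L(\mu) > $ the relevant entropy-type rate. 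One then optimizes the constant $c$ so that the threshold exponent is exactly $H(\mu)/L(\mu)$: the number of words of length $n$ that matters is $e^{H(\mu)n(1+o(1))}$, each contributing a tangency at scale $e^{-L(\mu)n(1+o(1))}$, and balancing gives precisely the ratio $H(\mu)/L(\mu)$.

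In carrying this out I would: (i) use irreducibility and $L(\mu)>0$ together with Proposition~\ref{heteroclinic tangencies existence} and the Avila–Bochi–Yoccoz structure to pin down the matrices $A,B,C$ and set up $C_E$; (ii) establish uniform large-deviation / Furstenberg-type estimates for $\mu$ (available since $\mu$ is irreducible with $L(\mu)>0$) giving exponential convergence of $\tfrac1n\log\|L_n\cdots L_1 v\|$ to $L(\mu)$ and control of the direction of $L_n\cdots L_1 v$ away from bad sets; (iii) do the key local computation near the tangency — estimating $\|A^k C_E g v\|$ in terms of $\angle(C_E\hat u(B),\hat s(A))$ and showing the logarithmic deficit; (iv) sum over words via Shannon–McMillan–Breiman to get the $e^{-H(\mu)n}$-mass of effective tangency words and hence a quantitative upper bound on $L(\mu_E)$ for $E$ in the window $I_n$; (v) let $n\to\infty$ and read off non-$\alpha$-H\"olderness. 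The main obstacle I anticipate is step (iv): one must show that the tangency-induced cancellations over a positive-mass family of words actually depress the \emph{integrated} quantity $L(\mu_E)=\lim \tfrac1n\mathbb{E}[\log\|L_n\cdots L_1\|]$ — a single bad word does not suffice, and one has to rule out that the "good" words compensate, which requires controlling the interaction between the random walk's stationary measure on $\bP^1$ and the shrinking neighborhood of $\hat s(A)$ where the cancellation lives, uniformly as $E\to 0$. This is where the precise entropy-vs-exponent bookkeeping enters and where the constant $H(\mu)/L(\mu)$, rather than something larger, becomes forced.
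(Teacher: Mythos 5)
There is a genuine gap, and it sits exactly where you flag your "main obstacle": step (iv) is not a technical loose end but the missing central idea. Your mechanism is to show that the tangency-induced cancellations depress $L(\mu_E)$ itself for $E$ in a window $I_n$. In the strong form you first state ($L(\mu_E)\le L(\mu)-\delta$ on windows shrinking to $0$) this is false: it would make $\mu\mapsto L(\mu)$ discontinuous at $\mu$, contradicting Bocker--Viana. In the weaker form ($|L(\mu_E)-L(\mu_0)|\gtrsim 1/n$) it is unsupported: the cancellation lives on a set of words of $\prodMeasure$-mass about $e^{-H(\mu)n}$, and each such word can lower $\log\norm{L_n\cdots L_1}$ by at most $O(n)$, so its contribution to the \emph{averaged} quantity $\frac1n\mathbb{E}[\log\norm{L_n\cdots L_1}]$ is exponentially small, nowhere near $1/n$ or $|I_n|^\alpha$. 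Nothing in your outline converts the pointwise cancellation into a lower bound on the oscillation of the integrated quantity $L(\mu_E)$, and the paper never does this either.

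The paper's route is different in precisely this respect: it embeds the cocycle into an analytic family of Schr\"odinger cocycles over a Markov shift (writing each $A_i$ as a product of four Schr\"odinger matrices), so that $\mu_E$ is the energy-shifted family, and then works with the integrated density of states rather than with $L$ directly. A tangency produces, via Proposition~\ref{prop:solvingEquations}, a positive-measure set ($\gtrsim e^{-H(\mu)l}$) of ``matchings'' spread over an energy interval of length $\sim e^{-L(\mu)l}$; each matching yields, by Temple's lemma, an eigenvalue of a truncated Schr\"odinger matrix, and eigenvalue counts are \emph{additive}, giving $\Delta_{I}\ids\gtrsim e^{-H(\mu)l}$ over $|I|\sim e^{-L(\mu)l}$ with no compensation issue. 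The Thouless formula (via Goldstein--Schlag) then transfers the failure of $\alpha$-H\"older continuity from $\ids$ back to $E\mapsto L(\mu_E)$. Your entropy-vs-exponent bookkeeping lands on the correct threshold $H(\mu)/L(\mu)$, but without the detour through the IDS (or some substitute that makes the contributions of the bad words additive rather than averaged), the argument as proposed does not close. A secondary point: the Schr\"odinger embedding also supplies the uniform monotonicity of $E\mapsto\lcCocycle^n_{(E)}(\omega)\hat v$ (Lemma~\ref{lem:windingProperty}) used to solve the matching equations by an intermediate-value argument; an ad hoc analytic deformation $C_E$ of the transition matrix does not automatically provide this.
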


\begin{remark}
Our result implies a similar conclusion as in Halperin/Simon-Taylor example with a less  sharper threshold. For simplicity we consider  the parameters $a=0$ with energy $E=0$. In this example
$H(\mu_{0,b,0})=\log 2$ while 
$$  \Le(\mu_{0,b,0})\leq \frac{1}{2}\,\log\norm{B_0 }
= \frac{1}{2}\, \log\sqrt{\frac{2+b^2 + |b| \sqrt{b^2+4 }}{2}}
< \frac{1}{2}\, \mathrm{arccosh}\left(1+\frac{|b|}{2}\right) .$$
The last two quantities are asymptotically equivalent,
which implies that 
$$ \frac{H(\mu_{0,b,0})}{\frac{1}{2}\, \log \norm{B_0}}\sim \frac{2\,\log 2}{\mathrm{arccosh}(1+|b|/2)}  \quad \text{ as } \; b\to\infty. $$
\end{remark}

Set
\begin{align*}
    \muRegExp := \sup\left\{
        \alpha>0 \colon \ \Le \text{ is locally } \alpha\text{-H\"older around } \mu
    \right\}.
\end{align*}
\begin{lcorollary}\label{Cor:010722.8}
    Let $\mu$ be a finitely supported measure on $\SL_2(\R)$ with $L(\mu)>0$. Then, either $\mu$ is uniformly hyperbolic and $L$ is locally analytic around $\mu$, or else
    \begin{align*}
        \alpha_{\mu}\leq \frac{H(\mu)}{L(\mu)}.
    \end{align*}
\end{lcorollary}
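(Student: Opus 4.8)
The plan is to dichotomize according to whether $\mu$ is uniformly hyperbolic. If $\mu$ is uniformly hyperbolic, then since $\mu$ is supported on $\SL_2(\R)$ the cocycle it generates has a one–dimensional unstable bundle, so Ruelle's analyticity theorem \cite[Theorem 3.1]{Rue79} applies; combined with the fact that uniform hyperbolicity is an open condition (the invariant multicone characterization of \cite{ABY10}), this gives that $\mu'\mapsto L(\mu')$ is real-analytic on a neighbourhood of $\mu$, which is the first alternative.

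So assume $\mu$ is not uniformly hyperbolic; since $L(\mu)>0$, $\mu$ is non-uniformly hyperbolic. Fix $\alpha>H(\mu)/L(\mu)$; it suffices to show that $L$ is not $\alpha$-Hölder on any neighbourhood $\cU$ of $\mu$. Given such a $\cU$, I would invoke Proposition~\ref{heteroclinic tangencies existence}, possibly followed by a further arbitrarily small perturbation, to produce a measure $\nu\in\cU$ that is irreducible, carries a heteroclinic tangency, has the same number of atoms and the same weights as $\mu$ (so that $H(\nu)=H(\mu)$), and satisfies $L(\nu)>0$ with $L(\nu)$ as close to $L(\mu)$ as we wish, by continuity of the Lyapunov exponent \cite{BV16}. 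Here Proposition~\ref{heteroclinic tangencies existence} supplies the nearby tangency, reducible measures being nowhere dense allows one to superimpose irreducibility while keeping a tangency, and choosing the perturbation not to alter the combinatorial type of $\mu$ fixes the Shannon entropy. In particular we may arrange $\alpha>H(\nu)/L(\nu)$.

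Now apply Theorem~\ref{mainThm} to $\nu$: there is an analytic one-parameter family $\{\nu_E\}_E$ with $\nu_0=\nu$ such that $E\mapsto L(\nu_E)$ is not locally $\alpha$-Hölder at any neighbourhood of $E=0$. Since $E\mapsto\nu_E$ is continuous and $\cU$ is open, $\nu_E\in\cU$ for all sufficiently small $|E|$; and since this curve is locally Lipschitz, $\alpha$-Hölder continuity of $L$ on $\cU$ would force $E\mapsto L(\nu_E)$ to be $\alpha$-Hölder near $E=0$, a contradiction. Hence $L$ is not $\alpha$-Hölder on $\cU$. As $\cU$ and $\alpha>H(\mu)/L(\mu)$ were arbitrary, $\alpha_\mu\le H(\mu)/L(\mu)$.

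The step I expect to require the most care is the perturbation in the second paragraph: packaging "a nearby heteroclinic tangency exists" (Proposition~\ref{heteroclinic tangencies existence}) together with irreducibility, invariance of the Shannon entropy, and $L>0$ into a single controlled perturbation. The existence of nearby tangencies is already available; the remaining requirements should follow from genericity of irreducibility on the (locally codimension-one) set of measures carrying a fixed tangency, together with continuity of $L$ and $H$ under perturbations that do not change the number of atoms or their weights, but verifying that all of these can be imposed simultaneously is the delicate point. The uniformly hyperbolic case via Ruelle and the passage from the statement about the curve $\{\nu_E\}_E$ to a statement about neighbourhoods of $\mu$ in the space of measures are routine.
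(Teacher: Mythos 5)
Your proof is correct and takes essentially the same route as the paper: Ruelle's theorem in the uniformly hyperbolic case, and otherwise a small perturbation to an irreducible measure carrying a heteroclinic tangency, followed by Theorem~\ref{mainThm} and continuity of the quotient $H/L$. The ``delicate point'' you flag is handled in the paper by performing all perturbations along the single Schr\"odinger family $\mu_E$ of Section~\ref{250122.1} (which fixes the weights and hence $H$): Proposition~\ref{heteroclinic tangencies  existence} produces tangencies at a sequence $E\to E_0$, while Proposition~\ref{irred lemma} gives irreducibility for \emph{all} small $E\neq E_0$, so the two conditions coexist automatically.
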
 

As a consequence of the proof of \ref{mainThm} we have the following application in mathematical physics (for precise definitions see Section \ref{sec:050722.1}).
\begin{lcorollary}\label{cor:070122.9}
    Consider the Anderson model of the discrete Schr\"odinger operators associated with a finitely supported measure $\mu$. Let $\alpha > \frac{H(\mu)}{L(\mu)}$ and $E_0$ be an energy in the spectrum. Then, the integrated density of states function $E\mapsto \mathcal{N}(E)$ and the Lyapunov exponent function $E\mapsto \Le(E)$ are not $\alpha$-H\"older continuous at any neighborhood of $E_0$.
\end{lcorollary}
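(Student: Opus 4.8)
The plan is to reduce Corollary~\ref{cor:070122.9} to Theorem~\ref{mainThm} applied to the Schr\"odinger family at energies in the spectrum, and then to carry the conclusion from the Lyapunov exponent over to the integrated density of states via the Thouless formula. For the Anderson model with single-site distribution $\mu$ — finitely supported on $\R$, and we may assume $\#\supp\mu\geq2$, since otherwise the operator is periodic and $\mathcal N,\Le$ are analytic on the spectrum — the transfer matrices at energy $E$ are $S_E(v)=\left(\begin{smallmatrix} E-v & -1\\ 1 & 0\end{smallmatrix}\right)$, so the relevant family of $\SL_2(\R)$-measures is $\mu_E:=(S_E)_\ast\mu$, which is finitely supported, analytic in $E$, and satisfies $\Le(\mu_E)=\Le(E)$. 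The reason we appeal to the \emph{proof} of Theorem~\ref{mainThm}, rather than to its statement, is that for a base measure that is itself of Schr\"odinger type the analytic family constructed there is precisely this energy family $\{\mu_E\}_E$.

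First I would verify that $\mu_E$ satisfies the hypotheses of Theorem~\ref{mainThm} for a set of energies accumulating at every point of the spectrum $\Sigma$. Positivity $\Le(E)>0$ holds for every real $E$ by Furstenberg's theorem, since $\mu$ is non-degenerate. Irreducibility of $\mu_E$ follows from an elementary computation showing that no line of $\R^2$ is invariant under two matrices $S_E(v_1),S_E(v_2)$ with $v_1\neq v_2$. By Johnson's theorem for ergodic Schr\"odinger operators, the cocycle over $(\mu,S_E)$ is uniformly hyperbolic exactly when $E$ lies in the resolvent set, so for $E\in\Sigma$ the measure $\mu_E$ is non-uniformly hyperbolic. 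By Theorem 4.1 of~\cite{ABY10} together with Proposition~\ref{heteroclinic tangencies existence}, a heteroclinic tangency can be produced by an arbitrarily small perturbation; and since that perturbation can be taken of Schr\"odinger type, i.e.\ within the family $\{\mu_E\}_E$, there are energies $E_1\in\Sigma$ arbitrarily close to $E_0$ for which $\mu_{E_1}$ carries a heteroclinic tangency (with $E_1=E_0$ if $\mu_{E_0}$ already does).

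Next, fixing a neighborhood $V$ of $E_0$ and $\alpha>H(\mu)/\Le(E_0)$, I would use that $H(\mu_E)\equiv H(\mu)$ and that $E\mapsto\Le(E)$ is continuous to pick $E_1\in V\cap\Sigma$ carrying a heteroclinic tangency and with $\alpha>H(\mu)/\Le(E_1)=H(\mu_{E_1})/\Le(\mu_{E_1})$. Applying Theorem~\ref{mainThm} to $\mu_{E_1}$, whose associated analytic family is $\{\mu_E\}_E$, yields that $E\mapsto\Le(\mu_E)$ is not $\alpha$-H\"older on any neighborhood of $E_1$; since an $\alpha$-H\"older bound on $V$ restricts to any subneighborhood of $E_1$ inside $V$, $\Le$ is not $\alpha$-H\"older on $V$, and $V$ was arbitrary. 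For the integrated density of states I would invoke the Thouless formula $\Le(E)=\int\log|E-E'|\,d\mathcal N(E')$, which exhibits $\Le$, up to harmonic terms, as the logarithmic potential of $d\mathcal N$; the resulting conjugate-function relation (boundedness of the Hilbert transform on $C^\beta$ for $0<\beta<1$) shows that $\mathcal N$ is locally $\beta$-H\"older near $E_0$ if and only if $\Le$ is. Hence $\mathcal N$ fails to be $\beta$-H\"older near $E_0$ for $\beta\in(H(\mu)/\Le(E_0),1)$, and the case $\alpha\geq1$ follows from this via $C^\alpha\subseteq C^\beta$ on bounded intervals, together with the fact that $\mathcal N$, being strictly increasing on $\Sigma$, is not $\alpha$-H\"older for $\alpha>1$ on any neighborhood of $E_0\in\Sigma$.

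The main obstacles — and the reason this is a consequence of the \emph{proof} rather than the statement of Theorem~\ref{mainThm} — will be, first, checking that the construction there produces exactly the Schr\"odinger energy family $(S_E)_\ast\mu$ when the base measure is of Schr\"odinger type, and second, that the tangency-creating perturbation of Proposition~\ref{heteroclinic tangencies existence} can be realized by moving the energy parameter, so that heteroclinic-tangency energies accumulate at every point of $\Sigma$. The Thouless-formula transfer in the last step is classical and robust; one only needs mild care near the band edges, where $\mathcal N$ may fail to be Lipschitz, but since the statement is local and neighborhoods may be shrunk this is harmless.
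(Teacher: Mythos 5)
Your proposal is correct and follows essentially the same route as the paper: Johnson's theorem to get non-uniform hyperbolicity at spectral energies, Proposition~\ref{heteroclinic tangencies  existence} to produce tangency energies arbitrarily close to $E_0$ within the energy family, continuity of $E\mapsto H(\mu)/\Le(\mu_E)$ to preserve the inequality $\alpha>H/\Le$, the proof of Theorem~\ref{mainThm} (whose analytic family is exactly the Schr\"odinger energy family here), and the Thouless-formula equivalence of Proposition~\ref{prop:120522.1} to pass between $\ids$ and $\Le$. The only cosmetic difference is that you verify irreducibility of $(S_E)_\ast\mu$ by a direct computation on common invariant lines of Schr\"odinger matrices, whereas the paper invokes its perturbative Proposition~\ref{irred lemma}; both work.
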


\subsection{Relations with other dimensions}

See Section \ref{sec:260722.1} for a precise description of the objects treated in this subsection.

The study of formulas relating (some type of) dimension, entropy and Lyapunov exponent has a vast history with many contributions in different settings (see for instance \cite{LY1984} and \cite{LY1985} for diffeormorphisms of a compact manifold and \cite{Ba2015} for self affine measures).

For $\SL_2(\R)$ supported measures $\mu$ with $\Le(\mu)>0$, Ledrappier in \cite{Le1983}, proved that we have a dimension type formula for any (forward) stationary measures $\eta$ associated with $\mu$, namely
\begin{align}\label{eq:020622.2}
    \text{Dim }\eta = \min\left\{
        1,\, \frac{h_F(\eta)}{2L(\mu)}
    \right\}.
\end{align}
$\text{Dim}$ is a different notion of dimension from $\dim$ given in Section \ref{sec:260722.1} (See \cite[Remark 2.34]{Fur02}), but in the case that $\eta$ is exact dimensional they coincide. The exactness of the dimension of the stationary measures was established by Hochman and Solomyak in \cite{HoSo2017} assuming additionally that $\mu$ is irreducible. In particular, if $\eta^+$ and $\eta^-$ denote respectively the forward and backward stationary measures then
\begin{align*}
    \dim\eta^{\pm} = \min\left\{
        1,\, \frac{h_F(\eta^{\pm})}{2\Le(\mu)}
    \right\} \leq \min\left\{
        1,\, \frac{H(\mu)}{2\Le(\mu)}
    \right\}.
\end{align*}
Moreover, in \cite{HoSo2017} they provided, among other things, conditions to obtain Ledrappi\-er-Young type formulas relating the dimension of the stationary measure, the entropy and the Lyapunov exponents, i.e., 
\begin{align*}
    \dim\eta^{\pm}
    = \min\left\{
        1,\, \frac{H(\mu)}{2\Le(\mu)}
    \right\},
\end{align*}
where $h_F(\eta^{\pm})$ is the \emph{Furstenberg entropy} of $\eta^{\pm}$. In light of the above discussion we leave the following questions.
\begin{question}
    Assume that $\alpha > \dim\eta^+ + \dim\eta^-$. Under the assumptions of Theorem \ref{mainThm}, is it true that the Lyapunov exponent is not $\alpha$-H\"older continuous in any neighborhood of $\mu$?
\end{question}

\begin{question}
    Is $\frac{H(\mu)}{\Le(\mu)}$ a sharp bound for the regularity? In other words, is there an example where $\alpha_{\mu} = \frac{H(\mu)}{\Le(\mu)}$?
\end{question}
Halperin's example above does not answer this question.

\begin{question}
    In the case that $\frac{H(\mu)}{\Le(\mu)} \geq 1$, is it true that the Lyapunov exponent is Lipschitz continuous function around $\mu$?
\end{question}

\begin{question}
    Is it possible to express the lower bound for the regularity in terms of some of the previous measurements?
\end{question}

\subsection{Sketch of the proof and organization}

In Mathematical Physics the Thouless formula~\eqref{190122.3} relates the Lyapunov exponent of a Schr\"odinger cocycle with the integrated density of sates (IDS) of the corresponding Schr\"odinger operator. It follows from this identity~\eqref{190122.3} that the Lyapunov exponent and the IDS,  as functions of the energy, share the same modulus of continuity. See Proposition~\ref{prop:120522.1}. The IDS is a spectral quantity that measures the asymptotic distribution of the eigenvalues of  truncation matrices of the Schr\"odinger operator as the  size of the truncation tends to infinity.
The strategy to break the  H\"older regularity of the IDS in Halperin's example 
is to establish around a certain energy a very large concentration of eigenvalues of the Schr\"odinger truncated matrices which  implies a disproportionately large leap of the IDS around that energy,  see~\cite[Appendix 3]{SiTa85}.
Then, as explained above, the loss of H\"older regularity passes from the IDS to the Lyapunov exponent.

Let $\mu$ be an irreducible and  finitely supported measure on $\SL_2(\R)$ with positive Lyapunov exponent and  $\lcCocycle:\prodSpace\to \SL_2(\R)$ be the associated locally constant cocycle. 
In order to use the strategy described  above, in Section~\ref{250122.1} we embed the cocycle $\lcCocycle$ into a  family of locally constant Schr\"odinger cocycles  over a Markov shift.

We call \emph{matching} to a configuration where 
the horizontal direction $e_1=(1,0)$ is mapped in $n$ iterations to the vertical direction $e_2=(0,1)$ in a way that that   $e_1$ is greatly expanded in the first half iterations followed by  a  similar contraction in the second half iterations. The number $n$ is referred to as the \emph{size} of the matching. A matching of size $n$ at some energy $E_0$ determines an almost eigenvector for an $n\times n$ truncated Schr\"odinger matrix, which then implies a true nearby  eigenvalue $E_0^\ast \approx E_0$  of the same matrix.
Hence matchings of size $n$ for energies in some small interval $I$ can be used to count eigenvalues
of a truncated Schr\"odinger operator of size $n$.

By Proposition~\ref{prop:solvingEquations}, a heteroclinic tangency of the cocycle $\lcCocycle=\lcCocycle_{(0)}$ implies many nearby matchings of any chosen large size $n$, spreading through a small interval of length $\sim e^{-c\, n}$.
Because these matchings are still not enough to break  the H\"older regularity in the stated form, 
we prove in Proposition~\ref{prop:010722.1} that a single tangency will cause many more  tangencies to occur at nearby energies, which are in some sense typical.
Propositions~\ref{prop:solvingEquations} and~\ref{prop:010722.1}  were designed to be used recursively in the sense that the output of the second feeds the input of the first. They could be used recursively  to characterize the fractal structure of matchings and tangencies, a path we do not explore in this work. We do  use them in a single cycle to gather the matchings, of some appropriate size, associated to a typical nearby heteroclinic tangency.
The matchings coming from a typical tangency are now 
 enough to break  the H\"older regularity in the stated form.

Proposition~\ref{prop:010722.3} plays a key role in the proof of Theorem A, to estimate the number of matchings and tangencies from Propositions~\ref{prop:solvingEquations} and~\ref{prop:010722.1}.
On the other hand the proof of Proposition~\ref{prop:010722.3} relies on a  characterization of the projective random walk  distribution in Proposition~\ref{P1}
and a few Linear Algebra facts on the geometry of the projective action in Appendix~\ref{appendix:LA}. See propositions~\ref{balanced radius},~\ref{lem:080722.3} and
Lemma~\ref{190722.10}.

\paragraph{Organization:}
This work is organized as follows. Section \ref{sec2:010622.1} contains the general definitions that will be used throughout the paper. In section \ref{sec3:010622.2} we define and state some properties of locally constant linear cocycles and Furstenberg measures. We discuss general spectral properties of Schr\"odinger operators in Section \ref{sec4:010622.3} and in Section \ref{250122.1} we show how to embed a general locally constant cocycle into a Schr\"odinger family over a Markov shift. In Section \ref{sec6:010622.5} we obtain a lower bound for the oscillation of the integrated density of states in terms of counting matchings. Section \ref{sec7:010622.6} contains the core technical results of the work, namely propositions \ref{prop:010722.3}, \ref{prop:solvingEquations} and \ref{prop:010722.1}. Section \ref{sec8:010622.7} provides lower bounds for the measure of the set of matchings. In Section \ref{sec9:010622.8} we give the proof of the results. The Appendix \ref{appendix:LA} contains the linear algebra tools needed in this work and Appendix \ref{Appendix:derivativeProjectiveActions} describes some of the formulas for derivatives of projective actions.


\paragraph{Logical structure:}The following picture describes the logical structure of the proof of Theorem \ref{mainThm}.
\begin{figure}[h]
    \centering
    \includegraphics[width=0.7\textwidth]{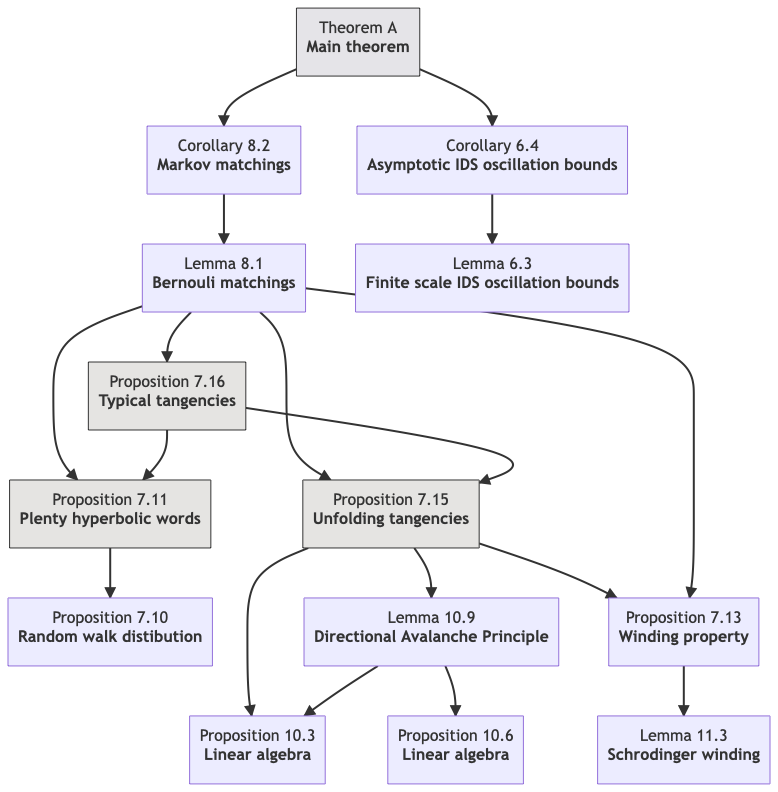}
    \caption{Logical dependencies}
    \label{EntropyVersion}
\end{figure}

\paragraph{Acknowledgments:}
Both authors were supported by FCT-Funda\c{c}\~{a}o para a Ci\^{e}ncia e a Tecnologia through the project  PTDC/MAT-PUR/29126/2017.
J. B. was also supported by the Center of Excellence "Dynamics, Mathematical Analysis and Artificial Intelligence" at Nicolaus Copernicus University in Torun.
P.D.  was also supported by CMAFCIO through FCT project  UIDB/04561/2020.

\section{Basic definitions and general concepts}\label{sec2:010622.1}
In this subsection we establish some of the general notation used throughout this work.

\subsection{Preliminary definitions and notations}
\label{definitions and notations}
\begin{itemize}
\item 
We denote by $\GL_d(\R)$ and $\SL_d(\R)$ respectively the group of $d\times d$ invertible matrices and its subgroup of matrices with determinant one. Given a $d\times d$ square matrix $H$ we denote  its spectrum by $\spec(H)$ and by $|\spec(H)|$   the number of elements in $\spec(H)$ counted with multiplicity. Unless otherwise stated, $\norm{H}$ refers to the operator norm of the matrix $H$.

\item 
The projective space of $\R^2$, consisting of all lines in $\R^2$,  is denoted by $\bP^1$. Its  points are denoted by $\hat v\in \bP^1$.
After introducing a projective point $\hat v$, by convention the letter $v$ will stand for any unit vector aligned with the line $\hat v$.
A natural distance in $\bP^1$ is given by
$d(\hat v,\, \hat w):=|v\wedge w|=\sin \measuredangle(\hat v, \hat w)$.

A  matrix $A\in\SL_2(\R)$ induces a projective automorphism $\hat A\colon \bP^1\to \bP^1$, where $\hat A\, \hat v:=\widehat{A\, v}$
is the line determined by the unit vector $A\, v/\norm{A v}$. For the sake of notational simplicity we often write $A\,\hat v$ instead of $\hat A\, \hat v$.

\item 
We use the standard classification for $\SL_2(\R)$ matrices as \emph{elliptic}, $\emph{parabolic}$ or \emph{hyperbolic} meaning respectively that the absolute value of the trace is smaller than one, equal or greater than two.

\item 
Let $X$ be a compact metric space.
The space of all Borel probability measures on $X$ is denoted by $\Prob(X)$. This is a convex and compact set with respect to the weak* topology. Given a sequence of measures $\eta_n\in\Prob(X)$, we say that $\eta_n$ converges weak* to $\eta$ in $\Prob(X)$,
and write  $\eta_n\wconv \eta$, if for every continuous function $\varphi\in C^0(X)$,
$$\int \varphi\, d\eta=\lim_{n\to\infty} \int \varphi\, d\eta_n.$$

\item 
Given two probability measures $\mu_1, \mu_2\in \Prob(\SL_2(\R))$, the convolution between $\mu_1$ and $\mu_2$ is the measure
\begin{align*}
    \mu_1*\mu_2 := \int_{\SL_2(\R)} g_*\mu_2\, d\, \mu_1(g).
\end{align*}
The $n$-th convolution power, $\mu^{*n}$, of a measure $\mu\in \SL_2(\R)$ is defined inductively by $\mu^{*n} := \mu^{*(n-1)}*\mu$.

    \item 
    Let $\Lambda$ be a finite set and $\Sigma=\Lambda^\Z$. Given a finite word
    $a=(a_0,a_1,\ldots, a_{m-1})\in\Lambda^m$ and $k\in\Z$ the set
    $$ [k;\, a]:=\left\{ \zeta\in\Sigma\, \colon \, \zeta_{j+k}=a_j,\, \forall j=0,1,\ldots, m-1 \, \right\}$$
    is called the \emph{cylinder} of $\Sigma$ determined by the word $a$ and the position $k$. The integer $m$ is referred to as the length of the cylinder.

    \item  Given a compact metric space $(X,d)$, $0<\theta<1$ and a continuous function $\varphi\in C^0(X)$,
    the $\theta$-H\"older constant of $\varphi$ is defined by
    $$ v_\theta(\varphi):=\sup_{\substack{x,y\in X\\x\neq y}}\frac{|\varphi(x)-\varphi(y)|}{d(x,y)^\theta} . $$
    The space of $\theta$-H\"older continuous functions on $X$ is
    $$C^\theta(X):=\left\{ \varphi\in C^0(X)\, \colon \, v_\theta(\varphi)<\infty \, \right\} $$
    which endowed with the norm
    $$ \norm{\varphi}_\theta:= \norm{\varphi}_\infty + v_\theta(\varphi) $$
    becomes a Banach algebra.
    
    \item 
    Given sequences of real numbers 
    $(a_n)_n$ and $(b_n)$ with $a_n,\, b_n>0$ we write
    \begin{itemize}
        \item $a_n = O(b_n)$ if there exists an absolute constant $C>0$ and $n_0\in\N$ such that $a_n \leq C\, b_n$ for every $n\geq n_0$;
        
        \item $a_n\lesssim b_n$ or $b_n \gtrsim a_n$ if $a_n = O(b_n)$;
                
        \item $a_n\sim b_n$ if $\lim_{n\to\infty} a_n/b_n  = 1$.
        
        \item For $\gamma,\, t>0$ we write $\gamma\, \gg\, t$ to indicate that $t$ is much smaller than $\gamma$.
    \end{itemize}

    \item Given some interval  $J\subset \R$ or $J\subseteq \bP^1$, we denote by $|J|$ the length (size) of $J$. Given a positive number $t$, we denote by $t\, J$ the interval with the same center as $J$ and size $t\, |J|$.
\end{itemize}

\subsection{Linear cocycles}
Let $X$ be a compact metric space, with Borel $\sigma$-algebra $\mathcal{B}$. Consider a homeomorphism $T:X\to X$  which preserves a probability measure $\xi$ defined on $\mathcal{B}$ and such that the system $(T,\xi)$ is ergodic. The triple $(X, T, \xi)$ is referred to as the \emph{base dynamics}.

Any continuous map $A:X\to \SL_2(\R)$ defines a \emph{linear cocycle}, over the base dynamics $(X,T,\xi)$, $F_A:X\times \R^2\to X\times\R^2$ given by $F_A(x,v) := (Tx, A(x)\, v)$. By linearity of the fiber action we can define the projectivization of  $F_A$ as the map $\hat{F}_A:X\times \bP^1\to X\times\bP^1$ given by $\hat{F}_A(x,\hat v) := (Tx, \hat A(x)\, \hat v)$. We also use the term \emph{linear cocycle} referring to the map $A:X\to \SL_2(\R)$ when the base dynamics is fixed.

Note that for each $n\in \Z$, the $n$-th iteration of the linear cocycle $F_A$ sends a point $(x,v)\in X\times\R^2$ to $(T^nx, A^n(x)\, v)$, where
\begin{align*}
    A^n(x) = \left\{
        \begin{array}{lc}
            A(T^{n-1}x)\,\ldots\, A(Tx)\, A(x)  & \text{if } n>0 \\
            \; I  & \text{if } n=0 \\
            A(T^nx)^{-1}\,\ldots\, A(T^{-2}x)^{-1}\, A(T^{-1}x)^{-1}  & \text{if } n<0.
        \end{array}
    \right.
\end{align*}

The \emph{Lyapunov exponent} of the cocycle $F_A$ can be defined as the limit
\begin{align*}
    L(A) = \lim_{n\to\infty}\frac{1}{n}\log\norm{A^n(x)},
\end{align*}
which exists and is constant for $\xi$-a.e. $x\in X$ as a consequence of Kingman's sub-additive ergodic theorem. Notice that the Lyapunov exponent depends on the base dynamics despite the fact that the notation $L(A)$ does not   refer to  $(X,T,\xi)$. The underlying base dynamics should always be clear from the context.

\subsection{Transition kernel and stationary measures}
Let $X$ be a compact metric space. We call  \emph{transition kernel} to any continuous map $K:X\to \Prob(X)$. Any transition kernel $K$ induces a linear operator $K:C^0(X)\to C^0(X)$
\begin{align*}
    (K\varphi)(x) := \int_X \varphi dK_x,
\end{align*}
acting on the space $C^0(X)$ of  continuous functions $\varphi:X\to \R$. This is called the \emph{Markov operator} associated to the transition kernel $K$. The adjoint of $K$, $K^\ast$, in the space of probability measures $\Prob(X)$ is given by
\begin{align*}
    K^\ast\xi := \int K_x d\xi(x).
\end{align*}
We say that a probability measure $\xi_0$ is \emph{stationary} for $K$ if $\xi_0$ is a fixed point of $K^\ast$, i.e., if for every  $\varphi\in C^0(X)$,
\begin{align*}
    \int_X\varphi \, d\xi_0 = \Int_X\left(
        \int_X \varphi(y)\ dK_x(y)
    \right)\, d\xi_0(x).
\end{align*}

Consider the process $e_n\colon X^\N\to X$,
$e_n(\omega):= \omega_n$.
Given a probability measure $\xi\in \Prob(X)$ there exists a unique measure $\tilde{\xi}$ in the space $X^{\N}$ such that:
\begin{enumerate}
    \item[(a)] $\tilde \xi (e_0^{-1} (E) )=\xi(E)$, \; $\forall\, E\in \mathcal{B}$;
    \item[(b)] $\tilde \xi ( e_n^{-1} (E) \, \vert \, e_{n-1}=x )= K_{x}(E)$, \; $\forall\, E\in \mathcal{B},\, \forall\, x\in X$.
\end{enumerate}
We say that the measure $\tilde{\xi}$ is the \emph{Kolmogorov extension} of the pair $(K,\xi)$. The following statements are equivalent:

\begin{enumerate}
    \item[(1)]   $\xi$ is $K$-stationary,
    \item[(2)]   $\tilde \xi$ is  invariant under the one-sided shift map $T\colon X^{\N}\to X^{\N}$,
$T(x_n)_{n\in\N}:=(x_{n+1})_{n\in \N}$,
    \item[(3)]   $e_n:X^\N\to X$ is a stationary Markov process with  transition kernel $K$ and common law $\xi$.
\end{enumerate}
When these conditions hold we refer to $(K,\xi)$ as a \emph{Markov system}.
In this case the Kolmogorov extension $\tilde \xi$ admits a natural extension
to $X^\Z$, still denoted by $\tilde\xi$, for which the two sided process 
$e_n\colon X^\Z\to X$,
$e_n(\omega):= \omega_n$, is a stationary Markov process. Moreover $\tilde \xi$ is invariant under the two sided shift map $T\colon X^{\Z}\to X^{\Z}$,
\begin{align*}
    T(\ldots,x_{-1}, \mathbf{x_0}, x_1, x_2,\ldots) := (\ldots, x_0, \mathbf{x_1}, x_2,\ldots),
\end{align*}
where the bold term in the above expression indicates the $0$-th position of the sequence.  The dynamical system $(T, \tilde \xi)$ is then called the  Markov shift over $X$ induced by the pair $(K,\xi)$. 

We say that a Markov system $(K,\xi)$ is \emph{strongly mixing} if 
$$\lim_{n\to \infty} \norm{ K^n\varphi -  \int \varphi\, d\xi }_\infty=0 $$
with uniform convergence over bounded sets of $C^0(X)$.
It is important to observe that if a  Markov system $(K,\xi)$
is  strongly mixing then  the Markov shift $(T, \tilde{\xi})$ is mixing. See~\cite[Proposition 5.1]{DK-book}.  

\section{Random product of matrices}\label{sec3:010622.2}

In this section we describe the base dynamics associated with random i.i.d. products of matrices generated by a probability measure on $\SL_2(\R)$.

In the subsequent sections $\mu$ is a probability measure on $\SL_2(\R)$ with finite support given by $\supp\mu = \{A_1,\ldots, A_{\kappa}\}\subset \SL_2(\R)$. We write
\begin{align*}
    \mu = \sum_{i=1}^{\kappa}\mu_i\, \delta_{A_i},
\end{align*}
where the components $\mu_i = \mu\{A_i\}>0$ form a probability vector $(\mu_1,\ldots,\mu_{\kappa})$.

\begin{remark}
The positivity requirements $\mu_i>0$  avoids discontinuities as in Kifer counter-example. See~\cite{Ki82} or~\cite[Remark 7.5]{BV16}. 
\end{remark}

\subsection{General locally constant cocycles}\label{260122.5}

\paragraph{Locally constant cocycles:} Let $\prodSpace = \{1,\ldots, \kappa\}^{\Z}$ be the space of sequences in the symbols $\{1,\ldots, \kappa\}$, $\prodMeasure = (\mu_1, \ldots, \mu_\kappa)^{\Z}$ be the Bernoulli product measure on $\prodSpace$ and consider $\shift: \prodSpace\to\prodSpace$ the shift map. Note that the system $(\shift, \prodMeasure)$ is ergodic. We say that the triple $(\prodSpace, \shift, \prodMeasure)$ is the base dynamics determined by $\mu$.

It is important to point out that the base dynamics $(\prodSpace, \shift, \prodMeasure)$ does not depend on which $\kappa$-tuple $(A_1,\dots,A_{\kappa})\in (\SL_2(\R))^{\kappa}$ but only on the values $\mu_i = \mu\{A_i\}$.

Consider the map $\lcCocycle: \prodSpace\to \SL_2(\R)$ given by
\begin{align*}
    \lcCocycle(\ldots, \omega_{-1}, \omega_0,\omega_1,\ldots) := A_{\omega_0}.
\end{align*}
Notice that for each sequence $\omega \in \prodSpace$, $\lcCocycle(\omega)$ only depends on the $0$-th coordinate of the sequence $\omega$. Such maps are known in the literature as \emph{locally constant linear cocycles}. This is an agreed abuse of the term  since for the standard topology in $\prodSpace$,  locally constant observables include a broader class of functions. Since the base dynamics is fixed,  $(A_1,\ldots, A_{\kappa})$ determines the Lyapunov exponent $L(\lcCocycle)$ and for that reason some times we write $L(\lcCocycle) = L(A_1,\ldots,A_{\kappa})$ to emphasize this dependence.
This definition of Lyapunov exponent of   $\lcCocycle$ agrees with the one given in the introduction for the distribution law $\mu$, so that $L(\mu) = L(\lcCocycle) = L(A_1\ldots, A_{\kappa})$.

\paragraph{Uniformly hyperbolic cocycles}
The measure $\mu$, or equivalently, the locally constant cocycle $\lcCocycle:\prodSpace\to \SL_2(\R)$ is said to be \emph{uniformly hyperbolic} if there exist $C>0$ and $\gamma>0$ such that for every $n\geq 1$ and $\omega\in \prodSpace$,
\begin{align}\label{eq:020822.1}
    \norm{\lcCocycle^n(\omega)}\geq Ce^{\gamma n} .
\end{align}
 It is known \cite{Viana-book} that this is equivalent to the existence of two $\lcCocycle$-invariant continuous sections $F^u,\, F^s:\prodSpace\to \bP^1$ such that for every $\omega$  $F^u(\omega)\oplus\, F^s(\omega) = \R^2$ and there exist $C>0$ and $\gamma>0$ such that 
\begin{align*}
    \norm{
        \lcCocycle^n(\omega)|_{F^s(\omega)} 
    }\leq Ce^{-\gamma n}
    \quad
    \text{and}
    \quad
    \norm{
        \lcCocycle^{-n}(\omega)|_{F^u(\omega)}
    } \leq Ce^{-\gamma n}.
\end{align*}

\paragraph{Forward and backward stationary measures:} Consider the transition kernels $Q_+:\bP^1\to \Prob(\bP^1)$ and $Q_{-}:\bP^1\to \Prob(\bP^1)$ defined, respectively, by
\begin{align*}
    Q_+(\hat v) := \sum_{i=1}^{\kappa}\mu_i\, \delta_{A_i\, \hat v}
    \quand
    Q_-(\hat v) := \sum_{i=1}^{\kappa}\mu_i\, \delta_{A_i^{-1}\, \hat v}.
\end{align*}

\begin{definition}
A measure $\eta\in\Prob(\bP^1)$ is called forward, resp. backward, stationary for $\mu$\,  if  \, $Q_+^\ast\, \eta=\eta$, resp. $Q_-^\ast\, \eta=\eta$,
i.e., if $\eta$ is  stationary for $Q_+$, resp.  for $Q_-$.
\end{definition}
 
Notice that a backward stationary measure for $\mu$ is a forward stationary measure for the reverse measure $\mu^{-1}:=\sum_{i=1}^{\kappa} \mu_i\, \delta_{A_i^{-1}}$.

\subsection{Irreducible cocycles}\label{sec:genericLCC}
Throughout this section, unless otherwise explicitly said, we assume that the probability measure $\mu = \sum\mu_i\, \delta_{A_i}\in \spaceProb(\SL_2(\R))$ has positive Lyapunov exponent and is quasi-irreducible. 

It follows that the Markov operator $Q_+:C^0(\bP^1)\to C^0(\bP^1)$ defined by
\begin{align*}
    (Q_+\varphi)(\hv) := \sum_{i=1}^{\kappa}\mu_i\, \varphi(A_i\, \hv),
\end{align*}
preserves the space of $\theta$-H\"older continuous functions $C^{\theta}(\bP^1)$, for some $\theta>0$, and   $Q_+|_{C^{\theta}(\bP^1)}:C^{\theta}(\bP^1)\to C^{\theta}(\bP^1)$ is a quasi-compact and simple operator, i.e., it has a simple largest eigenvalue, namely $1$ associated to the constant functions, and all other elements in the spectrum have absolute value strictly less than $1$.
%
\begin{proposition}
    There exist   a unique forward stationary measure $\plusStat$ and a unique backward stationary measure $\minusStat$ for $\mu$.
\end{proposition}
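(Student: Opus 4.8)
The statement to prove is the existence and uniqueness of forward and backward stationary measures $\eta^+$ and $\eta^-$ for $\mu$, given that $\mu$ is quasi-irreducible with positive Lyapunov exponent.

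The plan is to deduce this from the spectral properties of the Markov operator $Q_+$ acting on $C^\theta(\bP^1)$, which the excerpt states (in the paragraph just before the proposition) is quasi-compact and simple: its largest eigenvalue is $1$, simple, associated to constants, and the rest of the spectrum lies strictly inside the unit disk. First I would note that stationary measures always exist by a standard Krylov--Bogolyubov/Schauder fixed point argument: $Q_+^\ast$ maps the weak*-compact convex set $\Prob(\bP^1)$ continuously into itself, so it has a fixed point; this gives existence of $\eta^+$ without using quasi-irreducibility. The same applies to $Q_-$, giving $\eta^-$.

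For uniqueness, the key step is to dualize the spectral information. Since $Q_+|_{C^\theta}$ is quasi-compact with a simple leading eigenvalue $1$ whose eigenspace is the constants, the adjoint $Q_+^\ast$ acting on the dual of $C^\theta(\bP^1)$ has a one-dimensional fixed space. Any stationary probability measure $\eta$ defines a continuous linear functional on $C^\theta(\bP^1)$ fixed by $Q_+^\ast$, hence lies in this one-dimensional space; being a probability measure pins it down uniquely. Equivalently, one can argue via the mean ergodic / spectral decomposition: for $\varphi \in C^\theta$, $\frac1N\sum_{n=0}^{N-1} Q_+^n\varphi \to \Pi\varphi$ where $\Pi$ is the spectral projection onto constants, so $\Pi\varphi = c(\varphi)\cdot \mathbf 1$; then for any stationary $\eta$, $\int \varphi\, d\eta = \int Q_+^n\varphi\, d\eta = \int (\frac1N\sum Q_+^n\varphi)\, d\eta \to c(\varphi)$, which shows $\int \varphi\, d\eta$ is independent of $\eta$ for all $\varphi \in C^\theta$, and $C^\theta(\bP^1)$ is dense in $C^0(\bP^1)$, so $\eta$ is unique. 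The backward statement follows identically, since $\mu^{-1}$ is also quasi-irreducible with positive Lyapunov exponent (the Lyapunov exponent is unchanged and quasi-irreducibility is symmetric under inversion in this $2$-dimensional setting), so the same spectral input applies to $Q_-$.

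The main obstacle, or rather the point that must be handled with care, is the passage from the simplicity of $Q_+$ on $C^\theta(\bP^1)$ to uniqueness among \emph{all} Borel probability measures, not just those absolutely continuous with respect to some reference or those with H\"older densities. This is why the density of $C^\theta(\bP^1)$ in $C^0(\bP^1)$ (in the uniform norm) is essential: it lets the identity $\int\varphi\,d\eta = c(\varphi)$, initially established for $\varphi\in C^\theta$, extend to all $\varphi\in C^0$, which by the Riesz representation theorem forces $\eta$ to be the unique measure with these integrals. I would also remark that the quasi-irreducibility hypothesis enters precisely in guaranteeing the simplicity of the top eigenvalue — without it $Q_+$ could have a higher-dimensional fixed space (e.g. for reducible $\mu$ with two invariant directions) — so this hypothesis is used exactly once, through the cited spectral statement.
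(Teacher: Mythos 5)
The paper offers no argument of its own for this proposition: its ``proof'' is a citation to Proposition 4.2 of the Duarte--Klein lecture notes. Your self-contained derivation from the quasi-compactness and simplicity of $Q_+$ on $C^\theta(\bP^1)$ is therefore a genuinely different (and more informative) route, and the forward half is correct: existence follows from Markov--Kakutani/Schauder on the weak*-compact convex set $\Prob(\bP^1)$, and uniqueness because $Q_+^n\varphi$ converges uniformly to the constant $c(\varphi)\mathbf 1$ for $\varphi\in C^\theta(\bP^1)$, so $\int\varphi\,d\eta=c(\varphi)$ for every stationary $\eta$, and H\"older functions are uniformly dense in $C^0(\bP^1)$. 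This is exactly the mechanism behind the cited reference, so for $\plusStat$ your proof is a faithful expansion of what the paper leaves implicit.

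There is, however, a gap in the backward half. You justify applying the same spectral input to $Q_-$ by asserting that quasi-irreducibility is symmetric under inversion. Irreducibility is symmetric (a line $S$ satisfies $AS=S$ iff $A^{-1}S=S$), but quasi-irreducibility is not: if $\mu$ is supported on upper-triangular matrices $\begin{pmatrix}\lambda & b\\ 0&\lambda^{-1}\end{pmatrix}$ with $\mathbb{E}[\log|\lambda|]>0$, then $S=\R e_1$ is the invariant line and $L(\mu|_S)=\mathbb{E}[\log|\lambda|]=L(\mu)$, so $\mu$ is quasi-irreducible with positive exponent; yet $L(\mu^{-1}|_S)=-\mathbb{E}[\log|\lambda|]<0<L(\mu^{-1})$, so $\mu^{-1}$ is \emph{not} quasi-irreducible, and $Q_-$ then has at least two ergodic stationary measures ($\delta_{\hat e_1}$ and another one carrying the top exponent of $\mu^{-1}$). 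Hence under the standing hypothesis of this section (quasi-irreducibility) your argument for the uniqueness of $\minusStat$ does not close --- and indeed the uniqueness assertion itself is delicate in that generality. Under the irreducibility hypothesis of Theorem A, which is what the paper actually invokes wherever $\minusStat$ is used, $\mu^{-1}$ is irreducible with the same Lyapunov exponent and your argument goes through verbatim; you should either invoke irreducibility for the backward half or restrict that part of the claim accordingly.
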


\begin{proof}
See~\cite[Proposition 4.2]{DK-31CBM}.
\end{proof}

The operator $Q_+$ contracts the H\"older seminorm.

\begin{proposition}\label{prop:020622.4}
    There exist  positive constants $0<\theta<1$, $C$ and $c$ such that
   $$ v_\theta(Q_+^n \varphi)\leq C\,e^{-c\, n}\, v_\theta(\varphi)\qquad \forall \, n\in\N .$$
   for every $\varphi\in C^\theta(\bP^1)$.
\end{proposition}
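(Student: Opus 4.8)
The plan is to establish the spectral gap for $Q_+$ on $C^\theta(\bP^1)$ and then extract the contraction of the H\"older seminorm as a quantitative consequence. The statement asserts uniform exponential decay of $v_\theta(Q_+^n\varphi)$, which is precisely the kind of conclusion that follows from the quasi-compactness and simplicity of $Q_+$ already recorded in the preceding paragraph, combined with the fact that constant functions (the eigenfunctions for the top eigenvalue $1$) have zero $\theta$-seminorm. So at the highest level, the proof is an application of the Ionescu-Tulcea–Marinescu / Hennion framework: once $Q_+$ is known to act boundedly on $C^\theta(\bP^1)$, satisfies a Lasota–Yorke (Doeblin–Fortet) inequality, and is quasi-compact with a simple leading eigenvalue, the part of the spectrum inside the unit disk is contained in a disk of radius $e^{-c}<1$, and therefore the restriction of $Q_+$ to the $Q_+$-invariant subspace $C^\theta_0(\bP^1) := \{\varphi : \int\varphi\,d\plusStat = 0\}$ has spectral radius $e^{-c}$. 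On $C^\theta_0$ the norm $\norm{\cdot}_\theta$ and the seminorm $v_\theta(\cdot)$ are comparable, so $v_\theta(Q_+^n\varphi)\le \norm{Q_+^n\varphi}_\theta\le C e^{-cn}\norm{\varphi}_\theta$ for $\varphi\in C^\theta_0$; since $v_\theta$ is insensitive to additive constants, one replaces a general $\varphi$ by $\varphi - \int\varphi\,d\plusStat$ and bounds $\norm{\varphi - \int\varphi\,d\plusStat}_\theta \lesssim \norm{\varphi}_\infty + v_\theta(\varphi)$. This last step is slightly unsatisfactory because the target inequality has only $v_\theta(\varphi)$ on the right-hand side, not $\norm{\varphi}_\infty$, so I would do one more iterate to absorb the sup-norm.

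More concretely, here is the order I would carry the argument out. First I would prove the Lasota–Yorke inequality: there exist $0<\theta<1$, $0<\lambda<1$, and $C_0>0$ such that
\begin{align*}
    v_\theta(Q_+\varphi) \le \lambda\, v_\theta(\varphi) + C_0\,\norm{\varphi}_\infty \qquad \forall\,\varphi\in C^\theta(\bP^1).
\end{align*}
This follows from the fact that the projective action of each $A_i$ is a $C^1$ contraction/expansion with bounded distortion; quantitatively, $d(A_i\hat v, A_i\hat w)\le \norm{A_i}^2\, d(\hat v,\hat w)$ always, but more importantly the \emph{average} $\sum_i \mu_i\, d(A_i\hat v, A_i\hat w)^\theta$ is eventually dominated by $\lambda\, d(\hat v,\hat w)^\theta$ for some $\lambda<1$ once one passes to a high enough convolution power $\mu^{*N}$ — this is where positivity of the Lyapunov exponent and quasi-irreducibility enter, guaranteeing that on average directions get contracted toward the Furstenberg measure. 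Passing to $Q_+^N$ and relabeling, one gets the displayed inequality (possibly with $Q_+$ replaced by $Q_+^N$, which is harmless for the final statement since a bound for every multiple of $N$ plus boundedness of $Q_+$ on $C^\theta$ gives a bound for every $n$). Iterating the Lasota–Yorke inequality gives $v_\theta(Q_+^n\varphi)\le \lambda^n v_\theta(\varphi) + \frac{C_0}{1-\lambda}\norm{\varphi}_\infty$, which is \emph{not yet} the claim because of the $\norm{\varphi}_\infty$ term.

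To remove the sup-norm term I would use quasi-compactness and simplicity of $Q_+|_{C^\theta}$ (stated in the excerpt) together with the fact that $\plusStat$ is the unique stationary measure: write $\varphi_0 := \varphi - (\int\varphi\,d\plusStat)\mathbf{1}$, note $v_\theta(\varphi_0)=v_\theta(\varphi)$ and $Q_+^n\varphi - \int\varphi\,d\plusStat = Q_+^n\varphi_0$, and observe that $\varphi\mapsto\varphi_0$ maps $C^\theta$ into the hyperplane $C^\theta_0$ on which the spectral radius of $Q_+$ is some $\rho<1$. Hence $\norm{Q_+^n\varphi_0}_\theta \le C\,\rho^n\,\norm{\varphi_0}_\theta \le C\,\rho^n\,(\norm{\varphi_0}_\infty + v_\theta(\varphi))$, and since $\norm{\varphi_0}_\infty \le \mathrm{diam}(\bP^1)^\theta\, v_\theta(\varphi)$ (because $\varphi_0$ has zero integral against a probability measure, its oscillation bounds its sup-norm, and its oscillation is at most $\mathrm{diam}(\bP^1)^\theta v_\theta(\varphi)$), we conclude $v_\theta(Q_+^n\varphi)\le \norm{Q_+^n\varphi_0}_\theta \le C'\,\rho^n\, v_\theta(\varphi)$, which is the claim with $c=-\log\rho$. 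The main obstacle is the Lasota–Yorke step, specifically verifying the averaged contraction $\sum_i\mu_i\,d(A_i\hat v,A_i\hat w)^\theta \le \lambda\, d(\hat v,\hat w)^\theta$ after passing to a convolution power — this is the place where one genuinely uses positivity of the Lyapunov exponent and quasi-irreducibility, via Furstenberg's formula $L(\mu) = \int\int \log\frac{\norm{A v}}{\norm{v}}\,d\mu(A)\,d\plusStat(\hat v) > 0$ and a careful choice of $\theta$ small enough; the rest of the argument is soft functional analysis that is essentially already granted by the quoted quasi-compactness statement. Alternatively, if one is willing to simply cite the quoted properties of $Q_+|_{C^\theta}$ as a black box, the entire proposition reduces to the last paragraph's bookkeeping, and I would present it that way for brevity, referring to \cite{DK-31CBM} or \cite{DK-book} for the Lasota–Yorke estimate.
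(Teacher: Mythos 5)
Your argument is correct, and since the paper offers no proof of its own here --- it simply cites \cite{DK-31CBM}, Propositions 4.1 and 4.2 --- your write-up is a faithful reconstruction of exactly what those cited propositions contain: the key averaged contraction estimate (which, as you say, is where $\Le(\mu)>0$ and quasi-irreducibility enter) plus soft spectral bookkeeping. One small simplification worth noting: once you have the $n$-step averaged contraction $\sum_{|w|=n}\mu_w\, d(A_w\hat v, A_w\hat w)^\theta\leq C e^{-cn}\, d(\hat v,\hat w)^\theta$, the bound $|Q_+^n\varphi(\hat v)-Q_+^n\varphi(\hat w)|\leq v_\theta(\varphi)\sum_{|w|=n}\mu_w\, d(A_w\hat v,A_w\hat w)^\theta$ gives the conclusion directly, so the detour through the Lasota--Yorke inequality, the spectral gap on the mean-zero hyperplane, and the removal of the $\norm{\varphi}_\infty$ term is not needed --- though everything you wrote there is sound (in particular your oscillation bound $\norm{\varphi_0}_\infty\leq \operatorname{diam}(\bP^1)^\theta\, v_\theta(\varphi)$ for mean-zero $\varphi_0$ is correct).
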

\begin{proof}
See~\cite[Propositions 4.1 and 4.2]{DK-31CBM}.
\end{proof}

Another consequence of the quasi-compactness is that the locally constant linear cocycle $\lcCocycle:\prodSpace\to \SL_2(\R)$, associated with $\mu$ and  defined on the product space $\prodSpace = \{1,\ldots,\kappa\}^{\Z}$, satisfies uniform large deviation estimates of exponential type in a neighborhood of $\lcCocycle$.

\begin{proposition}\label{ULDE}
    There exist constants $\delta>0$, $C>0$, $\tau>0$ and $\varepsilon_0>0$ such that for every $\varepsilon\in (0,\varepsilon_0)$, for all locally constant $\textbf{B}:\prodSpace\to \SL_2(\R)$ with $\norm{\textbf{B} - \lcCocycle}_{\infty}<\delta$, every $\hat v\in \bP^1$ and   $n\in \N$,
    \begin{align*}
        \prodMeasure 
            \left(\left\{
                \omega\in \prodSpace \, \colon\,  \left|
                    \frac{1}{n}\log\norm{\textbf{B}^n(\omega)\,v} - \Le(\textbf{B})
                \right|\geq \varepsilon
            \right\}\right)  \leq C\, e^{-\tau\varepsilon^2n},
    \end{align*}
    and
    \begin{align*}
        \prodMeasure 
            \left(\left\{
                \omega\in \prodSpace \, \colon\,  \left|
                    \frac{1}{n}\log\norm{\textbf{B}^n} - \Le(\textbf{B})
                \right|\geq \varepsilon
            \right\}\right)  \leq C\, e^{-\tau\varepsilon^2n}.
    \end{align*}
\end{proposition}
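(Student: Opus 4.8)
This is the standard transfer‑operator / Chebyshev argument; the only delicate point is the uniformity over the $C^0$‑neighbourhood of $\lcCocycle$. Fix the exponent $0<\theta<1$ on which $Q_+$ has its spectral gap (Proposition~\ref{prop:020622.4}). For a locally constant cocycle $\mathbf B:\prodSpace\to\SL_2(\R)$ with fibre matrices $B_1,\dots,B_\kappa$, and for a real parameter $z$, introduce the perturbed (Birkhoff--Ruelle) operator on $C^\theta(\bP^1)$,
\[
(\mathbf L_{\mathbf B,z}\varphi)(\hat v):=\sum_{i=1}^\kappa\mu_i\left(\frac{\|B_i v\|}{\|v\|}\right)^{z}\varphi(B_i\hat v).
\]
Since $\hat v\mapsto\log(\|B_iv\|/\|v\|)$ is a bounded $\theta$-Hölder function, $z\mapsto\mathbf L_{\mathbf B,z}$ is an entire family of bounded operators, and $\mathbf L_{\mathbf B,0}=Q_+^{\mathbf B}$ is the Markov operator of $\mathbf B$. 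A straightforward induction on $n$, conditioning successively on $\omega_0,\omega_1,\dots$, yields for every unit vector $v$ and every $n$
\[
\int_{\prodSpace}\|\mathbf B^n(\omega)\,v\|^{z}\,d\prodMeasure(\omega)=(\mathbf L_{\mathbf B,z}^{\,n}\mathbf 1)(\hat v).
\]

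Next I would record the robustness of the spectral picture. Shrinking the constant $\delta$ of Proposition~\ref{prop:020622.4} if necessary, the projective cone‑contraction underlying that proposition — hence the quasi‑compactness and simplicity of $Q_+^{\mathbf B}$ on $C^\theta(\bP^1)$, and in fact a spectral gap with $\mathbf B$‑independent constants — persists for every locally constant $\mathbf B$ with $\norm{\mathbf B-\lcCocycle}_\infty<\delta$ (this is exactly the content imported from \cite{DK-31CBM}). By analytic perturbation theory there is a fixed interval $(-z_0,z_0)$ so that, for $z$ in it and all such $\mathbf B$, $\mathbf L_{\mathbf B,z}$ has a simple leading eigenvalue $\rho_{\mathbf B}(z)>0$ with $\rho_{\mathbf B}(0)=1$, a positive $\theta$-Hölder leading eigenfunction bounded away from $0$ and $\infty$, and a uniform gap to the rest of the spectrum; consequently
\[
(\mathbf L_{\mathbf B,z}^{\,n}\mathbf 1)(\hat v)=\rho_{\mathbf B}(z)^n\left(c_{\mathbf B,z}(\hat v)+O(\theta_1^{\,n})\right),
\]
with $0<a\le c_{\mathbf B,z}\le A$ and $0<\theta_1<1$, all constants independent of $\mathbf B,z,\hat v$. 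Writing $\Lambda_{\mathbf B}(z):=\log\rho_{\mathbf B}(z)$, one has $\Lambda_{\mathbf B}(0)=0$; the identity $\Lambda_{\mathbf B}'(0)=\Le(\mathbf B)$ follows from $\tfrac1n\int\log\|\mathbf B^n v\|\,d\prodMeasure\to\Le(\mathbf B)$ (Furstenberg's formula for the simple operator $Q_+^{\mathbf B}$) together with the displayed identity differentiated at $z=0$; and analyticity with uniform bounds gives $|\Lambda_{\mathbf B}''(z)|\le M$ on $|z|\le z_0/2$, uniformly in $\mathbf B$. Hence $\Lambda_{\mathbf B}(t)\le t\,\Le(\mathbf B)+Mt^2$ and $\Lambda_{\mathbf B}(-t)\le -t\,\Le(\mathbf B)+Mt^2$ for $0<t\le z_0/2$.

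The large deviation bound is then a one‑line Chebyshev estimate. For $0<t\le z_0/2$,
\[
\prodMeasure\!\left(\tfrac1n\log\|\mathbf B^n(\omega)\,v\|\ge\Le(\mathbf B)+\varepsilon\right)\le e^{-nt(\Le(\mathbf B)+\varepsilon)}\int\|\mathbf B^n v\|^{t}\,d\prodMeasure\le C\,e^{-n(t\varepsilon-Mt^2)},
\]
and choosing $t=\min\{\varepsilon/(2M),\,z_0/2\}$ gives an upper‑tail bound $C\,e^{-\tau\varepsilon^2 n}$ valid for all $\varepsilon\le\varepsilon_0:=Mz_0$, with $\tau=1/(4M)$; replacing $t$ by $-t$ and using $\Lambda_{\mathbf B}(-t)$ controls the lower tail identically, proving the first displayed inequality of the Proposition uniformly in $\hat v$. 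For the second, use $\|\mathbf B^n v\|\le\|\mathbf B^n\|\le\sqrt2\max(\|\mathbf B^n e_1\|,\|\mathbf B^n e_2\|)$ in $\SL_2(\R)$: the lower tail of $\tfrac1n\log\|\mathbf B^n\|$ is dominated by that of $\tfrac1n\log\|\mathbf B^n e_1\|$, and its upper tail by the union of the upper tails of $\tfrac1n\log\|\mathbf B^n e_1\|$ and $\tfrac1n\log\|\mathbf B^n e_2\|$ (the factor $\sqrt2$ absorbed into a harmless adjustment of $\varepsilon_0$ and $C$); both are already estimated.

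The step I expect to be the main obstacle is precisely the uniformity asserted in the second paragraph: one must know that the cone‑contraction of Proposition~\ref{prop:020622.4}, and hence the spectral gap of $Q_+^{\mathbf B}$, the analytic dependence of $\mathbf L_{\mathbf B,z}$, and the bound on $\Lambda_{\mathbf B}''$, all survive with constants independent of $\mathbf B$ throughout a fixed $C^0$-neighbourhood of $\lcCocycle$. This cannot be obtained by naive operator perturbation, because $\mathbf B\mapsto Q_+^{\mathbf B}$ is \emph{not} continuous for the $C^\theta(\bP^1)$-operator norm; it is the quantitative, $C^0$-robust nature of the contraction estimates (and the openness of the relevant irreducibility‑type hypothesis for finitely supported measures) that makes it go through. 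Everything downstream — the two Taylor expansions of the pressure $\Lambda_{\mathbf B}$ and the Chebyshev inequality — is routine.
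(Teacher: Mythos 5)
Your argument is correct and is essentially the proof of the result the paper invokes: the paper itself does not prove Proposition~\ref{ULDE} but cites Theorem~4.1 of~\cite{DK-31CBM}, whose proof proceeds exactly as you describe, via the analytic family of Laplace--Markov operators $\mathbf L_{\mathbf B,z}$, a spectral gap that is uniform over a $C^0$-neighbourhood of $\lcCocycle$ (obtained from quantitative, $\mathbf B$-independent contraction estimates rather than naive operator perturbation), and a Chebyshev bound on the pressure $\Lambda_{\mathbf B}$. You have also correctly isolated the uniformity of the spectral data as the genuinely delicate step that must be imported from that reference.
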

\begin{proof}
See Theorem 4.1 and its proof in~\cite{DK-31CBM}.
\end{proof}

Let $0<\theta<1$ be the constant in Proposition~\ref{prop:020622.4}.

\begin{proposition}\label{prop:020622.5}
    There exist constants $C>0$ and $c>0$ such that for any interval $I\subset \bP^1$, every $\hat{v}\in \bP^1$ and $n\geq 1$ we have,
    \begin{align*}
        \eta^{\pm}(I/2) - C\frac{e^{-cn}}{|I|^\theta}
        \leq
        \prodMeasure\left(\left[
            \lcCocycle^{\pm n}(\cdot)\, \hat{v}\in I
        \right]\right)
        \leq \eta^{\pm}(2I) + C\frac{e^{-cn}}{|I|^\theta}.
    \end{align*}
\end{proposition}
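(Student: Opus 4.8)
\textbf{Proof plan for Proposition~\ref{prop:020622.5}.}

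The plan is to exploit the approximation-by-stationary-measure mechanism provided by Proposition~\ref{prop:020622.4}, applied to a suitable $\theta$-H\"older test function that approximates the indicator $\mathbf{1}_I$. First I would fix the interval $I\subset\bP^1$ and choose, for $\varepsilon>0$ to be optimized later, two continuous $\theta$-H\"older functions $\varphi^-_I\le \mathbf{1}_I\le \varphi^+_I$ with $\varphi^-_I\ge \mathbf{1}_{(1-\varepsilon)I}$ (roughly, supported slightly inside $I$) and $\varphi^+_I\le \mathbf{1}_{(1+\varepsilon)I}$ (supported slightly outside $I$), and with H\"older seminorm controlled by the ``buffer'' width: $v_\theta(\varphi^\pm_I)\lesssim (\varepsilon|I|)^{-\theta}$. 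This is a standard mollification of an indicator of an interval in a one-dimensional space, and the seminorm bound comes from the fact that $\varphi^\pm_I$ varies by $1$ over a region of diameter $\sim \varepsilon|I|$.

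Next I would observe that, writing $q^n_+(\hat v)(I):=\prodMeasure([\lcCocycle^{n}(\cdot)\,\hat v\in I])$, we have $q^n_+(\hat v)(I)=(Q_+^n\mathbf{1}_I)(\hat v)$, and therefore
\begin{align*}
    (Q_+^n\varphi^-_I)(\hat v)\ \le\ q^n_+(\hat v)(I)\ \le\ (Q_+^n\varphi^+_I)(\hat v).
\end{align*}
Now decompose $Q_+^n\varphi^\pm_I = \int\varphi^\pm_I\,d\eta^+ + \bigl(Q_+^n\varphi^\pm_I - \int\varphi^\pm_I\,d\eta^+\bigr)$. Since $1$ is a simple eigenvalue of $Q_+$ on $C^\theta(\bP^1)$ with $\eta^+$ the corresponding stationary measure, the standard quasi-compactness/contraction estimate (a consequence of Proposition~\ref{prop:020622.4}, or directly of the spectral gap) gives
\begin{align*}
    \left\|Q_+^n\varphi^\pm_I - \int\varphi^\pm_I\,d\eta^+\right\|_\infty\ \le\ C\,e^{-cn}\,v_\theta(\varphi^\pm_I)\ \le\ C'\,e^{-cn}\,(\varepsilon|I|)^{-\theta}.
\end{align*}
Finally, $\int\varphi^-_I\,d\eta^+\ge \eta^+((1-\varepsilon)I)$ and $\int\varphi^+_I\,d\eta^+\le \eta^+((1+\varepsilon)I)$. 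Choosing $\varepsilon=1/2$ (so that $(1-\varepsilon)I$ contains $I/2$ and $(1+\varepsilon)I\subset 2I$, up to harmlessly adjusting constants) absorbs the $\varepsilon^{-\theta}$ factor into the constant $C$ and yields exactly
\begin{align*}
    \eta^+(I/2) - C\frac{e^{-cn}}{|I|^\theta}\ \le\ q^n_+(\hat v)(I)\ \le\ \eta^+(2I) + C\frac{e^{-cn}}{|I|^\theta}.
\end{align*}
The case of $\lcCocycle^{-n}$ and $\eta^-$ is identical, replacing $\mu$ by the reverse measure $\mu^{-1}$ and $Q_+$ by $Q_-$ (which enjoys the same properties by Propositions~\ref{prop:020622.4} and the uniqueness of $\eta^-$).

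The only genuinely delicate point is making sure the contraction estimate I invoke is legitimately available: Proposition~\ref{prop:020622.4} as stated contracts the \emph{seminorm} $v_\theta$, which controls $\|Q_+^n\varphi - (\text{const})\|$ only after one identifies the constant as $\int\varphi\,d\eta^+$ and checks that $\|Q_+^n\varphi - \int\varphi\,d\eta^+\|_\infty$ (not merely its $v_\theta$-seminorm) decays exponentially. This follows from the spectral description of $Q_+|_{C^\theta(\bP^1)}$ recalled just before that proposition (simple leading eigenvalue $1$ with eigenprojection $\varphi\mapsto \int\varphi\,d\eta^+$, rest of spectrum inside a disk of radius $<1$), so the step is routine but must be stated carefully; I would either cite \cite{DK-31CBM} for the $\sup$-norm version or include the one-line argument that $Q_+^n(\varphi - \int\varphi\,d\eta^+)$ has zero $\eta^+$-integral and hence its $\sup$-norm is bounded by its $v_\theta$-seminorm times the diameter of $\bP^1$. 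Everything else is bookkeeping with the geometry of intervals in $\bP^1$.
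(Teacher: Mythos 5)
Your proposal is correct and follows essentially the same route as the paper: the paper also mollifies $\chi_I$ by piecewise linear $\theta$-H\"older functions $f^\pm_\varepsilon$ with $\varepsilon=|I|/4$ (so $v_\theta(f^\pm_\varepsilon)\asymp|I|^{-\theta}$), sandwiches $Q_+^n\chi_I$ between $Q_+^nf^\pm_\varepsilon$, and applies the seminorm contraction of Proposition~\ref{prop:020622.4} together with stationarity of $\eta^\pm$. Your closing remark about upgrading the $v_\theta$-seminorm decay to a sup-norm bound on $Q_+^n\varphi-\int\varphi\,d\eta^+$ (via the fact that $Q_+^n\varphi$ has the same $\eta^+$-mean as $\varphi$) is a legitimate and carefully noted point that the paper passes over silently.
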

\begin{proof}
    Set $\varepsilon = |I|/4$ and consider piece-wise linear functions $f^\pm_\varepsilon\in C^{\theta}(\bP^1)$ 
    such that  $v_{\theta}(f^\pm _{\varepsilon}) = \varepsilon^{-\theta}$, \,
    $0\leq f^-_{\varepsilon} \leq \chi_I \leq f^+_{\varepsilon}\leq 1$,
    \, $f^-_\varepsilon=1$ on $I/2$\,
    and \, $f^+_{\varepsilon}=0$ out of $2I$.
    By Proposition \ref{prop:020622.4} we have that
    \begin{align*}
        \left|
            Q^n_+(f^{\pm}_{\varepsilon}) - \int f^{\pm}_{\varepsilon}\, d\plusStat
        \right|
        &\leq Ce^{-cn}v_{\theta}(f_{\varepsilon})
        = Ce^{-cn}\varepsilon^{-\theta}.
    \end{align*}
    Therefore,
    \begin{align*}
        \prodMeasure\left(\left[
            \lcCocycle^n(\cdot)\, \hat{v}\in I
        \right]\right)
        &= Q_+^n(\chi_I) \geq Q_+^n(f^-_{\varepsilon})
        \geq \int f^-_{\varepsilon}\, d\plusStat - Ce^{-cn}\varepsilon^{-\theta}\\
        &\geq \plusStat(I/2) - 4^{\theta}Ce^{-cn}|I|^{-\theta},
    \end{align*}
    and
    \begin{align*}
        \prodMeasure\left(\left[
            \lcCocycle^n(\cdot)\, \hat{v}\in I
        \right]\right)
        &= Q_+^n(\chi_I) \leq Q_+^n(f^+_{\varepsilon})
        \leq \int f^+_{\varepsilon}\, d\plusStat + Ce^{-cn}\varepsilon^{-\theta}\\
        &\leq \plusStat(2I) + 4^{\theta}Ce^{-cn}|I|^{-\theta}.
    \end{align*}
    The argument for $\lcCocycle^{-n}$ is analogous.
\end{proof}

\subsection{Entropy and dimensions}\label{sec:260722.1}
\paragraph{Entropies:}
For a finitely supported measure $\mu\in\Prob(\SL_2(\R))$ the \emph{Shannon's entropy} defined by
\begin{align*}
    H(\mu):= -\sum_{g\in \supp(\mu)}\mu(\{g\})\log\mu(\{g\}) =-\sum_{i=1}^\kappa \mu_i\, \log \mu_i.
\end{align*}
A measurement of how far the semigroup generated by the $\supp(\mu)$ is from being free is given by
\begin{align*}
    h_{\text{WR}}(\mu) := \lim_{n\to\infty}\frac{1}{n}H(\mu^{*n})
    = \inf_{n\in \N}\frac{1}{n}H(\mu^{*n}),
\end{align*}
which is usually called the \emph{random walk entropy} of $\mu$. It holds that $h_{\text{RW}}(\mu)\leq H(\mu)$ and the equality is equivalent to the semigroup generated by $\supp(\mu)$ being free. This is the typical case.

The \emph{Furstenberg's entropy}, also known as \emph{Boundary} entropy, is defined by
\begin{align*}
    h_F(\eta) := \int\int\log\frac{dg_*\eta}{d\eta}(v)
\, d\eta(v)\, d\mu(g).
\end{align*}
We always have that,
\begin{align*}
    h_F(\eta) \leq h_{\text{RW}}(\mu) \leq H(\mu).
\end{align*}
See \cite[Theorem 2.31]{Fur02} for details.

\paragraph{Dimension:}
Let $\eta$ be a probability measure on $\R$ (or $\bP^1$). For any $t\in \R$, the limits
\begin{align*}
    \overline{\dim}(\eta, t)=\limsup_{\delta\to 0}\frac{
        \log\eta([t-\delta, t+\delta])
    }{\log\delta},
    \hspace{.2cm}
    \underline{\dim}(\eta,t)=\liminf_{\delta\to 0}\frac{
        \log\eta([t-\delta,t+\delta])
    }{\log\delta}
\end{align*}
are called, respectively, the \emph{upper local dimension} and \emph{lower local dimension} of $\eta$ at the point $t\in \R$. We say that $\eta$ is \emph{exact dimensional} if there exists a real number $\alpha\geq 0$ such that $\overline{\dim}(\eta, t) = \underline{\dim}(\eta, t) = \alpha$, for $\eta$-a.e. $t\in \R$. In this case, the number $\alpha$ is the \emph{dimension} of the probability measure $\eta$ and is denoted just by $\dim\eta$.

As mentioned in the introduction the stationary measures of an irreducible cocycle with positive Lyapunov exponent are always exact dimensional.

\paragraph{Entropy deviations:}
\newcommand{\bp}{{\bf{p}}}
Consider $\bp_n:\prodSpace\to \R$, $\bp_{n}(\omega):= \prod_{j=0}^{n-1} p_{\omega_j}$, the function $\varphi:\prodSpace\to \R$,$\varphi(\omega):= -\log  p_{\omega_0} $, 
and notice that 
\begin{align*}
    \Int \varphi\, d\, \prodMeasure = H(\mu)\quad \text{ and } \quad
(S_n\varphi)(\omega):=\sum_{j=0}^{n-1} \varphi(\sigma^j\omega) =-\log \bp_n(\omega).
\end{align*}

\begin{proposition}\label{prop:040722.2}
    Assuming $\Le(\mu)>0$ let  $h:=\max_{1\leq j\leq \kappa} - \log p_j>0$. For every  $n\in\N$ and $\beta>0$,
    \begin{align*}
        \prodMeasure\left(\left\{
            \omega\in \Omega\colon\, \left|
                \frac{1}{n}\log \bp_{n}(\omega)  + H(\mu)
            \right|>\beta
        \right\}\right) \leq 2 \, \exp\left( -n\, \frac{ 2\,\beta^2}{ h^2}\right) .
    \end{align*}
\end{proposition}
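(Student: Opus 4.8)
The plan is to recognize this as a Hoeffding-type concentration inequality for the ergodic sums $S_n\varphi$ of the bounded observable $\varphi(\omega)=-\log p_{\omega_0}$ under the Bernoulli measure $\prodMeasure$. First I would record the two basic facts already assembled before the statement: $\int\varphi\,d\prodMeasure = H(\mu)$ and $(S_n\varphi)(\omega) = -\log\bp_n(\omega)$. Thus the event in question is precisely $\{\,|\tfrac1n S_n\varphi - \mathbb{E}[\varphi]| > \beta\,\}$, and the claim reduces to a two-sided tail bound for the average of $n$ i.i.d. bounded random variables $X_j := \varphi(\sigma^j\omega) = -\log p_{\omega_j}$, which under $\prodMeasure$ are independent and identically distributed, each taking values in the interval $[0,h]$ where $h = \max_j(-\log p_j)$ (note $-\log p_j \ge 0$ since $p_j\le 1$, so the range has length at most $h$).

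The key step is Hoeffding's inequality: if $X_1,\dots,X_n$ are independent with $X_j \in [a_j,b_j]$, then for every $t>0$,
\begin{align*}
    \mathbb{P}\left( \left| \frac{1}{n}\sum_{j=1}^n X_j - \mathbb{E}\Big[\frac{1}{n}\sum_{j=1}^n X_j\Big] \right| > t \right) \le 2\exp\left( -\frac{2 n^2 t^2}{\sum_{j=1}^n (b_j-a_j)^2} \right).
\end{align*}
Applying this with $a_j = 0$, $b_j = h$ for all $j$, so that $\sum_j (b_j-a_j)^2 = n h^2$, and with $t = \beta$, gives exactly
\begin{align*}
    \prodMeasure\left( \left| \frac{1}{n} S_n\varphi - H(\mu) \right| > \beta \right) \le 2\exp\left( -\frac{2 n^2 \beta^2}{n h^2} \right) = 2\exp\left( -n\,\frac{2\beta^2}{h^2} \right),
\end{align*}
which is the desired bound after substituting $\tfrac1n S_n\varphi = -\tfrac1n\log\bp_n$.

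If one prefers a self-contained argument rather than citing Hoeffding, I would instead run the standard exponential moment (Chernoff) computation: for $\lambda>0$ bound $\mathbb{E}[e^{\lambda(X_j - \mathbb{E}X_j)}] \le e^{\lambda^2 h^2/8}$ using the lemma that a random variable supported in an interval of length $h$ has log-moment-generating function at most $\lambda^2 h^2/8$, multiply over the $n$ independent coordinates, apply Markov's inequality to get $\prodMeasure(\tfrac1n S_n\varphi - H(\mu) > \beta) \le \exp(n\lambda^2 h^2/8 - n\lambda\beta)$, optimize over $\lambda$ (taking $\lambda = 4\beta/h^2$) to obtain the one-sided bound $\exp(-2n\beta^2/h^2)$, and finally add the symmetric bound for the lower tail (applying the same estimate to $-\varphi$) to pick up the factor $2$. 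There is no real obstacle here; the only point requiring a little care is the bookkeeping of the constant in the exponent — making sure the interval-length-$h$ bound on the variance proxy feeds through to give exactly $2\beta^2/h^2$ rather than a weaker constant — and confirming that $\varphi \ge 0$ so that the range is genuinely contained in $[0,h]$. The assumption $\Le(\mu)>0$ plays no role in this proposition beyond matching the hypotheses under which $h$ and the ambient setup are introduced.
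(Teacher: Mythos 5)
Your proposal is correct and matches the paper's own argument: the paper identifies the event as a large deviation of $S_n\varphi$ from its mean and applies Hoeffding's inequality with interval length $h$, exactly as you do. The additional Chernoff computation you sketch is just an unpacking of the same inequality and is not needed.
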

\begin{proof}
The large deviation set in the statement is
$$ \Delta_{n}:=\left\{ \omega\in\prodSpace\colon  \left|(S_n\varphi)(\omega)-\mathbb{E}(S_n\varphi) \right| > n\, \beta \right\}$$
and by Hoeffding's inequality~\cite[Theorem 2]{Hof1963} 
$$\prodMeasure(\Delta_n)\leq 2\, \exp\left(-\frac{2\,n^2\,\beta^2}{n\, h^2}\right) = 2\, \exp\left(- n\, \frac{2 \,\beta^2}{h^2}\right)  .$$
\end{proof}

\section{Schr\"odinger cocycles}\label{sec4:010622.3}\label{sec:050722.1}

In this section we present some background in the theory of Schr\"odinger cocycles. The advantage in dealing with this family is the intrinsic relation with the spectral theory of (discrete) Schr\"odinger operators which allow us, among other things, to analyze the behaviour of the Lyapunov exponent in terms of properties of the spectrum of these operators.

\subsection{Schr\"odinger operators and cocycles}
Consider the base dynamics $(X, T, \xi)$, where $T:X\to X$ is a homeomorphism on the compact metric space $X$ and $\xi$ is a probability measure on $X$ such that the system $(T, \xi)$ is ergodic. Fix a continuous function $\phi:X\to \R$.

For each $x\in X$, the (discrete) \emph{Schr\"odinger operator} at $x$ is the self-adjoint bounded linear operator $H_x:l^2(\Z)\to l^2(\Z)$
\footnote{
    \quad $l^2(\Z)$ denotes the set of square-summable sequences $(u_n)_{n\in\Z}$.
}
defined, for $u = (u_n)_{n\in \Z}\in l^2(\Z)$ by
\begin{align*}
    (H_x\, u)_n := -u_{n+1} - u_{n-1} + \phi(T^nx)u_n
\end{align*}
or in short notation
\begin{align*}
    H_x\, u := -\Delta u + \phi_x\, u,
\end{align*}
where $\Delta$ is the Laplace operator and $\phi_x$ is the multiplication by $(\phi(T^nx))_{n\in\Z}$.

It is convenient to express the operator $H_x$ as a matrix in the canonical basis $(e_i)_{i\in\Z}$ of $l^2(\Z)$, where $(e_i)_n = \delta_{i,n}$.
\begin{align*}
    H_x = \left(
        \begin{array}{cccccccc}
            \ddots & \vdots & \vdots & \vdots &  & \vdots & \vdots &  \\
            \dots & \phi(T^{-1}x) & -1 & 0 & \dots & 0 & 0 & \dots \\
            \dots & -1 & \phi(x) & -1 & \dots & 0 & 0 & \dots \\
            \dots & 0 & -1 & \phi(Tx) & \dots & 0 & 0 & \dots \\
             & \vdots & \vdots & \vdots & \ddots & \vdots & \vdots & \\
            \dots & 0 & 0 & 0 & \dots & \phi(T^{n-2}x) & -1 & \dots \\
            \dots & 0 & 0 & 0 & \dots & -1 & \phi(T^{n-1}x) & \dots\\
             & \vdots & \vdots & \vdots &  & \vdots & \vdots & \ddots
        \end{array}
    \right)
\end{align*}
A matrix with this structure where all entries outside the three main diagonals vanish is usually called \emph{tridiagonal matrix}.

Assume that there exists a sequence $u = (u_n)_{n\in \Z}$, not necessarily in $l^2(\Z)$, which satisfies the eigenvalue equation for some $E\in \R$, i.e.,
\begin{align}\label{190122.0}
    H_x\, u = E\, u.
\end{align}
Using the definition of $H_x$, equation~\eqref{190122.0} gives us a second order recurrence equation which can be written in matrix form as
\begin{align*}
    \left(
        \begin{array}{cc}
            \phi(T^{n-1}x) - E & -1 \\
            1 & 0
        \end{array}
    \right)\,
    \begin{pmatrix}
        u_{n-1}  \\
        u_{n-2} 
    \end{pmatrix}=
    \begin{pmatrix}
        u_{n} \\
        u_{n-1}
    \end{pmatrix}.
\end{align*}
This implies that
\begin{align}\label{190122.1}
    \left(
        \begin{array}{cc}
            \phi(T^{n-1}x) - E & -1 \\
            1 & 0
        \end{array}
    \right)\cdot\ldots\cdot
    \left(
        \begin{array}{cc}
            \phi(x) & -1 \\
            1 & 0
        \end{array}
    \right)
    \begin{pmatrix}
        u_0 \\
        u_{-1}
    \end{pmatrix}=
    \begin{pmatrix}
        u_n \\
        u_{n-1}
    \end{pmatrix}.
\end{align}
Hence, if we define the family of cocycles $A_E:X\to \SL_2(\R)$ 
\begin{align*}
    A_E(x) := \left(
        \begin{array}{cc}
            \phi(x) - E & -1 \\
            1 & 0
        \end{array}
    \right),
\end{align*}
then equation \eqref{190122.1} can be rewritten as
\begin{align*}
    A^n_E(x)\,
    \begin{pmatrix}
        u_0\\
        u_{-1}
    \end{pmatrix} =
    \begin{pmatrix}
        u_n\\
        u_{n-1}
    \end{pmatrix}.
\end{align*}
In other words, any (formal) eigenvector $u = (u_n)$ of the Schr\"odinger operator $H_x$ associated with an  eigenvalue $E$ is completely determined by the orbit of the cocycle $A_E$ starting at $(u_0, u_{-1})\in \R^2$. This is one of the first indications of the close relationship between the action of the cocycle $A_E$ and the properties of the spectrum of $H_x$.

The cocycles $A_E: X\to \SL_2(\R)$ are called \emph{Schr\"odinger cocycles} with potential $\phi:X\to \R$, generated by the dynamical system $(X, T, \xi)$. 

\subsection{Integrated density of states and Thouless formula}
\label{truncated IDS subsection}
For each $n\in \N$ and for each $x\in X$, $H^n_x\in \M_n(\R)$ denotes the \emph{truncated Schr\"odinger operator} defined by
\begin{align*}
    H^n_x = \left(
        \begin{array}{cccccccc}
            \phi(x) & -1 & 0 & \dots & 0 & 0\\
            -1 & \phi(Tx) & -1 & \dots & 0 & 0\\
            0 & -1 & \phi(T^2x) & \dots & 0 & 0\\
            \vdots & \vdots & \vdots & \ddots & \vdots & \vdots\\
            0 & 0 & 0 & \dots & \phi(T^{n-2}x) & -1\\
            0 & 0 & 0 & \dots & -1 & \phi(T^{n-1}x)\\
        \end{array}
    \right).
\end{align*}

For any interval $I\subset \R$ denote by $|\spec(H^n_x)\cap I|$ the number of eigenvalues of $H^n_x$ in $I$ counted with multiplicity. With this notation, set for each $x\in X$
\begin{align*}
    \ids_{n,x}(t) := \frac{1}{n}\left
        |\spec(H^n_x)\cap (-\infty, t]
    \right|.
\end{align*}
So,  by definition $\ids_{n,x}(t)$ is a distribution function of a probability measure supported in the spectrum of $H^n_x$.

It is known~\cite[Subsections 3.2 and 3.3]{Da17} that for each $t\in \R$ the limit
\begin{align*}
    \ids(t) = \lim_{n\to\infty} \ids_{n,x}(t),
\end{align*}
exists and by ergodicity of the base dynamics $(X, T, \xi)$ is constant for $\xi$-a.e. $x\in X$. The function $\ids:\R \to [0,\infty)$ is called the \emph{integrated density of states}.

The following equation, known as the
\emph{Thouless formula}, relates the Lyapunov exponent  of a Schr\"odinger cocycle with the integrated density of states.
\begin{align}\label{190122.3}
    L(A_E) = \Int_{-\infty}^{\infty}\log|E - t|\ d\ids(t), \qquad \forall \, E\in\R.
\end{align}
See~\cite[Theorem 3.16]{Da17}. Integrating by parts the Riemann-Stieltjes integral on the right-hand side of equation \eqref{190122.3}, we see that this equation expresses $\Le(A_E)$ as the Hilbert transform of  $\ids(t)$. 
This fact implies, by the work of Goldstein and Schlag, see~\cite[Lemma 10.3]{GS01}, that  the Lyapunov exponent and the integrated density of states must share all `sufficiently nice' modulus of continuity.
These nice moduli of continuity include the H\"older and weak-H\"older regularities. In particular we have:

\begin{proposition}\label{prop:120522.1}
     $\ids(E)$ is not $\beta$-H\"older\, if and only if\, $E\mapsto L(A_E)$ is not $\beta$-H\"older.
\end{proposition}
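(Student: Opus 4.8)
The plan is to read the statement off from the Thouless formula \eqref{190122.3} together with the regularity results of Goldstein and Schlag. The starting point is the remark that, since $\phi$ is bounded, the spectrum of $H_x$ — which for $\xi$-a.e.\ $x$ is one and the same compact subset of $[-C_0,C_0]$, where $C_0:=2+\norm{\phi}_\infty$ — carries the measure $d\ids$. Thus $d\ids$ is a compactly supported probability measure and $\ids$ is its distribution function: bounded, continuous, nondecreasing, equal to $0$ to the left of the spectrum and to $1$ to its right. In these terms the Thouless formula \eqref{190122.3} says exactly that $E\mapsto L(A_E)$ is the logarithmic potential of $d\ids$, that is, $L(A_E)=\int\log|E-t|\,d\ids(t)$.

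Next I would invoke \cite[Lemma 10.3]{GS01}, the mechanism of ``averages of shifts of subharmonic functions'': for a compactly supported probability measure $\rho$ on $\R$ the logarithmic potential $E\mapsto\int\log|E-t|\,d\rho(t)$ and the distribution function $E\mapsto\rho((-\infty,E])$ share the same modulus of continuity within the scale of ``sufficiently nice'' moduli; in particular, for each $\beta\in(0,1]$, one of them is $\beta$-H\"older if and only if the other is (and likewise along the weak-H\"older scale). Heuristically, passing from $\ids$ to its potential is a smoothing — convolution against the locally integrable kernel $\log|\cdot|$ — which cannot spoil $\beta$-H\"older regularity; the reverse direction amounts to recovering $d\ids$ from the potential by differentiating, i.e.\ by a principal-value Hilbert transform, and \cite[Lemma 10.3]{GS01} is what guarantees that this operation, too, preserves $\beta$-H\"older regularity up to a constant (formally this uses the involution property of the Hilbert transform). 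Applying this with $\rho=d\ids$ yields: $\ids$ is $\beta$-H\"older if and only if $E\mapsto L(A_E)$ is $\beta$-H\"older, and the proposition is the contrapositive.

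Two points need attention when carrying this out, and I expect them to be where essentially all of the (modest) work lies. First, the estimates behind \cite[Lemma 10.3]{GS01} are local, controlling the modulus of continuity on a neighbourhood of $\supp(d\ids)=\spec(H_x)$; to get the \emph{global} equivalence stated in the proposition one notes that $\ids$ is locally constant on $\R\setminus\spec(H_x)$ and that $E\mapsto L(A_E)$ is real-analytic there — the cocycle $A_E$ being uniformly hyperbolic for $E$ off the spectrum (see \cite{Da17}), so Ruelle's theorem \cite[Theorem 3.1]{Rue79} applies — whence global $\beta$-H\"older regularity of either function is equivalent to its $\beta$-H\"older regularity on a neighbourhood of the spectrum. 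Second, one must verify that $d\ids$ (equivalently, its distribution function $\ids$, which is known to be at least continuous) satisfies the hypotheses of \cite[Lemma 10.3]{GS01}; once that bookkeeping is done, the proposition follows formally from the Thouless formula and that lemma.
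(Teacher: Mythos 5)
Your argument — Thouless formula plus the Goldstein–Schlag mechanism (\cite[Lemma 10.3]{GS01}) showing that a measure's distribution function and its logarithmic potential share all sufficiently nice moduli of continuity — is exactly the route the paper takes; the extra care you give to localizing near the spectrum and checking hypotheses is sound but not treated in more detail in the paper either. The proposal is correct and matches the paper's proof.
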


\subsection{Temple's Lemma}

The Thouless formula allows us to shift the analysis of the regularity from the Lyapunov exponent to  the integrated density of states, and more specifically to the counting of eigenvalues of the truncated Schr\"odinger operators $H^n_x$. An important tool  is  the next linear algebra fact, known as Temple's lemma, which allows us to count eigenvalues by counting instead orthonormal almost eigenvectors.
\begin{lemma}[Temple's lemma]\label{260122.4}
    Let $(V, \langle\cdot,\cdot\rangle)$ be a finite dimensional Hilbert space and let $H:V\to V$ be a self-adjoint linear operator on $V$. Given $\delta>0$ and $\lambda_0\in \R$, assume that there exists a orthonormal set $\{u_1,\ldots, u_k\}\subset V$ such that
    \begin{enumerate}
        \item $\langle Hu_i, u_j\rangle = \langle Hu_i, H u_j\rangle = 0$ \, if \, $i\neq j$,
        \item $\norm{Hu_i - \lambda_0 u_i}\leq \delta$ for every $i$.
    \end{enumerate}
    Then   \, $|\spec(H)\cap (\lambda_0 - \delta, \lambda_0 + \delta)|\geq k$.
\end{lemma}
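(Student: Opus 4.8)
The plan is to use the minimax (Courant--Fischer) characterization of eigenvalues of a self-adjoint operator, applied to the shifted operator $(H-\lambda_0\id)^2$, which is positive semi-definite. The key observation is that conditions (1) and (2) together say that the vectors $u_i$ are ``almost eigenvectors'' of $(H-\lambda_0\id)^2$ with small eigenvalue: indeed, for $i\neq j$ we have $\langle (H-\lambda_0\id)u_i,\, (H-\lambda_0\id)u_j\rangle = \langle Hu_i,Hu_j\rangle - \lambda_0\langle Hu_i,u_j\rangle - \lambda_0\langle u_i,Hu_j\rangle + \lambda_0^2\langle u_i,u_j\rangle = 0$ by (1) and orthonormality, while for $i=j$, condition (2) gives $\norm{(H-\lambda_0\id)u_i}^2 \le \delta^2$. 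So on the $k$-dimensional subspace $W:=\mathrm{span}\{u_1,\dots,u_k\}$, writing a unit vector $w=\sum_i c_i u_i$ with $\sum_i |c_i|^2 = 1$, orthogonality of the vectors $(H-\lambda_0\id)u_i$ yields $\norm{(H-\lambda_0\id)w}^2 = \sum_i |c_i|^2\,\norm{(H-\lambda_0\id)u_i}^2 \le \delta^2$, i.e. $\langle (H-\lambda_0\id)^2 w, w\rangle \le \delta^2$ for every unit $w\in W$.

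Next I would invoke the minimax principle for the positive semi-definite self-adjoint operator $B:=(H-\lambda_0\id)^2$: if $\beta_1\le\beta_2\le\cdots\le\beta_{\dim V}$ are its eigenvalues in increasing order, then $\beta_k = \min_{\substack{U\subseteq V\\ \dim U = k}}\ \max_{\substack{w\in U\\ \norm{w}=1}} \langle Bw,w\rangle \le \max_{\substack{w\in W\\ \norm{w}=1}} \langle Bw,w\rangle \le \delta^2$. Hence $B$ has at least $k$ eigenvalues (with multiplicity) in $[0,\delta^2]$. Since the eigenvalues of $B=(H-\lambda_0\id)^2$ are exactly $(\lambda-\lambda_0)^2$ as $\lambda$ runs over $\spec(H)$ with multiplicity, having $k$ of them $\le \delta^2$ means $H$ has at least $k$ eigenvalues $\lambda$ with $(\lambda-\lambda_0)^2\le\delta^2$, i.e. $|\lambda-\lambda_0|\le\delta$. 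To land in the open interval $(\lambda_0-\delta,\lambda_0+\delta)$ as stated, one notes that strictness is essentially a matter of bookkeeping: either the inequality $\norm{(H-\lambda_0\id)u_i}\le\delta$ can be taken strict without loss (replacing $\delta$ by a slightly larger value still gives the weaker conclusion with the original $\delta$ in the open interval), or one argues directly that if all $k$ smallest eigenvalues of $B$ equalled exactly $\delta^2$ the $u_i$ would be genuine eigenvectors and a short separate argument applies; I would simply remark that the closed/open distinction is immaterial for the application and can be absorbed.

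The main (and only mild) obstacle is the orthogonality bookkeeping in the cross terms: one must be careful that condition (1), which is stated as $\langle Hu_i,u_j\rangle = \langle Hu_i,Hu_j\rangle = 0$ for $i\neq j$, is precisely what is needed to make the vectors $\{(H-\lambda_0\id)u_i\}$ pairwise orthogonal, so that the quadratic form $\langle Bw,w\rangle$ diagonalizes over $W$ with no error terms. Everything else is a direct application of the minimax principle. I do not expect any analytic difficulty; the proof is a few lines once the reduction to $(H-\lambda_0\id)^2$ and the subspace $W$ is set up.
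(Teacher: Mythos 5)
Your proof is correct and is essentially the standard argument behind the cited reference: the paper gives no proof beyond pointing to~\cite[Lemma A.3.2]{SiTa85}, and the reduction to the positive semi-definite operator $(H-\lambda_0\,\id)^2$ restricted to $W=\mathrm{span}\{u_1,\dots,u_k\}$ (where condition (1) makes the quadratic form diagonal) followed by the minimax principle is exactly the intended route. Your side remark on the open versus closed interval is also the right call --- as literally stated (non-strict hypothesis, open-interval conclusion) the lemma fails already for $H=(\lambda_0+\delta)\,\id$ on a one-dimensional space with $u_1$ a unit vector, but in the paper's only application (Lemma~\ref{260122.3}) the almost-eigenvector bound $\norm{H^{N}_\zeta v - E v}<\delta$ is strict, so the distinction is immaterial there.
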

\begin{proof}
 See~\cite[Lemma A.3.2]{SiTa85}.
\end{proof}
We will say that $u\in V\backslash \{0\}$ is a $\delta$-\emph{almost eigenvector} associated with an \emph{almost eigenvalue} $\lambda_0$ if condition 2 above is satisfied.
\section{Embedding cocycles into Schr\"odinger families}\label{250122.1}

Let $\mu$, as in the previous section, be a probability measure on $\SL_2(\R)$ supported in $\{A_1,\ldots, A_{\kappa}\}$.

We use the notation $\schrMatrix(t)\in \SL_2(\R)$ to denote the \emph{Schr\"odinger matrix}
\begin{align*}
    \schrMatrix(t) = \left(
        \begin{array}{cc}
            t & -1 \\
            1 & 0
        \end{array}
    \right)
\end{align*}

The following lemma is the ground basis of the entire section. The fact that we can decompose any given $\SL_2(\R)$ matrix as a product of four Schr\"odinger matrices provides a way to embed  our random cocycle in a Schr\"odinger cocycle over a  Markov shift. 
\begin{lemma}\label{210122.0}
    For every $B\in \SL_2(\R)$, there exists real numbers $t_0, t_1, t_2$ and $t_3$ such that $B = \schrMatrix(t_3)\,\schrMatrix(t_2)\,\schrMatrix(t_1)\,\schrMatrix(t_0)$.
\end{lemma}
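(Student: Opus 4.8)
The plan is to prove the claim by an explicit elementary computation, reducing a general $\SL_2(\R)$ matrix to a product of Schrödinger matrices by manipulating the entries. Since $\schrMatrix(t) = \begin{pmatrix} t & -1 \\ 1 & 0\end{pmatrix}$, a quick computation gives $\schrMatrix(t)\schrMatrix(0) = \begin{pmatrix} t & -1 \\ 1 & 0\end{pmatrix}\begin{pmatrix} 0 & -1 \\ 1 & 0\end{pmatrix} = \begin{pmatrix} -1 & -t \\ 0 & -1 \end{pmatrix}$, so products of Schrödinger matrices with consecutive zero entries generate (up to sign) unipotent upper-triangular matrices. Similarly $\schrMatrix(0)\schrMatrix(t) = \begin{pmatrix} 0 & -1 \\ 1 & 0\end{pmatrix}\begin{pmatrix} t & -1 \\ 1 & 0\end{pmatrix} = \begin{pmatrix} -1 & 0 \\ t & -1 \end{pmatrix}$ gives unipotent lower-triangular matrices. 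Also $\schrMatrix(0)^2 = -\id$. The strategy is therefore to recall that $\SL_2(\R)$ is generated by elementary (shear) matrices, write $B$ as a bounded-length word in such shears, and then realize each shear as a short product of Schrödinger matrices, finally checking that the total length can always be compressed to exactly four by absorbing extra factors of $\schrMatrix(0)$ and $-\id$.

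Concretely, I would proceed as follows. First, note the identity $\schrMatrix(a)\schrMatrix(0)\schrMatrix(b) = \begin{pmatrix} -1 & -a \\ 0 & -1\end{pmatrix}\schrMatrix(b) = \begin{pmatrix} -b-1 & 1 \\ -b & 1\end{pmatrix}$, and more usefully compute the general product $\schrMatrix(t_3)\schrMatrix(t_2)\schrMatrix(t_1)\schrMatrix(t_0)$ symbolically as a function of $(t_0,t_1,t_2,t_3)$; its entries are polynomials in the $t_i$. Then, given the target $B = \begin{pmatrix} a & b \\ c & d\end{pmatrix}$ with $ad-bc=1$, I would solve the resulting polynomial system for the $t_i$. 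A cleaner route: write $B = \schrMatrix(t_3)\cdot\big(\schrMatrix(t_3)^{-1}B\big)$ and observe $\schrMatrix(t)^{-1} = \begin{pmatrix} 0 & 1 \\ -1 & t\end{pmatrix}$, so one can choose $t_3$ to kill a chosen entry, reducing to a matrix with a zero entry; iterate, using that a matrix in $\SL_2(\R)$ with a zero off-diagonal entry is (up to a Schrödinger factor) triangular, and triangular unipotent matrices are single Schrödinger-type products as computed above. Counting: each entry-killing step costs one $\schrMatrix$ factor, a triangular matrix costs at most two, and signs/$\schrMatrix(0)^2 = -\id$ let us pad or trim to exactly four.

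The main obstacle is bookkeeping: ensuring the count is exactly four for \emph{every} $B$, including degenerate cases (e.g. $B$ already triangular, or $B$ with $c=0$, or $B = \pm\id$), rather than "at most four" or "at most five". I expect to handle this by a short case split — one generic case solved by the explicit parametrization of $\schrMatrix(t_3)\schrMatrix(t_2)\schrMatrix(t_1)\schrMatrix(t_0)$, and a couple of special cases where one inserts compensating factors $\schrMatrix(0)$ and uses $\schrMatrix(0)^2 = -\id$ together with $-\id = \schrMatrix(0)\schrMatrix(0)$ to keep the word length pinned at four. The only real computation is inverting the polynomial map $(t_0,t_1,t_2,t_3)\mapsto \schrMatrix(t_3)\schrMatrix(t_2)\schrMatrix(t_1)\schrMatrix(t_0)$ on its image, which is a routine triangular elimination since each new $t_i$ can be chosen to zero out one matrix entry at a time; surjectivity onto $\SL_2(\R)$ then follows from a dimension/degree-of-freedom count (four free real parameters mapping onto the three-dimensional group, with the fibers accounting for the one-parameter redundancy).
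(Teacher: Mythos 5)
Your overall strategy---explicitly analyzing the image of the product map $(t_0,t_1,t_2,t_3)\mapsto S(t_3)S(t_2)S(t_1)S(t_0)$, or equivalently peeling off factors by entry-killing---is viable and is essentially what the paper does (the paper computes the image of the three-fold product, finds it misses exactly the family $\mathcal{M}=\left\{\left(\begin{smallmatrix} a & \lambda\\ -\lambda^{-1} & 0\end{smallmatrix}\right):\lambda\neq 0,1\right\}$, and then handles the leftover family by a second explicit formula). However, as written your proposal has a genuine gap where it matters most: you assert that ``surjectivity onto $\SL_2(\R)$ then follows from a dimension/degree-of-freedom count.'' That is not a valid argument for a polynomial map, and the lemma itself is the counterexample to the heuristic: the three-parameter map $(t_0,t_1,t_2)\mapsto S(t_2)S(t_1)S(t_0)$ maps onto the three-dimensional group $\SL_2(\R)$ \emph{minus} the two-parameter family $\mathcal{M}$ above (since $S(t_1)S(t_0)=\left(\begin{smallmatrix}t_1t_0-1 & -t_1\\ t_0 & -1\end{smallmatrix}\right)$, the $(2,2)$-entry of a triple product is $-t_1$, which forces $t_1=0$ and hence $(1,2)$-entry equal to $1$ whenever the target has vanishing $(2,2)$-entry). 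Identifying and then explicitly covering this exceptional set is the entire content of the proof; it cannot be waved away by parameter counting, and your deferred ``short case split'' is precisely where the work lives.

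Two further concrete problems: first, your count ``a triangular matrix costs at most two'' is wrong --- since $S(t_1)S(t_0)$ always has $(2,2)$-entry $-1$, a matrix $\mathrm{diag}(\alpha,\alpha^{-1})$ with $\alpha\neq -1$ is not a product of two Schr\"odinger matrices; a non-unipotent triangular matrix needs exactly three (one checks $S(t_2)S(t_1)S(t_0)=\left(\begin{smallmatrix}-t_0 & 1-t_2t_1\\ 0 & -t_1\end{smallmatrix}\right)$ when $t_1t_0=1$). Happily this makes the generic count come out to exactly four (one killing step plus three for the resulting triangular matrix), but the arithmetic you actually wrote does not close. Second, the one fully displayed computation $S(a)S(0)S(b)=\left(\begin{smallmatrix}-b-1 & 1\\ -b & 1\end{smallmatrix}\right)$ is incorrect (that matrix has determinant $-1$); the correct product is $\left(\begin{smallmatrix}-a-b & 1\\ -1 & 0\end{smallmatrix}\right)$. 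To repair the proof along your lines: if $c\neq 0$ choose $t_3=a/c$ so that $S(t_3)^{-1}B$ is upper triangular with diagonal $(c,c^{-1})$, hence a product of exactly three Schr\"odinger matrices; the case $c=0$ (where $B$ is already triangular and must be padded to length four) then requires the separate explicit treatment that the paper supplies for its exceptional family.
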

\begin{proof}
Consider first the map $\R^3\ni  (t_1,t_2,t_3)\mapsto S(t_3)\, S(t_2)\, S(t_1)\in \SL_2(\R)$. A direct calculation shows that the range of this map is the set $\SL_2(\R)\setminus \mathcal{M}$ where
$$ \mathcal{M}:=\left\{
    \begin{pmatrix} 
    a & \lambda \\ -\lambda^{-1} & 0 
    \end{pmatrix} \, \colon \, \lambda \neq 0,1, \quad \text{and} \quad a\in \R
\right\} . $$
This implies that the range of the map 
 $\R^3\ni  (t_1,t_2,t_3)\mapsto S(0)\,S(t_3)\, S(t_2)\, S(t_1)\in \SL_2(\R)$ is the set
 $\SL_2(\R)\setminus S(0)\,\mathcal{M}$ where 
 $$ S(0)\, \mathcal{M} = \left\{ \begin{pmatrix} 
 \lambda^{-1} & 0 \\ a & \lambda  \end{pmatrix} \, \colon \, \lambda \neq 0, 1 \quad \text{and} \quad a\in \R  \right\} . $$
Another simple calculation shows that if
\begin{align*}
    (t_1,t_2,t_3,t_4)=(1,\, 1 - \lambda^{-1},\, -\lambda,\, \lambda^{-2} - \lambda^{-1} - a\lambda^{-1})
\end{align*}
then 
$$ \schrMatrix(t_3)\,\schrMatrix(t_3)\,\schrMatrix(t_2)\,\schrMatrix(t_1) = \begin{pmatrix}
\lambda^{-1} & 0 \\ a & \lambda
\end{pmatrix} .$$
Hence  every matrix in $\SL_2(\R)$ is a product of four Schr\"odinger matrices.
\end{proof}

\subsection{Construction of the embedding}
For each $i=1,\ldots, \kappa$, by Lemma \ref{210122.0}, there exists $t^i = (t^i_0,\ldots, t^i_3)\in \R^4$ such that
\begin{align}
\label{S-decomposition}
    A_i = \schrMatrix(t^i_3)\, \schrMatrix(t^i_2)\, \schrMatrix(t^i_1)\, \schrMatrix(t^i_0).
\end{align}

Consider the set $\towerSymbols = \{1,\ldots,\kappa\}\times\{0, 1, 2, 3\}$. We define the following transition kernel $\towerKernel:\towerSymbols\to \spaceProb(\towerSymbols)$, for each element $(i,j)\in \towerSymbols$, 
\begin{align*}
    \towerKernel_{(i,j)} := \left\{
        \begin{array}{cc}
            \delta_{(i,j+1)} & \text{if } j\in \{0,1,2\}\\
            \sum_{k=1}^{\kappa}\mu_k\delta_{(k,0)} & \text{ if } j=3,
        \end{array}
    \right.
\end{align*}
where $\mu_k = \mu(A_k)$, for any $k\in \{1,\ldots, \kappa\}$ and $\delta_{(k,l)}$ denotes the Dirac measure supported in $(k,l)$. Note that the measure
\begin{align*}
    \towerStat = \frac{1}{4}\sum_{j=0}^3\sum_{i=1}^{\kappa}\mu_i\delta_{(i,j)}.
\end{align*}
defines a $\towerKernel$-stationary measure on $\towerSymbols$. Let $\tilde{\towerStat}$ be the Kolmogorov extension of $(K, \towerStat)$ on the  product space $\prodTower = \towerSymbols^{\Z}$. This defines the base dynamics $(\prodTower, \shift, \tilde{\towerStat})$, where $\shift: \prodTower\to \prodTower$ is the shift map and $\supp\tilde{\towerStat}$ is the set of $K$-admissible sequences. 

\subsection{Conjugating the embedded and original cocycle}\label{270122.1}

Consider the real function $\phi:\prodTower\to \R$ defined by
\begin{align*}
    \phi(\zeta) := t^{i_0}_{j_0},
    \quad \text{ where } \; \zeta = ((i_n,j_n))_n,
\end{align*}
where the numbers $t^{i_0}_{j_0}$ were defined in~\eqref{S-decomposition}. We can express the family of Schr\"odinger cocycles, $\lcCocycle_E: \prodTower\to \SL_2(\R)$, with potential $\phi$, generated by the Markov shift $(\prodTower,\, \shift,\, \tilde{\towerStat})$, by
\begin{align*}
    \lcCocycle_E(\zeta) = \schrMatrix(\phi(\zeta) - E),
\end{align*}
for every $E\in \R$ and $\zeta\in \prodTower$. It is important to notice that iterating the cocycle $\lcCocycle_0$ four times we recover the locally constant cocycle $\lcCocycle: \prodSpace\to \SL_2(\R)$. More precisely, for each element $\zeta = ((i_n,j_n))_n\in \prodTower$, with $j_0=0$, consider  the sequence $\omega = (i_{4n})_n\in \prodSpace$. By~\eqref{S-decomposition}  we have that
\begin{align*}
    \lcCocycle_0^4(\zeta)
    &= \lcCocycle_0(\shift^3(\zeta))\, \lcCocycle_0(\shift^2(\zeta))\, \lcCocycle_0(\shift(\zeta))\, \lcCocycle_0(\zeta)\\
    &= S(t^{i_3}_3)\, S(t^{i_3}_2)\, S(t^{i_3}_1)\, S(t^{i_0}_0)\\
    &= A_{i_0} = \lcCocycle(\omega).
\end{align*}
In this case, we say that $\lcCocycle_0:\prodTower\to \SL_2(\R)$ is the \emph{embedding} of the cocycle $\lcCocycle:\Omega\to \SL_2(\R)$ into the Schr\"odinger family $\{\lcCocycle_E:\prodTower\to \SL_2(\R)\}_{E\in \R}$ over $(\prodTower, \shift, \tilde{\towerStat})$.

For each $j\in \{0,1,2,3\}$, set $\prodTower_j := \{(i_n,j_n)_n\in \prodTower;\ j_0 = j\}$. Note that
\begin{align*}
    \prodTower = \bigcup_{j=0}^3\prodTower_j,
\end{align*}
is a partition of the set $\prodTower$ and for each $j\in\{0,1,2,3\}$, $\shift(\prodTower_j) = \prodTower_{j+1\!\!\!\mod\!4}$. In particular, for every $j=0,1,2,3$, $\Sigma_j$ is $\shift^4$-invariant. Denote by $\pi:\prodTower\to \Omega$ the natural projection mapping  $\prodTower\ni (i_n,j_n)_n\mapsto (i_{4n})_n\in \prodSpace$.

Using the notation above we see that $(\prodSpace, \shift, \prodMeasure)$ is a \emph{factor} of  $(\prodTower, \shift, \tilde{\towerStat})$ in the following sense.
\begin{lemma}\label{240122.1}
    The map $\pi:\prodTower\to \prodSpace$ is surjective, \, $\pi_\ast\tilde{\towerStat} = \prodMeasure$ \, and \, $\shift\circ\pi = \pi\circ\shift$. 
    
    \noindent
    Moreover, for each $j\in \{0,1,2,3\}$, $\pi|_{\prodTower_j}$ conjugates $(\prodTower_j, \shift^4, 4\tilde{\towerStat})$ $(\prodSpace, \shift, \prodMeasure)$, where $4\tilde{\towerStat}$ is the normalization of $\tilde{\towerStat}$ on $\prodTower_j$.
\end{lemma}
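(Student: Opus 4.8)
The plan is to verify each of the four assertions essentially by unwinding the definitions of $\pi$, $\shift$, $\towerKernel$, $\towerStat$ and the Kolmogorov extension $\tilde{\towerStat}$. First I would check surjectivity of $\pi$: given any $\omega = (\omega_n)_n \in \prodSpace = \{1,\dots,\kappa\}^{\Z}$, the sequence $\zeta$ whose block at positions $4n, 4n+1, 4n+2, 4n+3$ equals $(\omega_n,0),(\omega_n,1),(\omega_n,2),(\omega_n,3)$ is $\towerKernel$-admissible (each transition is forced for $j\in\{0,1,2\}$ and the transition from $(\omega_n,3)$ to $(\omega_{n+1},0)$ has positive $\towerKernel$-weight $\mu_{\omega_{n+1}}>0$), hence lies in $\supp\tilde{\towerStat}\subset\prodTower$, and $\pi(\zeta)=\omega$. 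The intertwining identity $\shift\circ\pi = \pi\circ\shift$ is false as stated for the single shift on $\prodTower$ (shifting $\zeta$ by one changes the value of $j_0$), so I would read the displayed claim as $\shift^4\circ\pi = \pi\circ\shift^4$, or more precisely as the statement that on each $\prodTower_j$ the projection conjugates $\shift^4$ to $\shift$; I would make this reading explicit and prove $\pi(\shift^4\zeta) = \shift(\pi\zeta)$ directly from the coordinate formula $\pi((i_n,j_n))_n = (i_{4n})_n$.

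Next, for the measure identity $\pi_\ast\tilde{\towerStat} = \prodMeasure$: I would show that the pushforward of $\tilde{\towerStat}$ under $\pi$ agrees with the Bernoulli measure $\prodMeasure$ on cylinders. The key computation is that, under the Markov measure $\tilde{\towerStat}$ built from $(\towerKernel,\towerStat)$, conditioned on being in $\prodTower_0$ the coordinate blocks $(i_{4n},j_{4n},\dots,i_{4n+3},j_{4n+3})$ are independent across $n$ with the law of the $n$-th block determined solely by $i_{4n}$, distributed as $\mu_{i_{4n}}$; this is because each of the three intra-block transitions is deterministic while the inter-block transition from $(i,3)$ distributes the next symbol $k$ with probability $\mu_k$, independently of the past. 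Thus the induced process $n\mapsto i_{4n}$ is i.i.d. with law $(\mu_1,\dots,\mu_\kappa)$, i.e. Bernoulli $\prodMeasure$. The only mild subtlety is bookkeeping the marginal: since $\towerStat$ gives mass $1/4$ to each level $j$, one gets $\tilde{\towerStat}(\prodTower_j)=1/4$ for each $j$, which is exactly why $4\tilde{\towerStat}|_{\prodTower_j}$ is a probability measure and why the pushforward of the full $\tilde{\towerStat}$ (not just its restriction) still comes out normalized — the four levels contribute equal copies of $\prodMeasure$.

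For the last assertion, fix $j\in\{0,1,2,3\}$. I would first note $\shift^4(\prodTower_j) = \prodTower_j$ (since $\shift$ cyclically permutes the $\prodTower_j$ with period $4$), so $(\prodTower_j,\shift^4,4\tilde{\towerStat}|_{\prodTower_j})$ is a well-defined measure-preserving system — preservation follows from $\shift$-invariance of $\tilde{\towerStat}$ together with $\shift^4$-invariance of $\prodTower_j$. Then $\pi|_{\prodTower_j}$ is a bijection onto $\prodSpace$: injectivity holds because on $\prodTower_j$ the non-recorded coordinates $j_n$ are determined (they cycle through $0,1,2,3$ with a fixed phase set by $j$) and the recorded coordinates $i_{4n+j}$ are exactly the symbols of $\pi(\zeta)$ while the remaining $i$'s within a block are forced to equal the block's anchor symbol; surjectivity is the argument from the first paragraph adapted to phase $j$. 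Combined with $\pi\circ\shift^4 = \shift\circ\pi$ and $\pi_\ast(4\tilde{\towerStat}|_{\prodTower_j}) = \prodMeasure$ (the conditional-law computation above, now localized to level $j$), this gives the claimed conjugacy.

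The main obstacle, and the only place requiring genuine care rather than notation-chasing, is the measure-theoretic step $\pi_\ast\tilde{\towerStat}=\prodMeasure$: one must argue cleanly from the defining properties (a)--(b) of the Kolmogorov extension that the deterministic "filler" transitions inside each length-$4$ block contribute nothing to the randomness and that the block structure makes the coarse-grained process genuinely i.i.d. rather than merely Markov. I would do this by computing $\tilde{\towerStat}$ of an arbitrary cylinder in $\prodTower$ that is "aligned" with the block structure, factoring the product of kernel weights into one $\mu_k$ per block times deterministic factors equal to $1$, and then matching with the Bernoulli formula $\prodMeasure([k;\,a]) = \prod \mu_{a_i}$; a routine $\pi$-$\lambda$ / monotone class argument extends from aligned cylinders to all Borel sets.
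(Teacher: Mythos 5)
Your verification is correct, and the paper in fact offers no proof of this lemma at all -- it is stated as a routine consequence of the construction in Section 5, so there is nothing to compare against except the intended (implicit) argument, which is exactly the one you give: blockwise determinism of the intra-block transitions, the single $\mu$-distributed choice at each level-$3$-to-level-$0$ transition, and the equidistribution $\tilde{\towerStat}(\prodTower_j)=1/4$ coming from the stationary measure $\towerStat$. Your observation that the displayed identity $\shift\circ\pi=\pi\circ\shift$ cannot hold literally is also right: with $\pi((i_n,j_n)_n)=(i_{4n})_n$ one computes $\pi(\shift\zeta)_n=i_{4n+1}$ while $(\shift\pi\zeta)_n=i_{4n+4}$, and for an admissible $\zeta\in\prodTower_0$ these are the symbols of consecutive, generically distinct blocks; the correct intertwining is $\shift\circ\pi=\pi\circ\shift^4$, which is what the ``Moreover'' clause (and Lemma \ref{conjugation1}, the only place the statement is used) actually requires. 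Two cosmetic points you may want to tighten: injectivity of $\pi|_{\prodTower_j}$ only holds on $\supp\tilde{\towerStat}\cap\prodTower_j$, since $\prodTower_j$ as defined contains inadmissible sequences, so the conjugacy is a mod-$0$ isomorphism; and the coordinates recorded by $\pi$ on $\prodTower_j$ are $i_{4n}$ (one per block, since the block anchors sit at positions $\equiv -j \pmod 4$), not $i_{4n+j}$. Neither affects the argument.
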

For the  linear cocycle we have:
\begin{lemma}\label{conjugation1}
For every $j=0,1,2,3$ the linear cocycle
\begin{align*}
    \prodTower_j\times \R^2 \ni (\zeta, v) \mapsto (\shift^{4}(\zeta), \lcCocycle^4_0(\zeta)\, v)\in \prodTower_j\times\R^2
\end{align*}
is conjugated to the linear cocycle
\begin{align*}
    \prodSpace\times\R^2\ni (\omega, v) \mapsto (\shift(\omega), \lcCocycle(\omega)\, v)\in \prodSpace\times\R^2.
\end{align*}
In particular, taking $j=0$ we have that $F^4_{\lcCocycle_0}:\prodTower_0\times\R^2\to \prodTower_0\times\R^2$ is conjugated to $F_{\lcCocycle}:\prodSpace\times\R^2\to\prodSpace\times\R^2$. The same considerations hold for the projectized cocycles.
\end{lemma}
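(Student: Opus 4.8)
The plan is to unwind the definitions and check that the natural candidate conjugacy is the one supplied by Lemma~\ref{240122.1}. Fix $j\in\{0,1,2,3\}$ and recall that $\pi|_{\prodTower_j}\colon \prodTower_j\to\prodSpace$ is a bijection intertwining $\shift^4$ and $\shift$, with $(\pi|_{\prodTower_j})_\ast(4\tilde{\towerStat})=\prodMeasure$. The conjugacy between the two linear cocycles will be the map $\Phi\colon \prodTower_j\times\R^2\to\prodSpace\times\R^2$, $\Phi(\zeta,v):=(\pi(\zeta),\,M_j(\zeta)\,v)$, where $M_j(\zeta)\in\SL_2(\R)$ is a suitable fiber change of coordinates; for $j=0$ one takes $M_0\equiv\id$ and then the identity $\lcCocycle_0^4(\zeta)=\lcCocycle(\pi(\zeta))$, already verified in the paragraph preceding Lemma~\ref{conjugation1} via the $S$-decomposition~\eqref{S-decomposition}, says exactly that $\Phi$ conjugates $F^4_{\lcCocycle_0}|_{\prodTower_0}$ to $F_{\lcCocycle}$. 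So the $j=0$ assertion is essentially immediate.

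For general $j$ one must account for the fact that a point of $\prodTower_j$ has its "clock" coordinate starting at $j$ rather than $0$, so $\lcCocycle_0^4(\zeta)$ is a cyclically shifted product of the four Schr\"odinger factors, i.e. $\lcCocycle_0^4(\zeta)=S(t^{i}_{j-1})\cdots S(t^i_0)\,S(t^{i'}_3)\cdots S(t^{i'}_j)$ with appropriate symbol indices $i,i'$ read off $\zeta$ (indices mod $4$). This is a conjugate of $A_{i'}=S(t^{i'}_3)S(t^{i'}_2)S(t^{i'}_1)S(t^{i'}_0)$ by the partial product $P_j:=S(t^{i'}_{j-1})\cdots S(t^{i'}_0)$, which depends only on the first symbol seen after the last "reset". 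Concretely I would define $M_j(\zeta)$ to be that partial-product matrix, check $M_j$ is constant on the relevant cylinders (hence continuous) and invertible, and then verify the cocycle identity $M_j(\shift^4\zeta)\,\lcCocycle_0^4(\zeta)=\lcCocycle(\pi(\zeta))\,M_j(\zeta)$ by direct substitution of the $S$-decomposition — this is the routine but slightly bookkeeping-heavy computation. Together with Lemma~\ref{240122.1} for the base, this gives that $\Phi$ is a homeomorphism (it is a bijection with continuous inverse $\Phi^{-1}(\omega,w)=((\pi|_{\prodTower_j})^{-1}\omega,\,M_j((\pi|_{\prodTower_j})^{-1}\omega)^{-1}w)$) conjugating the two linear cocycles, and the measure statement transfers automatically.

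The "in particular" clause for $j=0$ is the special case $M_0\equiv\id$. For the projectivized cocycles one simply applies the functor $v\mapsto\hat v$: since $\Phi$ is fiberwise linear and invertible, it descends to $\hat\Phi(\zeta,\hat v)=(\pi(\zeta),\widehat{M_j(\zeta)}\,\hat v)$, which is a homeomorphism of $\prodTower_j\times\bP^1$ onto $\prodSpace\times\bP^1$ conjugating $\hat F^4_{\lcCocycle_0}|_{\prodTower_j}$ to $\hat F_{\lcCocycle}$, by the same identity passed to $\bP^1$. The only point requiring a little care is continuity/measurability of $M_j$ and the dependence of its defining partial product on finitely many coordinates of $\zeta$; since the $S$-decomposition assigns to each symbol $i$ a fixed $4$-tuple $t^i$, $M_j(\zeta)$ depends only on the symbol $i'$ of $\zeta$ at the coordinate of the most recent "$0$-clock" entry, so it is locally constant, and there is no real obstacle. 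I do not expect any genuine difficulty here — the content is entirely in the algebraic identity~\eqref{S-decomposition} and in Lemma~\ref{240122.1}, both already available.
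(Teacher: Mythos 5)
Your proposal is correct and follows the only natural route; the paper in fact states this lemma without proof, treating it as immediate from the displayed identity $\lcCocycle_0^4(\zeta)=A_{i_0}$ and Lemma~\ref{240122.1}, which is exactly the content of your $j=0$ case and base conjugacy. One small correction for $j\geq 1$: writing $P_j(\zeta):=S(t^{i_0}_{j-1})\cdots S(t^{i_0}_0)$, the block $\lcCocycle_0^4(\zeta)$ is \emph{not} a pointwise conjugate of $A_{i_0}$ by $P_j(\zeta)$ (its leftmost partial factor carries the \emph{next} block's symbol), but rather satisfies $\lcCocycle_0^4(\zeta)=P_j(\sigma^4\zeta)\,A_{i_0}\,P_j(\zeta)^{-1}$, so the fiber map in your $\Phi$ must be $M_j(\zeta)=P_j(\zeta)^{-1}$ (not $P_j(\zeta)$) for the intertwining identity $M_j(\sigma^4\zeta)\,\lcCocycle_0^4(\zeta)=\lcCocycle(\pi\zeta)\,M_j(\zeta)$ to hold --- a sign-of-conjugation slip that the direct substitution you propose would catch.
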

As consequence of the previous lemmas we have
\begin{lemma}\quad 
$\displaystyle 
        \Le(\mu) = \Le(\lcCocycle) = 4\,\Le(\lcCocycle_0).
$
\end{lemma}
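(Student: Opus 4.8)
The plan is to deduce the identity $\Le(\mu)=\Le(\lcCocycle)=4\,\Le(\lcCocycle_0)$ purely from the conjugacy results established in Lemmas~\ref{240122.1} and~\ref{conjugation1}, together with the definition of the Lyapunov exponent as an almost-everywhere limit of $\frac{1}{n}\log\norm{A^n(x)}$. The first equality, $\Le(\mu)=\Le(\lcCocycle)$, is already recorded right after the definition of the locally constant cocycle $\lcCocycle:\prodSpace\to\SL_2(\R)$, so the content is the second equality $\Le(\lcCocycle)=4\,\Le(\lcCocycle_0)$.

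First I would work on the subsystem $(\prodTower_0,\shift^4,4\tilde\towerStat)$. By Lemma~\ref{conjugation1} (taking $j=0$), the linear cocycle $(\zeta,v)\mapsto(\shift^4\zeta,\lcCocycle_0^4(\zeta)v)$ over $(\prodTower_0,\shift^4,4\tilde\towerStat)$ is conjugated, via $\pi|_{\prodTower_0}$, to $F_{\lcCocycle}$ over $(\prodSpace,\shift,\prodMeasure)$; since Lyapunov exponents are invariant under a measurable conjugacy that intertwines the base maps and the fiber actions (the fiber part of the conjugacy here is literally the identity on $\R^2$, by the construction in Section~\ref{270122.1}), one gets that the Lyapunov exponent of $\lcCocycle_0^4$ with respect to the base $(\prodTower_0,\shift^4,4\tilde\towerStat)$ equals $\Le(\lcCocycle)$. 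Explicitly, for $4\tilde\towerStat$-a.e.\ $\zeta\in\prodTower_0$,
\begin{align*}
    \lim_{n\to\infty}\frac1n\log\norm{(\lcCocycle_0^4)^n(\zeta)}
    =\lim_{n\to\infty}\frac1n\log\norm{\lcCocycle_0^{4n}(\zeta)}
    =\Le(\lcCocycle).
\end{align*}

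Next I would relate this to $\Le(\lcCocycle_0)$, the Lyapunov exponent over the full base $(\prodTower,\shift,\tilde\towerStat)$. Since $(\shift,\tilde\towerStat)$ is ergodic, $\frac1N\log\norm{\lcCocycle_0^N(\zeta)}\to\Le(\lcCocycle_0)$ for $\tilde\towerStat$-a.e.\ $\zeta$; restricting $N=4n$ along the subsequence and noting $\tilde\towerStat(\prodTower_0)=\frac14>0$, this limit also holds for $4\tilde\towerStat$-a.e.\ $\zeta\in\prodTower_0$. Hence for $4\tilde\towerStat$-a.e.\ $\zeta\in\prodTower_0$,
\begin{align*}
    \Le(\lcCocycle)
    =\lim_{n\to\infty}\frac1n\log\norm{\lcCocycle_0^{4n}(\zeta)}
    =4\lim_{n\to\infty}\frac{1}{4n}\log\norm{\lcCocycle_0^{4n}(\zeta)}
    =4\,\Le(\lcCocycle_0),
\end{align*}
which together with $\Le(\mu)=\Le(\lcCocycle)$ gives the claim. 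A small point worth spelling out is why the a.e.\ convergence of $\frac1N\log\norm{\lcCocycle_0^N(\zeta)}$ survives restriction to the sublattice $N\in4\N$ and to the positive-measure set $\prodTower_0$: this is immediate because almost-sure convergence of a sequence passes to any subsequence and the conditional measure $4\tilde\towerStat$ on $\prodTower_0$ is absolutely continuous with respect to $\tilde\towerStat$.

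The main obstacle, such as it is, is bookkeeping rather than mathematics: one must make sure the conjugacy in Lemma~\ref{conjugation1} is set up so that it genuinely intertwines the fiber actions (so that $\norm{(\lcCocycle_0^4)^n(\zeta)}$ really does match $\norm{\lcCocycle^n(\pi\zeta)}$, not merely up to a bounded cocycle coboundary), and to keep straight the three different base systems — $(\prodSpace,\shift,\prodMeasure)$, $(\prodTower,\shift,\tilde\towerStat)$, and $(\prodTower_0,\shift^4,4\tilde\towerStat)$ — and which Lyapunov exponent is computed over which. Given the explicit computation $\lcCocycle_0^4(\zeta)=A_{i_0}=\lcCocycle(\omega)$ carried out just before Lemma~\ref{conjugation1}, the intertwining is exact and no coboundary correction is needed, so the argument closes cleanly.
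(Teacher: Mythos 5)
Your argument is correct and is exactly the route the paper intends: the lemma is stated there as an immediate consequence of Lemmas~\ref{240122.1} and~\ref{conjugation1}, and your write-up simply makes explicit the standard steps (conjugacy gives $\lcCocycle_0^{4n}(\zeta)=\lcCocycle^n(\pi\zeta)$ on $\prodTower_0$, then almost-sure convergence along the subsequence $N=4n$ restricted to the positive-measure set $\prodTower_0$ yields the factor $4$). No gaps.
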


Using the conjugation in Lemma \ref{240122.1} we  build the one parameter family of cocycles $\lcCocycle_{(E)}:\prodSpace\to\SL_2(\R)$,
\begin{align*}
    \lcCocycle_{(E)}(\omega) := \lcCocycle^4_E(\zeta),
\end{align*}
where $\zeta = (\pi|_{\prodTower_0})^{-1}(\omega)$. 
The cocycles of this family are locally constant and determined by the probability measures  $\mu_E$ on $\SL_2(\R)$ defined by
\begin{align*}
    \mu_E = \sum_{i=1}^{\kappa}\mu_i\delta_{\lcCocycle_{(E)}(\Bar{i})},
\end{align*}
where  $\Bar{i}$ is any sequence $\omega\in\Omega$ such that $\omega_0=i$, 
for every $i=1,\ldots, \kappa$.
This family is the smooth curve of measures through $\mu$ whose existence is claimed in Theorem~\ref{mainThm}.
It depends analytically on $E$ in the sense that the function $E\mapsto \int \varphi\, d\mu_E=\sum_{i=1}^\kappa \mu_i\, \varphi( \lcCocycle_{(E)}(\Bar i))$ is analytic for every analytic function $\varphi(A)$ on $\SL_2(\R)$. In particular the curve $E\mapsto \mu_{E}$
 is continuous with respect to the weak* topology.
\begin{corollary}
    For every $E\in \R$,\quad $\displaystyle \Le(\mu_E) = \Le(\lcCocycle_{(E)}) = 4\,\Le(\lcCocycle_E)$.
\end{corollary}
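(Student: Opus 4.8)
The corollary to be proved states that $L(\mu_E) = L(\lcCocycle_{(E)}) = 4\,L(\lcCocycle_E)$ for every $E\in\R$. This is an immediate analogue of the unnumbered lemma just above it ($L(\mu) = L(\lcCocycle) = 4\,L(\lcCocycle_0)$), now with the energy parameter $E$ inserted throughout. Let me sketch the proof.

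The first equality, $L(\mu_E) = L(\lcCocycle_{(E)})$, is purely a matter of unwinding definitions: by construction $\lcCocycle_{(E)}:\prodSpace\to\SL_2(\R)$ is the locally constant cocycle whose distribution law is $\mu_E$, since $\mu_E = \sum_i \mu_i\,\delta_{\lcCocycle_{(E)}(\bar i)}$ and the fiber value $\lcCocycle_{(E)}(\omega)$ depends only on $\omega_0$. So this equality holds by the same identification $L(\mu) = L(\lcCocycle)$ recorded in Section~\ref{260122.5}, applied to $\mu_E$ in place of $\mu$.

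The second equality, $L(\lcCocycle_{(E)}) = 4\,L(\lcCocycle_E)$, follows from the conjugation machinery. By definition $\lcCocycle_{(E)}(\omega) = \lcCocycle_E^4(\zeta)$ where $\zeta = (\pi|_{\prodTower_0})^{-1}(\omega)$, exactly mirroring the relation $\lcCocycle_0^4(\zeta) = \lcCocycle(\omega)$ from Section~\ref{270122.1}. One would invoke Lemma~\ref{conjugation1} — or rather its verbatim analogue for the Schrödinger cocycle $\lcCocycle_E$ at energy $E$ instead of $\lcCocycle_0$ — which says that the linear cocycle $(\zeta,v)\mapsto(\shift^4\zeta,\lcCocycle_E^4(\zeta)v)$ over $(\prodTower_0,\shift^4,4\tilde\towerStat)$ is conjugated (via $\pi|_{\prodTower_0}$, by Lemma~\ref{240122.1}) to $(\omega,v)\mapsto(\shift\omega,\lcCocycle_{(E)}(\omega)v)$ over $(\prodSpace,\shift,\prodMeasure)$. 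Conjugate cocycles have equal Lyapunov exponents, so $L(\lcCocycle_{(E)}) = L(\lcCocycle_E^4\text{ over }(\prodTower_0,\shift^4,4\tilde\towerStat))$. Then, since $\prodTower = \bigcup_{j=0}^3\prodTower_j$ with $\shift(\prodTower_j)=\prodTower_{j+1\bmod 4}$ and $\tilde\towerStat$ is $\shift$-invariant, the Lyapunov exponent of $\lcCocycle_E$ over the full system $(\prodTower,\shift,\tilde\towerStat)$ equals $\tfrac14$ times the Lyapunov exponent of the fourth-power cocycle $\lcCocycle_E^4$ over $(\prodTower_0,\shift^4,4\tilde\towerStat)$ — this is the standard "Lyapunov exponent of an induced/accelerated cocycle" identity, and is precisely how the factor $4$ arose in the unnumbered lemma $L(\lcCocycle_0)=\tfrac14 L(\lcCocycle)$. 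Combining, $L(\lcCocycle_{(E)}) = 4\,L(\lcCocycle_E)$.

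There is essentially no obstacle here: the corollary is a formal consequence of results already established (Lemmas~\ref{210122.0}, \ref{240122.1}, \ref{conjugation1} and the preceding lemma), with $E$ carried along as an inert parameter throughout the construction of Section~\ref{250122.1}. The only point requiring a line of care is that Lemma~\ref{conjugation1} is literally stated for $\lcCocycle_0$, so one should remark that its proof uses nothing about the specific Schrödinger matrices $S(t^i_j)$ beyond the decomposition structure, and hence applies equally to $\lcCocycle_E(\zeta) = S(\phi(\zeta)-E)$ for any fixed $E$; equivalently, one simply notes that the whole embedding construction can be run with the potential $\phi - E$ in place of $\phi$. Thus the proof is: apply the $E$-analogue of Lemma~\ref{conjugation1}, use invariance of Lyapunov exponents under conjugacy, and use the acceleration identity for the $4$-step return map on $\prodTower_0$, exactly as in the proof of $L(\mu) = 4\,L(\lcCocycle_0)$.
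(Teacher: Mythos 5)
Your proposal is correct and follows exactly the route the paper intends: the corollary is stated without proof precisely because it is the $E$-parametrized restatement of the preceding lemmas, with the first equality definitional and the second coming from the conjugacy of Lemma~\ref{conjugation1} (run with potential $\phi-E$) together with the standard acceleration identity for the $4$-step cocycle on $\prodTower_0$. No gaps.
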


\section{Oscillations of the IDS}\label{sec6:010622.5}

Consider the family of Schr\"odinger cocycles $\lcCocycle_E:\prodTower\to \SL_2(\R)$ with potential $\phi:\Sigma\to \R$ over the basis dynamics $(\prodTower, \shift, \tilde{\towerStat})$, as in the Section \ref{250122.1}. 
The purpose of this section is to get a lower bound on the oscillations of the finite scale IDS $\ids_{n,\zeta}$ in terms of counting certain configurations along the orbit of $\zeta$, referred to as $\delta$-matchings.

Let $\{e_1, e_2\}$ be the canonical basis of $\R^2$.
Given $\delta>0$ and  $k\in\N$, we say that  $\zeta\in\Sigma$  has a $\delta$-\emph{matching} of size $k$ at $E$, or a $(\delta,k,E)$-matching, if  
\begin{align*}
    \lcCocycle_E^k(\zeta)\, \hat e_1=\hat e_2  \quad \text{ and } \quad\tau_k(\zeta, E) := \frac{
            \norm{\lcCocycle_E^k(\zeta)\, e_1}
        }{
            \underset{0\leq j\leq k-1}{\max} 
                \norm{
                    \lcCocycle^j_E(\zeta)\, e_1
                }
        } <\delta.
\end{align*}

For each $\zeta\in \Sigma$ and $k\in \N$,   we consider the truncated Schr\"odinger operator $H^k_{\zeta}:\R^k\to \R^k$ defined in Section~\ref{truncated IDS subsection}, which can be described, for $u\in \R^k$, by
\begin{align*}
    H^k_{\zeta}\, u := \left(
        (H^k_{\zeta}u)_0,\ldots, (H^k_{\zeta}u)_{k-1}
    \right),
\end{align*}
where
\begin{align*}
    (H^k_{\zeta}\, u)_j := \left\{
        \begin{array}{ll}
            -u_1 + \phi(\zeta)\, u_0, & \text{if } j = 0  \\
            -u_{j+1} - u_{j-1} + \phi(\shift^j(\zeta))\, u_j, & \text{if } j\neq 0, k-1 \\
            -u_{k-2} + \phi(\shift^{k-1}(\zeta))\, u_{k-1}, & \text{if } j= k-1.
        \end{array}
    \right.
\end{align*}

Let $E\in \R$ and $(v_0,v_{-1})\in \R^2$. Define the sequence $(v_j)_{j\in \Z}$ by the following equation
\begin{align*}
    \lcCocycle_E(\shift^j(\zeta))\,
    \begin{pmatrix}
        v_j\\
        v_{j-1}
    \end{pmatrix}=
    \begin{pmatrix}
        v_{j+1}\\
        v_j
    \end{pmatrix},
\end{align*}
which is equivalent to say that for every $j\in \Z$,
\begin{align}\label{260122.1}
    -v_{j-1} - v_{j+1} + \phi(\shift^j(\zeta))\, v_j = E\,v_j .
\end{align}

Let $\mathbf{e}_0, \ldots, \mathbf{e}_{k-1}$ be the canonical basis of $\R^{k}$.  
\begin{lemma}\label{260122.2}
Given a solution $(v_j)_{j\in\Z}$ of~\eqref{260122.1},
the vector $v^* = (v_0,\ldots, v_{k-1})\in \R^k$  satisfies
    \begin{align*}
        H^k_{\zeta}\, v^* - Ev^* = v_{-1}\mathbf{e}_0 + v_k\mathbf{e}_{k-1}.
    \end{align*}
    Moreover,   $\lcCocycle^k_E(\zeta)\, \hat e_1 = \hat e_2$\, if and only if\, there exists a solution $(v_j)_{j\in\Z}$ of~\eqref{260122.1} such that  $v^*$ is an eigenvector of $H^k_{\zeta}$  with the eigenvalue $E$.
\end{lemma}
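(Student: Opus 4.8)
The plan is to prove the displayed identity by a direct termwise comparison of $H^k_\zeta v^\ast - E v^\ast$ with the recurrence~\eqref{260122.1}, and then to deduce the stated equivalence from this identity together with the invertibility of the transfer matrices $\lcCocycle_E(\shift^j(\zeta)) = \schrMatrix(\phi(\shift^j\zeta)-E)$.

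For the identity, I would check each coordinate $j\in\{0,\ldots,k-1\}$ separately. For an interior index $0<j<k-1$ the definition of $H^k_\zeta$ gives $(H^k_\zeta v^\ast)_j - E v_j = -v_{j+1}-v_{j-1}+\phi(\shift^j\zeta)v_j - Ev_j$, which vanishes by~\eqref{260122.1}. For $j=0$ the operator drops the term $-v_{-1}$, so that $(H^k_\zeta v^\ast)_0 - Ev_0 = -v_1 + \phi(\zeta)v_0 - Ev_0 = v_{-1}$ after substituting~\eqref{260122.1} with $j=0$; symmetrically, for $j=k-1$ the missing term $-v_k$ yields $(H^k_\zeta v^\ast)_{k-1}-Ev_{k-1} = v_k$. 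Collecting the three cases gives exactly $H^k_\zeta v^\ast - E v^\ast = v_{-1}\mathbf{e}_0 + v_k\mathbf{e}_{k-1}$.

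For the equivalence I would first record that~\eqref{260122.1} is precisely the statement $\lcCocycle_E(\shift^j\zeta)(v_j,v_{j-1})^{\top} = (v_{j+1},v_j)^{\top}$, hence $\lcCocycle^k_E(\zeta)(v_0,v_{-1})^{\top} = (v_k,v_{k-1})^{\top}$, and that a nonzero solution of~\eqref{260122.1} cannot vanish at two consecutive indices (the transfer matrices are invertible), so $(v_j,v_{j-1})^{\top}\neq 0$ for all $j$. For the ``if'' direction, suppose a two-sided solution $(v_j)_{j\in\Z}$ of~\eqref{260122.1} is given with $v^\ast=(v_0,\ldots,v_{k-1})$ an eigenvector of $H^k_\zeta$ for $E$; then $v^\ast\neq 0$ and the identity forces $v_{-1}=v_k=0$, so $(v_0,v_{-1})^{\top}$ is a nonzero vector on the line $\hat e_1$ and $\lcCocycle^k_E(\zeta)(v_0,v_{-1})^{\top}=(0,v_{k-1})^{\top}$ is a nonzero vector on $\hat e_2$ (nonzero because $\lcCocycle^k_E(\zeta)$ is invertible), i.e. $\lcCocycle^k_E(\zeta)\hat e_1=\hat e_2$. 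Conversely, if $\lcCocycle^k_E(\zeta)\hat e_1=\hat e_2$, let $(v_j)_{j\in\Z}$ be the unique solution of~\eqref{260122.1} with $(v_0,v_{-1})=(1,0)$; then $(v_k,v_{k-1})^{\top}=\lcCocycle^k_E(\zeta)\hat e_1$ is a nonzero multiple of $e_2$, so $v_k=0$, and the identity gives $H^k_\zeta v^\ast - E v^\ast = v_{-1}\mathbf{e}_0 + v_k\mathbf{e}_{k-1} = 0$ with $v^\ast\neq 0$ since $v_0=1$, so $v^\ast$ is an eigenvector of $H^k_\zeta$ with eigenvalue $E$.

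I do not expect a real obstacle: the argument is elementary bookkeeping. The only points that need a little care are the correct reading of the ``if'' direction (one is handed a full two-sided solution whose first $k$ entries make up the eigenvector, not merely a vector in $\R^k$) and the systematic use of invertibility of $\lcCocycle^k_E(\zeta)$ to ensure the relevant vectors never degenerate to $0$, so that the projective statement $\hat e_1\mapsto\hat e_2$ is well posed.
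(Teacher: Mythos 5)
Your proof is correct and follows the same route as the paper: verify the identity coordinate by coordinate, then observe that $v^\ast$ being an eigenvector is equivalent to $v_{-1}=v_k=0$, which via the transfer matrices is the projective condition $\lcCocycle^k_E(\zeta)\,\hat e_1=\hat e_2$. The paper's own proof is just a two-line remark to this effect; your additional bookkeeping (invertibility of the transfer matrices, non-degeneracy of $(v_j,v_{j-1})$) fills in exactly the details it leaves implicit.
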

\begin{proof}
    The first statement follows from~\eqref{260122.1}. For the second part observe that $\lcCocycle^k_E(\zeta) \hat e_1 = \hat{e}_2$ if and only if  there is a solution of~\eqref{260122.1} such that $v_{-1} = v_k = 0$.
\end{proof}

Consider a large integer  $N=m\, (k+2)$
and split the interval $[0,N-1]$ into $m$ disjoint slots of length $k$, namely $S_j:=[j\,(k+2), j\, (k+2)+k-1]$ for $j=0,1,\ldots, m-1$. The integers $j\,(k+2)-1$ and $j\, (k+2)+k+1$ are referred to as the boundary points of the slot $S_j$. Notice that $\cup_{j=0}^{m-1} S_j$ has $m\, k$ elements which exclude the boundary points of the slots.
We say that  $\zeta\in\Sigma$  has a $(\delta, k,E)$-matching in the slot $S_j$ if $\sigma^{j\, (k+2)}(\zeta)$ has a $(\delta, k,E)$-matching. Next lemma says that when the sequence
$\zeta$  has a $(\delta, k,E)$-matching in the slot $S_j$
we can construct a $\delta$-almost eigenvector for $H^{N}_\zeta$ which is supported in that slot $S_j$.
Moreover, because consecutive slots share no  boundary points in common, if $\zeta$ admits several $(\delta, k)$-matchings in different slots then the corresponding $\delta$-almost eigenvectors are pairwise orthogonal.

\begin{lemma}\label{260122.3}
Given $\zeta\in\Sigma$ and  $j_0\in\N$ such that 
$\sigma^{j_0(k+2)}(\zeta)$ has a $(2^{-1/2} \delta,k,E)$-matching consider the  vector $v^\ast=(v_0,\ldots, v_{k-1})\in \R^k$ with components determined by 
$$\begin{pmatrix}v_j \\ v_{j-1} \end{pmatrix} = \lcCocycle_E^j(\sigma^{j_0(k+2)} \zeta)\, \begin{pmatrix}v_0 \\ 0 \end{pmatrix}$$
where $v_0$ is fixed so that $\max_{0\leq j\leq k-1} |v_j|=1$. Then the vector $v_{j_0,k}(\zeta)\in\R^{N}$, with all coordinates zero except those in the slot $S_{j_0}$
which coincide with the respective coordinates of $v^\ast$, satisfies
    \begin{align*}
        \norm{
            H^{N}_{\zeta}\, v_{j_0,k}(\zeta) - E\, v_{j_0, k}(\zeta)
        }
        <  \delta .
    \end{align*}
    In other words, $v_{j_0,k}(\zeta)$ is an $\delta$-almost eigenvector
     of $H^{N}_{\zeta}$ in the sense of Lemma~\ref{260122.4}.
\end{lemma}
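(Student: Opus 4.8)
The plan is to verify the inequality coordinate by coordinate, checking that $H^N_\zeta v_{j_0,k}(\zeta) - E\,v_{j_0,k}(\zeta)$ is supported near the slot $S_{j_0}$ and that its nonzero entries are controlled by the matching hypothesis. Write $\eta := \sigma^{j_0(k+2)}(\zeta)$, so by hypothesis $\eta$ has a $(2^{-1/2}\delta, k, E)$-matching. First I would apply Lemma~\ref{260122.2} to the shifted sequence $\eta$: the vector $v^\ast = (v_0,\dots,v_{k-1})$ built from the orbit $\lcCocycle_E^j(\eta)\binom{v_0}{0}$ satisfies $H^k_\eta v^\ast - E v^\ast = v_{-1}\mathbf{e}_0 + v_k \mathbf{e}_{k-1}$, and since by construction $v_{-1}=0$, this reduces to $H^k_\eta v^\ast - E v^\ast = v_k\, \mathbf{e}_{k-1}$.

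Next I would translate this local identity to the global operator $H^N_\zeta$. Because the potential of $H^N_\zeta$ restricted to the index block $S_{j_0}=[j_0(k+2),\,j_0(k+2)+k-1]$ is exactly the potential of $H^k_\eta$ (this is what the shift by $j_0(k+2)$ arranges), and because $v_{j_0,k}(\zeta)$ vanishes outside $S_{j_0}$, the components of $H^N_\zeta v_{j_0,k}(\zeta) - E\,v_{j_0,k}(\zeta)$ agree with those of $H^k_\eta v^\ast - E v^\ast$ on all interior indices of $S_{j_0}$, and they agree at the two boundary points $j_0(k+2)-1$ and $j_0(k+2)+k$ of the slot as well — here is where the gap of two unused sites between consecutive slots is used, so that the off-diagonal $-1$ entries of $H^N_\zeta$ reaching out of $S_{j_0}$ land on indices where $v_{j_0,k}(\zeta)$ is zero, except precisely at those two boundary sites. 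Collecting terms, $H^N_\zeta v_{j_0,k}(\zeta) - E\,v_{j_0,k}(\zeta)$ has at most two nonzero coordinates: one equal to $v_k$ (at the right boundary site) and, from $(H^k_\eta v^\ast - E v^\ast)$'s $\mathbf{e}_0$ term which vanished, nothing at the left — wait, one should double-check: the left boundary contribution is $-v_{-1}=0$, so indeed only the coordinate carrying $v_k$ survives, together possibly with the term $-v_0$ appearing at index $j_0(k+2)-1$; I would carefully list these, but the point is there are $O(1)$ of them, each bounded in absolute value by a quantity the matching hypothesis controls.

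Finally I would estimate the surviving entries. By the matching definition applied to $\eta$, $\norm{\lcCocycle_E^k(\eta)\,e_1} / \max_{0\le j\le k-1}\norm{\lcCocycle_E^j(\eta)\,e_1} < 2^{-1/2}\delta$. After the normalization $\max_{0\le j\le k-1}|v_j|=1$, the vector $\binom{v_0}{0}$ is (up to the scalar $v_0$) aligned with $e_1$, and $\max_{0\le j \le k-1}\norm{\lcCocycle_E^j(\eta)\binom{v_0}{0}}$ is comparable to $\max_j|v_j|=1$; hence $|v_k| = \norm{\lcCocycle_E^k(\eta)\binom{v_0}{0}}$ (the first coordinate of the image) is $\le \norm{\lcCocycle_E^k(\eta)\binom{v_0}{0}} < 2^{-1/2}\delta \cdot \max_j\norm{\lcCocycle_E^j(\eta)\binom{v_0}{0}}$, and since $\lcCocycle_E^k(\eta)\hat e_1 = \hat e_2$ the image lies along $e_2$ so its $e_1$-component vanishes — so in fact the relevant quantity is the norm of the image, bounded by $2^{-1/2}\delta$. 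Since at most two such terms appear, $\norm{H^N_\zeta v_{j_0,k}(\zeta) - E\,v_{j_0,k}(\zeta)} \le (2\cdot (2^{-1/2}\delta)^2)^{1/2} = \delta$ by taking the Euclidean norm of a vector with two entries each of size $\le 2^{-1/2}\delta$. The main obstacle I anticipate is bookkeeping the boundary-site contributions correctly — making sure the two-site gap between slots really does kill all cross terms except the intended ones, and pinning down exactly which coordinate $v_k$ (versus $v_0$) lands in — rather than any deep estimate; the quantitative heart is just the matching inequality together with the $2^{-1/2}$ slack chosen precisely so that the $\ell^2$-norm of the $O(1)$ error entries comes out to $\delta$.
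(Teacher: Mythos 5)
Your strategy is the same as the paper's: pass to $\eta=\sigma^{j_0(k+2)}\zeta$, apply Lemma~\ref{260122.2} with $v_{-1}=0$, note that the matching condition $\lcCocycle^k_E(\eta)\hat e_1=\hat e_2$ forces $v_k=0$ (so $v^\ast$ is an exact eigenvector of the $k\times k$ block and only the off-diagonal terms leaking out of the slot survive), and then bound the surviving entries via $\tau_k$. However, the quantitative endgame has a genuine gap. The factor $2^{-1/2}$ in the hypothesis is not there to absorb an $\ell^2$ sum of two error entries each of size $2^{-1/2}\delta$; it is consumed entirely in bounding the \emph{single} entry $|v_{k-1}|$ (which sits at index $j_0(k+2)+k$, one past the slot — not $v_k$, which vanishes). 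The normalization $\max_{0\le j\le k-1}|v_j|=1$ is a coordinate max, while $\tau_k$ involves the vector norms $\norm{\lcCocycle^j_E(\eta)e_1}$; since $\norm{(v_j,v_{j-1})}\le\sqrt2\,\max_i|v_i|$ one only gets $|v_0|\max_{0\le j\le k-1}\norm{\lcCocycle^j_E(\eta)e_1}\le\sqrt2$, hence $|v_{k-1}|=|v_0|\,\norm{\lcCocycle^k_E(\eta)e_1}\le\sqrt2\,\tau_k<\delta$. That is the paper's computation, and it uses up all the slack; your step ``$\max_j\norm{\lcCocycle^j_E(\eta)\binom{v_0}{0}}$ is comparable to $\max_j|v_j|=1$'' hides exactly this $\sqrt2$.

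Second, the left-boundary entry $-v_0$ at index $j_0(k+2)-1$, which you rightly flag, is \emph{not} ``bounded by a quantity the matching hypothesis controls.'' The definition of a $(\delta,k,E)$-matching bounds only the ratio $\tau_k$, hence $|v_{k-1}|$; it gives no bound on $|v_0|$ beyond $|v_0|\le 1$ (the max in the normalization could be attained at $j=0$). The paper's proof sidesteps this by taking $j_0=0$, where the index $j_0(k+2)-1$ does not exist, so the error vector is exactly $-v_{k-1}\mathbf e_k$ and the single estimate above finishes the proof. As written, your argument asserts a bound on $|v_0|$ that does not follow from the hypotheses, and your final $\ell^2$ combination of two entries of size $2^{-1/2}\delta$ is not available even for the entry you can control.
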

\begin{proof}
    For the sake of  simplicity let $j_0=0$ so that 
    $\zeta=\sigma^{j_0 (k+2)}(\zeta)$ is the sequence with a $(\delta/\sqrt{2},k,E)$-matching.
    By definition of $v_{j_0, k}(\zeta)$ and   Lemma \ref{260122.2} we have that
    \begin{align*}
        (H^{N}_{\zeta} -E)\, v_{j_0, k}(\zeta) = -v_{k-1}\mathbf{e}_k.
    \end{align*}
    Therefore, 
    \begin{align*}
        \norm{
            H^{N}_{\zeta}\, v_{j_0,k}(\zeta) - E\, v_{j_0, k}(\zeta)
        }
        &\leq |v_{k-1}| \leq |v_0|\,
            \norm{\lcCocycle_E^k(\zeta)\, e_1} 
            \leq \sqrt{2}\, \tau_k(\zeta,E) < \delta 
    \end{align*}
    because $v_{k-1}$ is one of the components of $v_0\,\lcCocycle^k_E\, e_1 $ and $$\frac{v_0}{\sqrt{2}}\,  \underset{0\leq j\leq k-1}{\max} 
                \norm{
                    \lcCocycle^j_E(\zeta)\, e_1}
            \leq v_0\, \underset{0\leq j\leq k-1}{\max}\left|
                \langle
                    \lcCocycle^j_E(\zeta)\, e_1, e_1
                \rangle
            \right| 
        =\underset{0\leq j\leq k-1}{\max} |v_j| = 1 .
    $$
\end{proof}

From the  point of view of Mathematical Physics, a
$\delta$-matching determines a $\delta$-almost eigenvector of the Schr\"odinger operator.

Dynamically, these configurations correspond to stable-unstable matchings in the following sense: let $k=k_1+k_2$ be some partition of  $k$ such that both factors in the decomposition
$\lcCocycle^{k}_E(\zeta)=\lcCocycle^{k_2}_E(\sigma^{k_1}(\zeta))\, \lcCocycle^{k_1}_E(\zeta)$ are very hyperbolic  with nearly horizontal unstable direction and almost vertical stable one.
If $k_1, k_2$ are large then $\lcCocycle^{k_1}_E(\zeta) \hat e_1$ is a good approximation of the Oseledets unstable direction $E^u(\sigma^{k_1}\zeta)$
at the point $\sigma^{k_1} (\zeta)$, while $\lcCocycle^{-k_2}_E(\zeta) \hat e_2$ is a good approximation of the stable direction $E^s(\sigma^{k_1}(\zeta))$
at the same point. The condition
$\lcCocycle^k(\zeta)\, \hat e_1=\hat e_2$ is equivalent to the matching
$\lcCocycle^{k_1}_E(\zeta) \hat e_1 = \lcCocycle^{-k_2}_E(\zeta) \hat e_2$
between these two approximate stable and unstable directions at the middle point.
This nearly stable-unstable matching also explains why $\tau_k(\zeta,E)$ should be very small.

The oscillation of the non-decreasing function $\ids$ and its finite scale analogue $\ids_{N,\zeta}$  on some interval $I=[\alpha,\beta]$ are denoted by
$$ \Delta_I \ids:= \ids(\beta)-\ids(\alpha), \; \text{ resp. }\;
\Delta_I \ids_{N,\zeta}:= \ids_{N,\zeta}(\beta)-\ids_{N,\zeta}(\alpha). $$
Denote by $\Sigma(\delta, k, I)$ the subset of $\Sigma$ formed by $(\delta, k, E)$-matching sequences $\zeta\in \Sigma$ with $E\in I$.
\begin{lemma}
    For any interval $I\subset\R$ and  $\zeta\in \Sigma$,
    \begin{align*}
        \Delta_{I_\delta}\ids_{N,\zeta}  \geq  
        \frac{1}{N}\Sum_{j=0}^{m-1}\chi_{\Sigma(\delta,k,I)}(\shift^{j(k+2)} \zeta)   
    \end{align*}
where $I_\delta:=I+[-\delta,\delta]$ is the $\delta$-neighborhood of $I$.
\end{lemma}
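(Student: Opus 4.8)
The plan is to combine the eigenvalue-counting estimate of Temple's lemma (Lemma~\ref{260122.4}) with the $\delta$-almost eigenvector construction of Lemma~\ref{260122.3}. First I would fix the interval $I$ and the sequence $\zeta\in\Sigma$, and for each slot index $j\in\{0,1,\ldots,m-1\}$ consider whether $\shift^{j(k+2)}(\zeta)$ admits a $(\delta/\sqrt2,k,E)$-matching for some $E\in I$. Say the set of such ``good'' indices is $G\subseteq\{0,\ldots,m-1\}$; for each $j\in G$ choose a witnessing energy $E_j\in I$ and let $u_j:=v_{j,k}(\zeta)\in\R^N$ be the corresponding $\delta$-almost eigenvector provided by Lemma~\ref{260122.3}, normalized so that $\norm{u_j}=1$ (the construction there normalizes the sup-norm of the nonzero block to $1$, which gives $\norm{u_j}\in[1,\sqrt k]$, so I renormalize; this only improves the bound $\norm{H^N_\zeta u_j - E_j u_j}<\delta$ since dividing by a number $\geq 1$ does not increase the norm).

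Next I would check the orthogonality hypotheses (1) and (2) of Temple's lemma for the family $\{u_j\}_{j\in G}$. Each $u_j$ is supported in the slot $S_j=[j(k+2),\,j(k+2)+k-1]$, and distinct slots are disjoint and moreover separated by the two boundary points $j(k+2)-1$ and $j(k+2)+k+1$ which belong to no slot. Hence for $j\neq j'$ the supports of $u_j$ and $u_{j'}$ are not only disjoint but at distance at least $2$ in $\Z$; since $H^N_\zeta$ is tridiagonal, $H^N_\zeta u_j$ is supported within distance $1$ of $\supp u_j$, so $\supp(H^N_\zeta u_j)$ is still disjoint from $\supp u_{j'}$ and from $\supp(H^N_\zeta u_{j'})$. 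This gives $\langle u_j,u_{j'}\rangle = \langle H^N_\zeta u_j, u_{j'}\rangle = \langle H^N_\zeta u_j, H^N_\zeta u_{j'}\rangle = 0$, which is exactly condition (1), and $\{u_j\}_{j\in G}$ is orthonormal.

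The one subtlety is that different slots may use different almost-eigenvalues $E_j$, whereas Temple's lemma as stated uses a single $\lambda_0$. I would handle this by picking $\lambda_0$ to be, say, the left endpoint $\alpha$ of $I=[\alpha,\beta]$ (or the midpoint), and noting $\norm{H^N_\zeta u_j - \lambda_0 u_j}\leq \norm{H^N_\zeta u_j - E_j u_j} + |E_j-\lambda_0|\,\norm{u_j}$. This is not automatically $<\delta$. The cleaner route, and the one I would actually take, is to observe that Temple's lemma is really about a spectral projection: for each $j$ the almost-eigenvector certifies $\spec(H^N_\zeta)\cap(E_j-\delta,E_j+\delta)\neq\emptyset$, but to count with multiplicity across all $j$ simultaneously one needs the orthogonality, so one applies Temple's lemma with $\lambda_0$ the center of a slightly enlarged interval. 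Concretely, all $E_j\in I$ and each $u_j$ satisfies $\norm{H^N_\zeta u_j - E_j u_j}<\delta$, so with $\lambda_0$ the midpoint of $I_\delta = I+[-\delta,\delta]$ and $\delta':=\tfrac12|I|+\delta$ we get $\norm{H^N_\zeta u_j - \lambda_0 u_j}\leq \delta'$ for all $j\in G$, whence by Lemma~\ref{260122.4}, $|\spec(H^N_\zeta)\cap(\lambda_0-\delta',\lambda_0+\delta')| = |\spec(H^N_\zeta)\cap \interior(I_\delta)|\geq |G|$. I expect this to be the main (minor) obstacle: matching the single-$\lambda_0$ formulation of Temple's lemma to the spread of witnessing energies, and I would resolve it exactly as above — the enlargement from $I$ to $I_\delta$ in the statement is precisely what absorbs it.

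Finally I would translate back to the indicator notation. By definition $|\spec(H^N_\zeta)\cap I_\delta| = N\cdot\bigl(\ids_{N,\zeta}(\beta+\delta)-\ids_{N,\zeta}(\alpha-\delta)^-\bigr) \geq N\cdot\Delta_{I_\delta}\ids_{N,\zeta}$ up to the usual care with endpoints (the enlarged closed interval $I_\delta$ contains the open one, so this inequality holds), and $|G| = \sum_{j=0}^{m-1}\chi_{\Sigma(\delta,k,I)}(\shift^{j(k+2)}\zeta)$ since, by definition, $\shift^{j(k+2)}(\zeta)\in\Sigma(\delta,k,I)$ exactly means $\shift^{j(k+2)}(\zeta)$ has a $(\delta,k,E)$-matching for some $E\in I$ — and a fortiori a $(\delta/\sqrt2,\ldots)$-matching condition is what Lemma~\ref{260122.3} needs, so I would be slightly careful to state the lemma with the constant $\delta$ already chosen to match (or note that $\Sigma(\delta,k,I)$ here should be read with the $\sqrt2$ built in, consistently with how it is used downstream). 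Dividing by $N$ gives the claimed inequality $\Delta_{I_\delta}\ids_{N,\zeta}\geq \tfrac1N\sum_{j=0}^{m-1}\chi_{\Sigma(\delta,k,I)}(\shift^{j(k+2)}\zeta)$.
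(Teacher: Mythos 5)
Your proof is correct and follows essentially the same route as the paper's: disjoint slots give an orthonormal family of almost eigenvectors via Lemma~\ref{260122.3}, Temple's lemma counts the corresponding eigenvalues in $I_\delta$, and this count equals $N\,\Delta_{I_\delta}\ids_{N,\zeta}$. The two points you flag — reconciling the single $\lambda_0$ of Temple's lemma with the spread of witnessing energies $E_j\in I$ by centering at the midpoint of $I_\delta$ with radius $\tfrac12|I|+\delta$, and the $\sqrt{2}$ normalization mismatch between the definition of $\Sigma(\delta,k,I)$ and the hypothesis of Lemma~\ref{260122.3} — are exactly the details the paper's one-line argument leaves implicit, and you resolve them correctly.
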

\begin{proof}
Let $m\in \N$ and set
\begin{align*}
    \mathcal{Z}_{m,k}(\zeta) := \left\{
        0\leq j \leq m-1\, \colon \, \shift^{j(k+2)} \zeta  \in \Sigma(\delta, k,I)
    \right\} .
\end{align*}
    The set of vectors $\displaystyle \{ v_{j,k}(\zeta) \in \R^{N}\, 
    \colon \ j\in \mathcal{Z}_{k,m}(\zeta)\}$  is orthonormal and by Lemma \ref{260122.3} these are $\delta$-almost eigenvectors. By Lemma \ref{260122.4} there is the same amount of eigenvalues of $H^{N}_{\zeta}$ in $I_\delta$ (counted with  multiplicity). Whence, 
    \begin{align*}
        \Sum_{j=0}^{m-1}\chi_{\Sigma(\delta,k,I)}(\shift^{j\,(k+2)} \zeta)
        =  \left|
            \mathcal{Z}_{k,m}(\zeta) 
        \right|  \, \leq  \, |\spec(H^{N}_{\zeta})\cap I_\delta| 
         = N\,  
           \Delta_{I_\delta}  \ids_{N,\zeta} .
    \end{align*}
\end{proof}
Applying Birkhoff's ergodic theorem sending $m\to\infty$ in the previous lemma we have the following corollary.
\begin{corollary}\label{cor:120522.3}
    For any interval $I\subseteq \R$,  
    \begin{align*}
        \Delta_{I_\delta} \ids  \geq \frac{1}{k+2}\tilde{\towerStat}\left(
            \Sigma(\delta,k,I)
        \right).
    \end{align*}
\end{corollary}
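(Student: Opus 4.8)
The plan is to pass from the finite-scale inequality in the previous lemma to the limit by applying Birkhoff's ergodic theorem to the shift $\shift$ on the Markov shift $(\prodTower,\shift,\tilde{\towerStat})$. First I would observe that for a fixed $\delta,k,I$, the set $\Sigma(\delta,k,I)$ is (up to a $\tilde{\towerStat}$-null boundary) a measurable subset of $\prodTower$, so $\chi_{\Sigma(\delta,k,I)}\in L^1(\tilde{\towerStat})$; indeed the condition $\lcCocycle_E^k(\zeta)\,\hat e_1=\hat e_2$ together with $\tau_k(\zeta,E)<\delta$ for some $E\in I$ depends on only finitely many coordinates of $\zeta$ once we note that the map $E\mapsto \lcCocycle_E^k(\zeta)$ is polynomial in $E$, so the matching set is a finite union of cylinder-times-interval conditions and is certainly Borel. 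In fact one can be careless here: for the ergodic theorem all we need is measurability and integrability of the indicator, and we merely want a lower bound, so even a slightly smaller Borel set contained in $\Sigma(\delta,k,I)$ would suffice.

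Next, I would apply Birkhoff's ergodic theorem to the function $\chi_{\Sigma(\delta,k,I)}$ under the transformation $\shift^{k+2}$, which is ergodic for $\tilde{\towerStat}$ because $(\shift,\tilde{\towerStat})$ is mixing (the Markov system underlying $(\prodTower,\shift,\tilde{\towerStat})$ is strongly mixing, hence $\shift$ is mixing, hence so is any power $\shift^{k+2}$, and mixing implies ergodicity). Thus for $\tilde{\towerStat}$-a.e. $\zeta$,
\begin{align*}
    \lim_{m\to\infty}\frac1m\sum_{j=0}^{m-1}\chi_{\Sigma(\delta,k,I)}\bigl(\shift^{j(k+2)}\zeta\bigr)
    = \tilde{\towerStat}\bigl(\Sigma(\delta,k,I)\bigr).
\end{align*}
Now I take $N=m(k+2)$ in the previous lemma, divide by nothing extra, and rewrite the right-hand side as $\frac{m}{N}\cdot\frac1m\sum_{j=0}^{m-1}\chi_{\Sigma(\delta,k,I)}(\shift^{j(k+2)}\zeta) = \frac{1}{k+2}\cdot\frac1m\sum_{j=0}^{m-1}\chi_{\Sigma(\delta,k,I)}(\shift^{j(k+2)}\zeta)$. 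Letting $m\to\infty$ along such $\zeta$, the left-hand side $\Delta_{I_\delta}\ids_{N,\zeta}$ converges to $\Delta_{I_\delta}\ids$ because $\ids_{N,\zeta}(t)\to\ids(t)$ for $\xi$-a.e.\ $\zeta$ and every $t$ (the endpoints of $I_\delta$ are fixed real numbers), while the right-hand side converges to $\frac{1}{k+2}\tilde{\towerStat}(\Sigma(\delta,k,I))$; hence
\begin{align*}
    \Delta_{I_\delta}\ids \geq \frac{1}{k+2}\tilde{\towerStat}\bigl(\Sigma(\delta,k,I)\bigr).
\end{align*}

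The only delicate points are bookkeeping ones rather than genuine obstacles: one must make sure that the almost-everywhere statements from Birkhoff and from the convergence $\ids_{N,\zeta}\to\ids$ hold simultaneously on a set of full $\tilde{\towerStat}$-measure (intersect two full-measure sets, which is harmless), and one should be slightly careful that the pointwise convergence $\ids_{N,\zeta}(t)\to\ids(t)$ used at the two endpoints $t=\inf I-\delta$ and $t=\sup I+\delta$ is the one already cited from \cite[Subsections 3.2 and 3.3]{Da17}, valid for $\xi$-a.e.\ $\zeta$ and \emph{every} $t$. I expect the main (still minor) subtlety to be verifying that $\ids_{N,\zeta}(t)$ converges to $\ids(t)$ at the specific endpoints rather than only Lebesgue-a.e.; if the cited result only gives a.e.\ convergence of the distribution functions, one instead argues with the measures $\nu_{N,\zeta}$ defined by $\ids_{N,\zeta}$ converging weak* to the measure defined by $\ids$ and uses that $\Delta_{I_\delta}\ids_{N,\zeta}=\nu_{N,\zeta}(I_\delta)\geq \nu_{N,\zeta}(I_\delta^\circ)$ together with the portmanteau lemma ($\liminf \nu_{N,\zeta}(U)\geq \nu(U)$ for open $U$), which still yields the desired inequality after shrinking $I_\delta$ slightly and then letting the shrinkage go to zero. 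Everything else is the routine passage $N=m(k+2)\to\infty$.
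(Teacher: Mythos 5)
Your overall route is exactly the paper's: the paper's entire proof of this corollary is ``apply Birkhoff's ergodic theorem and send $m\to\infty$ in the previous lemma,'' and your handling of the a.e.\ convergence $\ids_{N,\zeta}\to\ids$ at the two endpoints (including the portmanteau fallback) is fine and matches what is needed.

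There is, however, one step in your justification that is actually false as stated: the claim that $(\shift,\tilde{\towerStat})$ is mixing because the underlying Markov system is strongly mixing, and hence that $\shift^{k+2}$ is ergodic. The tower kernel $\towerKernel$ is deterministic on three of its four levels, so the chain on $\towerSymbols$ has period $4$; correspondingly $\shift(\prodTower_j)=\prodTower_{j+1 \bmod 4}$ and each $\prodTower_j$ has measure $1/4$ and is $\shift^4$-invariant. Such a system is ergodic but \emph{not} mixing (it has a nontrivial eigenvalue $i$), and $\shift^{k+2}$ fails to be ergodic whenever $\gcd(k+2,4)>1$. This is not a hypothetical corner case: in the application (Corollary \ref{cor:310722.1}) one takes $k=n_l=4(2l^3+l)$, so $k+2\equiv 2\pmod 4$ and $\shift^{k+2}$ preserves $\prodTower_0\cup\prodTower_2$, a set of measure $1/2$.

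The gap is repairable without changing the strategy. Apply Birkhoff's theorem to $\shift^{k+2}$ in its general (non-ergodic) form: the averages $\frac1m\sum_{j=0}^{m-1}\chi_{\Sigma(\delta,k,I)}(\shift^{j(k+2)}\zeta)$ converge a.e.\ to the conditional expectation $\varphi^\ast$ of $\chi_{\Sigma(\delta,k,I)}$ on the $\shift^{k+2}$-invariant $\sigma$-algebra, and $\int\varphi^\ast\,d\tilde{\towerStat}=\tilde{\towerStat}(\Sigma(\delta,k,I))$. Hence $\varphi^\ast(\zeta)\geq\tilde{\towerStat}(\Sigma(\delta,k,I))$ on a set of positive measure; intersecting with the full-measure set where $\ids_{N,\zeta}\to\ids$ and passing to the limit along $N=m(k+2)$ for one such $\zeta$ gives the stated inequality, since the finite-scale lemma holds for \emph{every} $\zeta$. (Alternatively, average the finite-scale inequality over $\zeta$ and use dominated convergence.) With this correction your argument is complete.
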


\section{Variation with respect to the energy}\label{sec7:010622.6}

This is the main technical section of the work.

\subsection{Trace property}
The main purpose of this subsection is to prove that if $R_0=\lcCocycle^{4 n_0}_0(\zeta_0)$ is elliptic then as we move the parameter $E$ the rotation angle of $R_E$ varies with non-zero speed around $E=0$. This will be a consequence of the following proposition, which is a general fact about Schr\"odinger matrices. Recall that
\begin{align*}
    S(t) = \left(
        \begin{array}{cc}
            t & -1 \\
            1 & 0
        \end{array}
    \right)
\end{align*}
denotes a Schr\"odinger type matrix.
For a vector $x = (x_1,\ldots, x_n)\in \R^n$, write
\begin{align*}
        S^n(x-E) := S(x_n-E)\,\ldots\, S(x_1-E).
\end{align*}

\begin{lemma}
\label{Avila hyperbolic lemma}
If $E\in\C\setminus\R$ then the matrix $S^n(x-E)$
is hyperbolic.
\end{lemma}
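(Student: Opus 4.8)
The plan is to show directly that $S^n(x-E)$ cannot have eigenvalues of modulus one when $\Imp E \neq 0$; since $S^n(x-E) \in \SL_2(\C)$ has determinant one, the eigenvalues are $\lambda, \lambda^{-1}$, so ruling out $|\lambda|=1$ forces $|\lambda| \neq |\lambda^{-1}|$, i.e. hyperbolicity (in the sense that the matrix has two eigenvalues of distinct modulus, equivalently $|\tr| $ avoids the parabolic/elliptic regime). The cleanest route is to observe that the transfer matrices $S(x_j - E)$ are exactly the one-step transfer matrices of the tridiagonal Schr\"odinger (Jacobi) operator with potential $(x_j)$, and to exploit the classical fact that such operators are self-adjoint, hence have real spectrum. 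Concretely, suppose for contradiction that $S^n(x-E)$ has an eigenvalue $\lambda$ with $|\lambda|=1$, with eigenvector $\binom{u_0}{u_{-1}}$. Extend this to a sequence $(u_j)_{j\in\Z}$ by the recursion encoded in $S(x_j-E)$, namely $-u_{j+1} - u_{j-1} + x_j u_j = E u_j$ for $1 \le j \le n$, and then propagate periodically using $u_{j+n} = \lambda u_j$.

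First I would set up the finite computation. Take the vector $u = (u_0, u_1, \ldots, u_{n-1})$ and consider the quadratic form $\langle (H^n - E) u, u\rangle$ where $H^n$ is the $n\times n$ truncated tridiagonal matrix with diagonal $(x_0, \ldots, x_{n-1})$ — except that to make the boundary terms cancel I would instead use the \emph{periodic-with-phase} operator: on $\C^n$ define $H_\lambda$ to be the tridiagonal matrix with the extra corner entries $-\bar\lambda$ and $-\lambda$ (i.e. $(H_\lambda u)_0 = -u_1 - \lambda u_{n-1} + x_0 u_0$ and similarly at the other end), which is self-adjoint precisely because $|\lambda| = 1$ makes $\bar\lambda = \lambda^{-1}$. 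The eigenvector condition $S^n(x-E)\binom{u_0}{u_{-1}} = \lambda \binom{u_0}{u_{-1}}$ translates exactly into $H_\lambda u = E u$ with $u \neq 0$. Since $H_\lambda$ is self-adjoint, $E$ must be real, contradicting $E \in \C \setminus \R$. Hence no eigenvalue of modulus one exists and $S^n(x-E)$ is hyperbolic.

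The step I expect to require the most care is the bookkeeping that turns the statement "$\binom{u_0}{u_{-1}}$ is an eigenvector of the product $S^n(x-E)$ with eigenvalue $\lambda$" into "$u$ is an eigenvector of the corner-modified tridiagonal matrix $H_\lambda$ with eigenvalue $E$," and checking that $H_\lambda$ is genuinely self-adjoint when $|\lambda|=1$. This is the same translation used in Lemma~\ref{260122.2} between solutions of the recursion~\eqref{260122.1} and eigenvectors of $H^k_\zeta$, except now the boundary data is not zero but tied together by the phase $\lambda$; one must verify that the off-diagonal corner entries introduced are $-\lambda$ and $-\overline{\lambda}$ in the right positions so the matrix is Hermitian. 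An alternative, perhaps slicker, argument avoiding the corner matrix: multiply the recursion relation $(x_j - E)u_j = u_{j+1} + u_{j-1}$ by $\overline{u_j}$, sum $j$ from $1$ to $n$, and take imaginary parts; the potential terms $x_j |u_j|^2$ are real and drop out, the "bulk" hopping terms $u_{j+1}\overline{u_j} + u_{j-1}\overline{u_j}$ telescope into a boundary term, and using $u_{j+n} = \lambda u_j$ with $|\lambda|=1$ one finds $\Imp(E) \sum_j |u_j|^2 = 0$, forcing either $u \equiv 0$ or $\Imp E = 0$ — a contradiction either way. I would present this summation-by-parts version as the main line of proof since it sidesteps defining the modified operator, keeping only the self-adjointness heuristic as motivation.
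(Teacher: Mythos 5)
Your argument is correct, but it takes a genuinely different route from the paper's. The paper reduces (via a parity symmetry of the trace) to the case $\Imp E<0$ and then exhibits an invariant cone: the upper half-plane $U=\{\Imp z>0\}\subset \C\cup\{\infty\}\equiv\bP(\C^2)$ is mapped strictly inside itself by the M\"obius action of each factor $S(x_j-E)$, since $\Imp\bigl(x_j-E-\tfrac{1}{z}\bigr)\geq -\Imp E>0$; strict cone invariance then forces hyperbolicity. You instead rule out unit-modulus eigenvalues by a self-adjointness argument: a modulus-one Floquet multiplier $\lambda$ would make $E$ an eigenvalue of the corner-modified tridiagonal matrix $H_\lambda$ (with corner entries $-\lambda$ and $-\bar\lambda$), which is Hermitian precisely because $|\lambda|=1$, hence has real spectrum; your summation-by-parts variant is the same computation unwound, and the boundary terms do cancel exactly as you claim since $u_n\bar u_{n-1}=|\lambda|^2 u_0\bar u_{-1}=u_0\bar u_{-1}$. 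Your reduction from ``no eigenvalue of modulus one'' to ``hyperbolic'' is also handled correctly (a repeated eigenvalue of a determinant-one matrix must be $\pm1$). Both proofs are standard tools from one-dimensional spectral theory; the paper's cone argument is slightly more self-contained at the level of linear algebra and gives the contraction of the half-plane as a byproduct, whereas yours makes transparent \emph{why} the statement is true (self-adjointness of the underlying operator with quasi-periodic boundary conditions) and generalizes immediately to Jacobi matrices with nonconstant off-diagonal entries. The only point to be careful about in writing it up is the index bookkeeping tying the eigenvector of the product $S^n(x-E)$ to the boundary data of $H_\lambda$, which you have correctly flagged.
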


\begin{proof}
See~\cite[Lemma 2.4]{Av2011}. For the sake of completeness we provide a proof of this fact.
By induction the entries in the main diagonal of $S^n(x)$ are polynomials in the variables $x_1,\ldots, x_n$ of degrees $n$ and $n-2$, respectively,
whose monomials have degrees with same parity as $n$,
while the entries  on the second diagonal are polynomials  of degrees  $n-1$, whose monomials have degrees with same parity as $n-1$.
It follows that for all $x\in\R^n$,
$$  \tr(S^n(-x))=(-1)^n\, \tr(S^n(x)).$$
In particular $\tr(S^n(x-E))=\pm\, \tr(S^n(E-x))$
and we only need to consider the case  $\Imp E<0$.
In this case the open set $U:=\{z\in\C\colon \Imp z>0\}$ determines an open cone in $\bP(\C^2)\equiv\C\cup\{\infty\}$. The projective action of the matrices $S^n(x-E)$ with $x\in \R^n$ and $E\in U$ sends $\overline{U}$ inside of $U$. In fact, if $n=1$ and $z\in \overline{U}\backslash \{0\}$ (possibly $z=\infty$) then $-z^{-1}\in U$ and since $\Imp E<0$,
$$  \Imp(S(x_1 - E)\cdot z) = \Imp\left(
    x_1 - E -\frac{1}{z}
\right) \geq -\Imp(E)>0, $$
for every $x_1\in \R$. Otherwise if $z=0$, then $S(x_1 - E)\cdot z = \infty$ and the statement follows iterating once again. The existence of this invariant cone implies that $S^n(x-E)$ is hyperbolic. Similarly,
if $\Imp E>0$ we consider the open set
$U^-:=\{z\in\C\colon \Imp z<0\}$ and prove that, under the projective action, $S^n(x-E)$ sends $\overline{U^-}$ inside of $U^-$.
\end{proof}
\begin{proposition}\label{240122.2}

    For any $n\in \N$, if $|\tr(S^n(x))|<2$, then 
    \begin{align*}
        \frac{d}{dE}\tr\left(
            S^n(x - E)
        \right)\biggr\rvert_{E = 0}\neq 0.
    \end{align*}
\end{proposition}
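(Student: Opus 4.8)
The plan is to combine Lemma~\ref{Avila hyperbolic lemma} with an elementary root‑counting argument for real polynomials. Set $f(E):=\tr\bigl(S^n(x-E)\bigr)$. A straightforward induction on the entries of the product $S(x_n-E)\cdots S(x_1-E)$ shows that its $(1,1)$ and $(2,2)$ entries are polynomials in $E$ of degrees $n$ and $n-2$ respectively, the leading coefficient of the $(1,1)$ entry being $(-1)^n$; hence $f\in\R[E]$ has degree exactly $n$, and in particular $f-c$ has degree $n$ for every constant $c$. The hypothesis is $|f(0)|<2$ and we must show $f'(0)\neq 0$.

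The key input is Lemma~\ref{Avila hyperbolic lemma}: for every $E\in\C\setminus\R$ the matrix $S^n(x-E)$ is hyperbolic. More precisely, the invariant cone produced in its proof --- the half‑plane $\overline U$ is mapped \emph{strictly} inside $U$ under the projective action --- precludes $S^n(x-E)$ from being elliptic or parabolic (a M\"obius map carrying a round disk strictly into itself is loxodromic), so $\tr\bigl(S^n(x-E)\bigr)\notin[-2,2]$. Equivalently, $f^{-1}\bigl([-2,2]\bigr)\subseteq\R$. In particular, for every $c\in(-2,2)$ every root of $f-c$ lies in $f^{-1}([-2,2])\subseteq\R$, so all $n$ roots of $f-c$ are real (counted with multiplicity).

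Now suppose, for contradiction, that $f'(0)=0$, and write $c_0:=f(0)\in(-2,2)$. Let $k\ge 2$ be the multiplicity of $0$ as a root of $f-c_0$, so that $f(E)-c_0=a\,E^k+o(E^k)$ with $a\in\R\setminus\{0\}$. Choose $c\in(-2,2)$ very close to $c_0$, on the side of $c_0$ for which $(c-c_0)/a<0$ when $k$ is even (either side works when $k$ is odd). By continuity of the roots of a polynomial under perturbation of its coefficients, the $n$ roots of $f-c$ split into a cluster of $k$ roots near $0$ together with $n-k$ roots near the remaining (real) roots of $f-c_0$; the latter contribute at most $n-k$ real roots. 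The leading‑order equation $a\,E^k=c-c_0$ shows that the cluster near $0$ contains no real root when $k$ is even (by the choice of $c$) and exactly one when $k$ is odd; in either case at most one of these $k$ roots is real (consistently with the conjugation symmetry of the real polynomial $f-c$). Hence $f-c$ has at most $(n-k)+1\le n-1$ real roots, contradicting the previous paragraph. Therefore $f'(0)\neq 0$, i.e. $\frac{d}{dE}\tr\bigl(S^n(x-E)\bigr)\big|_{E=0}\neq 0$.

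The only delicate points are checking that the invariant cone in Lemma~\ref{Avila hyperbolic lemma} genuinely forces the trace outside $[-2,2]$, and making the root‑perturbation bookkeeping precise (continuity of roots, plus the local model $a\,E^k=c-c_0$ controlling how many of the $k$ roots clustered at $0$ can be real). Both are routine; the main conceptual step is recognizing that the rigidity ``$f^{-1}([-2,2])\subseteq\R$'' is exactly what is needed, since a multiple root of $f-c_0$ at the point $0$, where $|f(0)|<2$, would force $f-c$ to shed real roots for nearby $c\in(-2,2)$.
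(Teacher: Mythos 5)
Your proof is correct and is essentially the paper's own argument: both assume $f'(0)=0$, exploit the order-$k\geq 2$ critical point of $E\mapsto\tr(S^n(x-E))$ at $E=0$ to produce a non-real $E^\ast$ with $\tr(S^n(x-E^\ast))\in(-2,2)$, and then contradict Lemma~\ref{Avila hyperbolic lemma} via the eigenvalue relation $\lambda+\lambda^{-1}\in[-2,2]\Rightarrow|\lambda|=1$. Your intermediate global statement that $f-c$ has only real roots for $c\in(-2,2)$ is precisely the non-circular half of the paper's later Proposition~\ref{log concave property} (whose other half, simplicity of the roots, is what this proposition is used to prove), and your perturbative root-counting at the multiple root is a sound, if slightly more elaborate, version of the paper's direct observation that $\psi(0)+E^k\Psi(E)$ with $k\geq2$ takes real values at non-real points.
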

\begin{proof}
    Define the analytic function $\psi:\C\to \C$ given by
    \begin{align*}
        \psi(E) := \tr\left(
            S^n(x - E)
        \right).
    \end{align*}
    Observe that $\psi$ is real in the sense that $\psi(E)\in \R$ for every $E\in \R$. By assumption $|\psi(0)| < 2$ and so there exists a radius $r_0>0$ such that for every $E$ in the disk centered in $0$ and radius $r_0$, $\mathbb{D}_{r_0}(0)$, we have that $|\psi(E)|< 2$.
    
    Assume by contradiction that $\psi'(0) = 0$. By analiticity of $\psi$, we can write
    \begin{align*}
        \psi(E) = \psi(0) + E^k\Psi(E),
    \end{align*}
    in a neighborhood of $0$, where $k\geq 2$ and $\Psi(0)\neq 0$. In particular, there exists $E^\ast\in \D_{r_0}(0)\backslash \R$ such that $\psi(E^\ast)\in \R$. As a consequence, we conclude that if $\lambda$ and $1/\lambda$ are the eigenvalues of $S^n(x-E^\ast)$, then $\lambda + \lambda^{-1}\in \R$. But, that can only happen if either $|\lambda| = 1$ or else $|\lambda|\neq 1$ and $\lambda$ itself is real. The former can not happen since by Lemma~\ref{Avila hyperbolic lemma} the matrix $S^n(x - E^\ast)$ is hyperbolic.
    The latter implies  that
    \begin{align*}
        \left|
            \tr\left(
                S^n(x - E^\ast)
            \right) 
        \right| = \left|
            \lambda + \frac{1}{\lambda}
        \right| > 2 .
    \end{align*}
    This contradicts the fact that $|\psi(E^\ast)|<2$ and proves the result.
\end{proof}

\begin{lemma}\label{260122.6}
    If $\lcCocycle_0^m(\zeta_0)$ is an elliptic element for some $\zeta_0\in \prodTower$, then
    \begin{align*}
        \frac{d}{dE}\tr(A^m_E(\zeta_0))\biggr\rvert_{E=0}\neq 0.
    \end{align*}
\end{lemma}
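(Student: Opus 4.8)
The plan is to recognize that $A^m_E(\zeta_0)$ is exactly a product of the form $S^m(x-E)$ studied in Proposition~\ref{240122.2}, and then to translate the hypothesis ``$\lcCocycle_0^m(\zeta_0)$ is elliptic'' into the trace condition $|\tr(S^m(x))|<2$ required by that proposition.

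First I would unwind the definition of the Schr\"odinger family: since $\lcCocycle_E(\zeta)=S(\phi(\zeta)-E)$, the cocycle iterate is
\begin{align*}
    A^m_E(\zeta_0) = S\bigl(\phi(\sigma^{m-1}\zeta_0)-E\bigr)\,\cdots\,S\bigl(\phi(\sigma\,\zeta_0)-E\bigr)\,S\bigl(\phi(\zeta_0)-E\bigr).
\end{align*}
Setting $x := \bigl(\phi(\zeta_0),\,\phi(\sigma\zeta_0),\,\ldots,\,\phi(\sigma^{m-1}\zeta_0)\bigr)\in\R^m$, this is precisely $A^m_E(\zeta_0)=S^m(x-E)$ in the notation of the previous subsection, and in particular $\lcCocycle_0^m(\zeta_0)=A^m_0(\zeta_0)=S^m(x)$.

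Next I would invoke the standard $\SL_2(\R)$ trichotomy recalled in Section~\ref{definitions and notations}: a matrix is elliptic exactly when the absolute value of its trace is strictly less than $2$. Hence the hypothesis that $S^m(x)$ is elliptic is equivalent to $|\tr(S^m(x))|<2$. Proposition~\ref{240122.2}, applied to this $x$, then gives directly $\frac{d}{dE}\tr(S^m(x-E))\big|_{E=0}\neq 0$, which is the claimed conclusion since $\tr(A^m_E(\zeta_0))=\tr(S^m(x-E))$.

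I do not anticipate a genuine obstacle here: the lemma is essentially a repackaging of Proposition~\ref{240122.2} through the embedding construction of Section~\ref{250122.1}. The only point requiring a little care is bookkeeping the order of factors and checking that the scalar $E$ enters each Schr\"odinger factor the same way in $A^m_E$ as in the definition of $S^m(x-E)$, so that the two trace functions of $E$ literally coincide rather than merely agreeing at $E=0$. \cqd
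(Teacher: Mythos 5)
Your proposal is correct and is exactly the paper's argument: the paper's proof of this lemma is the one-line remark that it is a direct consequence of Proposition~\ref{240122.2}, and you have simply spelled out the identification $A^m_E(\zeta_0)=S^m(x-E)$ and the translation of ellipticity into $|\tr|<2$ that this reduction implicitly requires.
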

\begin{proof}
    Direct consequence of Proposition \ref{240122.2}.
\end{proof}

\begin{proposition}
\label{log concave property}
Given a Schr\"odinger cocycle $A_E: X\to \SL_2(\R)$   with continuous potential $\phi:X\to \R$ and generated by the dynamical system $(X, T, \xi)$,   for all $n\in\N$, $\rho\in (-2,\, 2)$ and $x\in X$,
the polynomial $f_{\rho}:\R\to\R$, $f(E):=\tr ( A_E^n(x)) - \rho$, has $n$ distinct real roots.
\end{proposition}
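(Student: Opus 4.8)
The plan is to realise the real zeros of $f_\rho$ as the spectrum of a Hermitian $n\times n$ matrix (a Bloch/Floquet matrix for one ``period'' of the potential), and then to invoke Proposition \ref{240122.2} to conclude that every such zero is simple. Write $w_j := \phi(T^{j-1}x)$ for $j=1,\dots,n$, so that $A_E^n(x) = S^n(w-E) = S(w_n-E)\cdots S(w_1-E)$, and set $\psi(E) := \tr\big(A_E^n(x)\big)$. By the elementary induction already recorded in the proof of Lemma \ref{Avila hyperbolic lemma} (the two diagonal entries of $S^n$ are polynomials in $w_1,\dots,w_n$ of degrees $n$ and $n-2$), $\psi$ is a real polynomial of degree exactly $n$ in $E$ with leading coefficient $(-1)^n$. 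Hence $f_\rho=\psi-\rho$ has exactly $n$ roots in $\C$ counted with multiplicity and at most $n$ real ones, so it suffices to exhibit $n$ distinct real zeros.

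First I would fix $\theta\in(0,\pi)$ with $\rho = 2\cos\theta$ and introduce the $n\times n$ Hermitian matrix $H_\theta$: tridiagonal with diagonal $(w_1,\dots,w_n)$ and sub/super-diagonal entries $-1$, and with the two corner entries $(H_\theta)_{1,n}=-e^{-i\theta}$, $(H_\theta)_{n,1}=-e^{i\theta}$ (when $n\le 2$ the corner and off-diagonal slots coincide and the contributions simply add). The key claim is: a real number $E$ is an eigenvalue of $H_\theta$ if and only if $e^{i\theta}$ is an eigenvalue of the monodromy matrix $A_E^n(x)$. To verify it, given $(v_1,\dots,v_n)$ put $v_0:=e^{-i\theta}v_n$ and $v_{n+1}:=e^{i\theta}v_1$; then $H_\theta(v_1,\dots,v_n)^{\mathsf T} = E(v_1,\dots,v_n)^{\mathsf T}$ is equivalent to the three-term recurrence $-v_{j-1}-v_{j+1}+w_j v_j = E v_j$ holding for $j=1,\dots,n$ together with $\binom{v_{n+1}}{v_n} = e^{i\theta}\binom{v_1}{v_0}$, and since $\binom{v_{j+1}}{v_j}=S(w_j-E)\binom{v_j}{v_{j-1}}$ this says precisely that $\binom{v_1}{v_0}\neq 0$ is an $e^{i\theta}$-eigenvector of $A_E^n(x)$.

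Now $A_E^n(x)\in\SL_2(\R)$, so its two eigenvalues are $\lambda,\lambda^{-1}$ with $\lambda+\lambda^{-1}=\psi(E)$; because $\theta\in(0,\pi)$ one has $e^{i\theta}\neq e^{-i\theta}$ and $e^{\pm i\theta}\neq\pm1$, and therefore $e^{i\theta}$ is an eigenvalue of $A_E^n(x)$ exactly when $\psi(E)=e^{i\theta}+e^{-i\theta}=\rho$. Consequently the real zeros of $f_\rho$ are exactly the eigenvalues of $H_\theta$; being Hermitian of size $n$, $H_\theta$ has $n$ of them counted with multiplicity, so comparing with the degree count above forces all $n$ roots of $f_\rho$ to be real. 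Finally, at any zero $E_0$ one has $\psi(E_0)=\rho\in(-2,2)$, i.e. $|\tr S^n(w')|<2$ for $w':=w-E_0(1,\dots,1)$; hence Proposition \ref{240122.2} gives $0\neq \tfrac{d}{dE}\tr\big(S^n(w'-E)\big)\big|_{E=0} = \psi'(E_0)$, so $E_0$ is a simple root. (Alternatively, simplicity follows directly from the correspondence: a real $2\times2$ matrix cannot equal the non-real scalar $e^{i\theta}I$, so the $e^{i\theta}$-eigenspace of $A^n_{E_0}(x)$ is one-dimensional.) Thus $f_\rho$ has $n$ distinct real roots.

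I expect the only real work to be the bookkeeping in the Floquet correspondence of the second paragraph: pinning down the corner entries of $H_\theta$ and the boundary identifications $v_0=e^{-i\theta}v_n$, $v_{n+1}=e^{i\theta}v_1$ so that the eigenvalue equation converts exactly into the recurrence together with a quasiperiodicity condition (including the small cases $n=1,2$ where corner and off-diagonal entries overlap), and being careful to use $\theta\in(0,\pi)$ to exclude the degenerate values $\psi(E)=\pm2$, where the monodromy matrix need not be diagonalisable. The degree computation and the application of Proposition \ref{240122.2} are routine.
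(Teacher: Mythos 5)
Your approach is genuinely different from the paper's and, modulo one step that needs tightening, it works. The paper argues directly about the polynomial $f_\rho$: it excludes real multiple roots via Proposition~\ref{240122.2} (exactly as you do at the end), and it excludes non-real roots by invoking Lemma~\ref{Avila hyperbolic lemma} — for $E_0\in\C\setminus\R$ the matrix $S^n(x-E_0)$ is hyperbolic, whereas $\tr=\rho\in(-2,2)$ would force its eigenvalues onto the unit circle. Your Floquet/Bloch argument replaces that second step by realising the real zeros of $f_\rho$ as the spectrum of the Hermitian matrix $H_\theta$, which is the standard mechanism behind this statement in the Schr\"odinger literature; it buys a more structural explanation (and would also localise the roots in the spectrum of $H_\theta$), at the cost of the boundary-condition bookkeeping, which you carry out correctly.

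The step that does not follow as written is ``being Hermitian of size $n$, $H_\theta$ has $n$ eigenvalues counted with multiplicity, so the degree count forces all $n$ roots of $f_\rho$ to be real.'' Your correspondence identifies the \emph{set} of real zeros of $f_\rho$ with the \emph{set} of eigenvalues of $H_\theta$; it does not match multiplicities. A priori $H_\theta$ could have a repeated eigenvalue, in which case it would contribute only one point to the zero set of $f_\rho$ and the count would not close (e.g.\ $n-2$ distinct real zeros of $f_\rho$ plus two non-real ones is consistent with everything you proved up to that point). The fix is already contained in your parenthetical remark, but it must be applied to $H_\theta$ rather than to $f_\rho$: the map sending an eigenvector $v$ of $H_\theta$ to $\bigl(v_1, v_0\bigr)$ is injective into the $e^{i\theta}$-eigenspace of the real matrix $A_{E}^n(x)$, which is one-dimensional since $e^{i\theta}\notin\R$; hence every eigenvalue of $H_\theta$ has geometric, and therefore (Hermitian) algebraic, multiplicity one. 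Thus $H_\theta$ has $n$ \emph{distinct} eigenvalues, each a real zero of the degree-$n$ polynomial $f_\rho$, and the conclusion follows at once — making the final appeal to Proposition~\ref{240122.2} unnecessary. (Alternatively one can upgrade the eigenvector correspondence to the polynomial identity $\det(E\,I-H_\theta)=(-1)^n f_\rho(E)$, but that requires a further computation you have not done.)
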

\begin{proof}
This polynomial can not have a real root $E_0$  with multiplicity $\geq 2$ because this would imply that
$f_{\rho}(E_0)=f'_{\rho}(E_0)=0$, contradicting the conclusion of Proposition~\ref{260122.6}.
To see that it can not have complex non-real roots, assume  that there exists $E_0\in\C\setminus\R$ such that $f_{\rho}(E_0)=0$.
By Lemma~\ref{Avila hyperbolic lemma}, the matrix $S^n(x-E_0)$ is hyperbolic. Denoting by $\lambda$ and $\lambda^{-1}$ the eigenvalues of $S^n(x-E_0)$  we have
$$ \rho = \tr(S^n(x-E_0)) = \lambda+  \lambda^{-1}   $$
which implies that $|\lambda|= 1$. Therefore the matrix
$S^n(x-E_0)$ can not be hyperbolic. This contradiction proves that $f_{\rho}(E)$ can not have complex non-real roots.
\end{proof}

\begin{corollary}
\label{Morse property}
In the previous context,
 $f:\R\to\R$, $f(E):=\tr ( A_E^n(x))$,   is a Morse function, with
$f(E)\geq 2$ at local maxima, and
$f(E)\leq -2$ at local minima.
\end{corollary}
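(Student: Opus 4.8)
The plan is to read off both assertions from Proposition~\ref{log concave property} together with elementary facts about real polynomials. First note that $f(E):=\tr(A_E^n(x))$ is a real polynomial in $E$ of degree at most $n$, since $A_E^n(x)$ is a product of $n$ matrices whose entries depend affinely on $E$. Fixing any $\rho\in(-2,2)$, Proposition~\ref{log concave property} says that $f-\rho$ has $n$ \emph{distinct} real roots $\lambda_1<\cdots<\lambda_n$; in particular $\deg f=n$ and these are \emph{all} the roots of $f-\rho$, each simple. By Rolle's theorem $f'$ (which has degree $n-1$) has a zero in each of the $n-1$ disjoint intervals $(\lambda_i,\lambda_{i+1})$, so $f'$ has exactly $n-1$ real zeros and they are all simple. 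Hence every critical point of $f$ is non-degenerate, i.e. $f$ is a Morse function.

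Next I would locate the critical points relative to the $\lambda_i$ and determine the sign of $f$ there. Let $E_1<\cdots<E_{n-1}$ be the critical points. Since $f'$ has a sign change at each $E_i$, the $E_i$ alternate between local maxima and local minima, and $f$ is strictly monotone on each of the $n$ intervals $(-\infty,E_1],[E_1,E_2],\ldots,[E_{n-1},+\infty)$. A critical point cannot be a root of $f-\rho$ (all roots are simple, yet $f'$ vanishes at $E_i$), so each of the $n$ roots of $f-\rho$ lies in the interior of one of these monotonicity intervals, at most one per interval, hence exactly one per interval; this forces the interlacing $\lambda_1<E_1<\lambda_2<\cdots<E_{n-1}<\lambda_n$. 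If $E_i$ is a local maximum, then $f$ is increasing on $[E_{i-1},E_i]$, which contains $\lambda_i$, so $f(E_i)>f(\lambda_i)=\rho$; since this holds for every $\rho\in(-2,2)$ we conclude $f(E_i)\geq 2$. Symmetrically, $f(E_i)\leq -2$ at every local minimum.

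I expect no genuine difficulty here: the one point that needs a little care is the bookkeeping in the second step --- matching the $n-1$ critical points with the $n-1$ gaps between consecutive roots of $f-\rho$, checking on which side of $\rho$ the graph of $f$ sits on each monotonicity interval, and noticing that the resulting strict inequality is uniform in $\rho\in(-2,2)$ so that it passes to the closed bound as $\rho\to\pm 2$. No input is required beyond Proposition~\ref{log concave property} (and hence, ultimately, the derivative estimate of Proposition~\ref{240122.2}); the case $n=1$ is vacuous.
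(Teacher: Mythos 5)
Your proposal is correct and follows essentially the same route as the paper: apply Proposition~\ref{log concave property} to get $n$ distinct real roots of $f-\rho$, deduce via Rolle's theorem that all $n-1$ critical points of $f$ are simple (hence $f$ is Morse), and then rule out critical values in $(-2,2)$. The paper's own proof is just a terser version of this, locating each critical point between consecutive roots of $f$ and reading off the sign there, whereas you obtain the bounds $f\geq 2$ (resp.\ $f\leq -2$) by letting $\rho\to\pm 2$; both are fine.
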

\begin{proof}
    Since $f$ has $n$ different real roots, $f'$ has $n-1$ different real roots and for any pair $a < b$ of roots of $f$, there exists a unique $c\in (a,b)$ root of $f'$. Moreover, by Proposition \ref{log concave property}, if $f''(c)<0$, then $f(c)\geq 2$ and similarly, $f''(c)>0$ implies $f(c)\leq -2$.
\end{proof}

\subsection{Density of tangencies}
In this subsection we prove that cocycles with heteroclinic tangencies are dense outside the class of uniformly hyperbolic cocycles.

Let $\lcCocycle^{4\ell_0}_0(\zeta_0)$ be an elliptic matrix and $\ellipticDelta>0$ be such that $R_E = \lcCocycle^{4\ell_0}_E(\zeta_0)$ is elliptic for every $|E|\leq \ellipticDelta$.

\begin{lemma}\label{lem:230522.1}
    There exist $c>0$ such that for every $m\geq 1$, every $E\in [- \delta_0,\delta_0]$ and every $\hat{v}\in \bP^1$ we have
    \begin{align*}
        \left|
            \frac{d}{dE}R^m_E\, \hat{v}
        \right|\geq m\, c. 
    \end{align*}
\end{lemma}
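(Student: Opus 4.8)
The plan is to conjugate $R_E$ to a genuine rotation by an $E$-dependent conjugator that stays uniformly controlled on $[-\delta_0,\delta_0]$, so that the rotation angle carries \emph{all} the fast dependence on $E$ while everything else remains bounded independently of $m$; the lower bound will then come from controlling the derivative of the rotation angle via the trace.

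\textbf{Uniform trace control.} Write $R_E = S^{4\ell_0}(y-E)$ with $y\in\R^{4\ell_0}$, $y_j=\phi(\sigma^{j-1}\zeta_0)$. Since $R_E$ is elliptic for every $E\in[-\delta_0,\delta_0]$, we have $|\tr R_E|<2$ there, hence $\max_{|E|\le\delta_0}|\tr R_E|<2$ by compactness. Applying Proposition~\ref{240122.2} after translating the energy variable (for any $E_1$ with $|\tr S^{4\ell_0}(y-E_1)|<2$, the derivative of $E\mapsto\tr S^{4\ell_0}(y-E)$ at $E_1$ is nonzero), we get $\frac{d}{dE}\tr R_E\neq0$ for every $E\in[-\delta_0,\delta_0]$, so $\min_{|E|\le\delta_0}\bigl|\frac{d}{dE}\tr R_E\bigr|=:c_0>0$.

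\textbf{Analytic conjugation to $SO(2)$.} Since $R_E$ is elliptic it has a unique fixed point $z_E$ in the upper half-plane for the M\"obius action, and its lower-left entry cannot vanish while $R_E$ is elliptic, so $z_E$ depends analytically on $E$ and, by continuity and compactness, takes values in a fixed compact subset of $\mathbb{H}$. Choosing the analytic family $g_E\in\SL_2(\R)$ with $g_E\cdot i=z_E$ (upper-triangular normalisation) yields $R_E=g_E\,\mathrm{Rot}(\omega_E)\,g_E^{-1}$ with $2\cos\omega_E=\tr R_E$, where $\omega_E$ is analytic and $\|g_E^{\pm1}\|$, $\|\tfrac{d}{dE}g_E^{\pm1}\|$ are uniformly bounded on $[-\delta_0,\delta_0]$. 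From the previous step, $|\sin\omega_E|$ is bounded away from $0$ and $|\omega_E'|=\bigl|\tfrac{d}{dE}\tr R_E\bigr|/(2|\sin\omega_E|)\ge c_1>0$ uniformly.

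\textbf{Isolating the fast term and estimating.} Let $\gamma_E\colon\bP^1\to\bP^1$ be the projective action of $g_E$; its derivative lies in $[\|g_E\|^{-2},\|g_E^{-1}\|^2]$, hence is bounded above and below by positive constants, and $\partial_E\gamma_E$ is bounded, uniformly in $E$. Since $R^m_E=g_E\,\mathrm{Rot}(m\omega_E)\,g_E^{-1}$, the point $R^m_E\hat v$ has angle $\gamma_E\bigl(\gamma_E^{-1}(\hat v)+m\omega_E\bigr)$, so
\[
\tfrac{d}{dE}\bigl(R^m_E\hat v\bigr)=\gamma_E'\!\bigl(\gamma_E^{-1}(\hat v)+m\omega_E\bigr)\Bigl(m\,\omega_E'-\tfrac{(\partial_E\gamma_E)(\gamma_E^{-1}(\hat v))}{\gamma_E'(\gamma_E^{-1}(\hat v))}\Bigr)+(\partial_E\gamma_E)\bigl(\gamma_E^{-1}(\hat v)+m\omega_E\bigr).
\]
Every factor except $m\,\omega_E'$ is bounded in absolute value uniformly in $m,E,\hat v$, and $\gamma_E'$ is moreover bounded below; hence $\bigl|\tfrac{d}{dE}R^m_E\hat v\bigr|\ge a\,m\,c_1-b$ for constants $a,b>0$. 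This gives $\bigl|\tfrac{d}{dE}R^m_E\hat v\bigr|\ge\tfrac{a c_1}{2}m$ as soon as $m\ge m_0:=2b/(ac_1)$. For the finitely many $1\le m<m_0$ the function $(E,\hat v)\mapsto\tfrac{d}{dE}R^m_E\hat v$ is continuous on the compact set $[-\delta_0,\delta_0]\times\bP^1$ and one checks it is bounded below there by a positive constant $c_m$ (again via Proposition~\ref{240122.2}, resp.\ Corollary~\ref{Morse property}, which prevents degeneracy of this family of elliptic matrices); taking $c:=\min\{\tfrac{a c_1}{2}\}\cup\{c_m/m:1\le m<m_0\}$ finishes the proof.

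\textbf{Main obstacle.} The conceptual heart is obtaining an honestly \emph{analytic}, \emph{uniformly bounded} conjugator $g_E$ so that $\omega_E$ absorbs all the fast variation; once this is in place, $|\omega_E'|\gtrsim1$ reduces directly to Proposition~\ref{240122.2}. The only delicate bookkeeping is ruling out that the bounded error terms conspire with the principal term $m\,\omega_E'$ for \emph{small} $m$ — which is why the argument is organised so that the substantial estimate only needs $m\ge m_0$, the remaining finitely many values being absorbed by compactness (and being in any case irrelevant to the applications of the lemma, where $m$ is taken large).
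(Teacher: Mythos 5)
Your argument is correct in substance but takes a genuinely different route from the paper. The paper's proof is much shorter: for each $E_0$ it picks an inner product on $\R^2$ making $R_{E_0}$ a rotation, so that in Proposition~\ref{winding derivative formula} all the denominators $\norm{R_{E_0}^j\bvec}^2$ equal $1$ and the derivative becomes the sum $\sum_{j=1}^m R_{E_0}\bvec_{j-1}\wedge \dot R_{E_0}\bvec_{j-1}$ of $m$ terms which, by the positive-definiteness argument of Lemma~\ref{Schrodinger winding}, are each bounded below by a positive constant; uniformity in $E_0$ comes from the uniform equivalence of the adapted norms. This exploits the monotonicity of the winding (no cancellation among the $m$ terms) and yields $mc$ directly, with no error term. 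Your conjugation $R_E=g_E\,\mathrm{Rot}(\omega_E)\,g_E^{-1}$ instead isolates the rotation number and lower-bounds $|\omega_E'|$ via the trace derivative (Proposition~\ref{240122.2}), which is a clean and correct way to see where the factor $m$ comes from; the price is the additive error $-b$ and hence the need to treat $1\leq m<m_0$ separately.

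That last patch is the one soft spot. For small $m$ you need the derivative of $E\mapsto R_E^m\hat v$ to be nonvanishing on the compact set $[-\delta_0,\delta_0]\times\bP^1$, and the references you give (Proposition~\ref{240122.2}, Corollary~\ref{Morse property}) concern the trace, not the projective derivative at a given $\hat v$; in general a nonzero trace derivative does not preclude the projective derivative from vanishing at some direction, so as written this step is not justified. The correct tool is Lemma~\ref{Schrodinger winding} (equivalently Lemma~\ref{lem:windingProperty}): since $R_E^m$ is a Schr\"odinger word of length $4\ell_0 m\geq 2$, its projective derivative is everywhere $\geq c_*>0$, which disposes of the finitely many small $m$ at once (and, used from the start as the paper does, removes the need for the conjugation altogether).
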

\begin{proof}
Take $E_0\in [-\delta_0,\delta_0]$ and an inner product in $\R^2$
for which $R_{E_0}$ is a rotation. Then by Proposition~\ref{winding derivative formula} and Lemma~\ref{Schrodinger winding}
\begin{align*}
\frac{1}{m}\, \left|
    \frac{d}{dE}  R_E^m\, \hv \biggr\rvert_{E=E_0} 
\right|
 &=  \frac{1}{m}\, \sum_{j=1}^{m}   R_{E_0} \, \bvec_{j-1}  \wedge  \dot R_{E_0}\, \bvec_{j-1}  
\end{align*}
is bounded away from $0$.
Notice that by compactness of $[-\delta_0,\delta_0]$, all norms associated with inner products that turn the matrices $R_E$ into rotations are uniformly equivalent.
\end{proof}

\begin{proposition}
\label{irrational elliptic existence}
Given $\zeta\in  \prodTower$, $E\in [-\delta_0,\delta_0]$ and $\ell\in\N$, if   $\lcCocycle_{E_0}^{\ell}(\zeta)$ is parabolic then there exist $E$ arbitrary close to $E_0$ such that  $\lcCocycle_{E}^{\ell}(\zeta)$
is   irrational elliptic.
\end{proposition}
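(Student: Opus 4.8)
The plan is to use the fact that the trace function $E \mapsto \tr(\lcCocycle_E^\ell(\zeta))$ is a real-analytic function of $E$ which, by Corollary~\ref{Morse property}, is a Morse function taking value $\geq 2$ at every local maximum and $\leq -2$ at every local minimum, and whose derivative never vanishes wherever $|\tr| < 2$ (Proposition~\ref{260122.6}). Since $\lcCocycle_{E_0}^\ell(\zeta)$ is parabolic, we have $|\tr(\lcCocycle_{E_0}^\ell(\zeta))| = 2$, say $\tr(\lcCocycle_{E_0}^\ell(\zeta)) = 2$ (the case $-2$ is symmetric). I claim $E_0$ is a strict local maximum of $f(E) := \tr(\lcCocycle_E^\ell(\zeta))$: indeed $f$ cannot exceed $2$ in a neighborhood of $E_0$, because if $f(E_1) > 2$ for some $E_1$ near $E_0$ then by the intermediate value theorem $f$ would take every value in $(2, f(E_1))$, but Corollary~\ref{Morse property} forces $f$ to pass through a local maximum with value $\geq 2$ only, not a continuum of values bounded below by something $> 2$ on both sides — more carefully, near a point where $f = 2$, since $f'(E_0)$ could be zero or not, I should argue directly from the structure: $f$ has $\ell$ distinct real roots (Proposition~\ref{log concave property}), hence $f^{-1}((-2,2))$ is a union of open intervals on each of which $f$ is strictly monotone, and $f^{-1}((2,\infty))$ consists of "humps" each containing exactly one critical point. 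A parabolic value $f(E_0) = 2$ is therefore an endpoint of such a hump, so on one side $f$ decreases into $(-2,2)$ and on the other side $f$ may either stay $= 2$ only momentarily (impossible by analyticity unless constant) or rise above $2$.

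So the relevant dichotomy is: either arbitrarily close to $E_0$ on one side we have $f(E) \in (-2,2)$, in which case we are essentially done, or else $f(E) \geq 2$ on a full one-sided neighborhood and $f(E) \in (-2,2)$ only on the other side — but in either case there is a sequence $E_n \to E_0$ with $f(E_n) \in (-2,2)$, i.e. $\lcCocycle_{E_n}^\ell(\zeta)$ is elliptic. This uses that $f$ is not locally constant equal to $2$, which holds because a nonconstant real-analytic function cannot be constant on an interval, and $f$ is nonconstant since it has finitely many roots (indeed $f \equiv 2$ would contradict Proposition~\ref{log concave property} applied with $\rho$ slightly less than $2$, as $f - \rho$ would have no real roots). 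Thus the set of $E$ near $E_0$ for which $\lcCocycle_E^\ell(\zeta)$ is elliptic accumulates at $E_0$; in fact it contains a punctured interval on at least one side.

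It remains to upgrade "elliptic" to "irrational elliptic", i.e. to arrange that the rotation number (the argument of the eigenvalue, normalized) is irrational. Consider the rotation angle $\theta(E) \in (0,\pi)$ determined by $\tr(\lcCocycle_E^\ell(\zeta)) = 2\cos\theta(E)$ on the interval(s) where the matrix is elliptic. As $E \to E_0$ from the elliptic side, $f(E) \to 2$, so $\theta(E) \to 0$; in particular $\theta$ is a nonconstant continuous (indeed real-analytic, on the open elliptic locus) function of $E$ on a small interval $J$ with $E_0$ as an endpoint, and its image is an interval of positive length accumulating at $0$. Since a nonempty open interval contains irrational multiples of $\pi$ (equivalently, the rotation number $\theta/(2\pi)$ takes irrational values on any nonempty open interval, the irrationals being dense), we may pick $E \in J$ arbitrarily close to $E_0$ with $\theta(E)/\pi$ irrational; for such $E$, $\lcCocycle_E^\ell(\zeta)$ is conjugate to a rotation by an angle that is an irrational multiple of $2\pi$, i.e. irrational elliptic. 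This completes the argument.

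**Main obstacle.** The only delicate point is ruling out that $f$ is identically $2$ on a one-sided neighborhood of $E_0$ (which would make $E_0$ fail to be an accumulation point of the elliptic locus); this is dispatched by real-analyticity of $E \mapsto \tr(\lcCocycle_E^\ell(\zeta))$ together with Proposition~\ref{log concave property}, which guarantees $f$ is genuinely nonconstant. Everything else is soft: monotonicity/Morse structure of $f$ from Corollary~\ref{Morse property}, the intermediate value theorem for $\theta(E)$, and density of irrationals.
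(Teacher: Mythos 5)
Your proof is correct and follows essentially the same route as the paper: analyze $f(E)=\tr(\lcCocycle_E^\ell(\zeta))$ via Proposition~\ref{log concave property} and Corollary~\ref{Morse property} to conclude that the elliptic locus accumulates at $E_0$ on at least one side. You additionally make explicit the upgrade from elliptic to \emph{irrational} elliptic (nonconstancy of the rotation angle on the elliptic locus plus density of irrationals), a step the paper's own proof leaves implicit.
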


\begin{proof}
Consider the function $f:\R\to\R$, $f(E):=\tr (A_E^\ell(\zeta))$.
Assume $\lcCocycle_{E_0}^{\ell}(\zeta)$ parabolic, i.e., $f(E_0)=\pm 2$. When $f'(E_0)\neq 0$, all matrices 
$A_E^{\ell}(\zeta)$ are elliptic in a $1$-sided neighborhood of $E_0$.
On the other hand, if $f'(E_0)=0$ by Corollary~\ref{Morse property}  all matrices 
$A_E^\ell(\zeta)$ are elliptic in a $2$-sided neighborhood of $E_0$.
\end{proof}

\begin{figure}[h]
    \centering
    \includegraphics[width=0.5\textwidth]{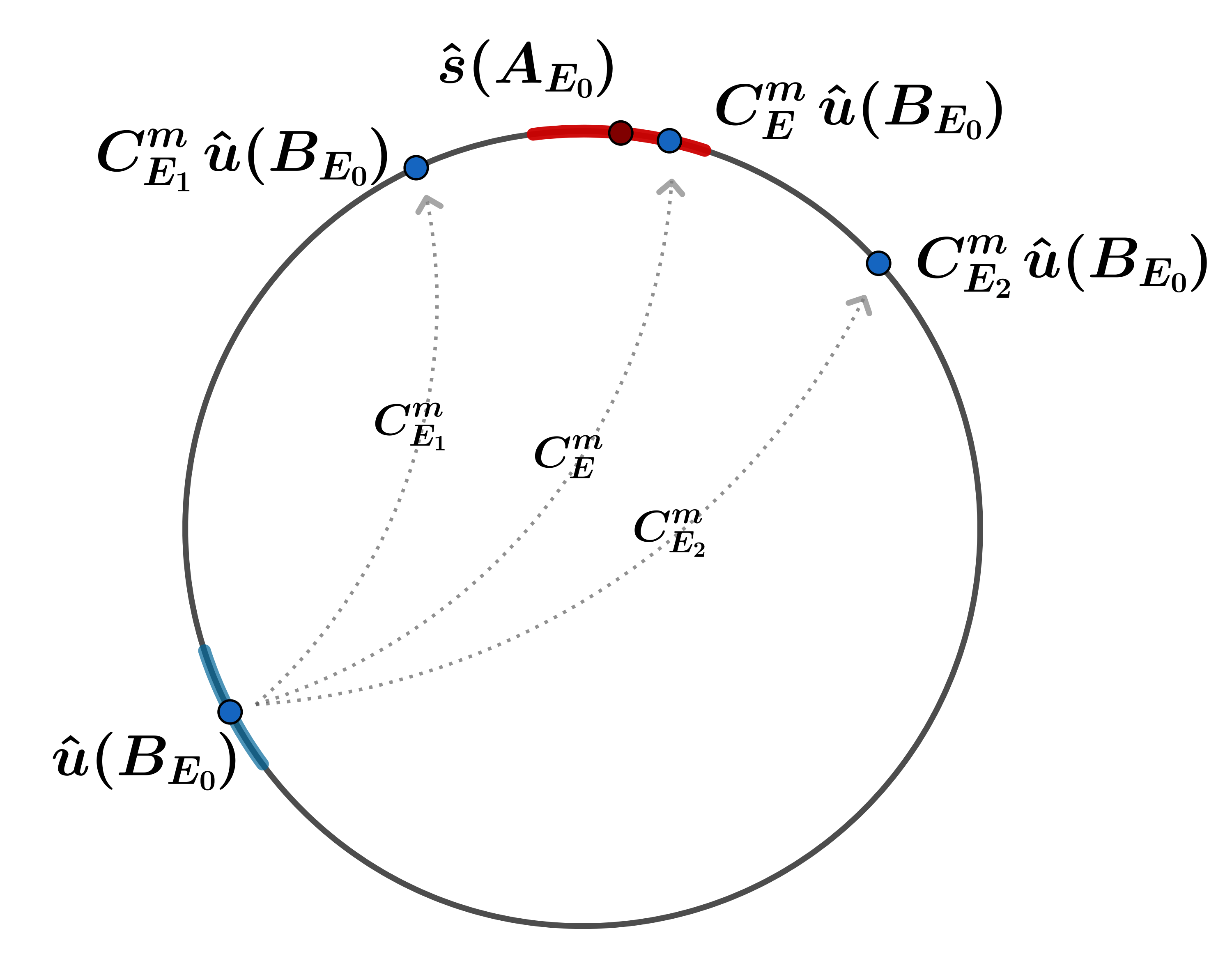}
    \caption{Creation of Tangencies}
    \label{fig:my_label0}
\end{figure}
\begin{proposition}
\label{heteroclinic tangencies  existence}
Assume $\Le(\mu_{E_0})>0$ and $\lcCocycle^{4l}_{E_0}$ is not uniformly hyperbolic, then there exist $E$ arbitrary close to $E_0$ at  which  $\mu_{E}$
admits heteroclinic tangencies.
\end{proposition}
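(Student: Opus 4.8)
The plan is to invoke the dichotomy from Theorem 4.1 of~\cite{ABY10} for the non-uniformly hyperbolic cocycle $\lcCocycle_{E_0}$ (after replacing $\lcCocycle^{4l}_{E_0}$ by the locally constant cocycle $\lcCocycle_{(E_0)}$ it generates, using the conjugation of Section~\ref{270122.1} so that hyperbolicity, ellipticity, and tangencies transfer between the two pictures). Since $\Le(\mu_{E_0})>0$ and $\mu_{E_0}$ is not uniformly hyperbolic, the semigroup $\Gamma_{\mu_{E_0}}$ contains hyperbolic elements and, by~\cite[Theorem 4.1]{ABY10}, either (a) $\Gamma_{\mu_{E_0}}$ already contains a heteroclinic tangency, in which case we are done with $E=E_0$, or (b) $\Gamma_{\mu_{E_0}}$ contains a non-hyperbolic element, i.e.\ a word $\lcCocycle^{m}_{E_0}(\zeta)$ that is elliptic or parabolic for some $\zeta\in\prodTower$ and $m\in\N$. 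In case (b), Proposition~\ref{irrational elliptic existence} lets us perturb $E_0$ slightly (if the word is parabolic, first make it elliptic) so that for the perturbed parameter the corresponding word $R_E:=\lcCocycle^{m}_E(\zeta)$ is \emph{irrational} elliptic, while by an open-ness argument the cocycle stays non-uniformly hyperbolic and $\Gamma$ still contains hyperbolic matrices $A,B$ with distinct stable/unstable directions (positivity of the Lyapunov exponent and continuity in $E$ guarantee this for $|E-E_0|$ small).

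The core of the argument is then the classical creation-of-tangencies mechanism sketched in Figure~\ref{fig:my_label0}. Having a hyperbolic $A\in\Gamma$ with stable direction $\hat s(A)$ and unstable direction $\hat u(A)$, and an irrational elliptic $R=R_E\in\Gamma$ whose powers $R^k$ act on $\bP^1$ as iterates of an irrational rotation, the orbit $\{R^k\,\hat u(A)\}_{k\ge 0}$ is dense in $\bP^1$; hence we may choose $k$ with $R^k\,\hat u(A)$ as close as we wish to $\hat s(A)$. To turn this approximate alignment into an \emph{exact} tangency we use the extra freedom in the energy parameter: by Lemma~\ref{lem:230522.1} the map $E\mapsto R^k_E\,\hat v$ moves with speed at least $k\,c$, so for $k$ large the image sweeps across $\hat s(A)$ (which, together with $\hat u(A)$, moves only at bounded speed in $E$) as $E$ ranges over an interval of length $O(1/k)$ around the chosen parameter; by the intermediate value theorem there is an energy $E$, still arbitrarily close to $E_0$, at which $R^k_E\,\hat u(A_E)=\hat s(A_E)$ exactly, i.e.\ $(A_E,\,R^k_E,\,A_E)$ is a (homoclinic, hence heteroclinic) tangency for $\mu_E$. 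One must check that $\hat u(A_E),\hat s(A_E)$ and the rotation speed of $R_E$ vary continuously/stay uniformly transverse for $E$ in this small interval, which follows from analyticity of $E\mapsto\lcCocycle_{(E)}$ and persistence of hyperbolicity and ellipticity (Lemma~\ref{260122.6}, Corollary~\ref{Morse property}).

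I expect the main obstacle to be case (b) with a \emph{parabolic} word and the bookkeeping needed to keep everything alive under perturbation simultaneously: after moving $E$ to make the parabolic word elliptic and then irrational elliptic, one must still have, at that same nearby parameter, a hyperbolic element of $\Gamma_{\mu_E}$ with well-defined transverse stable and unstable directions, and the irrational-rotation number must be preserved (or re-achieved) so that the density argument applies — irrationality is only generic, so one should argue that among the $E$'s making the word elliptic, a dense (in fact full-measure) set makes it irrational elliptic, and that the finitely many transversality and hyperbolicity conditions are open and hold on an interval. Packaging these finitely many open-dense conditions, together with the quantitative sweeping estimate from Lemma~\ref{lem:230522.1}, into a single choice of $E$ arbitrarily close to $E_0$ is the delicate part; everything else is the standard ABY tangency-creation picture.
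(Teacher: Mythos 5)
Your proposal is correct and follows essentially the same route as the paper: the ABY dichotomy, Proposition~\ref{irrational elliptic existence} to produce an irrational elliptic word, density of the rotation orbit to nearly align $\hat u$ with $\hat s$, and then the fast winding from Lemma~\ref{lem:230522.1} against the bounded speed of the stable/unstable curves (Proposition~\ref{speed of stable and unstable directions}) to solve the tangency equation exactly by the intermediate value theorem. The only cosmetic difference is that the paper phrases the tangency with two hyperbolic words $A$ and $B$ rather than a homoclinic one, and it does not belabor the persistence-of-irrationality bookkeeping you flag, since irrationality is needed only at one nearby parameter before the sweep.
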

\begin{proof}
By \cite[Thereom 4.1]{ABY10}, either $\mu_{E_0}$ has an heteroclinic tangency, or else the semigroup generated by $\supp\mu_{E_0}$ contains a parabolic or an elliptic matrix. Since $\Le(\mu_{E_0})>0$, $\supp(\mu_{E_0})$ admits hyperbolic matrices $A_{E_0}$ and $B_{E_0}$.
By Proposition~\ref{irrational elliptic existence}
we can assume that $C_{E_0}:=\lcCocycle_{E_0}^{4 \ell}(\zeta)$ is an irrational elliptic rotation, which implies that
the distance $d(C_{E_0}^m\, u(B_{E_0}), s(A_{E_0}))$ gets arbitrary small for some large $m$. On the other hand, the curves $E\mapsto u(B_E), s(A_E)$ are smooth, see Proposition~\ref{speed of stable and unstable directions}, while by Lemma~\ref{lem:230522.1}
the projective curve $E\mapsto C^m_E\, u(B_E)$ has large speed when $m$ is large. Hence the equation 
$C_E^m\, u(B_E)=s(A_E)$ has infinitely many solutions with $E$ arbitrary close to $E_0$.
\end{proof}


\subsection{Projective random walk distribution}
\label{section Dimension of the stationary measures}
In this subsection we establish some estimates on the distribution of the projective random walk, needed to prove Proposition \ref{prop:010722.3}.

\begin{proposition}\label{prop:100722.8}
     Assume that $\Le(\mu) > 0$ and $\mu$ is irreducible. There exist $C>0$ and $t\in (0,1)$ such that
    \begin{align*}
        \sup_{\hat{y}\in \bP^1}\Int_{\bP^1}\frac{1}{d(\hat{x},\, \hat{y})^t}\, d\, \eta(\hat{x}) \leq C.
    \end{align*}
    In particular, $\eta$ is $t$-H\"older, i.e., for every $\hat{x}\in \bP^1$ and $r>0$
    \begin{align*}
        \eta\left(
            B(\hat{x},\, r) 
        \right)\leq Cr^t.
    \end{align*}
\end{proposition}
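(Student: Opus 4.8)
The plan is to prove the $L^t$-integrability bound $\sup_{\hat y}\int d(\hat x,\hat y)^{-t}\,d\eta(\hat x)\le C$ for the (forward) stationary measure $\eta=\plusStat$ and small $t>0$; the pointwise estimate $\eta(B(\hat x,r))\le C r^t$ is then immediate, since $\eta(B(\hat x,r))\le r^t\int_{B(\hat x,r)} d(\hat z,\hat x)^{-t}\,d\eta(\hat z)\le C r^t$. First I would introduce the potential function $\Phi_{\hat y}(\hat x):=d(\hat x,\hat y)^{-t}$, and use stationarity: $\int \Phi_{\hat y}\,d\eta = \int Q_+^n\Phi_{\hat y}\,d\eta$ for every $n$, where $(Q_+^n\Phi_{\hat y})(\hat x)=\sum_{|I|=n}\mu_I\,\Phi_{\hat y}(A_I\hat x)$ (here $\mu_I=\mu_{i_1}\cdots\mu_{i_n}$ and $A_I=A_{i_n}\cdots A_{i_1}$ ranges over length-$n$ words). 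So it suffices to show that $Q_+^n\Phi_{\hat y}$ is uniformly bounded for some fixed large $n$, uniformly in $\hat y\in\bP^1$. The mechanism controlling $\Phi_{\hat y}(A_I\hat x)=d(A_I\hat x,\hat y)^{-t}$ is the contraction of the projective action: writing the projective action derivative, $d(A_I\hat x,A_I\hat x')\asymp \|A_I\|^{-2}d(\hat x,\hat x')$ near the most expanded direction, and more relevantly $d(A_I\hat x,\hat y)$ is bounded below unless $\hat x$ is close to $A_I^{-1}\hat y$, on a set of $\eta$-measure that is itself small.

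The key quantitative input is the large deviation estimate, Proposition~\ref{ULDE} together with Proposition~\ref{prop:020622.5}: for a fixed large scale $n$, with overwhelming probability $\|A_I\|\ge e^{(L(\mu)/2)n}$, and the "bad" words — those for which $A_I$ fails to contract — carry total weight $\le C e^{-\tau n}$. For the good words, $A_I$ maps all of $\bP^1$ except a tiny neighborhood of $\hat s(A_I)$ into a tiny neighborhood (of size $\sim e^{-(L(\mu)/2)n}$, i.e. $\asymp\|A_I\|^{-2}$) of $\hat u(A_I)$. Thus $d(A_I\hat x,\hat y)$ is either comparable to $d(\hat u(A_I),\hat y)$ (which is harmless, provided $\hat u(A_I)$ is not itself too close to $\hat y$) or $\hat x$ lies in a shrinking neighborhood of $\hat s(A_I)$. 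To handle this I would split, for each word $I$, according to whether $\hat x\in B(\hat s(A_I),\varepsilon_n)$ with $\varepsilon_n\to 0$ appropriately; on the complement the contribution to $Q_+^n\Phi_{\hat y}$ is $O(\varepsilon_n^{-t})+O(d(\hat u(A_I),\hat y)^{-t})$, and on $B(\hat s(A_I),\varepsilon_n)$ we bound $\Phi_{\hat y}(A_I\hat x)$ by the much cruder $\|A_I\|^{2t}\,d(\hat x,\hat y)^{-t}$ coming from the Lipschitz bound on $\hat A_I$ combined with the lower bound on $d(A_I\hat x,A_I\hat y)$ — this costs a factor $\|A_I\|^{2t}\le e^{tLn}$ but is multiplied by $\eta(B(\hat s(A_I),\varepsilon_n))$, which we would want to be $\le C'\varepsilon_n^{t'}$ for some exponent $t'$.

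The main obstacle — and the reason this has to be set up carefully as a fixed-point/bootstrap argument rather than a one-shot estimate — is circularity: controlling the bad part needs an a priori Hölder-type bound on $\eta$ near stable directions, which is essentially what we are proving. The standard resolution (going back to Le Page / Guivarc'h, and as in Proposition~\ref{prop:020622.4}'s circle of ideas) is to show that the operator $\Phi\mapsto Q_+^n\Phi$ acts on the cone of functions of the form $\Phi_{\hat y}$ (or on $\sup_{\hat y}$ of such) as a contraction up to an additive constant: one proves an inequality of the shape $\sup_{\hat y}\|Q_+^n\Phi_{\hat y}\|_\infty \le \lambda\,\sup_{\hat y}\|\Phi_{\hat y}\|_{?} + C_0$ with $\lambda<1$ for $n$ large and $t$ small, where the $\lambda$ comes from $\sum_I \mu_I\|A_I\|^{-2t}\le e^{-ct n}\to 0$ (by the large deviation estimate, since $\|A_I\|^{-2t}$ is exponentially small for good $I$ and the bad $I$ have exponentially small mass that beats the at-most-polynomial blow-up if $t$ is small enough). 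Iterating gives a uniform bound. I would first establish this contraction estimate, quantifying the "good word" geometry using Propositions~\ref{balanced radius} and~\ref{lem:080722.3} from the appendix (comparing $d(A_I\hat x,A_I\hat y)$ to $\|A_I\hat x\|^{-1}\|A_I\hat y\|^{-1} d(\hat x,\hat y)$ and estimating $\|A_I v\|$ for $\hat v$ away from $\hat s(A_I)$), then run the deviation estimate to make the multiplier $<1$, then conclude by iteration and stationarity. The delicate point throughout is the uniformity in $\hat y$: one must ensure the neighborhood of $\hat u(A_I)$ that could be close to $\hat y$ is controlled on average over $I$, which again follows from the regularity (Proposition~\ref{prop:020622.5}) of the distribution of $\hat u(A_I)$, i.e. of $A_I^{\pm n}\hat v$, under $\prodMeasure$.
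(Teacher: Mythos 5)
The paper does not actually prove this proposition: it is quoted verbatim from the literature, with the proof deferred to Guivarc'h's regularity theorem (\cite{Gu1990}, or \cite[Theorem 13.1]{Be2016}). Your sketch is, in architecture, exactly the standard proof from those references: reduce via stationarity to bounding $Q_+^n\Phi_{\hat y}$ for the potentials $\Phi_{\hat y}=d(\cdot,\hat y)^{-t}$, extract a contraction factor $\sum_I\mu_I\norm{A_I}^{-2t}\le e^{-ctn}$ from the large deviation estimates for small $t$, and close the loop with a Foster--Lyapunov/bootstrap iteration rather than a one-shot bound. So you have correctly identified the right mechanism and, importantly, the right obstruction (the circularity between the smallness of $\eta$ near the stable directions $\hat s(A_I)$ and the regularity being proven).

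Two points where the sketch as written would not survive being turned into a proof. First, the displayed drift inequality $\sup_{\hat y}\norm{Q_+^n\Phi_{\hat y}}_\infty\le\lambda\sup_{\hat y}\norm{\Phi_{\hat y}}_{?}+C_0$ is vacuous for any reasonable choice of the unspecified norm, since $\norm{\Phi_{\hat y}}_\infty=\infty$; the correct formulation iterates the finite quantities $M_k:=\sup_{\hat y}\int\Phi_{\hat y}\,d(\mu^{*kn}\ast\delta_{\hat x_0})$ (or works with the truncations $\min\{\Phi_{\hat y},M\}$ and lets $M\to\infty$ at the end), proves $M_{k+1}\le aM_k+C$ with $a<1$, and passes to the limit using $\mu^{*kn}\ast\delta_{\hat x_0}\wconv\eta$. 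Second, your appeal to Proposition \ref{prop:020622.5} to control how often $\hat u(A_I)$ falls near $\hat y$ reintroduces the circularity, because that proposition only transfers the question to $\eta^{+}(B(\hat y,r))$, which is the quantity being estimated; this term must also be absorbed into the bootstrap (it contributes $a'M_k$ rather than a constant), which is precisely why the argument is a fixed-point iteration and not a contraction plus a harmless additive constant. With those two repairs your outline matches the Guivarc'h/Benoist--Quint proof.
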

\begin{proof}
    See \cite{Gu1990} or \cite[Theorem 13.1]{Be2016}.
\end{proof}

The first item of the next proposition corresponds to (13.8)  of Proposition 13.3 in~\cite{Be2016}.

\begin{proposition}\label{P1}
    Assume that $\Le(\mu_{E_0}) > 0$ and $\mu_{E_0}$ irreducible. Given $\beta>0$, there exist constants $C,\, c_1\, c_2 >0$ and $k_0\in \N$ such that for every $\ell, n\in \N$, $n\geq k_0\, \ell$ and directions $\hv, \hw\in \bP^1$, the sets
    \begin{enumerate}[label=\arabic*)]
         \item\label{item:190722.3}
         $\left\{
            \omega\in \prodSpace\colon\,
            \exists\,E,\,  |E - E_0|\leq e^{-c_1n},\,
            \lcCocycle^{ n}_{(E)}(\omega)\, \hv \in B(\hw,\, e^{-\beta \ell})
         \right\}$;
         
         \item\label{item:190722.4}
         $\left\{
            \omega\in \prodSpace\colon\,
            \exists\, E,\, |E- E_0|\leq e^{-c_1n},\,
            \lcCocycle^{- n}_{(E)}(\omega)\, \hw \in B(\hv,\, e^{-\beta \ell})
         \right\}$;
         
         \item\label{item:190722.5}
         $\left\{
            (\omega,\tilde{\omega})\in \prodSpace\times\prodSpace\colon\,
            \exists\,E,\, |E-E_0|\leq e^{-c_1n},\,
            d(\lcCocycle^n_{(E)}(\omega)\, \hv,\,
            \lcCocycle^{-n}_{(E)}(\tilde{\omega})\, \hw) \leq e^{-\beta \ell}
        \right\}$.
     \end{enumerate}
     have probability $\leq Ce^{-c_2\ell}$.
\end{proposition}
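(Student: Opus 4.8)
The plan is to prove all three estimates by reducing them to the H\"older regularity of the stationary measure $\eta = \eta^+$ (and $\eta^-$) established in Proposition~\ref{prop:100722.8}, together with the large deviation estimate of Proposition~\ref{ULDE}. First I would fix $\beta>0$ and choose the auxiliary constants: $k_0$ large, $c_1$ small, $c_2$ small, all to be adjusted along the way. The governing idea is that over $n\geq k_0\ell$ iterations the cocycle is so strongly hyperbolic (by the large deviation estimate) that for a set of $\omega$ of probability $\geq 1-Ce^{-c_2\ell}$ the direction $\lcCocycle^n_{(E)}(\omega)\,\hv$ lies within $e^{-\gamma n}$ of the Oseledets unstable direction $u_n(\omega)$, \emph{uniformly} for $|E-E_0|\leq e^{-c_1 n}$; here I would use that $E\mapsto \lcCocycle_{(E)}$ is analytic (hence Lipschitz on compacts) so that moving $E$ by $e^{-c_1 n}$ perturbs the $n$-fold product by at most $e^{-c_1 n}\cdot e^{O(n)}\ll e^{-\gamma n}$ if $c_1$ is chosen large compared to the norm growth rate but this forces a trade-off — actually $c_1$ must be chosen \emph{larger} than the logarithmic growth rate of the cocycle so that the $E$-perturbation term is negligible, which is fine since $c_1$ is at our disposal.

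For item~\ref{item:190722.3}: once $\lcCocycle^n_{(E)}(\omega)\,\hv$ is within $e^{-\gamma n}\ll e^{-\beta\ell}$ of $u_n(\omega)$ for \emph{all} admissible $E$ simultaneously (on the good set), the event "$\exists E:\ \lcCocycle^n_{(E)}(\omega)\,\hv\in B(\hw,e^{-\beta\ell})$" is contained in "$u_n(\omega)\in B(\hw, 2e^{-\beta\ell})$" up to the bad set. Then I would use Proposition~\ref{prop:020622.5} (or directly Proposition~\ref{prop:100722.8} applied to the law of $u_n(\omega)$, which is $(Q_+^*)^n\delta$-close to $\eta$ in the relevant sense) to bound $\prodMeasure(u_n(\omega)\in B(\hw,2e^{-\beta\ell}))\leq \eta(B(\hw,4e^{-\beta\ell})) + C e^{-cn}(e^{-\beta\ell})^{-\theta}\leq C e^{-t\beta\ell}+C e^{-cn+\theta\beta\ell}$. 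Since $n\geq k_0\ell$, choosing $k_0$ large makes the second term $\leq e^{-\beta\ell}$, and we get the bound $Ce^{-c_2\ell}$ with $c_2 = t\beta$ (shrunk slightly). Item~\ref{item:190722.4} is identical after replacing $\mu_{E_0}$ by its reverse $\mu_{E_0}^{-1}$, using that $\eta^-$ is the forward stationary measure of the reverse and is also H\"older by Proposition~\ref{prop:100722.8} applied to $\mu^{-1}$ (which is still irreducible with positive exponent).

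For item~\ref{item:190722.5}, the events for $\omega$ and $\tilde\omega$ are independent under $\prodMeasure\times\prodMeasure$. On the good set, $\lcCocycle^n_{(E)}(\omega)\,\hv$ is $e^{-\gamma n}$-close to $u_n(\omega)$ and $\lcCocycle^{-n}_{(E)}(\tilde\omega)\,\hw$ is $e^{-\gamma n}$-close to $s_n(\tilde\omega)$ (the backward-Oseledets direction), uniformly in $E$; so the event is contained, modulo the bad set, in $\{d(u_n(\omega), s_n(\tilde\omega))\leq 2e^{-\beta\ell}\}$. Conditioning on $\tilde\omega$ and integrating, $\prodMeasure\{\omega: u_n(\omega)\in B(s_n(\tilde\omega), 2e^{-\beta\ell})\}\leq C e^{-t\beta\ell}+Ce^{-cn+\theta\beta\ell}$ by the same computation (uniformly in the center $s_n(\tilde\omega)$), then integrate out $\tilde\omega$. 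Again $n\geq k_0\ell$ with $k_0$ large absorbs the error term. The main obstacle, and the one point needing genuine care, is getting the hyperbolicity and the proximity to the Oseledets directions to hold \emph{uniformly over the whole interval $|E-E_0|\leq e^{-c_1 n}$ at once} rather than for a single $E$: one must show that the "bad set" where the large-deviation estimate fails can be taken independent of $E$, which follows by applying Proposition~\ref{ULDE} to a fixed nearby cocycle (say $\lcCocycle_{(E_0)}$) and absorbing the $E$-dependence into the exponentially small perturbation $\norm{\lcCocycle_{(E)}-\lcCocycle_{(E_0)}}_\infty\le e^{-c_1 n}\ll\delta$, plus the telescoping estimate $\norm{\lcCocycle^n_{(E)}(\omega)-\lcCocycle^n_{(E_0)}(\omega)}\le e^{-c_1 n}e^{O(n)}$ controlling the direction drift. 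The dependence of $k_0$, $c_1$, $c_2$ on $\beta$ is then just bookkeeping: $c_1$ is fixed larger than the norm growth rate of the cocycle family, $c_2=t\beta/2$, and $k_0 > 2\theta\beta/c$.
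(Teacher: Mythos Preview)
Your approach is correct and has the same skeleton as the paper's---reduce the $E$-dependence by finite-scale continuity, then invoke convergence to the stationary measure together with its H\"older regularity (Proposition~\ref{prop:100722.8})---but the execution diverges in two places worth noting.

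For items~\ref{item:190722.3} and~\ref{item:190722.4} you route the argument through the large deviation estimate and an auxiliary Oseledets-type direction $u_n(\omega)$. This is unnecessary: the paper instead uses the crude projective derivative bound (Lemma~\ref{lem:020822.1}), which gives $d(\lcCocycle^n_{(E)}(\omega)\hv,\lcCocycle^n_{(E_0)}(\omega)\hv)\lesssim e^{-(c_1-c)n}$ for \emph{every} $\omega$, not just on a good set. Once the $E$-quantifier is eliminated this way, Proposition~\ref{prop:020622.5} applies directly to $\lcCocycle^n_{(E_0)}(\omega)\hv$ with no need for an intermediate object. Your detour also incurs the small extra nuisance that the law of $u_n(\omega)=\hv_1^\ast(\lcCocycle^n_{(E_0)}(\omega))$ is not literally $(Q_+^\ast)^n\delta_{\hv}$, so one more comparison is needed; the paper's route avoids this.

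For item~\ref{item:190722.5} the situation is reversed: your conditioning argument (fix $\tilde\omega$, apply the bound from item~\ref{item:190722.3} uniformly in the center, integrate out) is simpler than what the paper does. The paper introduces a product Markov operator $\mathbf{Q}$ on $C^\theta(\bP^1\times\bP^1)$, argues its quasi-compactness, and then uses the integral form of Proposition~\ref{prop:100722.8} to control the mass of a neighborhood of the diagonal under $\eta^+\times\eta^-$. Both arguments work; yours is more elementary once items~\ref{item:190722.3}--\ref{item:190722.4} are in hand, while the paper's makes the role of the joint stationary measure $\eta^+\times\eta^-$ explicit.
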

\begin{proof}
    By Lemma \ref{lem:020822.1}, there exist constants $C^*,\, c_1^*,\, c_2^* > 0$ such that
    \begin{align}\label{eq:180722.2}
        d\left(
            \lcCocycle^n_{(E_0)}(\omega)\, \hv^+,\,
            \lcCocycle^n_{(E)}(\omega)\, \hv^+
        \right) \leq C^*e^{-c_1^*n},
    \end{align}
    for every $E$ with $|E - E_0|\leq e^{-c_2^*n}$. By Proposition \ref{prop:020622.5} and Proposition \ref{prop:100722.8}, we have that 
    \begin{align*}
        \prodMeasure\left(\left[
            \exists\, E,\, |E-E_0|\leq e^{-c_1n},
        \right.\right.
        &\left.\left.
            \lcCocycle^n_{(E)}(\cdot)\, \hv\in B(\hw,\, e^{-\beta \ell})\,
        \right]\right)\\
        &\lesssim \eta^+\left(
            B(\hw,\, e^{-\beta \ell})
        \right) + C\frac{e^{-cn}}{e^{-c_1\theta \ell}}\\
        &\lesssim e^{-t\beta \ell} + C\frac{e^{-cn}}{e^{-\beta\theta \ell}},
    \end{align*}
    where $n\geq k_0\, \ell$ with $k_0> {\beta(t+\theta)}/{c}$ and $c_2:=t\beta$. The argument to estimate the probability in 2) is entirely  analogous, making use of $\eta^-$ instead of $\eta^+$.

    We now study the probability of the set in \ref{item:190722.5}. We extend the Markov operators $Q_\pm$  to the product space $\bP^1\times\bP^1$ defining a new operator $\textbf{Q}:C^{\theta}(\bP^1\times\bP^1)\to C^{\theta}(\bP^1\times\bP^1)$  by
    \begin{align*}
        (\textbf{Q}\varphi)(\hat{x},\, \hat{y}) := \sum_{i,j = 1}^{\kappa}\mu_i\, \mu_j\, \varphi(A_{i,E_0}\, \hat{x}, A_{j,E_0}^{-1}\, \hat{y}) .
    \end{align*}
    This operator is also a quasi-compact operator and there exists constants $c, C>0$ such that for every observable $\varphi\in C^{\theta}(\bP^1\times\bP^1)$ we have
    \begin{align*}
        v_{\theta}(\textbf{Q}^n\varphi) \lesssim e^{-cn}v_{\theta}(\varphi).
    \end{align*}
    For each $r>0$, let $\Delta_r := \{(\hat x, \hat y)\in\bP^1\times\bP^1 \colon d(\hat x,\hat y)\leq r\}$ and
     $\rho_r:[0,+\infty[\to [0,1]$  be a piece-wise
    linear function supported in $[0,3r]$ such that  $\rho_r(t)= 1$ for $t\in [0,2r]$. Define the $\theta$-H\"older observable $\psi_{r}(\hat x, \hat y):= \rho_r(d(\hat x, \hat y))$,
    with $v_{\theta}(\psi_r) = (2r)^{-\theta}$ and $\chi_{\Delta_r}\leq \psi_r$. Writing $r = 2e^{-c_1n} + e^{-\beta \ell}$, we can use Markov's inequality and Proposition \ref{prop:100722.8} to conclude that
    \begin{align*}
        \prodMeasure\times\prodMeasure &\left(\left\{
            (\omega,\tilde{\omega})\in \prodSpace^2\colon\,
            \exists\,E, |E-E_0|\leq e^{-c_1n},
            d\left(\lcCocycle^n_{(E)}(\omega)\, \hv^+,
            \lcCocycle^{-n}_{(E)}(\tilde{\omega})\, \hv^-
            \right) \leq e^{-\beta \ell}
        \right\}\right)\\
        &\leq \prodMeasure\times\prodMeasure\left(\left\{
            (\omega,\tilde{\omega})\in \prodSpace^2\colon\,
            d\left(
                \lcCocycle^n_{(E_0)}(\omega)\, \hv^+,\,
            \lcCocycle^{-n}_{(E_0)}(\tilde{\omega})\, \hv^-
            \right) \leq 2e^{-c_1n} + e^{-\beta \ell}
        \right\}\right)\\
        & = \textbf{Q}^n(\chi_{\Delta_r})(\hv^+,\, \hv^-)
        \leq \textbf{Q}^n(\psi_{r})(\hv^+,\, \hv^-)\\
        &\leq 
        \left|
            \textbf{Q}^n(\psi_{r}) - \Int_{\bP^1\times\bP^1}\psi_{r}\, d\, (\eta_{E_0}^+\times\eta_{E_0}^-)
        \right|
        +
        \Int_{\bP^1\times\bP^1}\psi_{r}\, d\, (\eta_{E_0}^+\times\eta_{E_0}^-)\\
        &\lesssim e^{-cn}v_{\theta}(\psi_{r}) + (\eta_{E_0}^+\times\eta_{E_0}^-)\left(
            \Delta_{3r}
        \right)\\
        &\lesssim 
        \frac{e^{-cn}}{e^{-\beta\theta \ell}} + 3^t\, (2e^{-c_1n} + e^{-\beta \ell})^t\Int_{\bP^1\times\bP^1}\frac{1}{d(\hat{x},\, \hat{y})^t}\, d\, (\eta_{E_0}^+\times\eta_{E_0}^-)(\hat{x}, \hat{y})\\
        &\lesssim
        \frac{e^{-cn}}{e^{-\beta\theta \ell}} + 3^t\, (2e^{-c_1n} + e^{-\beta \ell})^t\sup_{\hat{y}\in \bP^1}\Int_{\bP^1\times\bP^1}\, \frac{1}{
            d\left(
                \hat{x},\,
                \hat{y}
            \right)^t
        }\, d\, \eta_{E_0}^+(\hat{x})\\
        &
        \lesssim 
        \frac{e^{-cn}}{e^{-\beta\theta \ell}} + (2e^{-c_1 n} + e^{-\beta \ell})^t\;
        \lesssim
        e^{-c_2\ell}.
    \end{align*}
    In the two last inequalities we have used Proposition \ref{prop:100722.8} and that we can increase  $k_0$ so that $k_0> \frac{\beta(t+\theta)}{c}$ and decrease $c_2$ so that  $c_2\leq \beta\theta t$. This completes the proof of the Proposition.
\end{proof}

\subsection{Variation of the `hyperbolic' elements}
In this subsection we establish one of the core proposition for the proof of the Theorem \ref{mainThm}, providing plenty of good hyperbolic words. We will be using the notation introduced in the Section \ref{270122.1}.

Take $\delta_1 = \delta_1(E_0)>0$ as in the Proposition~\ref{ULDE}, in the sense that the large deviations hold uniformly for all cocycles $\lcCocycle_{(E)}$ with $|E - E_0|\leq \delta_1$ and also so that
\begin{align*}
    \lambda:= \min_{|E - E_0|\leq \delta_1} \Le(\mu_{(E)})>0.
\end{align*}

\begin{proposition}
\label{prop:010722.3}
    Assume $\Le(\mu_{E_0})>0$ and $\mu_{E_0}$ irreducible. Given $\beta>0$ there exist constants $\tau>0$  and $N_0\in\N$ such that for every $n\geq N_0$ and every $\hv,\, \hw\in \bP^1$, the set $\mathcal{G}_n(\hv,\hw,\beta, \tau, E_0)$ of all $\omega\in\prodSpace$ satisfying  for all $|E - E_0|\leq e^{-\tau\,n^{1/4}}$:
    \begin{enumerate}
        \item\label{prop:010722.3-item1}
        $\norm{\lcCocycle^n_{(E)}(\omega)\, v} \gtrsim e^{(\lambda - \beta)n}$ and $\norm{\lcCocycle^{-n}_{(E)}(\shift^n\omega)\, w} \gtrsim e^{(\lambda - \beta)n}$;
        
        \item\label{prop:010722.3-item2}
        $\lcCocycle^n_{(E)}(\omega)$ is hyperbolic and $\lambda(\lcCocycle^n_{(E)}(\omega))\gtrsim e^{(\lambda - \beta)n}$;
        
        \item\label{prop:010722.3-item2.5}
        $d(\hv_1^*(\lcCocycle^n_{(E)}(\omega)),\, \hv_2(\lcCocycle^n_{(E)}(\omega))) \gtrsim e^{-\beta n^{1/8}}$.
    \end{enumerate}
    has measure \, $\prodMeasure\left(\mathcal{G}_n (\hv,\hw,\beta,\tau, E_0)
    \right) >1 - \beta$.
\end{proposition}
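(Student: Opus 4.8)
\textbf{Overall strategy.} The three conditions are ``generic hyperbolicity with a controlled spectral gap'', stated uniformly over a tiny energy interval around $E_0$. The plan is to produce each condition as a high-probability event using the uniform large deviation estimates of Proposition~\ref{ULDE}, the H\"older regularity of the stationary measures (Propositions~\ref{prop:100722.8} and~\ref{prop:020622.5}), and the projective random walk estimates of Proposition~\ref{P1}, and then intersect finitely many such events. The key point is that the number of conditions is bounded and each failure probability can be driven below $\beta/3$ by choosing $n\geq N_0$ large; the energy-uniformity is handled by first proving the estimates at $E=E_0$ and then transferring them to $|E-E_0|\leq e^{-\tau n^{1/4}}$ via a continuity/Lipschitz-in-$E$ argument for the cocycle $\lcCocycle_{(E)}$ on the relevant time scale $n$.

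\textbf{Step 1: norm growth (item~\ref{prop:010722.3-item1}).} At $E=E_0$, by Proposition~\ref{ULDE} applied with $\varepsilon=\beta/2$ (say), the set of $\omega$ with $\tfrac1n\log\norm{\lcCocycle^n_{(E_0)}(\omega)\,v}<\lambda-\beta$ has measure $\leq Ce^{-\tau_0\beta^2 n}$; similarly for $\lcCocycle^{-n}_{(E_0)}(\sigma^n\omega)\,w$ using the reversed cocycle. To upgrade to all $|E-E_0|\leq e^{-\tau n^{1/4}}$, note that $\log\norm{\lcCocycle^n_{(E)}(\omega)\,v}$ is Lipschitz in $E$ with constant $\lesssim e^{Cn}$ (the cocycle is a product of $4n$ Schr\"odinger matrices, each linear in $E$ with bounded operator norm on the relevant ball), so the variation over an interval of length $e^{-\tau n^{1/4}}$ is $\lesssim e^{Cn}e^{-\tau n^{1/4}}$, which is $o(n)$ once we reparametrize --- wait, that is not small. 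The correct device here is exactly Lemma~\ref{lem:020822.1} (invoked in the proof of Proposition~\ref{P1}): it gives $d(\lcCocycle^n_{(E_0)}(\omega)\,\hat v,\lcCocycle^n_{(E)}(\omega)\,\hat v)\leq C^*e^{-c_1^*n}$ and, by the same argument, control of $\bigl|\log\norm{\lcCocycle^n_{(E)}(\omega)v}-\log\norm{\lcCocycle^n_{(E_0)}(\omega)v}\bigr|$ for $|E-E_0|\leq e^{-c_2^* n}$. So one takes $\tau n^{1/4}\geq c_2^* n$ --- no: one instead keeps $n$ as the time and chooses the energy scale $e^{-\tau n^{1/4}}$ only at the \emph{end} of the whole construction, matching whichever exponential scale the previous propositions impose; here, since we only need $|E-E_0|\leq e^{-c_2^*n}\leq e^{-\tau n^{1/4}}$, the inclusion is automatic for large $n$. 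This gives item~\ref{prop:010722.3-item1} off a bad set of measure $\leq Ce^{-\tau_0\beta^2 n}$.

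\textbf{Step 2: hyperbolicity and eigenvalue gap (item~\ref{prop:010722.3-item2}).} Here the standard mechanism is: if $\norm{\lcCocycle^n_{(E)}(\omega)}$ is exponentially large \emph{and} the image direction $\lcCocycle^n_{(E)}(\omega)\hat e_1$ (or, better, $\lcCocycle^n_{(E)}(\omega)\hat v$ for a generic $\hat v$) is far from the contracting direction, then the matrix is hyperbolic with $\lambda(\lcCocycle^n_{(E)}(\omega))$ comparable to $\norm{\lcCocycle^n_{(E)}(\omega)}$. Concretely, for $A\in\SL_2(\R)$ with $\norm{A}\geq e^{\gamma n}$, hyperbolicity with $\lambda(A)\gtrsim e^{\gamma n}$ follows once $\bigl|\tr A\bigr|\geq 3$, and $|\tr A|$ can be estimated below by $\norm{A}\cdot d(\hat u^*,\hat v_2)$-type quantities; alternatively, one uses that the backward growth condition $\norm{\lcCocycle^{-n}_{(E)}(\sigma^n\omega)w}\gtrsim e^{(\lambda-\beta)n}$ together with the forward one forces the ``most contracted'' and ``most expanded'' directions for the two half-orbits to be in generic position, hence the product is hyperbolic. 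The cleanest route is to invoke Proposition~\ref{P1}\ref{item:190722.5} (with $\ell\sim n^{1/8}$, say, and $\beta$ there tuned): it says that, off a set of probability $\leq Ce^{-c_2\ell}$, the forward image $\lcCocycle^n_{(E)}(\omega)\hat v$ and the backward image $\lcCocycle^{-n}_{(E)}(\sigma^n\omega)\hat w$ are at distance $\geq e^{-\beta\ell}$ for all $|E-E_0|\leq e^{-c_1 n}$; combined with Step 1 this yields hyperbolicity of $\lcCocycle^n_{(E)}(\omega)$ and $\lambda\gtrsim e^{(\lambda-\beta)n}$, off a bad set of measure $\leq Ce^{-c_2 n^{1/8}}$.

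\textbf{Step 3: the fixed-point gap (item~\ref{prop:010722.3-item2.5}).} This asks that the attracting fixed point $\hat v_1^*$ of $\lcCocycle^n_{(E)}(\omega)$ and the repelling one $\hat v_2$ be at distance $\gtrsim e^{-\beta n^{1/8}}$. These fixed points are, up to $O(e^{-cn})$, the forward Oseledets direction at $\sigma^n\omega$ pulled back, and the backward Oseledets direction --- more precisely $\hat v_1^*(\lcCocycle^n_{(E)}(\omega))\approx \lcCocycle^n_{(E)}(\omega)\hat v$ for generic $\hat v$, and $\hat v_2(\lcCocycle^n_{(E)}(\omega))\approx \lcCocycle^{-n}_{(E)}(\sigma^n\omega)\hat w$ for generic $\hat w$, with exponentially small errors once hyperbolicity with a definite rate is known. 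So item~\ref{prop:010722.3-item2.5} is again a consequence of Proposition~\ref{P1}\ref{item:190722.5} with $\ell=\lfloor n^{1/8}\rfloor$ and a suitable choice of the constant $\beta$ in that proposition (the $e^{-\beta\ell}$ there becomes the $e^{-\beta n^{1/8}}$ here, and the error $e^{-cn}$ from replacing fixed points by images is negligible against $e^{-\beta n^{1/8}}$). The bad set again has measure $\leq Ce^{-c_2 n^{1/8}}$.

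\textbf{Step 4: conclusion.} Choose $\tau>0$ so small that $e^{-\tau n^{1/4}}\leq e^{-c_1 n}$ fails --- rather, so that $e^{-\tau n^{1/4}}$ is \emph{larger} than the energy scales $e^{-c_1 n},e^{-c_2^*n}$ appearing above, i.e. $\tau n^{1/4}\leq \min\{c_1,c_2^*\}\,n$, which holds for all $n\geq N_0$ with any fixed $\tau$; hence every estimate proved for $|E-E_0|\leq e^{-c_1 n}$ holds a fortiori for $|E-E_0|\leq e^{-\tau n^{1/4}}$. Then $\mathcal{G}_n(\hv,\hw,\beta,\tau,E_0)$ contains the intersection of the three good events, whose complement has measure $\leq Ce^{-\tau_0\beta^2 n}+Ce^{-c_2 n^{1/8}}\leq \beta$ once $n\geq N_0$ for a suitable $N_0=N_0(\beta)$. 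This proves $\prodMeasure(\mathcal{G}_n(\hv,\hw,\beta,\tau,E_0))>1-\beta$, uniformly in $\hv,\hw$.

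\textbf{Main obstacle.} The delicate point is Step 2--3: passing from ``the forward and backward images are in generic position and both half-products expand'' to ``the full $n$-step product is hyperbolic with $\lambda\gtrsim e^{(\lambda-\beta)n}$ and its fixed points are separated by $e^{-\beta n^{1/8}}$''. This requires the quantitative linear-algebra lemmas on the projective action (the ones referenced in Appendix~\ref{appendix:LA}: Propositions~\ref{balanced radius}, \ref{lem:080722.3} and Lemma~\ref{190722.10}), to convert the distance lower bound $e^{-\beta n^{1/8}}$ between images into a trace lower bound and a fixed-point-gap lower bound with only polynomially-in-$n$ loss, so that the sub-exponential scale $n^{1/8}$ is preserved. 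Ensuring the energy-uniformity does not degrade these estimates — i.e. that the $E$-derivative bounds from Lemma~\ref{lem:020822.1} are strong enough on the scale $e^{-\tau n^{1/4}}$ — is the other bookkeeping hazard, but it is genuinely routine given that lemma.
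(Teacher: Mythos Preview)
Your Step~4 has the key implication backwards, and this is where the proof breaks. You establish all your estimates for $|E-E_0|\leq e^{-c\,n}$ (this is what Lemma~\ref{lem:020822.1} and Proposition~\ref{P1} applied at time $n$ actually give), and then claim they hold ``a fortiori'' for $|E-E_0|\leq e^{-\tau n^{1/4}}$. But $e^{-\tau n^{1/4}}\gg e^{-c\,n}$ for large $n$: the interval in the statement is \emph{much larger} than the one your argument covers. Knowing the estimates on the tiny interval $[-e^{-cn},e^{-cn}]$ says nothing about the big one. You in fact noticed the difficulty yourself in Step~1 (``the variation over an interval of length $e^{-\tau n^{1/4}}$ is $\lesssim e^{Cn}e^{-\tau n^{1/4}}$, which is not small''), and the invocation of Lemma~\ref{lem:020822.1} does not resolve it---that lemma only confirms that the derivative bound is $e^{cn}$, so one needs $|E-E_0|\leq e^{-cn}$.

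What is missing is the block decomposition that the paper uses and that explains all the fractional exponents. One splits $n$ into $m$ blocks of length $n_0\asymp n^{1/4}$ and applies the large deviations (Proposition~\ref{ULDE}) at scale $n_0$. Finite-scale continuity at scale $n_0$ then only costs a Lipschitz constant $e^{C n_0}$, so the block norms are controlled uniformly over $|E-E_0|\leq e^{-\tau n_0}=e^{-\tau n^{1/4}}$, which \emph{is} the required interval. Proposition~\ref{P1} is likewise applied at time $n_0$ with $\ell=n_0^{1/2}=n^{1/8}$ (hence the exponent $n^{1/8}$ in item~\ref{prop:010722.3-item2.5}), giving the angle conditions~(d)--(f) of Lemma~\ref{190722.10} for the first and last blocks. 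The blocks are then glued by Lemma~\ref{190722.10}, which is an Avalanche-Principle statement about a \emph{chain} of matrices $A_j=\lcCocycle^{n_0}_{(E)}(\sigma^{jn_0}\omega)$, not merely a single-matrix linear-algebra fact; its hypotheses~(b)--(c) come from the block-scale large deviations, and its conclusions~1--3 give items~\ref{prop:010722.3-item2.5}, \ref{prop:010722.3-item2}, \ref{prop:010722.3-item1} for the full product $\lcCocycle^n_{(E)}(\omega)$. Without this blocking device the energy window $e^{-\tau n^{1/4}}$ cannot be reached.
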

\begin{proof}
    Split $n$ into blocks of size $n_0\asymp n^{1/4}$. For the sake of simplicity  we assume that $n=m\, n_0$ with $n_0=n^{1/4}$ and $E_0=0$. Consider the sets
    $$\mathcal{B}_{n_0}:= \left\{\omega\in\Omega\colon \exists_{j=0}^{m-1}  \; \norm{\lcCocycle_{(0)}^{n_0}(\sigma^{j\, n_0}\omega) }< e^{  (\lambda-\frac{\beta}{10})\,n_0} \, \vee\, 
    \norm{\lcCocycle_{(0)}^{n_0}(\sigma^{j\, n_0}\omega) }> e^{  (\lambda+\frac{\beta}{10})\,n_0}\right\} $$
    and $\mathcal{B}_{n_0}^\ast:=\mathcal{B}_{n_0}\cup \mathcal{B}_{2 n_0}$, where $\mathcal{B}_{2 n_0}$ is similarly defined. By large deviations, Proposition~\ref{ULDE}, there exists a constant $\tau_1>0$ such that for all large enough $n$, $\prodMeasure(\mathcal{B}_{n_0}^\ast)\leq 2\,n^{3/4}\, e^{-\tau_1\, n^{1/4}}$.
    By finite scale continuity, there exists $\tau>0$ such that
    for all $|E|\leq e^{-\tau\, n^{1/4}}=e^{-\tau n_0}$ and $\omega\notin \mathcal{B}_{n_0}^\ast$,
     $$ e^{  (\lambda-\frac{\beta}{5})\,n_0}\leq \norm{\lcCocycle_{(E)}^{n_0}(\sigma^{j n_0}\omega)} \leq
     e^{  (\lambda+\frac{\beta}{5}) \,n_0}
     \qquad \forall 0\leq j<m  $$
     and 
     $$  e^{  2\, (\lambda-\frac{\beta}{5}) \,n_0}\leq \norm{\lcCocycle_{(E)}^{2 n_0}(\sigma^{j n_0}\omega)} \leq
    e^{2\, (\lambda+\frac{\beta}{5})\, n_0}  
    \qquad \forall 0\leq j<m-1.$$

Consider $(\hv,\, \hw)\in \bP^1\times\bP^1$. We will apply Lemma \ref{190722.10} with the data
\begin{itemize}
    \item $A_j = \lcCocycle^{n_0}_{(E)}(\shift^{jn_0}\omega)$, $j=0,\ldots m-1$;
    \item $\hv = \hv$ and $\hw = \hw$;
    \item $t := \beta n_0^{1/2}$, $\gamma := \frac{4}{5}\beta n_0$ and $\tilde{\lambda} := (\lambda - \frac{\beta}{5})n_0$.
\end{itemize}
Notice that if $\omega\notin \mathcal{B}_{n_0}^*$ the assumptions (a)-(c) of Lemma \ref{190722.10} are automatically satisfied.

Consider $C, c_1, c_2>0$ and $k_0$ given by Proposition \ref{P1} applied with $n = n_0$ and $\ell= n_0^{1/2}$. Denote by $\mathcal{C}_{n_0}(\hv,\, \hw)$ the set of sequences $\omega\in \prodSpace$ such that for every $|E|\leq e^{-\tau n_0}$ ($\tau > c_1$).
\begin{enumerate}[label=(\alph*)]
    \item
    $\min\left\{d\left(
        \lcCocycle^{n_0}_{(E)}(\shift^{(m-1)n_0}\omega)\, \hv,\,
        \hw
    \right),\, d\left(
        \hv,\,
        \lcCocycle^{-n_0}_{(E)}(\shift^{n_0m}\omega)\, \hw
    \right)
    \right\} \geq e^{-\beta n_0^{1/2}}$;
    
    \item
    $\min\left\{d\left(
        \lcCocycle^{n_0}_{(E)}(\omega)\, \hv,\,
        \hw
    \right),\, d\left(
        \hv,\,
        \lcCocycle^{-n_0}_{(E)}(\shift^{n_0}\omega)\, \hw
    \right)
    \right\} \geq e^{-\beta n_0^{1/2}}$;
    
    \item $d\left(
        \lcCocycle^{n_0}_{(E)}(\shift^{(m-1)n_0}\omega)\, \hv,\,
        \lcCocycle^{-n_0}_{(E)}(\shift^{n_0}\omega)\, \hw
    \right) \geq e^{-\beta n_0^{1/2}}$.
\end{enumerate}
If $n_0/\ell = n_0^{1/2}\geq k_0$, then by Proposition \ref{P1} the set $\mathcal{C}_{n_0}^* := \mathcal{C}_{n_0}(\hv,\, \hw)\backslash\, \mathcal{B}_{n_0}^*$ satisfies
\begin{align*}
    \prodMeasure\left(
        \prodSpace\, \backslash\, \mathcal{C}^*_{n_0}
    \right) \leq Ce^{-c_2n_0^{1/2}} = Ce^{-c_2n^{1/8}}.
\end{align*}
If $\omega\in \mathcal{C}^*_{n_0}$ the above conditions (a)-(c) ensure that the hypothesis (d)-(f) of Lemma \ref{190722.10} holds. Therefore items \ref{prop:010722.3-item1}, \ref{prop:010722.3-item2}, and \ref{prop:010722.3-item2.5} are direct consequence of Lemma \ref{190722.10}.
 This concludes the proof of the proposition.
\end{proof}

\begin{proposition}
\label{irred lemma}
If the cocycle $\lcCocycle_{(E_0)}$ is not irreducible with  $\Le(\lcCocycle_{(E_0)})>0$ then there exists $\delta>0$ such that for all $0<|E-E_0|\leq \delta$, the cocycle $\lcCocycle_{(E)}$ is irreducible.
\end{proposition}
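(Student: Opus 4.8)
The plan is to prove that the set $R:=\{E\in\R:\lcCocycle_{(E)}\ \text{is reducible}\}$ is finite; since $\lcCocycle_{(E_0)}$ is reducible we have $E_0\in R$, and then any $\delta>0$ with $(E_0-\delta,E_0+\delta)\cap R=\{E_0\}$ gives the statement. Write $A_i(E):=\lcCocycle_{(E)}(\bar i)=S(t^i_3-E)\,S(t^i_2-E)\,S(t^i_1-E)\,S(t^i_0-E)$, so that each $E\mapsto A_i(E)$ is polynomial with real coefficients and, by Section~\ref{250122.1}, $A_i(0)=A_i$; in particular $\lcCocycle_{(0)}=\lcCocycle$ and $\mu_0=\mu$, which under the hypotheses in force (Theorem~\ref{mainThm}) is irreducible with $\Le(\mu)>0$.

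The heart of the argument is to algebraize reducibility by passing to $\C$. Set
\[
    R_\C:=\bigl\{E\in\C:\ A_1(E),\dots,A_\kappa(E)\ \text{have a common eigenvector in }\C^2\bigr\}.
\]
The incidence set $\{((B_i)_i,[v])\in\SL_2(\C)^\kappa\times\bP^1(\C):(B_i v)\wedge v=0,\ 1\le i\le\kappa\}$ is Zariski closed, and since $\bP^1(\C)$ is complete its image under the projection onto $\SL_2(\C)^\kappa$ — the locus of reducible $\kappa$-tuples — is Zariski closed as well (elimination theory), hence the zero set of finitely many polynomials in the matrix entries. Substituting $B_i=A_i(E)$ displays $R_\C$ as the common zero set of finitely many polynomials in the single variable $E$, so $R_\C$ is finite or all of $\C$. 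Since a common real invariant line is in particular a common complex eigenvector, $R\subseteq R_\C\cap\R$; it therefore suffices to rule out $R_\C=\C$, and for that I would verify $0\notin R_\C$.

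Suppose then that $v\in\C^2\setminus\{0\}$ were a common eigenvector, $A_i(0)v=\alpha_i v$ for all $i$. If $\C v$ is defined over $\R$ it produces a proper $\mu$-invariant subspace, contradicting irreducibility of $\mu$. Otherwise $v,\bar v$ are $\C$-linearly independent and, $A_i(0)$ being real, $A_i(0)\bar v=\bar\alpha_i\bar v$; hence each $A_i(0)$ is diagonal in the basis $\{v,\bar v\}$ with $\alpha_i\bar\alpha_i=\det A_i(0)=1$, so $|\alpha_i|=1$. Then, setting $P=[\,\Re v\mid\Im v\,]$, every $P^{-1}A_i(0)P$ is a rotation, so $\norm{\lcCocycle^n(\omega)}=\norm{P\,(\text{product of }n\text{ rotations})\,P^{-1}}\le\norm{P}\,\norm{P^{-1}}$ for all $\omega$ and $n$, forcing $\Le(\mu)=0$ and contradicting $\Le(\mu)>0$. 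Either way $0\notin R_\C$, $R_\C$ is finite, so $R$ is finite and we are done. The genuine difficulty is concentrated in this last dichotomy: over $\R$ the reducibility locus is merely closed semialgebraic and can contain intervals, so the passage to $\C$ (where projective morphisms are closed maps) is what turns ``reducible'' into an honest algebraic condition on $E$, and the two standing hypotheses on $\mu$ — irreducibility and positivity of the exponent — are precisely what prevents this algebraic condition from holding identically in $E$.
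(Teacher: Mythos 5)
Your route --- complexify, use completeness of $\bP^1(\C)$ to make ``the $A_i(E)$ have a common eigenvector'' a Zariski-closed condition in $E$, and conclude that the reducibility locus is finite unless it is all of $\C$ --- is genuinely different from the paper's, which is dynamical: reducibility together with $\Le>0$ and (implicitly) non-uniform hyperbolicity forces hyperbolic periodic points $\omega_1,\omega_2$ with $\hat u(\lcCocycle_{(E_0)}^{n_1}(\omega_1))=\hat s(\lcCocycle_{(E_0)}^{n_2}(\omega_2))$ (or the symmetric coincidence), and Proposition~\ref{speed of stable and unstable directions} makes the unstable and stable directions wind in \emph{opposite} directions as $E$ varies, so for $0<|E-E_0|$ small the two matrices $\lcCocycle_{(E)}^{n_i}(\omega_i)$ carry four distinct invariant lines and no common invariant line can survive.

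There is, however, a genuine gap at the one step that anchors your whole argument: the verification that $0\notin R_\C$ rests on $\mu_0=\mu$ being irreducible with $\Le(\mu)>0$. Those are hypotheses of Theorem~\ref{mainThm}, not of Proposition~\ref{irred lemma}, and they are unavailable precisely where the proposition is used: in the proofs of Corollaries~\ref{Cor:010722.8} and~\ref{cor:070122.9} it is invoked to perturb a possibly \emph{reducible} $\mu$ into irreducible nearby measures, so the base point $E=0$ (which may be $E_0$ itself, in which case the proposition's own hypothesis says $\mu_0$ is reducible) gives no contradiction in your dichotomy --- the argument becomes circular in its intended application and incomplete as a proof of the statement as written. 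Nor can finiteness of $R$ be recovered from the stated hypotheses alone: for $\kappa=1$ a single hyperbolic matrix satisfies ``reducible with $\Le>0$'', every complex $2\times 2$ matrix has an eigenvector so your $R_\C$ is all of $\C$, and $|\tr A_1(E)|>2$ persists on a whole interval, so $R$ contains a neighborhood of $E_0$. The proposition is really a statement about non-uniformly hyperbolic cocycles (the paper's proof uses this explicitly even though it is not in the statement), and some dynamical input of that kind --- not irreducibility and positivity at a single auxiliary parameter --- is what is needed to rule out identical reducibility of the family, or, more to the point, to show that $E_0$ is isolated in $R$.
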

\begin{proof}
The cocycle  $\lcCocycle_{(E_0)}$ has either one or two invariant lines, i.e., invariant under all matrices in the support of $\mu_{E}$.
Since  $\lcCocycle_{(E_0)}$ is not uniformly hyperbolic
there exist hyperbolic periodic points $\omega_1$ and $\omega_2$, with periods $n_1$ and $n_2$, respectively, such that
$\hat u(\lcCocycle_{(E_0)}^{n_1}(\omega_1))=\hat s(\lcCocycle_{(E_0)}^{n_2}(\omega_2))$ or/and
$\hat s(\lcCocycle_{(E_0)}^{n_1}(\omega_1))=\hat u(\lcCocycle_{(E_0)}^{n_2}(\omega_2))$, for otherwise a simple argument implies that the reducible cocycle is uniformly hyperbolic, see inequality \eqref{eq:020822.1}.
By Proposition~\ref{speed of stable and unstable directions} the directions
$\hat u(\lcCocycle_{(E)}^{n_i}(\omega_i))$
and
$\hat s(\lcCocycle_{(E)}^{n_i}(\omega_i))$
move in opposite directions with the parameter $E$.
Hence in any case, for $E\neq E_0$ close enough to $E_0$,
together the two matrices $\lcCocycle_{(E)}^{n_i}(\omega_i)$, $i=1,2$, have four distinct invariant directions.
This implies that the cocycle $\lcCocycle_{(E)}$ is irreducible.
\end{proof}

\subsection{Variation of the heteroclinic tangencies}
In this subsection we establish the core proposition \ref{prop:solvingEquations} and \ref{prop:010722.1} for the proof of Theorem \ref{mainThm} which allows us drive matchings and typical tangencies from an existing tangency. 

Consider a family of cocycles $\lcCocycle_{(E)}:\prodSpace\to\SL_2(\R)$
as introduced in Section \ref{270122.1}.
This family has a heteroclinic tangency at $E$ if and only if there exist periodic orbits $\omega_0,\omega_1\in\prodSpace$ with periods $\ell_0, \ell_1\geq 1$ such that $\lcCocycle_{(E)}^{\ell_0}(\omega_0)$ and $\lcCocycle_{(E)}^{\ell_1}(\omega_1)$ are hyperbolic matrices,
and there exists a heteroclinic orbit 
$\omega\in W^u_{\mathrm{loc}}(\omega_0)\cap \sigma^{-k} W^s_{\mathrm{loc}}(\omega_1)$ such that
$$  \lcCocycle_{(E)}^k(\omega)\, \hat u(\lcCocycle_{(E)}^{\ell_0}(\omega_0)) = \hat s(\lcCocycle_{(E)}^{\ell_1}(\omega_1)) . $$
In this case we say that $(B_E,\, C_E,\, A_E)$ is a tangency for $A_{(E)}$ where  $A_{E}=\lcCocycle_{(E)}^{\ell_1}(\omega)$,
$B_{E}=\lcCocycle_{(E)}^{\ell_0}(\omega_0)$
and $C_{E}=\lcCocycle_{(E)}^{k}(\omega) $ are respectively the \emph{target}, the \emph{source} and the \emph{transition} matrix of this heteroclinic tangency. The size of the tangency is by definition the size of the full word $B_E\, C_E\, A_E$ determined by the tangency.

Before entering in the main technical results of this section we state a version of Lemma \ref{Schrodinger winding} suitable for our purposes. We identify the derivative of projective curves such as $E\mapsto \lcCocycle^n_{(E)}(\omega)\, \hv$ with its scalar scalar value. 
\begin{lemma}\label{lem:windingProperty}
    There exists $c_* >0$ such that for all $n\geq 2$, $\omega\in \prodSpace$, $E\in \R$ and $\hv\in \bP^1$, we have
    \begin{align*}
        \frac{d}{dE}\lcCocycle^{-n}_{(E)}(\omega)\, \hv
        < -c_* < 0 < c_* 
        <\frac{d}{dE}\lcCocycle^n_{(E)}(\omega)\, \hv.
    \end{align*}
\end{lemma}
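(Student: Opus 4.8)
The plan is to reduce the claim to the standard derivative (``winding'') formula for products of Schr\"odinger matrices, and then to read off both the sign and the quantitative lower bound from that formula.

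First I would unwind the construction of Section~\ref{270122.1}: the matrix $\lcCocycle^n_{(E)}(\omega)$ is a product $B_E:=S(s_{N-1}-E)\cdots S(s_0-E)$ of $N:=4n\ge 8$ Schr\"odinger matrices, where the coefficients $s_i=\phi(\sigma^i\zeta)$ lie in the finite set $\{t^i_j\}$ from~\eqref{S-decomposition}, so $|s_i|\le M:=\|\phi\|_\infty<\infty$. For the inverse cocycle I would use the identity $S(t)^{-1}=P\,S(t)\,P$ with the permutation matrix $P=\left(\begin{smallmatrix}0&1\\1&0\end{smallmatrix}\right)$: telescoping the inner factors $P\,P=\id$ shows $\lcCocycle^{-n}_{(E)}(\omega)=P\,\widetilde B_E\,P$, where $\widetilde B_E$ is again a product of $4n$ Schr\"odinger matrices (along the reversed orbit, with the reversed, still bounded, potential). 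Since $P$ is orthogonal with $\det P=-1$, the map $\hat P$ is an orientation-reversing isometry of $\bP^1$; hence it suffices to analyse $E\mapsto\widehat{B_E\,v}$ for such a product $B_E$, the inverse case following by post-composing with $\hat P$, which negates the scalar derivative while preserving its absolute value.

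Next I would fix a unit vector $v$ on the line $\hv$ and introduce the Schr\"odinger solution $(a_i)_{i\ge -1}$ with $a_{-1}=\langle v,e_2\rangle$, $a_0=\langle v,e_1\rangle$ and $a_{i+1}=(s_i-E)a_i-a_{i-1}$; then $\langle S(s_{i-1}-E)\cdots S(s_0-E)\,v,e_1\rangle=a_i$, $\langle\,\cdot\,,e_2\rangle=a_{i-1}$, and in particular $B_E v=(a_N,a_{N-1})$. The derivative formula for projective actions (Appendix~\ref{Appendix:derivativeProjectiveActions}; Lemma~\ref{Schrodinger winding}, Proposition~\ref{winding derivative formula}), specialised to Schr\"odinger matrices, then gives
\begin{align*}
    \frac{d}{dE}\,\widehat{B_E\,v}\;=\;\frac{\sum_{i=0}^{N-1}a_i^2}{\|B_E v\|^2}\;=\;\frac{\sum_{i=0}^{N-1}a_i^2}{a_N^2+a_{N-1}^2}\;>\;0 ;
\end{align*}
this can also be checked directly, since the discrete Wronskian $a_j\,\partial_E a_{j-1}-a_{j-1}\,\partial_E a_j$ increases by exactly $a_j^2$ at each step and vanishes at $j=0$ (because $a_{-1},a_0$ do not depend on $E$), and strict positivity holds since $a_0=0$ forces $v=\pm e_2$ and then $a_1=\langle S(s_0-E)e_2,e_1\rangle=\mp 1$. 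This already establishes the two signs asserted in the lemma.

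Then I would derive the lower bound: using $a_N=(s_{N-1}-E)a_{N-1}-a_{N-2}$ and $N\ge 2$,
\begin{align*}
    \|B_E v\|^2=a_N^2+a_{N-1}^2
    &\le 2(s_{N-1}-E)^2a_{N-1}^2+2a_{N-2}^2+a_{N-1}^2\\
    &\le\big(2(M+|E|)^2+3\big)\sum_{i=0}^{N-1}a_i^2 ,
\end{align*}
so $\bigl|\tfrac{d}{dE}\,\widehat{B_E\,v}\bigr|\ge\bigl(2(M+|E|)^2+3\bigr)^{-1}$, uniformly in $n,\omega$ and $\hv$, and likewise for $\widetilde B_E$; over any fixed compact range of $E$ --- the only regime in which the lemma is used --- this yields the constant $c_*>0$. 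I expect this uniformity in $E$ to be the main obstacle: in the round metric the scalar derivative decays like $|E|^{-2}$ as $|E|\to\infty$ (a single factor $S(s_{N-1}-E)$ inflates $\|B_E v\|$ without a matching growth of $\sum_{i<N}a_i^2$), so to obtain a bound valid on all of $\R$ one would instead measure the derivative in the affine chart $\widehat{(x,y)}\mapsto -x/y$: by Lemma~\ref{Avila hyperbolic lemma} the $\C$-extension $E\mapsto\widehat{B_E v}$ is then a Herglotz function of $E$ whose linear coefficient is, by a degree count, exactly $1$, so its derivative is $\ge 1$ for every real $E$ outside the finite set where that chart degenerates. The only remaining point --- keeping the orientation conventions consistent, so that the forward derivative is positive and the inverse negative --- is handled by the observation that $P$ is orthogonal of determinant $-1$.
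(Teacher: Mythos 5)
Your proposal is correct and follows essentially the same route as the paper: the paper's proof is a one-line reduction to Lemma~\ref{Schrodinger winding} (noting that $\lcCocycle^n_{(E)}(\omega)=\lcCocycle^{4n}_E(\zeta)$ is a product of Schr\"odinger matrices), and Lemma~\ref{Schrodinger winding} is in turn proved from the winding formula of Proposition~\ref{winding derivative formula}, which is exactly your identity $\frac{d}{dE}\widehat{B_E v}=\bigl(\sum_{i=0}^{N-1}a_i^2\bigr)/\norm{B_E v}^2$ in disguise. Your treatment of the inverse cocycle via $S(t)^{-1}=P\,S(t)\,P$ with the orientation-reversing isometry $\hat P$ is a minor variant of the paper's direct wedge-product computation showing the backward curve winds the other way; both are fine.

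One substantive point in your favour: your quantitative bound $c_*\ge\bigl(2(M+|E|)^2+3\bigr)^{-1}$ correctly exposes that, in the round metric, the derivative is \emph{not} bounded below uniformly over all $E\in\R$ (take $\hv=\hat e_1$: the derivative decays like $|E|^{-2}$). The paper's Lemma~\ref{Schrodinger winding} asserts uniformity "for all $E\in\R$" and justifies it by saying "the denominators are bounded", which is only true for $E$ in a compact set; so the literal statement of both lemmas needs $c_*=c_*(K)$ for $E$ in a compact set $K$. As you observe, this is harmless for the paper, since Lemma~\ref{lem:windingProperty} is only invoked on small intervals around a fixed $E_0$ (Proposition~\ref{prop:solvingEquations} and Lemma~\ref{lem:120522.2}); and your Wronskian computation $W_N=\sum_{j<N}a_j^2\ge a_{N-1}^2$ in the affine chart gives a clean uniform statement if one wants it.
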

\begin{proof}
    Recall that for each $\omega\in \prodSpace$, $\lcCocycle^n_{(E)}(\omega) = \lcCocycle^n_E(\zeta)$ for some $\zeta\in \prodTower$ and that the cocycle $\lcCocycle_E:\prodTower\to \SL_2(\R)$ is a Schrodinger cocycle. Therefore, this lemma is a direct consequence of Lemma \ref{Schrodinger winding}.
\end{proof}

\begin{definition}\label{def:030822.1}
    Given $\gamma,\, t,\, \rho>0$, we say that a tangency $(B_{E_0},\, C_{E_0},\, A_{E_0})$ for a cocycle $\lcCocycle_{(E_0)}$ is $(\gamma,\, \rho,\, t)$-\emph{controlled} if the following conditions are satisfied:
    \begin{enumerate}
        \item $\min\{\lambda(A_{E_0}),\, \lambda(B_{E_0})\} \geq e^{\gamma}$;
        
        \item $\norm{C_{E_0}}\leq e^{\rho}$;
        
        \item $\min\{d(\hv_1^*(B_{E_0}),\, \hv_2(B_{E_0})),\, d(\hv_1^*(B_{E_0}),\, \hv_2(B_{E_0}))\} \geq e^{-t}$.
    \end{enumerate}
\end{definition}

\begin{figure}[h]
    \centering
    \includegraphics[width=\textwidth]{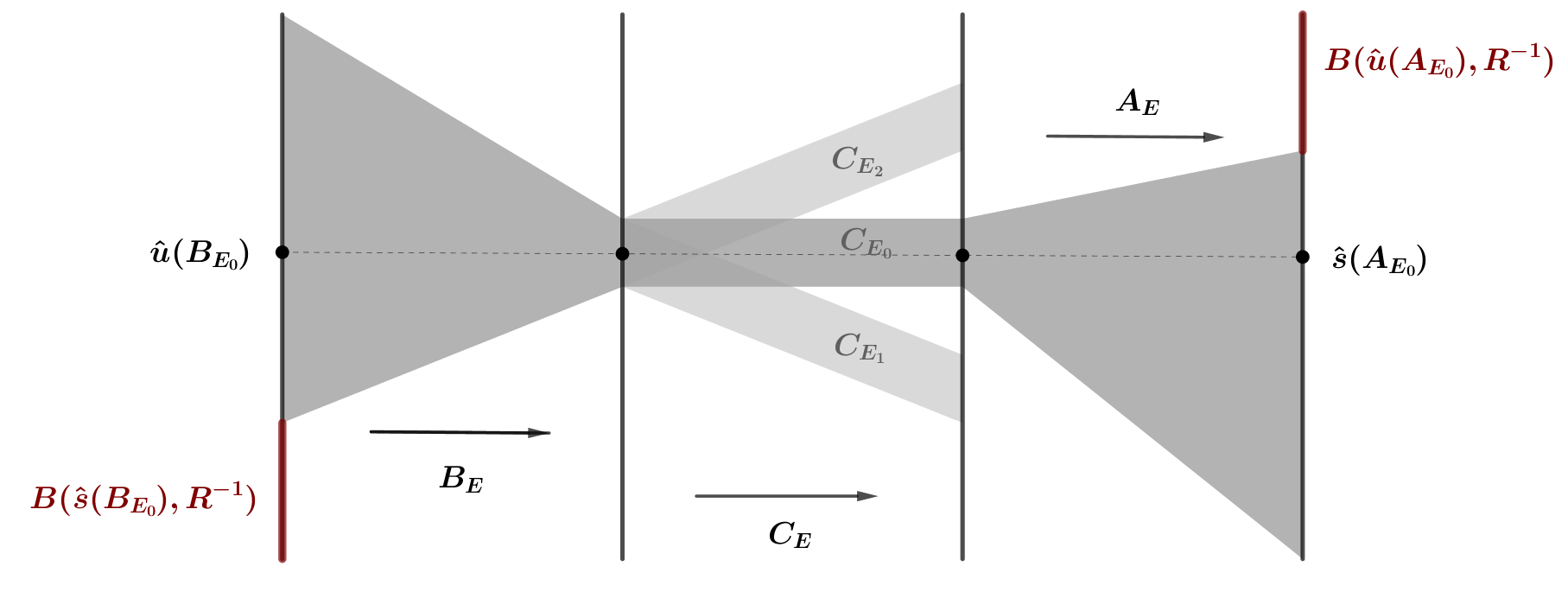}
    \caption{Unfolding the heteroclinic tangency. Vertical lines represent $\bP^1
    $.}
    \label{fig:my_label}
\end{figure}

\begin{proposition}\label{prop:solvingEquations}
    There exists $c_*>0$ such that for every $\beta>0$ and $R>0$ we can find  $\gamma_0$ with the following property: for every $\gamma\geq \gamma_0$, if $(B_{E_0},\, C_{E_0},\, A_{E_0})$ is a $(\gamma,\, \gamma^{1/2},\, \gamma^{1/7})$-controlled tangency for $\lcCocycle_{E_0}$, then defining
    \begin{align*}
        I:=[
            E_0 - 2c_*^{-1}(1+\beta)\, R\, e^{-2\gamma\,  (1-\beta)},\, E_0 + 2c_*^{-1}(1+\beta)\, R\, e^{-2\gamma\, (1-\beta)}
        ],
    \end{align*}
    for every pair of smooth curves $\hv^+,\, \hv^-:I\to \bP^1$ satisfying
    \begin{enumerate}
        \item[A1.] $\hv^+(E_0)\notin \Ball(\hat{s}(B_0),\, R^{-1})$ and $\hv^-(E_0)\notin \Ball(\hat{u}(A_0),\, R^{-1})$;
        
        \item[A2.] $\frac{d}{dE}\hv^+(E)\geq 0$ and $\frac{d}{dE}\hv^-(E)\leq 0$, for every $E\in I$;
    \end{enumerate}
    the equation
    \begin{align*}
        A_E\, C_E\, B_E\, \hv^+(E) = \hv^-(E),
    \end{align*}
    has at least one solution $E_*\in I$.
\end{proposition}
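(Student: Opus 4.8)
The plan is a quantitative intermediate value argument carried out in the universal cover of $\bP^1\cong\R/\pi\Z$. Fix continuous lifts: let $\tilde G\colon I\to\R$ lift $E\mapsto A_E\,C_E\,B_E\,\hv^+(E)$, let $\tilde v^-$ lift $\hv^-$, and put $\Theta:=\tilde G-\tilde v^-$. A solution of the equation is exactly a point of $I$ at which $\Theta$ takes a value in $\pi\Z$, so it suffices to exhibit an integer $k_0$ with $\Theta(E_-)<\pi k_0<\Theta(E_+)$, where $E_\pm$ are the endpoints of $I$. The first ingredient is \emph{monotonicity}: since $\hat B_E,\hat C_E,\hat A_E$ are orientation preserving on $\bP^1$ and $\tfrac{d}{dE}\hv^+(E)\ge 0$, applying Lemma~\ref{lem:windingProperty} to each factor in turn gives $\tfrac{d}{dE}\tilde G(E)\ge c_*$, while $\tfrac{d}{dE}\tilde v^-(E)\le 0$; hence $\Theta'\ge c_*>0$, $\Theta$ is strictly increasing, and $\Theta(I)=[\Theta(E_-),\Theta(E_+)]$ is a genuine interval of length $\mathrm{wind}(\tilde G)-\mathrm{wind}(\tilde v^-)\ge\mathrm{wind}(\tilde G)$. (This $c_*$ is the winding constant in the statement.) In particular there is an \emph{easy case}: if $\mathrm{wind}(\tilde G)\ge\pi$ over $I$ — which happens as soon as $\hv^-$ winds over $I$ by at least $\pi$, or the curve $p_2(E):=C_EB_E\hv^+(E)$ meets $\hat s(A_E)$ in $\bP^1$ more than once in $I$ — then $\Theta(I)$ has length $\ge\pi$, so it meets $\pi\Z$ and we are done. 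From now on assume $\mathrm{wind}(\tilde G)<\pi$.

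The second step is \emph{focusing at the tangency}. The controlled-tangency hypotheses together with the hyperbolic-geometry estimates of Appendix~\ref{appendix:LA} (Propositions~\ref{balanced radius} and~\ref{lem:080722.3}) show that at $E_0$ the point $B_{E_0}\hv^+(E_0)$ lies within $\lesssim e^{\gamma^{1/7}}e^{-2\gamma}$ of $\hat u(B_{E_0})$ — here one uses $\hv^+(E_0)\notin\Ball(\hat s(B_0),R^{-1})$, $\lambda(B_{E_0})\ge e^\gamma$, and the non-degeneracy $d(\hv_1^*(B_{E_0}),\hv_2(B_{E_0}))\ge e^{-\gamma^{1/7}}$ — so, applying $C_{E_0}$, which is $\|C_{E_0}\|^2\le e^{2\gamma^{1/2}}$–Lipschitz on $\bP^1$, and using the tangency identity $C_{E_0}\hat u(B_{E_0})=\hat s(A_{E_0})$, one gets $d\bigl(p_2(E_0),\hat s(A_{E_0})\bigr)\le\epsilon_0$ with $\epsilon_0\lesssim e^{\,2\gamma^{1/2}+\gamma^{1/7}-2\gamma}$. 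On the other hand $\tfrac{d}{dE}p_2(E)\ge c_*$ by the same factor-by-factor winding estimate, whereas $\hat s(A_E)$ moves in the \emph{opposite} direction — differentiating $\hat A_E(\hat s(A_E))=\hat s(A_E)$ and using $\hat A_E'(\hat s(A_E))=\lambda(A_E)^2>1$ gives $\tfrac{d}{dE}\hat s(A_E)<0$ — so $p_2(E)-\hat s(A_E)$ is strictly increasing with lift-speed $\ge c_*$ and, being $\le\epsilon_0$ at $E_0$, has a unique zero $E_*'\in I$ with $|E_*'-E_0|\le\epsilon_0/c_*$. Choosing $\gamma_0=\gamma_0(\beta,R)$ large makes $\epsilon_0/c_*$ far smaller than $|I|$, because $2\gamma^{1/2}+\gamma^{1/7}\ll 2\gamma\beta$; this is precisely where the sub-linear exponents $\tfrac12,\tfrac17$ in Definition~\ref{def:030822.1} are used. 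Over the short interval $I$, by continuity and Proposition~\ref{speed of stable and unstable directions}, the data $\lambda(A_E),\lambda(B_E),\|C_E\|$, the gap in condition~3, and the directions $\hat u(A_E),\hat s(A_E),\hat u(B_E),\hat s(B_E)$ move by only a small fraction of $R^{-1}$; this will be used without further comment.

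The third step is the \emph{sweep}. Put $\rho':=\bigl((1+\beta)R\bigr)^{-1}e^{-2\gamma\beta}\to 0$. The length of $I$ is chosen precisely so that $c_*\,|E_*'-E_\pm|\ge \rho'^{-1}e^{-2\gamma}$; in the regime $\mathrm{wind}(\tilde G)<\pi$ the curve $p_2$ crosses $\hat s(A_E)$ exactly once (at $E_*'$), so at each endpoint $E_\mp$ the $\bP^1$–distance from $p_2$ to $\hat s(A_E)$ is $\gtrsim\rho'^{-1}e^{-2\gamma}$, on opposite sides. Since $\hat A_E$ expands by $\lambda(A_E)^2\gtrsim e^{2\gamma}$ near $\hat s(A_E)$ and contracts near $\hat u(A_E)$, the image $G(E)=\hat A_E(p_2(E))$ therefore lies within $\rho'$ of $\hat u(A_E)$ at $E_-$ and at $E_+$ (on opposite sides), while $G(E_*')=\hat s(A_{E_*'})$ and $G$ winds monotonically in between. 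Choosing the lift with $\tilde G(E_*')=\tilde s$, a lift of $\hat s(A_{E_*'})$, and writing $\tilde u$ for the lift of $\hat u(A_{E_*'})$ with $0<\tilde u-\tilde s<\pi$ (possible since $e^{-\gamma^{1/7}}\le d(\hat s(A_{E_*'}),\hat u(A_{E_*'}))\le 1$), monotonicity together with $\mathrm{wind}(\tilde G)<\pi$ forces $\tilde G(E_-)$ to lie within $2\rho'$ of $\tilde u-\pi$ and $\tilde G(E_+)$ within $2\rho'$ of $\tilde u$.

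Finally, the \emph{position bookkeeping}. Still in the case $\mathrm{wind}(\tilde G)<\pi$, the identity $\mathrm{wind}(\tilde G)=\mathrm{wind}(\Theta)+\mathrm{wind}(\tilde v^-)$ with $\mathrm{wind}(\tilde G)\ge\pi-C\rho'$ (from the sweep) and $\mathrm{wind}(\Theta)<\pi$ forces $|\mathrm{wind}(\tilde v^-)|<C'\rho'$, so $\hv^-(E)$ stays within $O(\rho')$ of $\hv^-(E_0)$ throughout $I$; since by A1 the latter is at distance $\ge R^{-1}$ from $\hat u(A_0)$ and $\rho'\ll R^{-1}$, the lift $\tilde v^-$ with $\tilde v^-(E_0)\in(\tilde u-\pi,\tilde u)$ satisfies $\tilde v^-(E)\in\bigl(\tilde u-\pi+\tfrac12R^{-1},\ \tilde u-\tfrac12R^{-1}\bigr)$ for all $E\in I$. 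Comparing with the locations of $\tilde G(E_\pm)$ and using $3\rho'<\tfrac12R^{-1}$ gives $\Theta(E_-)=\tilde G(E_-)-\tilde v^-(E_-)<0<\tilde G(E_+)-\tilde v^-(E_+)=\Theta(E_+)$, so $\Theta$ vanishes at some $E_*\in\interior(I)$, i.e.\ $A_{E_*}C_{E_*}B_{E_*}\hv^+(E_*)=\hv^-(E_*)$, as required. The main obstacle is exactly this bookkeeping in the universal cover: the curves $\hv^\pm$ carry no a-priori bound on their speed, so one cannot naively track positions — the resolution is the dichotomy above, since a large winding of $\hv^\pm$ only helps by pushing $\Theta(I)$ beyond length $\pi$, whereas in the complementary regime the tangency forces a single clean sweep of $\tilde G$ almost all the way around $\bP^1$ with its fixed endpoint $\hat s(A_{E_*'})$ correctly placed relative to $\hv^-$. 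Making the three scales $\epsilon_0$, $\rho'$ and $|I|$ simultaneously consistent is what dictates the choice of $\gamma_0$ and the precise form of $I$.
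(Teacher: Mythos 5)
Your first (``focusing'') step coincides with the paper's: using A1, Proposition~\ref{balanced radius} and Lemma~\ref{LA-1}(b) together with the tangency identity $C_{E_0}\hat u(B_{E_0})=\hat s(A_{E_0})$ to show that $C_{E_0}B_{E_0}\hv^+(E_0)$ lies within $O(R\,e^{-2\gamma(1-\beta)})$ of $\hat s(A_{E_0})$. Where you diverge is in the intermediate value argument. The paper never pushes forward through $A_E$: it compares the two curves $f_+(E)=C_EB_E\hv^+(E)$ and $f_-(E)=A_E^{-1}\hv^-(E)$, which by Lemma~\ref{lem:windingProperty} and A2 wind monotonically in \emph{opposite} directions with speeds $\geq c_*$, and which are $2(1+\beta)Re^{-2\gamma(1-\beta)}$--close at $E_0$; a one-line IVT then produces the crossing inside $I$, and $f_+(E_*)=f_-(E_*)$ is equivalent to the stated equation. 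This pull-back needs only \emph{lower} bounds on winding speeds (which Lemma~\ref{Schrodinger winding} supplies uniformly in the word length) and no tracking of positions in the universal cover, which is exactly the difficulty you correctly identify and then spend most of your proof fighting.

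Your push-forward route has a genuine gap at the point you flag as ``used without further comment'': that $\hat u(A_E)$, $\hat s(A_E)$, etc.\ move by only a small fraction of $R^{-1}$ over $I$. Both the sweep (locating $\tilde G(E_\pm)$ relative to a \emph{single} lift $\tilde u$ of $\hat u(A_{E_*'})$) and the bookkeeping (placing $\tilde v^-(E_0)$ relative to that same $\tilde u$ via A1) require this. But a $(\gamma,\gamma^{1/2},\gamma^{1/7})$--controlled tangency bounds $\lambda(A_{E_0})$ and the singular gap only from \emph{below}; it places no upper bound on the length $n$ of the word $A_E$ nor on $\|\tfrac{d}{dE}A_E\|$. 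By Proposition~\ref{speed of stable and unstable directions} the speed of $\hat u(A_E)$ is of order $\lambda(A_E)^{-1}\,|u\wedge\dot A_E u|$, and $\|\dot A_E\|$ can be as large as $e^{cn}$ with $c=\log\sup\norm{A_E(\cdot)}$; for long words with $cn\gg 3\gamma$ the resulting displacement over $I$ (of length $\asymp e^{-2\gamma(1-\beta)}$) is not small, and ``by continuity'' cannot be invoked because $I$ is prescribed by $\gamma$ alone. So as written your argument does not prove the proposition in the stated generality (and even in the intended application the needed bound would have to be extracted from Proposition~\ref{prop:010722.3}, not from the tangency control). Secondary, fixable issues: the dichotomy should be run on $\mathrm{wind}(\Theta)$ rather than $\mathrm{wind}(\tilde G)$ (you use $\mathrm{wind}(\Theta)<\pi$ in the bookkeeping, which does not follow from $\mathrm{wind}(\tilde G)<\pi$), and the ``exactly one crossing'' and ``opposite sides'' claims in the sweep need the same stability of $\hat s(A_E)$ to be made rigorous. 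I recommend replacing the entire sweep/bookkeeping machinery by the pull-back comparison of $C_EB_E\hv^+(E)$ with $A_E^{-1}\hv^-(E)$.
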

\begin{proof}
    We assume for the sake of simplicity $E_0 = 0$. First observe that using triangular inequality, condition \textit{A1}, Proposition \ref{balanced radius} and the given control of the tangency, there exists $K_0>$ such that
    \begin{align*}
        d(\hv^+(0),\, \hv_2(B_0))
        &\geq d(\hv^+(0),\, \hat{s}(B_0)) - d(\hat{s}(B_0),\, \hv_2(B_0))\\
        &\geq R^{-1} - \frac{K_0}{d(\hv_1^*(B_0),\, \hv_2(B_0))\norm{B_0}^2}\\
        &\geq R^{-1}\left(
            1 - K_0R\, e^{-2\gamma(1 - \frac{1}{2\gamma^{6/7}})}
        \right)
    \end{align*}
    and
    \begin{align*}
        d(\hv^-(0),\, \hv_1^*(A_0))
        &\geq d(\hv^-(0),\, \hat{s}(B_0)) - d(\hat{s}(B_0),\, \hv_2(B_0))\\
        &\geq R^{-1} - \frac{K_0}{d(\hv_1^*(A_0),\, \hv_2(B_0))\norm{A_0}^2}\\
        &\geq R^{-1}\left(
            1 - K_0R\, e^{-2\gamma(1 - \frac{1}{2\gamma^{6/7}})}
        \right).
    \end{align*}
    Now using the previous inequalities jointly with item (b) of Lemma \ref{LA-1} and the control of the transition matrix,
    \begin{align*}
        d(C_0\, B_0\, \hv^+(0),\, C_0\, \hat{u}(B_0))
        &\leq \norm{C_0}^2d(B_0\, \hv^+(0),\, \hat{u}(B_0))\\
        &\leq \norm{C_0}^2\frac{1}{d(\hv^+(0),\, \hv_2(B_0))\norm{B_0}^2}\\
        &\leq R\, \left(
            1- K_0R\, e^{-2\gamma(1 - \frac{1}{2\gamma^{6/7}})}
        \right)^{-1}\, e^{-2\gamma(1 - \gamma^{-1/2})}
    \end{align*}
    and
    \begin{align*}
        d(A_0^{-1}\, \hv^-(0),\, \hat{s}(A_0))
        &\leq \frac{1}{d(\hv^-(0),\, \hv_1^*(A_0))\norm{A_0}^2}\\
        &\leq R\, \left(
            1- K_0R\, e^{-2\gamma(1 - \frac{1}{2\gamma^{6/7}})}
        \right)^{-1}\, e^{-2\gamma}.
    \end{align*}
    Taking  
    \begin{align*}
        \gamma_0 := \max\left\{
            \beta^{-2},\,
            (2\beta)^{-7/6},\,
            \frac{1}{2(1-\beta)}\log\left(
                \frac{K_0R(1+\beta)}{\beta}
            \right)
        \right\},
    \end{align*}
    we conclude that for every $\gamma \geq \gamma_0$
    \begin{align}\label{eq:310722.1}
        d(C_0\, B_0\, \hv^+(0),\, A_0^{-1}\, \hv^-(0)) \leq 2(1+\beta)Re^{-2\gamma(1-\beta)}.
    \end{align}
    
    Choose appropriate projective coordinates in such way to preserve the natural orientation. Consider the functions $f_+,\, f_-: I\to \bP^1$ given by
    \begin{align*}
        f_+(E) = C_E\, B_E\, \hv^+(E)
        \quad
        \text{and}
        \quad
        f_-(E) = A^{-1}_E\, \hv^-(E).
    \end{align*}
    By condition \textit{A2} and Lemma \ref{lem:windingProperty} we have that there exists $c_*>0$ such that $ f'_-(E) < -c_* < 0 < c_* < f'_+(E)$ for every $E\in I$. Moreover, by inequality \eqref{eq:310722.1}, $d(f_+(0),\, f_-(0)) \leq 2(1+\beta)Re^{-2\gamma(1-\beta)}$. Therefore, there exists $|E_*| \leq 2(1+\beta)c_*^{-1}Re^{-2\gamma(1-\beta)}$ such that $f_+(E_*) = f_-(E_*)$, i.e.,
    \begin{align*}
        A_{E_*}\, C_{E_*}\, B_{E_*}\, \hv^+(E_*) = \hv^-(E_*).
    \end{align*}
\end{proof}

We finish this section showing that if the cocycle $A_{E_0}$ has a tangency we can perturb the parameter to produce plenty of new tangencies which are typical with respect to the Lyapunov exponent and the Shannon entropy in a finite scale. Recall the notation of Section \ref{sec:260722.1}.
\begin{proposition}\label{prop:010722.1}
    Assume  the cocycle $\lcCocycle_{(E_0)}$ has a heteroclinic tangency and is irreducible. Given $\beta>0$, there exist constants $C^*_1, C^*_2, c_1^*, c_2^* >0$, a sequence $(l_k)_k\subset \N$, $l_k\to\infty$, and $k_0\in \N$ such that for every $k\geq k_0$ we can find a set $\mathcal{X}_k(\beta)\subset \prodSpace$ with $\prodMeasure(\mathcal{X}_k(\beta)) \geq C_1^*e^{-c_1^*l_k^{1/3}}$ with the following property: for every $\omega\in \mathcal{X}(\beta)$ there exists $E_k = E_k(\omega)$ with $|E_k - E_0|\leq C_2^*e^{-c_2^*l_k^{1/3}}$ such that $\lcCocycle_{(E_k)}$ has a tangency $(P_{E_k}, T_{E_k}, S_{E_k})$, of size $l_k$, satisfying
    \begin{enumerate}
        \item $(P_{E_k}, T_{E_k}, S_{E_k})$ is $(\gamma_k,\, \gamma_k^{1/2},\, \gamma_k^{1/7})$-controlled with $\gamma_k = \frac{l_k}{2}(\lambda - 3\beta)$;
        
        \item $\displaystyle 
            \bp_{l_k}(\omega):=\prod_{j=0}^{l_k-1} p_{\omega_j} 
         \geq  e^{-(H(\mu)+\beta)\, l_k}.$
    \end{enumerate}
\end{proposition}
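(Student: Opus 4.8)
The plan is to use the given heteroclinic tangency of $\lcCocycle_{(E_0)}$ as a fixed \emph{connecting template}, to glue onto it on the left and on the right two long typical hyperbolic words produced by Proposition~\ref{prop:010722.3}, and then to tune the parameter $E$, via Proposition~\ref{prop:solvingEquations}, so that the template carries the unstable direction of the left word exactly onto the stable direction of the right word. Write the given tangency as $\big(\lcCocycle^{\ell_0}_{(E)}(\omega_0),\,\lcCocycle^{k_0}_{(E)}(\omega_*),\,\lcCocycle^{\ell_1}_{(E)}(\omega_1)\big)$ with hyperbolic source and target. The first observation is that powering the endpoints preserves the tangency: for every $N$ the triple $(B^{(N)}_E,C_E,A^{(N)}_E):=\big((\lcCocycle^{\ell_0}_{(E)}(\omega_0))^N,\,\lcCocycle^{k_0}_{(E)}(\omega_*),\,(\lcCocycle^{\ell_1}_{(E)}(\omega_1))^N\big)$ is again a tangency for $\lcCocycle_{(E_0)}$ (the eigendirections do not move), and as $N\to\infty$ its contraction rate $\gamma(N)$ grows linearly in $N$ while $\norm{C_{E_0}}$ stays bounded and the separations $d(\hv_1^*(B^{(N)}_{E_0}),\hv_2(B^{(N)}_{E_0}))$, $d(\hv_1^*(A^{(N)}_{E_0}),\hv_2(A^{(N)}_{E_0}))$ stay bounded below; hence for $N$ large it is a $(\gamma(N),\gamma(N)^{1/2},\gamma(N)^{1/7})$-controlled tangency in the sense of Definition~\ref{def:030822.1}.

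Next I would fix the scales. Given $\beta$, pick $R=R(\beta)$ so large that $\eta^{\pm}(\Ball(\cdot,R^{-1}))<\beta/10$ (Proposition~\ref{prop:100722.8}), let $\gamma_0=\gamma_0(\beta,R)$ be the threshold of Proposition~\ref{prop:solvingEquations}, take an increasing sequence $L_k\to\infty$ and, for each $k$, a power $N_k$ with $\gamma^{\mathrm{old}}_k:=\gamma(N_k)\ge\gamma_0$ of order $L_k^{1/3}$. Let $w^{\mathrm{tr}}_k$ be the word $(\text{period of }\omega_0)^{N_k}\,(\text{connecting word of }\omega_*)\,(\text{period of }\omega_1)^{N_k}$, of length $|T_k|$ of order $L_k^{1/3}$, and set $l_k:=2L_k+|T_k|$, $\gamma_k:=\tfrac{l_k}{2}(\lambda-3\beta)$, so that $l_k\to\infty$ and $2L_k\le l_k\le 3L_k$ for $k$ large. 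Define $\mathcal{X}_k(\beta)$ to consist of those $\omega\in\prodSpace$ such that: (a) $\omega|_{[0,L_k)}$, extended periodically, defines a source $P_E:=\lcCocycle^{L_k}_{(E)}(\omega)$ that is good at scale $L_k$ in the sense of Proposition~\ref{prop:010722.3}, satisfies $\bp_{L_k}(\omega)\ge e^{-(H(\mu)+\beta/3)L_k}$ (Proposition~\ref{prop:040722.2}) and has $\hat u(P_{E_0})\notin\Ball(\hat s(B^{(N_k)}_{E_0}),R^{-1})$ (Proposition~\ref{prop:020622.5} together with the choice of $R$); (b) $\omega|_{[L_k,L_k+|T_k|)}=w^{\mathrm{tr}}_k$; (c) $\sigma^{L_k+|T_k|}\omega$ satisfies the symmetric requirements defining a target $S_E:=\lcCocycle^{L_k}_{(E)}(\sigma^{L_k+|T_k|}\omega)$, including $\hat s(S_{E_0})\notin\Ball(\hat u(A^{(N_k)}_{E_0}),R^{-1})$. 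Since (a),(b),(c) involve three disjoint coordinate blocks they are $\prodMeasure$-independent; (a) and (c) have probability $\ge 1-2\beta$ while (b) has probability $\prod_{\text{letters of }w^{\mathrm{tr}}_k}p_i\ge(\min_i p_i)^{|T_k|}$, and multiplying gives $\prodMeasure(\mathcal{X}_k(\beta))\ge C_1^*e^{-c_1^*l_k^{1/3}}$.

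For $\omega\in\mathcal{X}_k(\beta)$, set $P_E,S_E$ as above and $T_E:=\lcCocycle^{|T_k|}_{(E)}(\sigma^{L_k}\omega)=A^{(N_k)}_EC_EB^{(N_k)}_E$ (read off from the structure of $w^{\mathrm{tr}}_k$). The curves $\hv^+(E):=\hat u(P_E)$, $\hv^-(E):=\hat s(S_E)$ are defined and smooth on the interval $I$ that Proposition~\ref{prop:solvingEquations} attaches to $\gamma=\gamma^{\mathrm{old}}_k$, because $|I|$ is of order $e^{-2\gamma^{\mathrm{old}}_k(1-\beta)}\le e^{-\tau L_k^{1/4}}$ for $k$ large --- this is exactly where $\gamma^{\mathrm{old}}_k$ must grow at least like $L_k^{1/4}$ --- so by Proposition~\ref{prop:010722.3} the matrices $P_E,S_E$ are hyperbolic for all $E\in I$. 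Hypothesis A1 holds by the construction of $\mathcal{X}_k$ (the approximation $\hat u(P_{E_0})\approx\lcCocycle^{L_k}_{(E_0)}(\omega)\hv$ is exponentially accurate, so the ball-avoidance survives), and hypothesis A2 holds because for Schr\"odinger cocycles $E\mapsto\hat u(P_E)$ and $E\mapsto\hat s(S_E)$ are monotone with the sign prescribed by Lemma~\ref{lem:windingProperty} and Proposition~\ref{speed of stable and unstable directions}. Proposition~\ref{prop:solvingEquations} then yields $E_k=E_k(\omega)\in I$ with $A^{(N_k)}_{E_k}C_{E_k}B^{(N_k)}_{E_k}\hat u(P_{E_k})=\hat s(S_{E_k})$, i.e. $(P_{E_k},T_{E_k},S_{E_k})$ is a tangency of size $2L_k+|T_k|=l_k$ for $\lcCocycle_{(E_k)}$ with $|E_k-E_0|\le|I|=C_2^*e^{-c_2^*l_k^{1/3}}$. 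Property~2 follows by factoring $\bp_{l_k}(\omega)$ over the three blocks, $\bp_{l_k}(\omega)\ge e^{-(H(\mu)+\beta/3)\cdot2L_k}\,e^{-O(L_k^{1/3})}\ge e^{-(H(\mu)+\beta)l_k}$ for $k$ large; property~1 follows from Proposition~\ref{prop:010722.3} once more: $\lambda(P_{E_k}),\lambda(S_{E_k})\gtrsim e^{(\lambda-\beta)L_k}\ge e^{\gamma_k}$, $\norm{T_{E_k}}\le\norm{A^{(N_k)}_{E_k}}\,\norm{C_{E_k}}\,\norm{B^{(N_k)}_{E_k}}\lesssim e^{2\gamma^{\mathrm{old}}_k}\le e^{\gamma_k^{1/2}}$ since $\gamma^{\mathrm{old}}_k$ is of order $L_k^{1/3}=o(L_k^{1/2})$, and $d(\hv_1^*,\hv_2)\gtrsim e^{-\beta L_k^{1/8}}\ge e^{-\gamma_k^{1/7}}$ for both $P_{E_k}$ and $S_{E_k}$.

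The main obstacle is not any single estimate but making all the scales coexist: the power $N_k$ (equivalently the length $|T_k|$ of the \emph{forced} transition word) must be large enough --- at least of order $L_k^{1/4}$ --- for the solving interval $I$ of Proposition~\ref{prop:solvingEquations} to lie inside the Schr\"odinger-hyperbolicity window $\{|E-E_0|\le e^{-\tau L_k^{1/4}}\}$ of Proposition~\ref{prop:010722.3}, yet small enough --- $o(L_k^{1/2})$ --- for the norm bound on $T_{E_k}$ and the entropy bound to hold; it is precisely this admissible window for $|T_k|$ that produces the exponent $1/3$ in $\prodMeasure(\mathcal{X}_k(\beta))\ge C_1^*e^{-c_1^*l_k^{1/3}}$, since prescribing a transition word of length $|T_k|$ costs a factor $(\min_i p_i)^{|T_k|}$ in measure. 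The only genuinely structural input beyond soft manipulations is the monotonicity in A2 of the eigendirection curves $E\mapsto\hat u(P_E)$ and $E\mapsto\hat s(S_E)$, which rests on the winding property of Schr\"odinger cocycles; everything else is routine once Propositions~\ref{prop:solvingEquations},~\ref{prop:010722.3},~\ref{prop:040722.2},~\ref{prop:020622.5},~\ref{prop:100722.8} and Proposition~\ref{speed of stable and unstable directions} are available.
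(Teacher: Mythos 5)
Your proposal is correct and follows essentially the same route as the paper: power up the given tangency into a $(\gamma,\gamma^{1/2},\gamma^{1/7})$-controlled template of length $\asymp l_k^{1/3}$, sandwich it between two independent typical blocks from Proposition~\ref{prop:010722.3} whose unstable/stable directions avoid the $R^{-1}$-balls (via Propositions~\ref{prop:020622.5} and~\ref{prop:100722.8}), and solve for $E_k$ with Proposition~\ref{prop:solvingEquations}; the measure cost $e^{-c\,l_k^{1/3}}$ of forcing the transition word and the scale constraints you identify are exactly those of the paper's proof. The only (immaterial) deviation is that you raise both endpoints to the same power $N$, whereas the paper uses a Diophantine choice $p_k/q_k\approx\log\lambda(B_0)/\log\lambda(A_0)$ to balance $\lambda(A_0)^{p_k}\sim\lambda(B_0)^{q_k}$ --- since the control condition only involves the minimum of the two eigenvalues, equal powers work just as well.
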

\begin{proof}
To lighten notations assume $E_0=0$.
Fix $\beta>0$ and let $(B_{0}, C_{0}, A_{0})$ be a tangency for $\lcCocycle_{(E_0)}$.
    Take  integers $p_k, q_k\geq 1$ such that
$$ \left| \frac{p_k}{q_k}-\frac{\log \lambda(B_0)}{\log\lambda(A_0)} \right| <\frac{1}{q_k^2} , $$
or equivalently
\begin{align}\label{eq:020822.2}
    \lambda(B_0)^{q_k} \, \lambda(A_0)^{-\frac{1}{q_k}} < \lambda(A_0)^{p_k} <   \lambda(B_0)^{q_k} \, \lambda(A_0)^{\frac{1}{q_k}},
\end{align}
and write $\lambda_k:=\lambda(A_0)^{p_k}\sim \lambda(B_0)^{q_k}$.
Consider for each $k\geq 1$ the new tangency $(B_{0}^{q_k},\, C_{0},\, A_{0}^{p_k})$ of size $m_k$, also for $\lcCocycle_{(E_0)}$. We claim that this tangency is $(\gamma,\, \gamma^{1/2},\, \gamma^{1/7})$-controlled with $\gamma := (1-\beta)\log\lambda_k$ and $k$ sufficiently large. Indeed, by inequality \eqref{eq:020822.2},
\begin{align*}
    \min\{\lambda(A_0^{p_k}),\, \lambda(B_0^{q_k})\} \geq e^{\gamma},
\end{align*}
for every $k$ sufficiently large. Furthermore, the upper bound for $\norm{C_0}$ and the lower bound for the distances $d(\hv_1^*(A_0^{p_k}),\, \hv_2(A_0^{p_k}))$ and $d(\hv_1^*(B_0^{q_k}),\, \hv_2(B_0^{q_k}))$ can be taken independently of $k$ and so the conditions of Definition \ref{def:030822.1} are automatic satisfied for every $k$ large.
    
For each $R>0$, consider the projective intervals $J^s := \Ball(\hat{s}(B_{0}),\, R^{-1})$ and $J^u := \Ball(\hat{u}(A_{0}),\, R^{-1})$ as well as
\begin{align*}
    I_k := [E_0 - 2 c_\ast^{-1}(1+\beta) R\lambda_k^{-2(1-2\beta)}, E_0 + 2 c_\ast^{-1}(1+\beta) R\lambda_k^{-2(1-2\beta)}].
\end{align*}
    Denote by $\tau_k$ the finite word of size $m_k$ determined by the tangency, i.e., for every $\omega\in [0;\tau_k]$, $\lcCocycle^{m_k}_{(0)}(\omega) = A_{0}^{p_k}\, C_{0}\, B_{0}^{q_k}$.
     Since $\lcCocycle_{0}$ is strongly irreducible, the forward and backward stationary measures $\eta^+$ and $\eta^-$ are non-atomic. Hence, we can choose $R$ sufficiently large so that
    \begin{align}\label{eq:260722.2}
        \eta_{0}^+(6J^s) \leq 1/4
        \quand
        \eta_{0}^-(6J^u) \leq 1/4.
    \end{align}
    
    Given $\hv, \hw\in \bP^1$, consider the set $\mathcal{G}_n:=\mathcal{G}_n(\hv,\, \hw,\, \beta,\, \tau, 0)$ given by Proposition \ref{prop:010722.3}. For each $n\geq 1$, define
    \begin{align*}
        \mathcal{G}_n^u := \left\{
            \omega\in \mathcal{G}_n\colon\,
            \hat{u}(\lcCocycle^n_{(0)}(\omega))\notin 2J^s
        \right\}
        \quad
        \text{and}
        \quad
        \mathcal{G}_n^s := \left\{
            \omega\in \mathcal{G}_n\colon\,
            \hat{s}(\lcCocycle^n_{(0)}(\omega))\notin 2J^u
        \right\}.
    \end{align*}
    Notice that by item 3. of Proposition \ref{prop:010722.3}
    \begin{align*}
        \left\{
            \omega\in \mathcal{G}_n\colon
            \lcCocycle^n_{(0)}(\omega)\, \hv \notin 3J^s
        \right\}\subset \mathcal{G}_n^u
        \quad
        \text{and}
        \quad
        \left\{
            \omega\in \mathcal{G}_n\colon
            \lcCocycle^{-n}_{(0)}(\omega)\, \hw \notin 3J^u
        \right\}\subset \mathcal{G}_n^s.
    \end{align*}
    Thus, by inequality \eqref{eq:260722.2}, Proposition \ref{prop:020622.5} and Proposition \ref{prop:010722.3} we have that
    \begin{align}\label{eq:260722.7}
        \prodMeasure(\mathcal{G}^u_n) \geq \prodMeasure(\mathcal{G}_n) - \prodMeasure([ \lcCocycle^n_{(0)}(\cdot)\, \hv \in 3J^s])
        \geq 1 - \beta - 2\eta_{0}^+(6J^s) \geq \frac{1}{2} - \beta,
    \end{align}
    and similarly,
    \begin{align}\label{eq:260722.8}
        \prodMeasure(\mathcal{G}^s_n) \geq \prodMeasure(\mathcal{G}_n) - \prodMeasure([ \lcCocycle^{-n}_{(0)}(\cdot)\, \hw \in 3J^u])
        \geq 1 - \beta - 2\eta_{0}^-(6J^u) \geq \frac{1}{2} - \beta.
    \end{align}
    
   We define the set $\mathcal{T}_k$ of tangencies by
    \begin{align*}
        \mathcal{T}_k := \mathcal{G}_{ m_k^3}^u
        \cap [ m_k^3;\, \tau_k]
        \cap \shift^{-d_k}\left(
        \mathcal{G}_{ m_k^3}^s
        \right),
    \end{align*}
    where $d_k :=  m_k^3 + m_k$. Take $\omega\in \mathcal{T}_k$ and define the functions $\hv^+,\, \hv^-:I_k\to \bP^1$
    \begin{align*}
        \hv^+(E) := \hat{u}(\lcCocycle^{m_k^3}_{(E)}(\omega))
        \quad
        \text{and}
        \quad
        \hv^-(E):= \hat{s}(\lcCocycle^{m_k^3}_{(E)}(\shift^{d_k}\omega)).
    \end{align*}
    Notice that by definition of $\mathcal{T}_k$,
    \begin{align*}
        \hv^+(0) \notin 2J^s \supset \Ball(\hat{s}(B_0),\, R^{-1})
        \quad
        \text{and}
        \quad
        \hv^-(0)\notin 2J^u\supset \Ball(\hat{u}(A_0),\, R^{-1}).
    \end{align*}
    Moreover, by the Lemma \ref{speed of stable and unstable directions},
    \begin{align*}
        \frac{d}{dE}\hv^+(E)\geq 0
        \quad
        \text{and}
        \quad
        \frac{d}{dE}\hv^-(E)\leq 0.
    \end{align*}
    Thus, we can apply Proposition \ref{prop:solvingEquations}, to guarantee that there exists $E_k = E_k(\omega)\in I_k$, satisfying
    \begin{align*}
        A_{E_k}^{p_k}\,C_{E_k}\,B_{E_k}^{q_k}\,  \hat{u}(\lcCocycle^{m_k^3}_{(E_k)}(\omega)) = \hat{s}(\lcCocycle^{m_k^3}_{(E_k)}(\shift^{d_k}\omega)).
    \end{align*}
    
    Set $l_k:= 2 m_k^3+m_k$ and consider the set
    \begin{align*}
        \mathcal{F}_k(\beta) := \left\{
            \omega\in \prodSpace\colon\,
            \bp_{l_k}(\omega)  \geq e^{-(H(\mu)+ \beta)\,l_k}
        \right\}.
    \end{align*}
    By Proposition~\ref{prop:040722.2} with $n=l_k$ and $\varepsilon=\beta$,
    \begin{align}\label{eq:260722.9}
        \prodMeasure(\mathcal{F}_k(\beta)) \geq 1 - 2e^{-\frac{4}{h^2}\, l_k\,\beta^2},
    \end{align}
    where $h$ is a positive constant depending only on $\mu$.
    
    For each $\omega\in \mathcal{T}_k \cap \mathcal{F}_{k}(\beta)$, define
    \begin{align*}
        P_k := \lcCocycle^{m_k^3}_{(E_k)}(\omega),
        \quad
        T_k := A_{E_k}^{p_k}\,C_{E_k}\,B_{E_k}^{q_k}
        \quad
        \text{and}
        \quad
        S_k := \lcCocycle^{m_k^3}_{(E_k)}(\shift^{d_k}\omega).
    \end{align*}
    and observe that by Proposition \ref{prop:010722.3}, $P_k$ and $S_k$ are hyperbolic and if $\gamma_k := (\lambda - 3\beta)\frac{l_k}{2}$, then $(P_k,\, T_k,\, S_k)$ is a $(\gamma_k,\, \gamma_k^{1/2},\, \gamma^{1/7})$-controlled tangency for the cocycle $\lcCocycle_{(E_k)}$ of size $l_k$. Moreover,
    \begin{align*}
        \lambda(P_k) \gtrsim e^{(\lambda - \beta)m_k^3} > e^{(\lambda - 2\beta)\frac{l_k}{2}}
        \quad
        \text{and}
        \quad
        \lambda(S_k) \gtrsim e^{(\lambda - \beta) m_k^3 }> e^{(\lambda - 3\beta)\frac{l_k}{2}}.
    \end{align*}
    which proves item 1. 
    Item 2 holds because $\omega\in \mathcal{F}_k(\beta)$.
    
   To finish the proposition notice that by inequalities \eqref{eq:260722.7}, \eqref{eq:260722.8} and \eqref{eq:260722.9}
    \begin{align*}
        \prodMeasure(\mathcal{T}_k\cap \mathcal{F}_{k}(\beta))
        &\geq \prodMeasure(\mathcal{T}_k) - \prodMeasure(\prodSpace\backslash\, \mathcal{F}_k(\beta))\\
        &\geq (1/2 - \beta)^2\, \prodMeasure([0;\, \tau_k]) - 2e^{-\frac{4}{h^2}l_k\beta^2}
        \geq (1/2-\beta)^2\, e^{-c\,m_k}
    \end{align*}
    for some constant $c>0$. Taking $\mathcal{X}_k(\beta) := \mathcal{T}_k\cap \mathcal{F}_{k}(\beta)$ completes the proof.
    \end{proof}
\section{Counting Matchings}\label{sec8:010622.7}

The purpose of this section is to give a lower bound for the $\tilde{\towerStat}$-measure of the set of sequences for which we have a $(\delta, k, I)$-matching for some small interval of energies $I$. Throughout this subsection we assume that the cocycle $\lcCocycle_0$ has a heteroclinic tangency and is irreducible. We keep the notation used in the Proposition \ref{prop:010722.1}.

Recall that for a suitable $\delta_1>0$ we use the notation
\begin{align*}
    \lambda = \min_{|E|\leq \delta_1}\Le(\mu_E) > 0.
\end{align*}

\subsection{Subset of matchings}
Take $\beta>0$ and let $\N'$ be the set of sizes $l\in\N$ of the heteroclinic tangencies of  $\lcCocycle_{E_l}$, $(P_{E_l}, T_{E_l}, S_{E_l})$,  given by Proposition \ref{prop:010722.1}, applied with $E_0 = 0$, where $E_l = E_l(\omega)$ for some $\omega\in \prodSpace$ and $|E_l| \leq C_2^\ast\, e^{-c_2^\ast\, l^{1/3}}$ is such that 
$\lcCocycle^{l}_{(E_l)}(\omega) = S_{E_l}\, T_{E_l}\, P_{E_l}$.
Denote by $\tau_l\in\prodSpace$ the finite word of length $l$  associated with the block $S_{E_l}\, T_{E_l}\, P_{E_l}$, for a given size $l\in\N'$.
By item 3 of Proposition \ref{prop:010722.1} 
\begin{align}\label{eq:260722.4}
    \prodMeasure([0; \tau_l])= \bp_{l}(\omega)
    \geq e^{-(H(\mu)+ \beta)\,l}.
\end{align}

To  apply Proposition \ref{prop:solvingEquations} with the tangency $(P_{E_l},\, T_{E_l},\, S_{E_l})$ we consider the balls $J^s_l$ and $J^u_l$ in $\bP^1$ centered respectively in $\hat{s}(P_{E_l})$ and $\hat{u}(S_{E_l})$ with radius $R^{-1}>0$. 
Consider the interval
\begin{align*}
    I_l := [E_l-C\, e^{-l\,(\lambda-\beta)},\, E_l + C\, e^{-l\,(\lambda-\beta)}]
\end{align*}
provided by this proposition
with  $C:= 2 c_\ast^{-1} (1+\beta)  R$.
Choosing $\gamma := \frac{l}{2}(\lambda - \beta)$ by the said proposition the heteroclinic tangency $(P_{E_l},\, T_{E_l},\, S_{E_l})$  is $(\gamma,\, \gamma^{1/2},\, \gamma^{1/7})$-controlled so that the initial assumptions of  Pro\-po\-sition \ref{prop:solvingEquations} are automatically satisfied.

Fix $\tau>0$  given by Proposition~\ref{prop:010722.3} and 
 take $\mathcal{G}_{l^3} := \mathcal{G}_{l^3}(\hat e_1, \hat e_2,\beta, \tau, 0)$ given this proposition. Define then
\begin{align*}
    \Theta_l^u := \left\{
        \omega\in \mathcal{G}_{l^3} \, \colon\,
        \lcCocycle_{E_l}^{l^3}(\omega)\, \hat{e}_1\notin 2J^s_l
    \right\}
    \quad
    \text{and}
    \quad
    \Theta_l^s := \left\{
        \omega\in \mathcal{G}_{l^3} \, \colon\,
        \lcCocycle_{E_l}^{-l^3}(\omega)\, \hat{e}_2\notin 2J^u_l
    \right\}.
\end{align*}
Notice that taking $R$ sufficiently large and applying Proposition \ref{P1} we have
\begin{align}\label{eq:260722.3}
    \prodMeasure(\Theta_l^u) \geq 1/2 - \beta
    \quad
    \text{and}
    \quad
    \prodMeasure(\Theta_l^s) \geq 1/2 - \beta.
\end{align}

Now, we finally define our subset of matchings as
\begin{align*}
    \mathcal{M}_l
    := \Theta_l^u\cap\, \shift^{-l^3}([0;\tau_l])\cap\, \sigma^{-(2 l^3+l)}(\Theta_l^s) .
\end{align*}

\begin{lemma}\label{lem:120522.2}
    For every $l\in\N'$ and $\omega\in \mathcal{M}_l$ there exists $E_l^\ast\in I_l$ such that 
    \begin{enumerate}
        \item $\lcCocycle_{(E_l^\ast)}^{2 l^3+ l}(\omega)\, \hat e_1 = \hat e_2$;
        \item $e^{(\lambda-\beta)\, l^3} \leq
         \norm{\lcCocycle_{(E_l^\ast)}^{l^3}(\omega)\, e_1}\leq \norm{\lcCocycle_{(E_l^\ast)}^{l^3}(\omega)}\leq e^{(\lambda + \beta)\, l^3}$;
        \item $e^{(\lambda-\beta)\, l^3} \leq \norm{\lcCocycle_{(E_l^\ast)}^{-l^3}(\sigma^{2 l^3+l}\omega)\, e_2}\leq \norm{\lcCocycle_{(E_l^\ast)}^{-l^3}(\sigma^{2 l^3+l} \omega)}\leq e^{(\lambda + \beta)\, l^3}$;
        \item $\norm{\lcCocycle_{(E_l^\ast)}^{2l^3+l}(\omega)\, e_1}\leq e^{3 \, \beta \, l^3}$.
    \end{enumerate}  
    Moreover,
    \; $\displaystyle \prodMeasure\left(
            \mathcal{M}_l
        \right) \geq (1/2 - \beta)^2\, e^{
            -l \,(H(\mu)+ \beta)
        }$.
\end{lemma}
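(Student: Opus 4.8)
The plan is to \emph{unfold} the heteroclinic tangency $(P_{E_l},T_{E_l},S_{E_l})$ of $\lcCocycle_{(E_l)}$ by inserting it between two long uniformly hyperbolic ``typical'' blocks, and then to close up the resulting near-matching by an intermediate value argument via Proposition \ref{prop:solvingEquations}.

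Fix $l\in\N'$ and $\omega\in\mathcal{M}_l$. Splitting the word of length $2l^3+l$ read off $\omega$ into an initial block of length $l^3$, the tangency word $\tau_l$ in the middle, and a terminal block of length $l^3$, consider the smooth projective curves
\[
    \hv^+(E):=\lcCocycle^{l^3}_{(E)}(\omega)\,\hat e_1,\qquad
    \hv^-(E):=\lcCocycle^{-l^3}_{(E)}(\shift^{2l^3+l}\omega)\,\hat e_2,\qquad E\in I_l.
\]
By definition of $\Theta_l^u,\Theta_l^s$ one has $\hv^+(E_l)\notin 2J_l^s\supset\Ball(\hat s(P_{E_l}),R^{-1})$ and $\hv^-(E_l)\notin 2J_l^u\supset\Ball(\hat u(S_{E_l}),R^{-1})$, which is hypothesis A1 of Proposition \ref{prop:solvingEquations} (with $(P_{E_l},T_{E_l},S_{E_l})$ in the role of $(B_{E_0},C_{E_0},A_{E_0})$), while Lemma \ref{lem:windingProperty} gives $\frac{d}{dE}\hv^+>0>\frac{d}{dE}\hv^-$ on $I_l$, i.e.\ A2. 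Since with $\gamma=\frac{l}{2}(\lambda-\beta)$ the tangency is $(\gamma,\gamma^{1/2},\gamma^{1/7})$-controlled, Proposition \ref{prop:solvingEquations} produces $E_l^*\in I_l$ with $S_{E_l^*}T_{E_l^*}P_{E_l^*}\,\hv^+(E_l^*)=\hv^-(E_l^*)$. Since $\shift^{l^3}\omega\in[0;\tau_l]$ gives $\lcCocycle^{l}_{(E)}(\shift^{l^3}\omega)=S_ET_EP_E$ for every $E$, this reads $\lcCocycle^{l^3+l}_{(E_l^*)}(\omega)\,\hat e_1=\lcCocycle^{-l^3}_{(E_l^*)}(\shift^{2l^3+l}\omega)\,\hat e_2$; left-multiplying by $\lcCocycle^{l^3}_{(E_l^*)}(\shift^{l^3+l}\omega)$, whose inverse is $\lcCocycle^{-l^3}_{(E_l^*)}(\shift^{2l^3+l}\omega)$, yields item 1.

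Items 2 and 3 follow from $\omega\in\Theta_l^u\subset\mathcal{G}_{l^3}$ and $\shift^{2l^3+l}\omega\in\Theta_l^s\subset\mathcal{G}_{l^3}$: Proposition \ref{prop:010722.3}(1) gives, for $E\in I_l$, $\norm{\lcCocycle^{l^3}_{(E)}(\omega)\,e_1}\gtrsim e^{(\lambda-\beta)l^3}$ and $\norm{\lcCocycle^{-l^3}_{(E)}(\shift^{2l^3+l}\omega)\,e_2}\gtrsim e^{(\lambda-\beta)l^3}$, the matching upper bounds coming from the large deviation upper bound in Proposition \ref{ULDE} (which can be built into $\mathcal{G}_{l^3}$); running these applications with $\beta/2$ in place of $\beta$ absorbs the implicit constants into $e^{\pm\beta l^3}$ for $l$ large. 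For item 4 write $\lcCocycle^{2l^3+l}_{(E_l^*)}(\omega)=S'MP'$ with $P'=\lcCocycle^{l^3}_{(E_l^*)}(\omega)$, $M=\lcCocycle^{l}_{(E_l^*)}(\shift^{l^3}\omega)$, $S'=\lcCocycle^{l^3}_{(E_l^*)}(\shift^{l^3+l}\omega)$, and let $u^+$ be a unit vector along $\hv^+(E_l^*)$. Since $Mu^+$ lies along $\hv^-(E_l^*)=(S')^{-1}\hat e_2$, which $S'$ maps to $\hat e_2$, one obtains the exact identity
\begin{align*}
    \norm{\lcCocycle^{2l^3+l}_{(E_l^*)}(\omega)\,e_1}
    &=\norm{P'e_1}\cdot\norm{Mu^+}\cdot\frac{1}{\norm{(S')^{-1}e_2}}\\
    &\le e^{(\lambda+\beta)l^3}\,e^{cl}\,e^{-(\lambda-\beta)l^3}\le e^{3\beta l^3}
\end{align*}
for $l$ large, using items 2, 3 for the outer factors and $\norm{M}\le e^{cl}$ for the middle one ($M$ is a product of $O(l)$ uniformly bounded matrices, with $\norm{T_E}\le e^{\gamma^{1/2}}$ and $\norm{P_E},\norm{S_E}\le e^{O(l)}$).

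Finally, $\Theta_l^u$, $\shift^{-l^3}[0;\tau_l]$ and $\shift^{-(2l^3+l)}\Theta_l^s$ are measurable with respect to the pairwise disjoint coordinate blocks $[0,l^3)$, $[l^3,l^3+l)$ and $[l^3+l,3l^3+l)$ (here $\mathcal{G}_{l^3}$ depends only on coordinates $[0,l^3)$ and the direction constraint in $\Theta_l^s$ on $[-l^3,0)$), so by independence of $\prodMeasure$ together with \eqref{eq:260722.3} and \eqref{eq:260722.4},
\[
    \prodMeasure(\mathcal{M}_l)=\prodMeasure(\Theta_l^u)\,\prodMeasure\big([0;\tau_l]\big)\,\prodMeasure(\Theta_l^s)\ge(1/2-\beta)^2\,e^{-(H(\mu)+\beta)l}.
\]
The main obstacle I expect is item 4: the naive submultiplicative bound $\norm{S'MP'e_1}\le\norm{S'}\norm{M}\norm{P'e_1}$ is hopeless (it carries a factor $\sim e^{2\lambda l^3}$), so one must exploit the directional identity above, which forces one to keep careful track of which stable/unstable line each of the three blocks maps onto — the information encoded by the $2J_l^{s/u}$-complement conditions. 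A minor accompanying point is to check that $E_l^*$, although only located in the short interval $I_l$, still lies within the (much wider) ranges of validity of the estimates invoked, which holds since $|E_l^*-E_l|\le Ce^{-l(\lambda-\beta)}$ is far smaller than those windows.
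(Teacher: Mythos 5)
Your proof is correct and follows essentially the same route as the paper's: the same curves $\hv^{\pm}(E)$ fed into Proposition~\ref{prop:solvingEquations} with hypotheses A1, A2 checked via $\Theta_l^u$, $\Theta_l^s$ and Lemma~\ref{lem:windingProperty}, the same three-factor norm identity exploiting the directional matching for item 4, and the same independence-of-coordinate-blocks argument for the measure bound. The only cosmetic difference is that you justify the upper bounds in items 2--3 via the large-deviation upper bound built into $\mathcal{G}_{l^3}$, whereas the paper invokes Proposition~\ref{balanced radius}; both are consistent with how $\mathcal{G}_{l^3}$ is constructed.
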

\begin{proof}
Fix $\omega\in \mathcal{M}_l$. By Proposition \ref{prop:solvingEquations} we conclude that the equation
    \begin{align}\label{eq:010722.4}
        \lcCocycle^{2 l^3+l}_E(\omega)\, \hat{e}_1 = \hat{e}_2,
    \end{align}
    has at least a solution $E_l^\ast\in I_l$. 
    In fact, as explained above  the heteroclinic tangency $(P_{E_l},\, T_{E_l},\, S_{E_l})$  is $(\gamma,\, \gamma^{1/2},\, \gamma^{1/7})$-controlled so that the initial assumptions of  Proposition \ref{prop:solvingEquations} are automatically satisfied.
    Next consider the curves
    $\hat v^+(E):= \lcCocycle^{l^3}_E(\omega)\, \hat{e}_1$ and $\hat v^-(E):= \lcCocycle^{-l^3}_E(\sigma^{2 l^3+l}\omega)\, \hat{e}_2$. Assumption A1 holds because
    $\omega\in \Theta_l^u$ and $\sigma^{2 l^3+l}\omega\in \Theta_l^s$.  
    Assumption A2 holds by Lemma~\ref{lem:windingProperty}.

    The lower bounds in items 2 and 3 follow from item 1 of 
    Proposition~\ref{prop:010722.3} and the fact that $\omega\in \mathcal{G}_{l^3}(\hat e_1, \hat e_2,\beta, \tau, 0)$. From items 1 and 3 of the said proposition together with conclusion 2) of Proposition~\ref{balanced radius} we get the  upper bounds in items 2 and 3.
  
    Taking unit vectors
    $w_1\in \lcCocycle_{(E^\ast_l)}^{l^3}(\omega)\, \hat e_1$
    and
     $w_2\in \lcCocycle_{(E^\ast_l)}^{l}(\sigma^{l^3}\omega)\, \hat w_1$, by  2 above,
    \begin{align*}
     \norm{\lcCocycle_{(E_l^\ast)}^{2l^3+l}(\omega)\, e_1}&=
     \norm{\lcCocycle_{(E_l^\ast)}^{l^3}(\sigma^{l^3+l}\omega)\, w_2}\, 
     \norm{\lcCocycle_{(E_l^\ast)}^{l}(\sigma^{l^3} \omega)\, w_1}\,
     \norm{\lcCocycle_{(E_l^\ast)}^{l^3}(\omega)\, e_1}\\ 
     &\leq e^{-(\lambda-\beta)\, l^3}\, e^{C\, l} \, e^{(\lambda+\beta) l^3} \leq e^{2 \, \beta \, l^3 + C\, l} \leq e^{3\beta l^3}    ,
    \end{align*}
    which proves item 4.
    
    To finish, using the inequalities in \eqref{eq:260722.4} and \eqref{eq:260722.3} we have
    \begin{align*}
        \prodMeasure(\mathcal{M}_l) = \prodMeasure(\Theta_l^u)\, \prodMeasure(\Theta_l^s)\, \prodMeasure([0; \tau_l]) \geq (1/2 - \beta)^2\, e^{-l\, (H(\mu) + \beta)}.
    \end{align*}
    This completes the proof of the lemma.
\end{proof}

Now we can give a lower bound for the set of matchings. Recall the notation of Section \ref{sec6:010622.5}.
\begin{corollary}\label{cor:310722.1}
For all large $l\in\N'$, if  $n_l:=4(2l^3 + l)$ then 
\begin{align*}
    \tilde{\towerStat}\left(
        \Sigma(e^{-l^3(\lambda - 4\beta)},\, n_l,\,I_l)
    \right) \geq \frac{1}{4}(1/2 - \beta)^2\, e^{-l\, (H(\mu) + \beta)}.
\end{align*}
\end{corollary}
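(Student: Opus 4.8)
The plan is to derive the corollary directly from Lemma~\ref{lem:120522.2} by transporting the matchings it produces for the locally constant family $\lcCocycle_{(E)}$ over $\prodSpace$ into genuine $(\delta,n_l,E)$-matchings for the Schr\"odinger family $\lcCocycle_E$ over $\prodTower$, and then converting the measure lower bound through the conjugacy of Lemma~\ref{240122.1}. Write $k:=2l^3+l$, so $n_l=4k$. The elementary dictionary is: for every $\omega\in\prodSpace$, setting $\zeta:=(\pi|_{\prodTower_0})^{-1}(\omega)\in\prodTower_0$, one has
\[
    \lcCocycle_{(E)}^{\,j}(\omega)=\lcCocycle_E^{\,4j}(\zeta)\qquad (0\le j\le k,\ E\in\R).
\]
Indeed $\lcCocycle_{(E)}(\omega)=\lcCocycle_E^4(\zeta)$ by the construction in Section~\ref{270122.1}, and since $\pi\circ\shift^4=\shift\circ\pi$ with each $\prodTower_j$ being $\shift^4$-invariant, we get $(\pi|_{\prodTower_0})^{-1}(\shift^{j}\omega)=\shift^{4j}\zeta$; composing the consecutive four-step blocks telescopes into $\lcCocycle_E^{4j}(\zeta)$.

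Now fix a large $l\in\N'$ and $\omega\in\mathcal{M}_l$, and let $\zeta:=(\pi|_{\prodTower_0})^{-1}(\omega)$ together with $E^\ast=E_l^\ast(\omega)\in I_l$ the energy furnished by Lemma~\ref{lem:120522.2}. I claim $\zeta$ has a $(e^{-l^3(\lambda-4\beta)},n_l,E^\ast)$-matching. First, by item~1 of that lemma and the identity above, $\lcCocycle_{E^\ast}^{n_l}(\zeta)\,\hat e_1=\lcCocycle_{(E^\ast)}^{k}(\omega)\,\hat e_1=\hat e_2$. Second, for the contraction ratio: the numerator $\|\lcCocycle_{E^\ast}^{n_l}(\zeta)\,e_1\|=\|\lcCocycle_{(E^\ast)}^{k}(\omega)\,e_1\|\le e^{3\beta l^3}$ by item~4, while evaluating the maximum at the index $j=4l^3\le n_l-1$ gives the denominator bound $\max_{0\le j\le n_l-1}\|\lcCocycle_{E^\ast}^{j}(\zeta)\,e_1\|\ge\|\lcCocycle_{E^\ast}^{4l^3}(\zeta)\,e_1\|=\|\lcCocycle_{(E^\ast)}^{l^3}(\omega)\,e_1\|\ge e^{(\lambda-\beta)l^3}$ by item~2. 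Hence $\tau_{n_l}(\zeta,E^\ast)\le e^{-(\lambda-4\beta)l^3}$, and the inequality is strict for all large $l$ because the estimate in item~4 has slack (the proof of Lemma~\ref{lem:120522.2} actually bounds the numerator by $e^{2\beta l^3+Cl}$). Since $E^\ast\in I_l$, this establishes $\zeta\in\Sigma\!\left(e^{-l^3(\lambda-4\beta)},\,n_l,\,I_l\right)$.

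It remains to transfer the measure bound. The map $\omega\mapsto(\pi|_{\prodTower_0})^{-1}(\omega)$ is a measurable bijection $\prodSpace\to\prodTower_0$ which, by Lemma~\ref{240122.1}, pushes $\prodMeasure$ to the normalized restriction $4\,\tilde{\towerStat}|_{\prodTower_0}$; equivalently, $\tilde{\towerStat}\big((\pi|_{\prodTower_0})^{-1}(A)\big)=\tfrac14\,\prodMeasure(A)$ for measurable $A\subseteq\prodSpace$. Since the previous step shows $(\pi|_{\prodTower_0})^{-1}(\mathcal{M}_l)\subseteq\Sigma\!\left(e^{-l^3(\lambda-4\beta)},\,n_l,\,I_l\right)$, monotonicity of $\tilde{\towerStat}$ together with the bound $\prodMeasure(\mathcal{M}_l)\ge(1/2-\beta)^2 e^{-l(H(\mu)+\beta)}$ from Lemma~\ref{lem:120522.2} yields
\[
    \tilde{\towerStat}\!\left(\Sigma\!\left(e^{-l^3(\lambda-4\beta)},\,n_l,\,I_l\right)\right)\ \ge\ \tilde{\towerStat}\big((\pi|_{\prodTower_0})^{-1}(\mathcal{M}_l)\big)\ =\ \tfrac14\,\prodMeasure(\mathcal{M}_l)\ \ge\ \tfrac14(1/2-\beta)^2\,e^{-l(H(\mu)+\beta)},
\]
which is the asserted inequality. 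The only point requiring genuine care is the size-scaling dictionary between $\lcCocycle_{(E)}$ and $\lcCocycle_E$ and the verification that $\tau_{n_l}$ drops below $e^{-l^3(\lambda-4\beta)}$; beyond this bookkeeping there is no real obstacle.
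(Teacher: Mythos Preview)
Your proof is correct and follows essentially the same route as the paper's own argument: show $(\pi|_{\prodTower_0})^{-1}(\mathcal{M}_l)\subset \Sigma(e^{-l^3(\lambda-4\beta)},n_l,I_l)$ via the conjugacy and the four items of Lemma~\ref{lem:120522.2}, then transfer the measure using $\tilde{\towerStat}\circ(\pi|_{\prodTower_0})^{-1}=\tfrac14\,\prodMeasure$. Your remark that the slack $e^{2\beta l^3+Cl}$ in item~4 makes the matching inequality strict for large $l$ is a nice touch that the paper glosses over.
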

\begin{proof}
    Let $\pi_0 := \pi|_{\prodTower_0}:\prodTower_0\to \prodSpace$ be the conjugation given by Lemma \ref{240122.1}. We claim that $\pi_0^{-1}(\mathcal{M}_l) \subset \Sigma(e^{-l^3(\lambda - \beta)},\, n_l,\,I_l)$. Indeed, by Lemma \ref{lem:120522.2} if $\pi_0(\zeta)\in \mathcal{M}_l$, there exist $E_l^*\in I_l$ such that
    \begin{align*}
        \lcCocycle_{E_l^*}^{n_l}(\zeta)\, \hat{e}_1
        = \lcCocycle^{2l^3 + l}_{(E_l^*)}(\pi_0(\zeta))\, \hat{e}_1
        = \hat{e}_2
    \end{align*}
    Moreover, 
    \begin{align*}
        \tau_{n_l}(\zeta, E_l^*) &
        \leq \frac{
            \norm{
                \lcCocycle_{E^*}^{n_l}(\zeta)\, e_1
            }
        }{
            \norm{
                \lcCocycle_{E^*}^{4\,l^3}(\zeta)\, e_1
            }
        }
        = \frac{
            \norm{
                \lcCocycle_{E_l^*}^{2 l^3 + l}(\pi_0(\zeta))\, e_1
            }
        }{
            \norm{
                \lcCocycle_{E_l^*}^{l^3}(\pi_0(\zeta))\, e_1
            }
        }
        \leq e^{3\,\beta\, l^3 - (\lambda - \beta)\,l^3}
        =e^{-(\lambda - 4\beta)\,l^3}.
    \end{align*}
    This proves that any $\zeta$ is a $(e^{-l^3\,(\lambda - 4\beta)}, n_l, E^*_l)$-matching for some $E^*\in I_l$.
    
    To finish, since $4\tilde{\towerStat}$ is normalization of $\tilde{\towerStat}$ to $\prodTower_0$,
    \begin{align*}
        \tilde{\towerStat}\left(
            \Sigma(e^{-l^3(\lambda - \beta)},\, n_l,\,I_l)
        \right)
        \geq \tilde{\towerStat}(\pi_0^{-1}(\mathcal{M}_l)) = \frac{1}{4}\prodMeasure(\mathcal{M}_l)
        \geq \frac{1}{4}(1/2 - \beta)^2\, e^{-l\, (H(\mu) + \beta)}.
    \end{align*}
    This completes the proof of the corollary.
\end{proof}
\section{Proof of the results}\label{sec9:010622.8}
We keep the notations of the previous section.

\subsection{Proof of Theorem \ref{mainThm}}

We keep the notation of the previous section. Take $\alpha > \frac{H(\mu)}{\Le(\mu)}$ and  choose $\delta>0$ such that
$\lambda:= \min_{|E|\leq \delta} \Le(\mu_E)$ satisfies
$\lambda\, \alpha-H(\mu)>0$. Then take $0<\beta<\lambda$ small enough so that
$\lambda\, \alpha-H(\mu)>2\,\beta+\alpha\, \beta$, which  implies that 
\begin{align}\label{eq:260722.5}
  -H(\mu)-\beta + \alpha\, (\lambda-\beta)>\beta .
\end{align}

By Proposition \ref{prop:120522.1}, to prove Theorem \ref{mainThm} it is enough to prove that the integrated density of states $\ids$ is not $\alpha$-H\"older continuous. By corollaries \ref{cor:120522.3} and \ref{cor:310722.1}, writing $\delta_l : = e^{-l^3(\lambda - 4\beta)}$,
\begin{align*}
    \Delta_{I_l+ [-\delta_l,\, \delta_l]}\ids
    \geq \frac{1}{n_l}\tilde{\towerStat}\left(
        \Sigma(\delta_l,\, n_l,\, I_l)
    \right)
    \geq \frac{1}{4n_l}(1/2-\beta)^2\, e^{-l_l(H(\mu) + \beta)}.
\end{align*}
Thus by inequality \eqref{eq:260722.5},
\begin{align*}
    \frac{
        \Delta_{I_l+ [-\delta_l,\, \delta_l]}\ids
    }{
        |I_l+ [-\delta_l,\, \delta_l]|^{\alpha}
    }
    \gtrsim e^{
        l\left(
            -H(\mu) - \beta + \alpha(\lambda - \beta)
        \right)}
    \gtrsim e^{\beta\, l}.
\end{align*}
Taking $l\in \N'$, $l\to \infty$ we conclude that $\ids$ can not be $\alpha$-H\"older continuous.\qed

\subsection{Proof of Corollary \ref{Cor:010722.8}}
By \cite[Theorem 4.1]{ABY10}, if $\mu$ is not uniformly hyperbolic, then the semigroup generated by $\supp\mu$ must contain a parabolic or elliptic matrix. In either case, by Proposition \ref{heteroclinic tangencies  existence} and Proposition \ref{irred lemma}, we can approximate $\mu$ by measures with finite support admitting tangencies which are irreducible. The result follows by continuity of the quotient $E\mapsto\frac{H(\mu)}{\Le(\mu_E)}$.\qed

\subsection{Proof of Corollary \ref{cor:070122.9}}
By Johnson's theorem \cite{Jo1986}, if $E_0$ is an energy in the almost sure spectrum of the Schr\"odinger operator, then the associated Schr\"odinger cocycle $A_{E_0}$ is not uniformly hyperbolic. Therefore, we can again apply Propositions \ref{heteroclinic tangencies  existence} and \ref{irred lemma} to find energies close to $E_0$ such that the cocycle $A_{E_0}$ is irreducible and has heteroclinic tangencies The result follows by continuity of the quotient $E\mapsto\frac{H(\mu)}{\Le(\mu_E)}$.\qed
\section{Appendix: some linear algebra facts}\label{appendix:LA}

In this appendix we state and prove a few results about the geometry
of the projective action of a matrix $A\in\SL_2(\R)$ in the Euclidean space $\R^2$. Some of these results are well know. Others like propositions~\ref{balanced radius}, ~\ref{lem:080722.3} and Lemma \ref{190722.10} play a key role in logical architecture of our main results.
For the reader's convenience we also state here a version of the Avalanche Principle for $\SL_2(\R)$ matrices.
 
If $\vfrak=\{v_1, v_2\}$ is a basis of $\R^2$ then the dual basis of $\vfrak$ is the unique basis
$\dual{\vfrak}=\{\dual{v_1}, \dual{v_2}\}$  of $\R^2$ such that $\langle \dual{v_i}, v_j\rangle=\delta_{ij}$,
for $i,j=1,2$.

As usual let $J:=\begin{bmatrix} 0 & -1 \\ 1 & 0 
\end{bmatrix}$ denote the $90^{\text{o}}$ rotation matrix.

\begin{lemma}
\label{dual basis and trace}
For a basis $\vfrak=\{v_1, v_2\}$ of $\R^2$, its dual basis is given by $\dual{\vfrak}=\left\{ -\frac{J\, v_2}{v_1\wedge v_2},
\frac{J\, v_1}{v_1\wedge v_2}\right\}$.
In particular, the trace of $A\in\SL_2(\R)$ is given by
$$ \tr(A)= \frac{1}{v_1\wedge v_2} \, \left( A v_1 \wedge v_2  \, +  \, v_1\wedge A v_2 \right). $$
\end{lemma}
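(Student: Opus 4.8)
The plan is to reduce everything to one elementary identity relating the rotation $J$, the Euclidean inner product, and the area form $\wedge$ on $\R^2$, namely
\begin{align*}
    \langle J u,\, w\rangle \;=\; u\wedge w \qquad \text{for all } u,w\in\R^2 .
\end{align*}
This is verified at once from $Ju=(-u_2,u_1)$ and $u\wedge w = u_1 w_2 - u_2 w_1$. The only other facts I would use are the antisymmetry $u\wedge w=-(w\wedge u)$ (hence $u\wedge u=0$) and, for the trace, the coordinate-free formula $\tr(A)=\sum_i\langle\dual{v_i},\,Av_i\rangle$, valid for any linear map $A$ on $\R^2$ and any basis $\vfrak=\{v_1,v_2\}$ with dual basis $\dual{\vfrak}$, since $(\langle\dual{v_i},Av_j\rangle)_{ij}$ is precisely the matrix of $A$ in the basis $\vfrak$.

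First I would verify the proposed dual basis. Set $w_1:=-\frac{Jv_2}{v_1\wedge v_2}$ and $w_2:=\frac{Jv_1}{v_1\wedge v_2}$, which are well defined because $\vfrak$ is a basis, so $v_1\wedge v_2\neq 0$. Using the identity above together with antisymmetry,
\begin{align*}
    \langle w_1, v_1\rangle = -\frac{v_2\wedge v_1}{v_1\wedge v_2}=1, \qquad
    \langle w_1, v_2\rangle = -\frac{v_2\wedge v_2}{v_1\wedge v_2}=0,
\end{align*}
and symmetrically $\langle w_2, v_1\rangle=\frac{v_1\wedge v_1}{v_1\wedge v_2}=0$, $\langle w_2, v_2\rangle=\frac{v_1\wedge v_2}{v_1\wedge v_2}=1$. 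Thus $\langle w_i,v_j\rangle=\delta_{ij}$, and by uniqueness of the dual basis $\dual{\vfrak}=\{w_1,w_2\}$, which is the first assertion.

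Then for the trace I would substitute these expressions into $\tr(A)=\langle\dual{v_1},Av_1\rangle+\langle\dual{v_2},Av_2\rangle$ and simplify using $\langle Ju,w\rangle=u\wedge w$ and antisymmetry:
\begin{align*}
    \tr(A) = -\frac{\langle Jv_2,\,Av_1\rangle}{v_1\wedge v_2} + \frac{\langle Jv_1,\,Av_2\rangle}{v_1\wedge v_2}
    = \frac{-(v_2\wedge Av_1) + v_1\wedge Av_2}{v_1\wedge v_2}
    = \frac{Av_1\wedge v_2 + v_1\wedge Av_2}{v_1\wedge v_2}.
\end{align*}
(The hypothesis $A\in\SL_2(\R)$ is not actually used; the formula holds for any $2\times2$ matrix.)

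There is no genuine obstacle here: the whole argument is sign-bookkeeping around the conventions for $J$ and for $\wedge$, so the only thing to be careful about is to fix these conventions at the start and apply them consistently — in particular, keeping $J$ on the left of the inner product throughout, since $J^\top=-J$ introduces a sign when it is moved.
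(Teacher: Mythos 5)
Your proof is correct and follows essentially the same route as the paper: verify the four relations $\langle w_i, v_j\rangle=\delta_{ij}$ via the interplay of $J$, the inner product and $\wedge$, then substitute into $\tr(A)=\sum_i\langle \dual{v_i}, Av_i\rangle$. The only cosmetic difference is that you use $\langle Ju,w\rangle=u\wedge w$ directly where the paper uses $\langle x,y\rangle=x\wedge Jy$ together with $Jx\wedge Jy=x\wedge y$; your observation that $A\in\SL_2(\R)$ is not needed is also accurate.
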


\begin{proof}
 For the first part it is enough to check the following relations,
 where we make extensive use the relation  $\langle x, y\rangle = x \wedge J y$.
\begin{align*}
\left\langle  -\frac{J v_2}{v_1 \wedge v_2}, v_1 \right\rangle &= 
-\frac{ \langle J v_2, v_1 \rangle }{v_1 \wedge v_2} 
=-\frac{   J v_2 \wedge J v_1}{v_1 \wedge v_2} =-\frac{  v_2\wedge v_1}{v_1 \wedge v_2} = 1\\
\left\langle  -\frac{J v_2}{v_1 \wedge v_2}, v_2 \right\rangle &= 
-\frac{ \langle J v_2, v_2 \rangle }{v_1 \wedge v_2} 
=0\\
\left\langle  \frac{J v_1}{v_1 \wedge v_2}, v_1 \right\rangle &= 
-\frac{ \langle J v_1, v_1 \rangle }{v_1 \wedge v_2} 
=0\\
\left\langle  \frac{J v_1}{v_1 \wedge v_2}, v_2 \right\rangle &= 
\frac{ \langle J v_1, v_2 \rangle }{v_1 \wedge v_2} 
= \frac{   J v_1 \wedge J v_2}{v_1 \wedge v_2} =\frac{  v_1 \wedge v_2}{v_1 \wedge v_2} = 1
\end{align*}
Denoting the dual basis of $\vfrak=\{v_1, v_2\}$ by
$\dual{\vfrak}=\{\dual{v_1}, \dual{v_2}\}$,
we have
\begin{align*}
    \tr(A)&= \langle \dual{v_1}, A v_1\rangle +
    \langle \dual{v_2}, A v_2\rangle \\
    &= \left\langle  -\frac{J v_2}{v_1 \wedge v_2}, A v_1 \right\rangle
    +
    \left\langle  \frac{J v_1}{v_1 \wedge v_2}, A v_2 \right\rangle\\
    &=   -\frac{\langle J v_2, A v_1 \rangle}{v_1 \wedge v_2}
    +
     \frac{\langle J v_1, A v_2 \rangle}{v_1 \wedge v_2} 
     =   -\frac{J v_2 \wedge J A v_1 }{v_1 \wedge v_2}
    +
     \frac{ J v_1 \wedge J A v_2 }{v_1 \wedge v_2}\\
 &=   -\frac{v_2 \wedge A v_1 }{v_1 \wedge v_2}
    +
     \frac{ v_1 \wedge A v_2 }{v_1 \wedge v_2} = \frac{A v_1 \wedge v_2 }{v_1 \wedge v_2}
    +
     \frac{ v_1 \wedge A v_2 }{v_1 \wedge v_2}  .
\end{align*}
\end{proof}

From Lemma~\ref{LA-1} until Proposition~\ref{lem:080722.3}, we consider the projective distance, 
$$ d(\hat x, \hat y):= \frac{|x\wedge y|}{\norm{x}\norm{y}} =|\sin \measuredangle(x,y)| $$
take a matrix $A\in\SL_2(\R)$ and
 let $\{v_1, v_2\}$ and $\{v_1^\ast, v_2^\ast\}$ be singular orthonormal basis of $A$ characterized by the relations  
 $A\,v_1=\norm{A} v_1^\ast$, \,  $A\, v_2=\norm{A}^{-1}  v_2^\ast$,
 \, $v_2=J v_1$ \,  and  \, $v_2^\ast = J  v_1^\ast$.

\begin{lemma}
\label{LA-1}
For any $\hat x\in \bP^1$, if $x\in \hat x$ is a unit vector,
\begin{enumerate}
    \item[(a)] $\displaystyle \norm{A x}\geq \norm{A}\, d(\hat x, \hat v_2) $,
    \item[(b)]  $\displaystyle  d(A\hat x, \hat v_1^\ast)\leq \frac{1}{d(\hat x, \hat v_2)\, \norm{A}^2}$.
\end{enumerate}

\end{lemma}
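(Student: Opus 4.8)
The plan is to write the unit vector $x\in\hat x$ in the orthonormal singular basis $\{v_1,v_2\}$ of $A$, say $x=\alpha\,v_1+\beta\,v_2$ with $\alpha^2+\beta^2=1$, and then read off everything by direct computation. First I would observe that, since $v_1\wedge v_2=\pm 1$, the quantity controlling both estimates is $|\alpha|$: indeed
\begin{align*}
    d(\hat x,\hat v_2)=\frac{|x\wedge v_2|}{\norm{x}\norm{v_2}}=|(\alpha v_1+\beta v_2)\wedge v_2|=|\alpha|\,|v_1\wedge v_2|=|\alpha|.
\end{align*}

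For item (a), using $Av_1=\norm{A}v_1^\ast$, $Av_2=\norm{A}^{-1}v_2^\ast$ and the orthonormality of $\{v_1^\ast,v_2^\ast\}$,
\begin{align*}
    \norm{Ax}^2=\norm{\alpha\,\norm{A}\,v_1^\ast+\beta\,\norm{A}^{-1}\,v_2^\ast}^2
    =\alpha^2\norm{A}^2+\beta^2\norm{A}^{-2}\geq \alpha^2\norm{A}^2,
\end{align*}
hence $\norm{Ax}\geq |\alpha|\,\norm{A}=\norm{A}\,d(\hat x,\hat v_2)$, which is (a).

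For item (b), compute the numerator of the projective distance using $v_2^\ast=J v_1^\ast$ (so $v_1^\ast\wedge v_2^\ast=\pm1$):
\begin{align*}
    |Ax\wedge v_1^\ast|=\left|(\alpha\,\norm{A}\,v_1^\ast+\beta\,\norm{A}^{-1}\,v_2^\ast)\wedge v_1^\ast\right|=|\beta|\,\norm{A}^{-1}\leq \norm{A}^{-1}.
\end{align*}
Combining this with the lower bound $\norm{Ax}\geq \norm{A}\,d(\hat x,\hat v_2)$ from (a) gives
\begin{align*}
    d(A\hat x,\hat v_1^\ast)=\frac{|Ax\wedge v_1^\ast|}{\norm{Ax}\,\norm{v_1^\ast}}\leq\frac{\norm{A}^{-1}}{d(\hat x,\hat v_2)\,\norm{A}}=\frac{1}{d(\hat x,\hat v_2)\,\norm{A}^2},
\end{align*}
which is (b). There is essentially no obstacle here: the only things to be careful about are the sign conventions in the wedge products and the fact that $\norm{v_i\wedge v_j}=1$ for these orthonormal pairs, both of which are immediate. (One should also note the estimate in (b) is only meaningful when $d(\hat x,\hat v_2)\neq 0$, i.e. $\hat x\neq\hat v_2$, which is implicit.)
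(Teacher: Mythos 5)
Your proof is correct and takes essentially the same route as the paper's: expand the unit vector $x$ in the singular basis $\{v_1,v_2\}$, observe that $d(\hat x,\hat v_2)=|\langle x,v_1\rangle|$, bound $\norm{Ax}$ from below by the $v_1^\ast$-component, and then control $|Ax\wedge v_1^\ast|$ by $|\langle x,v_2\rangle|\,\norm{A}^{-1}$ to get (b) from (a). No gaps.
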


\begin{proof}
Writing $x=\langle x, v_1\rangle\, v_1 + \langle x, v_2\rangle\, v_2$ ,
\begin{align*}
\norm{A x} &= \norm{\langle x, v_1\rangle\,\norm{A}\, v_1^\ast + \langle x, v_2\rangle\,\norm{A}^{-1}\, v_2^\ast }\\
&\geq  |\langle x, v_1\rangle|\,\norm{A}\, \norm{v_1^\ast}
= | x\wedge v_2| \, \norm{A} = d(\hat x, \hat v_2)\, \norm{A}.
\end{align*}
Hence
\begin{align*}
  d(A\hat x, \hat v_1^\ast) &= \frac{|(A x)\wedge v_1^\ast| }{\norm{A x}}
  \leq  \frac{|\langle x, v_2\rangle|\, \norm{A}^{-1}\, |v_2^\ast \wedge v_1^\ast| }{d(\hat x, \hat v_2)\, \norm{A} } \leq 
  \frac{1 }{d(\hat x, \hat v_2) \,\norm{A}^2 } .
\end{align*}
\end{proof}

We denote by $\lambda(A)$ the absolute value of the unstable eigenvalue of $A$.

\begin{proposition}
\label{balanced radius}
 If $a:= | v_1^\ast \wedge v_2|$ then 
 \begin{enumerate}[label=\arabic*)]
 \item\label{item:100722.9} $|\tr(A)| \geq a\,\norm{A}$.  
 \item\label{item:100722.10} If $a \norm{A}>2$ \, then\, $A$ is hyperbolic and \\
 $\lambda(A)\geq \frac{1}{2}\,\left( a\,\norm{A} +\sqrt{ a^2\norm{A}^2-4} \right)\asymp a \norm{A}$ \, \text{ as } \, $\norm{A}\to\infty$.
 \item\label{item:100722.11}  There exists a function $k(A):=1+ O(\frac{1}{a^2 \norm{A}^2})$ such that 
 $$d(\hat s(A), \hat v_2)\leq  \frac{k(A)}{a\,\norm{A}^2} 
 \;\text{  and } \; 
    d(\hat u(A), \hat v_1^\ast)\leq \frac{k(A)}{a\,\norm{A}^2} .$$
Moreover, for any $\hat x\in\bP^1$,
$$ d(\hat u(A), A\,\hat x)\leq
\frac{k(A)}{\norm{A}^2}\, \left( \frac{1}{a} + \frac{1}{ |x\wedge v_2 |} \right) .$$
 \end{enumerate}
\end{proposition}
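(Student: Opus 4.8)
For part \ref{item:100722.9} the plan is to apply Lemma~\ref{dual basis and trace} to the orthonormal basis $\{v_1,v_2\}$, for which $v_1\wedge v_2=v_1\wedge Jv_1=1$. Since $Av_1=\norm{A}\,v_1^\ast$ and $Av_2=\norm{A}^{-1}v_2^\ast$, the lemma gives $\tr(A)=\norm{A}\,(v_1^\ast\wedge v_2)+\norm{A}^{-1}(v_1\wedge v_2^\ast)$; using $v_2=Jv_1$, $v_2^\ast=Jv_1^\ast$ and the relation $x\wedge Jy=\langle x,y\rangle$, both wedge products collapse to $\langle v_1^\ast,v_1\rangle$. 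Hence $\tr(A)=(\norm{A}+\norm{A}^{-1})\langle v_1^\ast,v_1\rangle$, and since $a=|v_1^\ast\wedge v_2|=|\langle v_1^\ast,v_1\rangle|$ we get the sharp identity $|\tr(A)|=(\norm{A}+\norm{A}^{-1})\,a\ge a\norm{A}$, which I will keep for later use. (Note $a\le1$ since $v_1^\ast,v_2$ are unit vectors.)

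For part \ref{item:100722.10}, $a\norm{A}>2$ forces $|\tr(A)|>2$, so $A$ is hyperbolic with unstable eigenvalue $\lambda(A)=\tfrac12\big(|\tr(A)|+\sqrt{\tr(A)^2-4}\big)$; since $t\mapsto\tfrac12(t+\sqrt{t^2-4})$ is increasing on $[2,\infty)$ and $|\tr(A)|\ge a\norm{A}$, this is the stated bound. For the comparison $\asymp$ I also record $\lambda(A)\le a\norm{A}$: from $\lambda+\lambda^{-1}=a(\norm{A}+\norm{A}^{-1})\le a\norm{A}+(a\norm{A})^{-1}$ (valid since $a\le1$) and monotonicity of $t+t^{-1}$ on $[1,\infty)$. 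Together with $\lambda(A)\ge\tfrac12 a\norm{A}$ this gives $\tfrac12 a\norm{A}\le\lambda(A)\le a\norm{A}$.

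For part \ref{item:100722.11} I set $k(A):=a\norm{A}/\lambda(A)$, so $1\le k(A)\le2$, and writing $\lambda(A)=a(\norm{A}+\norm{A}^{-1})-\lambda(A)^{-1}$ one checks $k(A)=\big(1+\norm{A}^{-2}-(a\norm{A}\lambda(A))^{-1}\big)^{-1}=1+O(a^{-2}\norm{A}^{-2})$, using $a\le1$ and $\lambda(A)\gtrsim a\norm{A}$. The stable estimate is Lemma~\ref{LA-1}(a) applied to the unit vector $s(A)$: since $\norm{A\,s(A)}=\lambda(A)^{-1}$ we get $\lambda(A)^{-1}\ge\norm{A}\,d(\hat s(A),\hat v_2)$, i.e. $d(\hat s(A),\hat v_2)\le(\lambda(A)\norm{A})^{-1}=k(A)/(a\norm{A}^2)$. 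The unstable estimate follows by applying this to $A^{-1}$, using $\norm{A^{-1}}=\norm{A}$, $\lambda(A^{-1})=\lambda(A)$, $\hat s(A^{-1})=\hat u(A)$, and the (sign-bookkept) facts that the least-expanded singular direction of $A^{-1}$ is $\hat v_1^\ast$ and $a(A^{-1})=a$. Finally the ``moreover'' bound follows from the triangle inequality $d(\hat u(A),A\hat x)\le d(\hat u(A),\hat v_1^\ast)+d(A\hat x,\hat v_1^\ast)$, the unstable estimate, Lemma~\ref{LA-1}(b) in the form $d(A\hat x,\hat v_1^\ast)\le(|x\wedge v_2|\,\norm{A}^2)^{-1}$, and $k(A)\ge1$.

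The computations are elementary. The points needing care are: choosing the signs of the singular vectors of $A^{-1}$ so that the relation $v_2=Jv_1$ persists (they are defined only up to sign), which is what lets the $\hat s$-estimate transfer to an $\hat u$-estimate; and verifying that the error factor is genuinely $1+O(a^{-2}\norm{A}^{-2})$ rather than $1+O(\norm{A}^{-1})$ — this is exactly where the sharp trace identity from part \ref{item:100722.9} is used.
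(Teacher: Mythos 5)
Your proof is correct, and in part 3) it takes a genuinely different route from the paper's. For parts 1) and 2) the two arguments are essentially the same computation: the paper applies Lemma~\ref{dual basis and trace} to the non-orthogonal basis $\{v_1^\ast, v_2\}$ and, after simplification (using $1-\langle v_1^\ast,v_2\rangle^2=(v_1^\ast\wedge v_2)^2=a^2$), arrives at exactly your identity $|\tr(A)|=a\,(\norm{A}+\norm{A}^{-1})$; your choice of the orthonormal basis $\{v_1,v_2\}$ merely shortens the computation. The real divergence is in 3). The paper first bounds the displacement $d(\hat v_1^\ast, A\,\hat v_1^\ast)\le (a\norm{A}^2)^{-1}$ and then invokes the fact that $\hat A$ is a Lipschitz contraction with constant $O(\lambda(A)^{-2})$ near its attracting fixed point, so that $d(\hat u(A),\hat v_1^\ast)\le d(\hat v_1^\ast, A\,\hat v_1^\ast)\,\bigl(1-O(\lambda(A)^{-2})\bigr)^{-1}$, the factor $k(A)$ being this $\bigl(1-O(\lambda^{-2})\bigr)^{-1}$. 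You instead apply Lemma~\ref{LA-1}(a) directly to the stable unit eigenvector, for which $\norm{A\,s(A)}=\lambda(A)^{-1}$, obtaining $d(\hat s(A),\hat v_2)\le(\lambda(A)\norm{A})^{-1}$ in one line, and you make $k(A):=a\norm{A}/\lambda(A)$ completely explicit. This buys a cleaner, fixed-point-free argument with an explicit constant, at the modest price of the upper bound $\lambda(A)\le a\norm{A}$, which you correctly derive from the trace identity and which is also what guarantees $k(A)\ge 1$ --- needed to absorb the second term in the ``moreover'' estimate, a point the paper leaves implicit. Your bookkeeping for the singular data of $A^{-1}$ (namely $\hat v_2(A^{-1})=\hat v_1^\ast(A)$, $\lambda(A^{-1})=\lambda(A)$, $a(A^{-1})=a$) is also correct, so the transfer from the stable to the unstable estimate goes through.
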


\begin{proof}
Item (1):  
Consider the basis $\vfrak=\{v_1^\ast, v_2\}$.
By Lemma~\ref{dual basis and trace},
\begin{align*}
| \tr(A) |
&=  \frac{1}{|v_1^\ast\wedge v_2|} \, \left|  A v_1^\ast  \wedge v_2 + v_1^\ast\wedge A v_2 \right| \\
&=  \frac{1}{|v_1^\ast\wedge v_2|} \, \left|
\left( \langle v_1^\ast, v_1\rangle \,A v_1
+ \langle v_1^\ast, v_2\rangle\,    \wedge A v_2 \right)) \wedge v_2 +   \norm{A}^{-1} (v_1^\ast\wedge v_2^\ast) \right| \\
&=  \frac{1}{|v_1^\ast\wedge v_2|} \, \left| 
\langle v_1^\ast, v_1\rangle  (v_1^\ast \wedge v_2 ) \norm{A} +  \left\{ \langle v_1^\ast, v_2\rangle  (v_2^\ast \wedge v_2 )
+   (v_2^\ast\wedge v_1^\ast) \right\} \norm{A}^{-1} \right| \\
&=  \frac{1}{|v_1^\ast\wedge v_2|} \, \left[ 
(v_1^\ast\wedge v_2)^2 \norm{A}  +  \left(1-\langle v_1^\ast, v_2\rangle^2\right) \, \norm{A}^{-1}  \right] \\
&\geq |v_1^\ast\wedge v_2|\, \norm{A} 
+  |v_1^\ast\wedge v_2|^{-1}\left(1-\langle v_1^\ast, v_2\rangle^2\right) \, \norm{A}^{-1} \geq a\, \norm{A}.
\end{align*}

\noindent
Item (2): If $a\, \norm{A}>2$ then
$|\tr(A)|\geq a\,\norm{A}>2$ and $A$ is hyperbolic.
Therefore
$$ a\,\norm{A}\leq |\tr(A)| =\lambda+\lambda^{-1} \; \text{ with }\;  \lambda=\lambda(A)  . $$
Solving in $\lambda$ we get
$$ \lambda \geq \frac{1}{2}\,\left( a\, \norm{A} + \sqrt{a^2\,\norm{A}^2-4  } \right) . $$

\noindent
Item (3): 
Because $v_1^\ast=\langle v_1^\ast, v_1\rangle v_1+ \langle v_1^\ast, v_2\rangle v_2$,
$$A v_1^\ast=\langle v_1^\ast, v_1\rangle \norm{A} v_1^\ast + \langle v_1^\ast, v_2\rangle \norm{A}^{-1} v_2^\ast$$ 
and whence
$$ d(\hat v_1^\ast, A\, \hat v_1^\ast) =\frac{|\langle v_1^\ast, v_2\rangle| \norm{A}^{-1} |v_1^\ast \wedge v_2^\ast|}{\sqrt{ a^2 \norm{A}^2 + (1-a^2) \norm{A}^{-2}}}
\leq  \frac{1}{  a  \norm{A}^2} . $$
Since near $\hat u (A)$ the projective map $\hat A$
is a Lipschitz contraction with Lipschitz constant of order $\lambda(A)^{-2}$,  
$$ d(\hat u(A), \hat v_1^\ast) \leq \frac{ \frac{1}{  a  \norm{A}^2} }{1-O(\lambda(A)^{-2})} =  \frac{1}{  a  \norm{A}^2}
\,\left( 1+ O\left(\frac{1}{a^2\norm{A}^2}\right) \right) .$$
The bound on $d(\hat s(A), \hat w)$ follows from the previous inequality applied to $A^{-1}$.
By Lemma~\ref{LA-1}(b),
$$ d(A\hat x, \hat v_1^\ast)\leq \frac{1}{d(\hat x, \hat v_2)\, \norm{A}^2} . $$
Hence  by the triangle inequality
$$d(\hat u(A), A\,\hat x)\leq
d(\hat u(A), \hat v_1^\ast)+d(\hat v_1^\ast, A\,\hat x)\leq 
\frac{k(A)}{\norm{A}^2}\, \left( \frac{1}{a} + \frac{1}{ |x\wedge v_2 |} \right) .$$
 \end{proof}

\begin{lemma}\label{lem:080722.1}
   If $\norm{w} = 1$ and $d(\hw,\, \hv_1) < \sqrt{ 1 - \norm{A}^{-2} }$, then $\norm{A w} \geq 1$.
\end{lemma}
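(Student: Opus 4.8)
The idea is to decompose $w$ in the singular orthonormal basis $\{v_1,v_2\}$ of $A$ and read off $\|Aw\|$ directly, using that the hypothesis controls exactly the coefficient of $w$ along $v_2$.

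First I would write $w=\langle w,v_1\rangle\, v_1+\langle w,v_2\rangle\, v_2$, so that $\langle w,v_1\rangle^2+\langle w,v_2\rangle^2=\|w\|^2=1$. Applying $A$ and using $Av_1=\|A\|v_1^\ast$, $Av_2=\|A\|^{-1}v_2^\ast$ together with the orthonormality of $\{v_1^\ast,v_2^\ast\}$ gives
\begin{align*}
    \|Aw\|^2=\langle w,v_1\rangle^2\,\|A\|^2+\langle w,v_2\rangle^2\,\|A\|^{-2}.
\end{align*}
Next I would translate the hypothesis: since $w\wedge v_1=\langle w,v_2\rangle\,(v_2\wedge v_1)$ and $\{v_1,v_2\}$ is orthonormal, $d(\hat w,\hat v_1)=|w\wedge v_1|=|\langle w,v_2\rangle|$, so the assumption $d(\hat w,\hat v_1)<\sqrt{1-\|A\|^{-2}}$ says precisely that $\langle w,v_2\rangle^2<1-\|A\|^{-2}$, equivalently $\langle w,v_1\rangle^2=1-\langle w,v_2\rangle^2>\|A\|^{-2}$.

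Finally, plugging this into the formula for $\|Aw\|^2$ and dropping the nonnegative second term,
\begin{align*}
    \|Aw\|^2\ge \langle w,v_1\rangle^2\,\|A\|^2 > \|A\|^{-2}\cdot\|A\|^2 = 1,
\end{align*}
which yields $\|Aw\|\ge 1$ as claimed. There is no real obstacle here; the only point requiring a moment's care is the sign/normalization convention for the wedge product when identifying $d(\hat w,\hat v_1)$ with $|\langle w,v_2\rangle|$, but since we only use absolute values this is immediate.
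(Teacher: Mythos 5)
Your proof is correct and is essentially the paper's argument: the paper simply cites Lemma~\ref{LA-1}(a) for the bound $\norm{Aw}\geq \norm{A}\,d(\hat w,\hat v_2)$ and uses $d(\hat w,\hat v_2)=\sqrt{1-d(\hat w,\hat v_1)^2}$, which is exactly your decomposition $\langle w,v_1\rangle^2=1-\langle w,v_2\rangle^2$ with the inline computation of $\norm{Aw}^2$ replacing the citation. No substantive difference.
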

\begin{proof}
    By Lemma~\ref{LA-1}(a),
    \begin{align*}
        \norm{A w}\geq \norm{A }\, d(\hat w,\hat v_2) = \norm{A }\, \sqrt{ 1 - d(\hat w, \hat v_1)^2 } \geq \norm{A}\,\norm{A}^{-1}=1 .
    \end{align*}
\end{proof}

\begin{lemma}\label{Lemma for LA1}
   If \, $d(A\, \hv, \hv_1^\ast) \leq \norm{A}^{-1}$, \,
   $d(A\, \hv, \hw) \geq a>0$\, and \,  $a\norm{A}> 2$\, then 
   $d(A^{-1}\, \hw, \hv_2) < \norm{A}^{-1}$.
\end{lemma}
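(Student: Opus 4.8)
The plan is to deduce this from Lemma~\ref{LA-1}(b) applied to the inverse matrix $A^{-1}$, after first bounding the auxiliary distance $d(\hw,\hv_1^\ast)$ from below by the triangle inequality.

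First I would record the singular geometry of $A^{-1}$. Since $A\in\SL_2(\R)$ we have $\norm{A^{-1}}=\norm{A}$; moreover, from $A\,v_2=\norm{A}^{-1}v_2^\ast$ and $A\,v_1=\norm{A}\,v_1^\ast$ one gets $A^{-1}v_2^\ast=\norm{A}\,v_2$ and $A^{-1}v_1^\ast=\norm{A}^{-1}v_1$. Thus a singular orthonormal basis for $B:=A^{-1}$ is obtained by letting $\hv_2^\ast$ be its top (expanding) singular direction, with image $\hv_2$, and $\hv_1^\ast$ its bottom (contracting) direction. Consequently, invoking Lemma~\ref{LA-1}(b) with $A$ replaced by $B=A^{-1}$ and with the point $\hat x=\hw$ yields
\[
   d(A^{-1}\,\hw,\,\hv_2)\ \le\ \frac{1}{d(\hw,\,\hv_1^\ast)\,\norm{A}^2}.
\]

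Next I would bound $d(\hw,\hv_1^\ast)$ from below. By the triangle inequality and the two hypotheses $d(A\,\hv,\,\hv_1^\ast)\le\norm{A}^{-1}$ and $d(A\,\hv,\,\hw)\ge a$,
\[
   d(\hw,\,\hv_1^\ast)\ \ge\ d(A\,\hv,\,\hw)-d(A\,\hv,\,\hv_1^\ast)\ \ge\ a-\norm{A}^{-1}\ >\ \frac{a}{2},
\]
the last step because $a\,\norm{A}>2$ forces $\norm{A}^{-1}<a/2$. Plugging this into the previous display,
\[
   d(A^{-1}\,\hw,\,\hv_2)\ <\ \frac{2}{a\,\norm{A}^2}\ =\ \frac{2}{a\,\norm{A}}\cdot\frac{1}{\norm{A}}\ <\ \frac{1}{\norm{A}},
\]
using $a\,\norm{A}>2$ once more. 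This is exactly the claimed inequality $d(A^{-1}\,\hw,\,\hv_2)<\norm{A}^{-1}$.

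I do not expect any real obstacle. The only point requiring care is the bookkeeping in the first step --- correctly matching the singular directions of $A^{-1}$ to the roles that $\hv_1,\hv_2,\hv_1^\ast$ play in the statement of Lemma~\ref{LA-1} --- and noticing that the hypothesis $a\,\norm{A}>2$ must be used twice: once to obtain $d(\hw,\hv_1^\ast)>a/2$, and once to make the final quantity \emph{strictly} less than $\norm{A}^{-1}$ rather than merely comparable to it.
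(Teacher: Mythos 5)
Your proof is correct. Both you and the paper open with the same triangle-inequality step, $d(\hw,\hv_1^\ast)\ge d(A\hv,\hw)-d(A\hv,\hv_1^\ast)\ge a-\norm{A}^{-1}$, but the second half differs. The paper converts the lower bound on $d(\hw,\hv_1^\ast)$ into $d(\hw,\hv_2^\ast)\le\sqrt{1-\norm{A}^{-2}}$, invokes Lemma~\ref{lem:080722.1} for $A^{-1}$ to get $\norm{A^{-1}w}\ge 1$, and then computes $d(A^{-1}\hw,\hv_2)=|x_w|\,\norm{A}^{-1}/\norm{A^{-1}w}\le\norm{A}^{-1}$ directly in singular coordinates. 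You instead apply Lemma~\ref{LA-1}(b) to $A^{-1}$ (with the singular directions correctly permuted: the expanding input direction of $A^{-1}$ is $\hv_2^\ast$ with image $\hv_2$, and its contracting input direction is $\hv_1^\ast$), which packages that same coordinate computation into one inequality, $d(A^{-1}\hw,\hv_2)\le\bigl(d(\hw,\hv_1^\ast)\,\norm{A}^2\bigr)^{-1}$, and then feed in $d(\hw,\hv_1^\ast)>a/2$. Your route is slightly shorter and, as a bonus, actually delivers the \emph{strict} inequality $d(A^{-1}\hw,\hv_2)<\norm{A}^{-1}$ asserted in the statement, whereas the paper's chain of estimates as written only yields $\le\norm{A}^{-1}$ (harmless for its later use in Proposition~\ref{lem:080722.3}, but your version matches the statement exactly).
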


\begin{proof}
    Note that $d(\hw,\, \hv_1^\ast)\geq d(A\hv,\, \hw) - d(A\hv,\, \hv_1^\ast)$. By assumption this implies   that $d(\hw,\, \hv_1^\ast) \geq a - \norm{A}^{-1} \geq \norm{A}^{-1}$, or equivalently \,  $d(\hw,\, \hv_2^\ast) \leq \sqrt{1 - \norm{A}^{-2}}$. Applying Lemma~\ref{lem:080722.1} to $A^{-1}$ we conclude that $\norm{A^{-1}w} \geq 1$.
    Therefore, writing
    \begin{align*}
        v &= x_v \, v_1 + y_v \, v_2, \; \text{ with } \; x_v^2+y_v^2=1 ,\\
        w &= x_w \, v_1^\ast + y_w \, v_2^\ast,  \; \text{ with } \; x_w^2+y_w^2=1 ,\\
        A\, v &= x_v\,\norm{A}\, v_1^\ast + y_v\,\norm{A}^{-1}\, v_2^\ast\\
        A^{-1}\, w &= x_w\,\norm{A}^{-1}\, v_1 + y_w\, \norm{A}\, v_2,
    \end{align*}
 we have that
    \begin{align*}
        d(A^{-1}\, \hat w,\, \hat v_2) = \frac{
            |A^{-1}\, w\,
            \wedge  v_2|
        }{\norm{A^{-1}\, w}} = \frac{|x_w|\norm{A}^{-1}}{\norm{A^{-1}w}}
        \leq   \norm{A}^{-1} .
    \end{align*}
\end{proof}

\begin{proposition}\label{lem:080722.3}
 If \,
$d(A\,\hat v, \hat w)\geq a$, \, $d(A^{-1}\,\hat w, \hat v)\geq a$  \, and  \, $a  \norm{A}>2 $ \,
then
$$ \max\left\{ \, d(A\, \hat v, \hat v_1^\ast), \, d(A^{-1}\, \hat w, \hat v_2) \,\right\}  \leq \frac{2}{ a\,\norm{A}^2+\norm{A} \sqrt{a^2\norm{A}^2-4}}\asymp\frac{1}{a \norm{A}^2} .$$
\end{proposition}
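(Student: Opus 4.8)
The plan is to reduce the statement, after normalising everything in the singular basis of $A$, to a single quadratic inequality whose discriminant is precisely $a^2\norm{A}^2-4$, so that its roots are the two numbers $B:=\frac{2}{a\norm{A}^2+\norm{A}\sqrt{a^2\norm{A}^2-4}}$ and $B':=\norm{A}^{-2}/B$ (the roots of $t^2-at+\norm{A}^{-2}$, so $B+B'=a$ and $BB'=\norm{A}^{-2}$). First I would use the symmetry $A\leftrightarrow A^{-1}$, $\hat v\leftrightarrow \hat w$: since $\norm{A^{-1}}=\norm{A}$ and the top left singular vector of $A^{-1}$ is $v_2=v_2(A)$, applying the statement to $A^{-1}$ with the data $(\hat w,\hat v)$ turns the bound on $d(A^{-1}\hat w,\hat v_2)$ into a bound on $d(A^{-1}\hat w,\hat v_1^\ast(A^{-1}))$; hence it suffices to establish $d(A\hat v,\hat v_1^\ast)\le B$.

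For this, write $v=\cos\alpha\,v_1+\sin\alpha\,v_2$ and $w=\cos\beta\,v_1^\ast+\sin\beta\,v_2^\ast$, so that $Av=\norm{A}\cos\alpha\,v_1^\ast+\norm{A}^{-1}\sin\alpha\,v_2^\ast$ and $A^{-1}w=\norm{A}^{-1}\cos\beta\,v_1+\norm{A}\sin\beta\,v_2$. Using $|v_1^\ast\wedge v_2^\ast|=|v_1\wedge v_2|=1$ one gets the identities
\[
d(A\hat v,\hat v_1^\ast)=\frac{\norm{A}^{-1}|\sin\alpha|}{\norm{Av}},\qquad
d(A\hat v,\hat w)=\frac{N}{\norm{Av}},\qquad
d(A^{-1}\hat w,\hat v)=\frac{N}{\norm{A^{-1}w}},
\]
where $N:=\bigl|\,\norm{A}\cos\alpha\sin\beta-\norm{A}^{-1}\sin\alpha\cos\beta\,\bigr|$ is the \emph{same} quantity in the last two expressions, $\norm{Av}^2=\norm{A}^2\cos^2\alpha+\norm{A}^{-2}\sin^2\alpha$ and $\norm{A^{-1}w}^2=\norm{A}^{-2}\cos^2\beta+\norm{A}^2\sin^2\beta$. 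So the two hypotheses become exactly $N\ge a\,\norm{Av}$ and $N\ge a\,\norm{A^{-1}w}$, while Cauchy--Schwarz gives for free $N\le\norm{Av}$ and $N\le\norm{A^{-1}w}$. From $\norm{Av}\ge\norm{A}|\cos\alpha|$ and the first hypothesis, $|\cos\alpha|\le N/(a\norm{A})$, and symmetrically $|\sin\beta|\le N/(a\norm{A})$; substituting into $N\le\norm{A}|\cos\alpha||\sin\beta|+\norm{A}^{-1}$ yields the quadratic inequality $N^2-a^2\norm{A}\,N+a^2\ge 0$, whose roots are $a\norm{A}B$ and $a/(\norm{A}B)$. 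If $N\ge a/(\norm{A}B)$ then $d(A\hat v,\hat v_1^\ast)\le \norm{A}^{-1}/\norm{Av}\le\norm{A}^{-1}/N\le B$ (using $a-B=\norm{A}^{-2}/B$ to simplify), which is the desired bound.

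The main obstacle is the remaining branch $N\le a\norm{A}B$, which forces $|\cos\alpha|\le B$ and $|\sin\beta|\le B$; here the crude estimates only give $d(A\hat v,\hat v_1^\ast)$ of order one, so this configuration must be excluded rather than estimated. The plan is to derive a contradiction by tracking the cross term exactly: when $|\cos\alpha|,|\sin\beta|\le B$ one has $|\sin\alpha|,|\cos\beta|\ge\sqrt{1-B^2}$, so the second term of $N$ is $\ge\norm{A}^{-1}(1-B^2)$ while the first is $\le\norm{A}B^2$, giving $N\ge\norm{A}^{-1}-B^2(\norm{A}+\norm{A}^{-1})$, to be compared with $N\le a\norm{A}B=\norm{A}^{-1}+\norm{A}B^2$. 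Both sides are of order $\norm{A}^{-1}$, so the contradiction only emerges once the $O(\norm{A}^{-3})$ corrections are retained --- equivalently, one may invoke Lemma~\ref{Lemma for LA1}, which propagates a $\norm{A}^{-1}$-bound from one side to the other, together with the elementary inequality $\frac{1}{a\norm{A}}<\norm{A}^{-1}$; this is exactly the point where the discriminant $\sqrt{a^2\norm{A}^2-4}$ (rather than $a\norm{A}$) is needed and the stated constant $B$ is produced. Once this branch is ruled out, the bound $d(A\hat v,\hat v_1^\ast)\le B$ follows, and the symmetry reduction of the first paragraph completes the proof.
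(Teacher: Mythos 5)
Your coordinate identities and the quadratic $N^2-a^2\norm{A}\,N+a^2\ge 0$ are correct, and the symmetry reduction is fine, but neither branch of the case analysis closes, and the second branch \emph{cannot} be closed from the stated hypotheses. Take $\hat v=\hat v_2$ and $\hat w=\hat v_1^\ast$ (i.e.\ $\alpha=\pi/2$, $\beta=0$). Then $A\hat v=\hat v_2^\ast$ and $A^{-1}\hat w=\hat v_1$, so $d(A\hat v,\hat w)=d(A^{-1}\hat w,\hat v)=1\ge a$ and both hypotheses hold; moreover $N=\norm{A}^{-1}\le a\norm{A}B$, so this configuration sits exactly in your branch $N\le a\norm{A}B$, and there $d(A\hat v,\hat v_1^\ast)=1$. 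Hence that branch contains admissible configurations in which the desired bound fails outright: no bookkeeping of $O(\norm{A}^{-3})$ corrections will produce a contradiction, and Lemma~\ref{Lemma for LA1} cannot be invoked because its hypothesis $d(A\hat v,\hat v_1^\ast)\le\norm{A}^{-1}$ is precisely what is false here. This is the fatal gap; it also shows that any complete argument must bring in something that excludes $\hat v\approx\hat v_2$ beyond the two distance hypotheses. The paper's own proof is structured around exactly this point: it first secures the a priori bounds $d(A\hat v,\hat v_1^\ast)\le\norm{A}^{-1}$ and $d(A^{-1}\hat w,\hat v_2)\le\norm{A}^{-1}$, and then, instead of collapsing everything into the single scalar $N$, keeps the two unknowns $x_0=\norm{A}d(A\hat v,\hat v_1^\ast)$ and $y_0=\norm{A}d(A^{-1}\hat w,\hat v_2)$ coupled through the pair of recursive inequalities $x_0\le (a\norm{A}-y_0)^{-1}$, $y_0\le (a\norm{A}-x_0)^{-1}$, which it solves by iterating the order-preserving contraction $F_b(x,y)=((b-y)^{-1},(b-x)^{-1})$, $b=a\norm{A}>2$; the fixed point $2/(b+\sqrt{b^2-4})$ is where the stated constant and the discriminant come from.

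Your first branch is also quantitatively short of the target. From $N\ge a/(\norm{A}B)$ and the Cauchy--Schwarz bound $\norm{Av}\ge N$ you obtain $d(A\hat v,\hat v_1^\ast)\le\norm{A}^{-1}/N\le B/a$, not $B$; since $a\le 1$ this is weaker by the factor $1/a$, which matters in the intended application (in Lemma~\ref{190722.10} one takes $a=e^{-t}$). The loss comes from replacing $\norm{Av}$ by $N$: in the extremal configuration $|\cos\alpha|\asymp a$, $|\sin\beta|\asymp a$ one has $\norm{Av}\asymp a\norm{A}$ while $N\asymp a^2\norm{A}$. To recover the constant $B$ you need a lower bound on $|\cos\alpha|=d(\hat v,\hat v_2)$ of order $a$, which comes from the \emph{other} hypothesis via $d(\hat v,\hat v_2)\ge d(A^{-1}\hat w,\hat v)-d(A^{-1}\hat w,\hat v_2)\ge a-\norm{A}^{-1}$ once a crude bound on $d(A^{-1}\hat w,\hat v_2)$ is available --- that is, precisely the coupled recursion described above. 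So the single-variable quadratic discards information that the two-variable fixed-point argument retains, and the proposal as written does not prove the proposition.
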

 
\begin{proof}
Without loss of generality me may assume that 
$$a\leq d(A \hat v, \hat w)\leq d(A^{-1} \hat w, \hat v),$$
for otherwise we would replace the roles of $\hat v$ and $\hat w$, respectively of $A$ and $A^{-1}$.
To prove the inequalities above we derive a system of recursive inequalities, which by iteration lead  to fixed point bound. For this scheme to work we need the following preliminary inequalities:
\begin{equation}
\label{claim2}
d(A \hat v, \hat v_1^\ast)\leq  \norm{A}^{-1} \quad \text{ and } 
\quad  d(A^{-1} \hat w, \hat v_2) \leq \norm{A}^{-1} .
\end{equation} 

Choose $v$ to have norm $1$ and normalize $w$ so that $A^{-1} w$ has norm $1$. There exist coordinates
$(x_v,y_v)$ and $(x_w, y_w)$ in the unit circle such that
\begin{align*}
  v &= x_v   v_1^\ast + y_v   v_2^\ast , \\
 A^{-1} w &= x_w   v_1  + y_w   v_2  ,\\
A v &= x_ v \norm{A} v_1^\ast + y_v \norm{A}^{-1} v_2^\ast , \\
w &= x_ w \norm{A} v_1^\ast + y_w \norm{A}^{-1} v_2^\ast .
\end{align*} 
We have 
$$ d( A\hat v, \hat v_1^\ast) =\frac{|A v \wedge v_1^\ast|}{\norm{A v}}
=\frac{|y_v|\, \norm{A}^{-1}}{\sqrt{ x_v^2 \norm{A}^2 + y_v^2 \norm{A}^{-2}}} $$
and similarly
$$ \frac{ | x_v y_w - x_w y_v|}{\sqrt{ x_v^2 \norm{A}^2 + y_v^2 \norm{A}^{-2}}}
=d(A \hat v, \hat w) \leq d(A^{-1} \hat w, \hat v) =  | x_v y_w - x_w y_v|  .
$$
This implies that \, 
$x_v^2 \norm{A}^2 + y_v^2 \norm{A}^{-2}\geq 1$ and whence
$$ d( A\hat v, \hat v_1^\ast)\leq
|y_v|\, \norm{A}^{-1}\leq \norm{A}^{-1} .$$
This proves the first inequality in~\eqref{claim2}.
The second follows from Lemma~\ref{Lemma for LA1}.

We establish next the mentioned recursive inequalities.
Since
$$ a\leq d(A^{-1}\hat w, \hat v)\leq
 d(A^{-1}\hat w, \hat v_2)+  d(\hat v, \hat v_2)$$
 we have\, 
$$d(\hat v, \hat v_2)\geq a- d(A^{-1}\hat w, \hat v_2)
\geq a-\norm{A}^{-1}\geq \norm{A}^{-1} > 0  $$
and by Lemma~\ref{LA-1} (b),
\begin{equation}
\label{eq1}
\norm{A}\, d(A\hat v, \hat v_1^\ast)\leq \frac{1}{d(\hat v, \hat v_2)\, \norm{A}}
\leq  \frac{1}{a \norm{A} - d(A^{-1}\hat w, \hat v_2)\, \norm{A}} .
\end{equation}
Similarly,
$$ a\leq d(\hat w, A \hat v)\leq
 d(\hat w, \hat v_1^\ast)+  d(\hat v_1^\ast, A \hat v)$$
implies that, 
$$d(\hat w, \hat v_1^\ast)\geq a- d(\hat v_1^\ast, A \hat v) \geq a-\norm{A}^{-1}\geq \norm{A}^{-1}>0 .$$
Hence, as before,
\begin{equation}
\label{eq2}
\norm{A}\, d(A^{-1}\hat w, \hat v_2)\leq \frac{1}{d(\hat w, \hat v_1^\ast)\, \norm{A}}
\leq  \frac{1}{ a \norm{A} -d(\hat v_1^\ast, A \hat v)\, \norm{A}} .
\end{equation}

To solve the recursive inequalities~\eqref{eq1} and~\eqref{eq2}, consider  the $1$-parameter family of partial maps
$F_b:\R^2\to\R^2$, $F_b(x,y):=\left(\frac{1}{b-y},\frac{1}{b-x} \right)$. For $b >2$, each component of $F_b$ is a well-defined contraction of the interval $[0,1]$.
Hence $F_b$ leaves the square $[0,1]^2$ invariant  and is a strict contraction with unique fixed point
$(x_\ast, y_\ast):= \left(\frac{2}{b+\sqrt{b^2-4}},
\frac{2}{b+\sqrt{b^2-4}}\right)$.
Moreover, the maps $F_b$ preserve the usual partial order of $\R^2$, defined by
$$(x,y)\geq (x',y')\; \text{  if } \; x\geq x'\, \text{ and } \,  y\geq y' .$$
Setting  $b:=a\norm{A}$   and   $(x_0,y_0) := \left( \norm{A} d(A\hat v, \hat v_1^\ast) , \, 
\norm{A} d(A^{-1}\hat w, \hat v_2)  \right)$,~\eqref{eq1} and~\eqref{eq2} are equivalent to \,
$(x_0,y_0) \leq F_{b}(x_0,y_0)$, while~\eqref{claim2}
ensures that $(x_0,y_0)\in [0,1]^2$.
Hence we obtain, inductively,  that $(x_0,y_0) \leq F_{b}^n(x_0,y_0)$
for all $n\geq 1$, and taking the limit as $n\to\infty$, $(x_0,y_0) \leq (x_\ast, y_\ast)$. This concludes the proof.
\end{proof}

\begin{remark}
    By the previous lemma, if $a^\ast\norm{A}>2$ then $A$ is hyperbolic and
    \begin{align*}
        \lambda(A) \geq \frac{a^\ast\norm{A} + \sqrt{ a^\ast2\norm{A}^2 - 4} }{2}.
    \end{align*}
\end{remark}

Given $A\in\SL_2(\R)$ with $\norm{A}>1$, denote by
$\hat v_1(A)$, $\hat v_2(A)$, $\hat v_1^\ast(A)$ and $\hat v_2^\ast(A)$
the unique projective points such that taking unit vectors $v_i\in \hat v_i(A)$ and $v_j^\ast \in \hat v_j(A)$,  with $i,j=1,2$, $\{v_1, v_2\}$ and $\{v_1^\ast, v_2^\ast\}$ are singular  basis of $A$ characterized by the relations  
 $A\,v_1=\norm{A} v_1^\ast$ and  $A\, v_2=\norm{A}^{-1}  v_2^\ast$.

\begin{lemma}
\label{eq:090722.10}
Given $A, A'\in \SL_2(\R)$ with $\norm{A}, \norm{A'}>1$,
    \begin{align*}
    \frac{\norm{A'A}}{\norm{A'}\, \norm{A}} \, \sqrt{1-\frac{ \norm{A}^{-4} + \norm{A'}^{-4}}{ \left( \frac{\norm{A'A}}{\norm{A'}\, \norm{A}}\right)^2}} \leq   d(\hv_1^\ast(A), \hv_2(A')) \leq  \frac{\norm{A'A}}{\norm{A'}\, \norm{A}} .
    \end{align*}
\end{lemma}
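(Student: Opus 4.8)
The plan is to prove the two inequalities separately; the upper bound is immediate from Lemma~\ref{LA-1}, whereas the lower bound requires writing $A'A$ explicitly in the singular bases of $A$ and $A'$.

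For the upper bound, pick a unit vector $v_1\in\hv_1(A)$, so that $A\,v_1=\norm{A}\,v_1^\ast$ with $v_1^\ast\in\hv_1^\ast(A)$ a unit vector. Since $\norm{v_1}=1$ and by Lemma~\ref{LA-1}(a) applied to the matrix $A'$ and the unit vector $v_1^\ast$,
\[
\norm{A'A}\;\geq\;\norm{A'A\,v_1}\;=\;\norm{A}\,\norm{A'\,v_1^\ast}\;\geq\;\norm{A}\,\norm{A'}\,d\bigl(\hv_1^\ast(A),\hv_2(A')\bigr),
\]
and dividing by $\norm{A}\norm{A'}$ gives the right-hand inequality.

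For the lower bound, let $\{v_1,v_2\},\{v_1^\ast,v_2^\ast\}$ and $\{v_1',v_2'\},\{v_1'^\ast,v_2'^\ast\}$ be the singular bases of $A$ and $A'$ normalized as before (so $Av_1=\norm{A}v_1^\ast$, $Av_2=\norm{A}^{-1}v_2^\ast$, $v_2=Jv_1$, $v_2^\ast=Jv_1^\ast$, and likewise for $A'$). Set $c:=\langle v_1^\ast,v_1'\rangle$ and $s:=\langle v_1^\ast,v_2'\rangle$; then $c^2+s^2=1$ and, $\{v_1',v_2'\}$ being orthonormal, $d(\hv_1^\ast(A),\hv_2(A'))=|v_1^\ast\wedge v_2'|=|c|$. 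From $v_2^\ast=Jv_1^\ast$ and $v_2'=Jv_1'$ one finds $v_2^\ast=-s\,v_1'+c\,v_2'$, and computing $A'Av_1=\norm{A}A'v_1^\ast$, $A'Av_2=\norm{A}^{-1}A'v_2^\ast$ in the orthonormal bases $\{v_1,v_2\}$ (domain) and $\{v_1'^\ast,v_2'^\ast\}$ (codomain) gives the matrix
\[
M=\begin{pmatrix}\norm{A}\norm{A'}\,c & -\,\norm{A}^{-1}\norm{A'}\,s\\[2pt] \norm{A}\norm{A'}^{-1}\,s & \norm{A}^{-1}\norm{A'}^{-1}\,c\end{pmatrix},\qquad\det M=c^2+s^2=1 .
\]
Since $\norm{A'A}^2$ is the largest eigenvalue of $M^{T}M$ and $\det(M^{T}M)=1$ forces the other eigenvalue to be positive, we have $\norm{A'A}^2\leq\tr(M^{T}M)$. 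Expanding and using $s^2=1-c^2$,
\[
\tr(M^{T}M)=\bigl(\norm{A}^2-\norm{A}^{-2}\bigr)\bigl(\norm{A'}^2-\norm{A'}^{-2}\bigr)c^2+\norm{A}^2\norm{A'}^{-2}+\norm{A}^{-2}\norm{A'}^2 ,
\]
and since $\norm{A},\norm{A'}>1$ the first factor is at most $\norm{A}^2\norm{A'}^2$. Hence $\norm{A'A}^2\leq\norm{A}^2\norm{A'}^2c^2+\norm{A}^2\norm{A'}^{-2}+\norm{A}^{-2}\norm{A'}^2$; dividing by $\norm{A}^2\norm{A'}^2$ and rearranging,
\[
c^2\;\geq\;\frac{\norm{A'A}^2}{\norm{A}^2\norm{A'}^2}-\norm{A}^{-4}-\norm{A'}^{-4}=\Bigl(\tfrac{\norm{A'A}}{\norm{A'}\norm{A}}\Bigr)^{2}\Bigl(1-\tfrac{\norm{A}^{-4}+\norm{A'}^{-4}}{(\norm{A'A}/(\norm{A'}\norm{A}))^{2}}\Bigr).
\]
Taking square roots (the claimed bound being vacuous when the right-hand side is negative) and recalling $d(\hv_1^\ast(A),\hv_2(A'))=|c|$ yields the left-hand inequality.

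The only slightly delicate step is the sign bookkeeping in identifying the entries of $M$ — expressing $v_1^\ast,v_2^\ast$ in the basis $\{v_1',v_2'\}$ and then applying $A'$ — where the orientation conventions $v_2=Jv_1$, $v_2^\ast=Jv_1^\ast$ are used; everything else is a routine eigenvalue estimate for a $2\times2$ matrix, and I expect no genuine obstacle.
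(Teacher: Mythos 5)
Your proof is correct. The paper itself gives no argument for this lemma, deferring entirely to \cite[Propositions 2.23 and 2.24]{DK-book}, so your write-up is a genuine addition: a self-contained derivation rather than a citation. Both halves check out. The upper bound is exactly Lemma~\ref{LA-1}(a) applied to $A'$ at the unit vector $v_1^\ast$. For the lower bound, your matrix $M$ of $A'A$ in the orthonormal bases $\{v_1,v_2\}$ and $\{v_1'^\ast,v_2'^\ast\}$ is right (including the sign $v_2^\ast=-s\,v_1'+c\,v_2'$ coming from $J^2=-I$), $\det M=c^2+s^2=1$ so $\norm{M}^2\leq\tr(M^TM)$, and the trace computation
\begin{align*}
\tr(M^TM)=\bigl(\norm{A}^2-\norm{A}^{-2}\bigr)\bigl(\norm{A'}^2-\norm{A'}^{-2}\bigr)c^2+\norm{A}^2\norm{A'}^{-2}+\norm{A}^{-2}\norm{A'}^2
\end{align*}
is correct; bounding the first factor by $\norm{A}^2\norm{A'}^2$ (legitimate since $\norm{A},\norm{A'}>1$) and dividing through gives precisely the stated radicand. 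Your remark that the lower bound is vacuous when the radicand is negative is the right reading of the statement. This is very likely the same singular-basis computation carried out in the cited reference, so I would not call it a different route, merely an explicit one.
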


\begin{proof}
See~\cite[Propositions 2.23 and 2.24]{DK-book}.
\end{proof}

\begin{proposition}[Avalanche Principle]
\label{AP}
    There exist positive constants $c_i$, $i=0, 1, 2$ such that given $0<\kappa < c_0\epsilon^2$ and $A_0,\ldots, A_n\in \SL_2(\R)$, if
    \begin{enumerate}
        \item $\min_j\norm{A_j}^2 \geq \kappa^{-1}$;
        \item $\min_j\frac{\norm{A_jA_{j-1}}}{\norm{A_{j-1}\norm{A_j}}} \geq \epsilon$.
    \end{enumerate}
    Then, for $A^n := A_{n-1}\cdots\, A_0$,
    \begin{align*}
        \max\left\{
            d\left(
                \hv_1^\ast\left(
                    A^n
                \right),\,
                \hv_1^\ast\left(
                    A_{n-1}
                \right)
            \right),\,
            d\left(
                \hv_2\left(
                    A^n
                \right),\,
                \hv_2\left(
                    A_0
                \right)
            \right)
        \right\}\leq c_1\kappa\epsilon^{-1}.
    \end{align*}
    and
    \begin{align*}
        e^{-c_2\kappa\epsilon^{-1}n}
        \leq \frac{
        \norm{A_{n-1}\, \cdots A_1\, A_0}\,\norm{A_{1}}\, \cdots \, \norm{A_{n-2}} 
        }{
            \norm{A_1\, A_0}\cdots\, \norm{A_{n-1}\, A_{n-2}}
        } \leq 
        e^{c_2\kappa\epsilon^{-1}n}.
    \end{align*}
\end{proposition}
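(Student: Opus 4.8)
The plan is to prove the two conclusions together by induction on $n$, carrying along a statement that locates the singular directions of the partial products $A^k:=A_{k-1}\cdots A_0$. The first step is to translate hypothesis (2) into geometry. By Lemma~\ref{eq:090722.10}, the ratio $b_j:=\norm{A_jA_{j-1}}/(\norm{A_j}\,\norm{A_{j-1}})$ satisfies $d(\hv_1^\ast(A_{j-1}),\hv_2(A_j))\geq b_j\sqrt{1-(\norm{A_{j-1}}^{-4}+\norm{A_j}^{-4})/b_j^2}$; since $\min_j\norm{A_j}^2\geq\kappa^{-1}$, $b_j\geq\epsilon$ and $\kappa<c_0\epsilon^2$, this gives $d(\hv_1^\ast(A_{j-1}),\hv_2(A_j))\geq\epsilon/2$ and, more precisely, $d(\hv_1^\ast(A_{j-1}),\hv_2(A_j))=b_j\,(1+O(\kappa^2/\epsilon^2))$. (Note also that $\norm{A^k}$ is automatically large, so its singular directions are well defined.)

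Next I would isolate a \emph{one-step} estimate: for $A,A'\in\SL_2(\R)$ with $\norm{A},\norm{A'}\geq\kappa^{-1/2}$ and $d:=d(\hv_1^\ast(A),\hv_2(A'))\geq\epsilon/4$, one has (i) $\norm{A'A}=\norm{A'}\,\norm{A}\,d\,(1+O(\kappa/\epsilon))$, and (ii) $d(\hv_1^\ast(A'A),\hv_1^\ast(A'))\leq c\,\kappa\epsilon^{-1}$, with the symmetric bound $d(\hv_2(A'A),\hv_2(A))\leq c\,\kappa\epsilon^{-1}$. The lower bound in (i) is Lemma~\ref{LA-1}(a) applied to the unit vector $v_1^\ast(A)$ (using $Av_1(A)=\norm{A}\,v_1^\ast(A)$), and the matching upper bound follows by expanding $A$ and $A'$ in their singular bases and using $d\gg\kappa^{1/2}$. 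For (ii) I would first observe that $\widehat{A'A}\,\hv_1(A)=A'\,\hv_1^\ast(A)$, so Lemma~\ref{LA-1}(b) gives $d(A'\hv_1^\ast(A),\hv_1^\ast(A'))\leq (d\,\norm{A'}^2)^{-1}=O(\kappa/\epsilon)$; then, since $\norm{A'A\,v_1(A)}\geq(1-O(\kappa/\epsilon))\norm{A'A}$ by (i), the direction $\hv_1(A)$ is an almost most-expanding input direction of $A'A$, whence $d(\hv_1(A),\hv_1(A'A))=O(\sqrt{\kappa/\epsilon})$; pushing this forward by $\widehat{A'A}$, which near $\hv_1(A'A)$ is a contraction of rate $O(\norm{A'A}^{-2})$, makes $d(\widehat{A'A}\,\hv_1(A),\hv_1^\ast(A'A))$ negligible, and the triangle inequality combines the two bounds to give (ii). The symmetric $\hv_2$-bound is the same statement for the inverses.

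Finally the induction: assume for $A^k$ that $d(\hv_1^\ast(A^k),\hv_1^\ast(A_{k-1}))\leq C\kappa\epsilon^{-1}$ together with the logarithmic norm identity $\bigl|\log\norm{A^k}-\sum_{j=0}^{k-1}\log\norm{A_j}-\sum_{j=1}^{k-1}\log b_j\bigr|\leq c_2\kappa\epsilon^{-1}k$. Because $\kappa<c_0\epsilon^2$ forces $C\kappa\epsilon^{-1}<\epsilon/4$, we get $d(\hv_1^\ast(A^k),\hv_2(A_k))\geq\epsilon/4$, so the one-step estimate applies with $A=A^k$, $A'=A_k$: part (ii) yields $d(\hv_1^\ast(A^{k+1}),\hv_1^\ast(A_k))\leq c\kappa\epsilon^{-1}\leq C\kappa\epsilon^{-1}$ (so the direction error does \emph{not} accumulate), and part (i) together with $d(\hv_1^\ast(A^k),\hv_2(A_k))=b_k(1+O(\kappa/\epsilon^2))$ (from the induction hypothesis and the reformulation of (2)) advances the norm identity by one more error term of size $O(\kappa/\epsilon)$, keeping the cumulative error $O(n\kappa/\epsilon)$. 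Unwinding the norm identity at $k=n$ is exactly conclusion (2), the direction bound at $k=n$ gives the first half of the first display, and applying the whole argument to $A_0^{-1}\cdots A_{n-1}^{-1}=(A^n)^{-1}$ (whose data satisfies the same hypotheses, with $\hv_1^\ast$ of the reversed inverse product close to $\hv_2(A_0)$) gives the $\hv_2$ half. I expect the main obstacle to be part (ii) of the one-step estimate, specifically controlling $\hv_1^\ast(A'A)$ against $A'\hv_1^\ast(A)$: the naive bound loses a factor $\sqrt{\kappa/\epsilon}$, and one must exploit the sharp projective contraction of $\widehat{A'A}$ near its own most-expanding direction to absorb that loss; the rest is bookkeeping of geometric series.
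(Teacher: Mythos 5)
The paper does not prove this proposition; it simply cites \cite[Proposition 2.42]{DK-book} and \cite[Theorem 2.1]{DK-31CBM}. Your argument is, in essence, the standard inductive proof of the Avalanche Principle contained in those references: reformulate hypothesis (2) geometrically via Lemma~\ref{eq:090722.10}, prove a one-step estimate comparing $\norm{A'A}$ with $\norm{A'}\,\norm{A}\,d(\hv_1^\ast(A),\hv_2(A'))$ and locating $\hv_1^\ast(A'A)$ near $\hv_1^\ast(A')$, and then iterate, using the fact that the direction error is reproduced (not accumulated) at each step. The individual steps are sound: the lower bound in (i) is indeed Lemma~\ref{LA-1}(a) applied to $v_1^\ast(A)$, the upper bound follows from expanding in singular bases with error $O(\norm{A}^{-2}+\norm{A'}^{-2})=O(\kappa)$, and you correctly identify and repair the weak point in (ii) — the naive bound $d(\hv_1(A),\hv_1(A'A))=O(\sqrt{\kappa/\epsilon})$ is harmless because $\widehat{A'A}$ contracts at rate $O(\norm{A'A}^{-2})=O(\kappa^2\epsilon^{-2})$ near $\hv_1(A'A)$, so the pushed-forward error is $o(\kappa/\epsilon)$. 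The treatment of the $\hv_2$ half via the reversed inverse product is also correct, since $\norm{A_j^{-1}}=\norm{A_j}$ and $\norm{A_{j-1}^{-1}A_j^{-1}}=\norm{A_jA_{j-1}}$ in $\SL_2(\R)$.

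One quantitative point in your final bookkeeping is internally inconsistent. You correctly note that $d(\hv_1^\ast(A^k),\hv_2(A_k))=b_k\bigl(1+O(\kappa/\epsilon^2)\bigr)$ — the additive perturbation $d(\hv_1^\ast(A^k),\hv_1^\ast(A_{k-1}))=O(\kappa/\epsilon)$ becomes a \emph{relative} error $O(\kappa/\epsilon^2)$ upon dividing by $b_k\geq\epsilon$ — but then assert that each step advances the norm identity by only $O(\kappa/\epsilon)$. It advances it by $O(\kappa/\epsilon^2)$, so your induction yields $e^{\pm c_2\kappa\epsilon^{-2}n}$ rather than $e^{\pm c_2\kappa\epsilon^{-1}n}$. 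This is in fact what the cited Duarte--Klein statement gives (the $\epsilon^{-1}$ in the second display of Proposition~\ref{AP} appears to be a misprint; note that the paper's own application in Lemma~\ref{190722.10} uses the factor $\exp(-e^{-2(\lambda-\gamma)})=\exp(-\kappa\epsilon^{-2})$, consistent with the $\epsilon^{-2}$ version), and since $\kappa<c_0\epsilon^2$ both exponents are $O(n)$ with small constant, so nothing downstream is affected. Apart from this, your proof is correct and matches the approach of the cited sources.
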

\begin{proof}
See~\cite[Proposition 2.42]{DK-book} or~\cite[Theorem 2.1]{DK-31CBM}.
\end{proof}

\begin{lemma}\label{190722.10}
    Given $\hat v, \hat w\in\bP^1$ and  $A_1,\ldots, A_n\in \SL_2(\R)$  assume that:
  \begin{enumerate}[label=(\alph*)]
    \item\label{item:200722.1} $\lambda \gg \gamma \gg t$;
    \item\label{item:200722.2} $\min_j\norm{A_j}  \geq e^{\lambda}$;
    \item\label{item:200722.3} $\min_j\frac{\norm{A_j\, A_{j-1}}}{\norm{A_{j-1}\norm{A_j}}} \geq e^{-\gamma}$;
    \item\label{item:200722.4}  $\min\left\{
            d(A_1\, \hv,\, \hw),\, d(A_1^{-1}\, \hw,\, \hv)
        \right\}\geq e^{-t}$;
    \item\label{item:200722.5}  $\min\left\{
              d(A_n\, \hv,\, \hw),\, d(A^{-1}_n\, \hw,\, \hv)
        \right\}\geq e^{-t}$.
    \item\label{item:200722.6} $d(A_n\, \hv,\, A^{-1}_1\, \hw) \geq e^{-t};$
    \end{enumerate}
 Then for all $j=1,\ldots, n-1$,
    \begin{enumerate}
        \item\label{item:310722.0} $d(\hv_1^\ast(A^n)\, \hv_2(A^n))\gtrsim e^{-t}$;
        
        \item\label{item:310722.1} $A^n$ is hyperbolic and $\lambda(A^n)\gtrsim e^{(\lambda - 2\gamma)n}$;
        
        \item\label{item:310722.2} $\norm{A^n\, v} \gtrsim e^{(\lambda - 2\gamma)n}$ and $\norm{(A^{n})^{-1}\, \hw}\gtrsim e^{(\lambda - 2\gamma)n}$;
    \end{enumerate}
\end{lemma}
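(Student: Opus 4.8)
The plan is to apply the Avalanche Principle (Proposition~\ref{AP}) to the factors $A_1,\ldots,A_n$ and then use the geometric control hypotheses \ref{item:200722.4}--\ref{item:200722.6} together with Proposition~\ref{lem:080722.3} to get the lower bound on $d(\hv_1^\ast(A^n),\hv_2(A^n))$. First I would fix $\epsilon := e^{-\gamma}$ and $\kappa := e^{-2\lambda}$; since $\lambda \gg \gamma$, hypothesis \ref{item:200722.1} guarantees $\kappa < c_0\epsilon^2$, and hypotheses \ref{item:200722.2}, \ref{item:200722.3} are exactly conditions (1), (2) of Proposition~\ref{AP}. The Avalanche Principle then yields, for $A^n = A_n\cdots A_1$, that $\hv_1^\ast(A^n)$ is $c_1\kappa\epsilon^{-1}$-close to $\hv_1^\ast(A_n)$ and $\hv_2(A^n)$ is $c_1\kappa\epsilon^{-1}$-close to $\hv_2(A_1)$, and moreover the norm estimate
\begin{align*}
    \norm{A^n} \geq e^{-c_2\kappa\epsilon^{-1}n}\,\frac{\prod_{j=1}^{n}\norm{A_j}}{\prod_{j=2}^{n}\norm{A_j A_{j-1}}/(\norm{A_j}\norm{A_{j-1}})^{-1}}
\end{align*}
— more cleanly, $\norm{A^n} \geq e^{-c_2\kappa\epsilon^{-1}n}\prod_j\norm{A_j}\cdot\prod_j(\norm{A_jA_{j-1}}/(\norm{A_j}\norm{A_{j-1}}))$, which by \ref{item:200722.2} and \ref{item:200722.3} is $\gtrsim e^{(\lambda-\gamma)n - c_2\kappa\epsilon^{-1}n} \gtrsim e^{(\lambda-2\gamma)n}$ once $n$ is large and $\kappa\epsilon^{-1} = e^{-2\lambda+\gamma}$ is small compared to $\gamma$. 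This already gives the norm growth half of item~\ref{item:310722.2} for $\norm{A^n}$; the bound for $(A^n)^{-1}$ is symmetric (apply the same to $A_1^{-1},\ldots,A_n^{-1}$, whose norms and products are unchanged).

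Next, for item~\ref{item:310722.0}, I would estimate $d(\hv_1^\ast(A^n),\hv_2(A^n))$ via the triangle inequality: it is at least $d(\hv_1^\ast(A_n),\hv_2(A_1)) - 2c_1\kappa\epsilon^{-1}$. So it suffices to bound $d(\hv_1^\ast(A_n),\hv_2(A_1))$ from below by something $\gtrsim e^{-t}$. Here is where \ref{item:200722.4}--\ref{item:200722.6} and Proposition~\ref{lem:080722.3} enter. Writing $B := A_n$ and applying Lemma~\ref{LA-1}(b) I would first note that $A_n\hv$ is close to $\hv_1^\ast(A_n)$ provided $\hv$ is not too close to $\hv_2(A_n)$; but I do not have that directly. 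Instead the clean route is to apply Proposition~\ref{lem:080722.3} to the matrix $A_n$ with the two directions $\hat v$ and $\hat w$: hypotheses \ref{item:200722.5} ($d(A_n\hv,\hw)\geq e^{-t}$ and $d(A_n^{-1}\hw,\hv)\geq e^{-t}$) and $e^{-t}\norm{A_n}\geq e^{-t}e^{\lambda} > 2$ (using $\lambda \gg t$) give that $d(A_n\hv,\hv_1^\ast(A_n))$ and $d(A_n^{-1}\hw,\hv_2(A_n))$ are both $\lesssim 1/(e^{-t}\norm{A_n}) \lesssim e^{t-\lambda}$, hence tiny. The same applied to $A_1$ via \ref{item:200722.4} gives $d(A_1\hv,\hv_1^\ast(A_1)) \lesssim e^{t-\lambda}$ and $d(A_1^{-1}\hw,\hv_2(A_1)) \lesssim e^{t-\lambda}$. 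Now \ref{item:200722.6} says $d(A_n\hv, A_1^{-1}\hw)\geq e^{-t}$; combining with the two previous smallness estimates and the triangle inequality, $d(\hv_1^\ast(A_n),\hv_2(A_1)) \geq e^{-t} - O(e^{t-\lambda}) \gtrsim e^{-t}$ since $\lambda \gg t$. Feeding this back into the Avalanche estimate, and using $\kappa\epsilon^{-1} = e^{-2\lambda+\gamma} \ll e^{-t}$ (again by $\lambda \gg \gamma \gg t$), we get $d(\hv_1^\ast(A^n),\hv_2(A^n)) \gtrsim e^{-t}$, proving item~\ref{item:310722.0}.

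For item~\ref{item:310722.1}, I would use Proposition~\ref{balanced radius}~\ref{item:100722.10} (or the remark after Proposition~\ref{lem:080722.3}): with $a := d(\hv_1^\ast(A^n),\hv_2(A^n)) \gtrsim e^{-t}$ and $\norm{A^n}\gtrsim e^{(\lambda-2\gamma)n}$, we have $a\norm{A^n} \gtrsim e^{(\lambda-2\gamma)n - t} \to \infty$, so for $n$ large $A^n$ is hyperbolic with $\lambda(A^n) \gtrsim a\norm{A^n} \gtrsim e^{(\lambda-2\gamma)n}$ (absorbing the $e^{-t}$ factor into the implicit constant, or noting it is harmless compared to the exponential in $n$). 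Finally item~\ref{item:310722.2}'s remaining assertions: $\norm{A^n v}$ where $v\in\hat v$ — by Lemma~\ref{LA-1}(a), $\norm{A^n v}\geq \norm{A^n}d(\hat v,\hat v_2(A^n))$, and $d(\hat v,\hat v_2(A^n))\geq d(\hat v, \hv_2(A_1)) - c_1\kappa\epsilon^{-1}$; since $d(A_1^{-1}\hw,\hv_2(A_1))$ is tiny and $d(A_1^{-1}\hw,\hv)\geq e^{-t}$ by \ref{item:200722.4}, we get $d(\hv,\hv_2(A_1))\gtrsim e^{-t}$, hence $\norm{A^n v}\gtrsim e^{-t}\norm{A^n}\gtrsim e^{(\lambda-2\gamma)n}$. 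Symmetrically for $(A^n)^{-1}\hw$, using $d(\hw,\hv_1^\ast(A_n))\gtrsim e^{-t}$ from \ref{item:200722.5}. The main obstacle I anticipate is bookkeeping the hierarchy of small quantities: one must check repeatedly that $\kappa\epsilon^{-1} = e^{-2\lambda+\gamma}$ and $e^{t-\lambda}$ are negligible compared to $e^{-t}$, which is where the strict hypothesis \ref{item:200722.1} that $\lambda \gg \gamma \gg t$ does all the work; getting the quantifiers in the right order (how large $n$, how large $\lambda/\gamma$, how large $\gamma/t$) is the only delicate point, and none of it is conceptually hard once the Avalanche Principle and Proposition~\ref{lem:080722.3} are in hand.
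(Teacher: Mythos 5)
Your proposal follows essentially the same route as the paper's proof: Proposition~\ref{lem:080722.3} applied to $A_1$ and $A_n$ via hypotheses (d), (e) to pin $A_n\hv$ near $\hv_1^\ast(A_n)$ and $A_1^{-1}\hw$ near $\hv_2(A_1)$, the Avalanche Principle with $\kappa=e^{-2\lambda}$, $\epsilon=e^{-\gamma}$ to transfer this to $A^n$ and to get $\norm{A^n}\gtrsim e^{(\lambda-\gamma)n}$, then the triangle inequality with (f) for item~1, Proposition~\ref{balanced radius} for item~2, and Lemma~\ref{LA-1}(a) for item~3. The only (harmless) deviation is that you quote the bound from Proposition~\ref{lem:080722.3} as $\lesssim 1/(a\norm{A})$ rather than the sharper $\asymp 1/(a\norm{A}^2)$ the proposition actually gives, which still suffices since $e^{t-\lambda}\ll e^{-t}$.
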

\begin{proof}
    Using the conditions~\ref{item:200722.4}, \ref{item:200722.5} and Proposition~\ref{lem:080722.3} we have
    \begin{align*}
        d(A_n\, \hv,\, \hv_1^\ast(A_n)) \lesssim e^{-2\lambda + t}
        \quad
        \text{and}
        \quad
        d(A^{-1}_1\, \hw,\, \hv_2(A_1)) \lesssim e^{-2\lambda + t}.
    \end{align*}
    By the AP (Proposition~\ref{AP})
    with $\kappa:=e^{-2\,\lambda}$ and $\epsilon:=e^{-\gamma}$,
    \begin{align*}
        d(\hv_1^\ast(A^n),\, \hv_1^\ast(A_n)) \leq e^{-2\lambda +\gamma}
        \quad
        \text{and}
        \quad
        d(\hv_2(A^n),\, \hv_2(A_1)) \leq e^{-2\lambda + \gamma}.
    \end{align*}
    Applying triangular inequality with condition \ref{item:200722.6},
    \begin{align}\label{eq:220722.1}
        d(\hv_1^\ast(A^n)\, \hv_2(A^n))
        \gtrsim e^{-t} - 2e^{-2\lambda + \gamma} - 2e^{-2\lambda + t}
        \gtrsim e^{-t},
    \end{align}
    which give us Item \ref{item:310722.0}. The AP also implies that
    \begin{align} \nonumber
        \norm{A^n } &\gtrsim 
        \exp\left( - e^{-2\,(\lambda-\gamma)}\right)\, 
        \frac{
             \norm{A_{2}\, A_{1}}\cdots\, \norm{A_{n}\, A_{n-1}}
         }{
          \norm{A_{2}}\, \cdots \, \norm{A_{n-1}} 
         }\\
        &\gtrsim e^{-n\,\gamma}\,   \norm{A_{1}}\, \cdots \, \norm{A_{n}}
        \geq e^{(\lambda-\gamma)\, n} .
    \end{align}
    Item \ref{item:310722.1} follows from inequality \eqref{eq:220722.1} and Proposition~\ref{balanced radius} with $\lambda(A^n)\gtrsim e^{-t}\norm{A^n} \gtrsim e^{(\lambda - 2\gamma)n}$.
    
    Using the bounds above for $d(A^{-1}_1\, \hw,\, \hv_2(A_1))$, $d(\hv_2(A^n),\, \hv_2(A_1))$  and condition \ref{item:200722.4} we have
    \begin{align}\label{eq:220722.2}
        d(\hv,\, \hv_2(A^n)) \gtrsim e^{-t} - e^{-2\lambda + t} - e^{-2\lambda + \gamma}\gtrsim e^{-t}.
    \end{align}
    Hence, by Lemma \ref{LA-1},\, $\displaystyle \norm{A^n\, v}\gtrsim e^{(\lambda - \gamma)n - t}$. Using similar arguments we conclude that,\, $\displaystyle \norm{(A^n)^{-1}\, w} \gtrsim e^{(\lambda - 2\gamma) n}$ which proves item \ref{item:310722.2}.
    \end{proof}
\section{Appendix: derivative of projective actions}\label{Appendix:derivativeProjectiveActions}
We state and prove some general formulas for the derivatives of the action that will be used throughout this section.
Given a non-zero vector $w\in\R^2$, let $z(w)$ be the unique unit vector which makes $\{ w/\norm{w}, z(w)\}$  an orthonormal basis.

Given   $A\in \SL_2(\R)$, the derivative of its projective action $\hat A:\bP^1\to\bP^1$ is  
\begin{align}\label{eq:020622.6}
   D \hat A(\hat{w})v = \frac{
        \left(
            Aw\wedge\, Av
        \right)
    }{\norm{Aw}^2}\,z(Aw) .
\end{align}
If $w\in\hat w$ is a unit vector and $v\in T_{\hat w}\bP^1$ is a unit and positive tangent vector then $A\, w\wedge A\, v=w\wedge v =1$ and the norm of the derivative $D\hat A(\hat w)$ is equal to 
$\norm{A\, w}^{-2}$.

For a $C^1$  one parameter family $\{A_t\}_{t\in I}$ of $SL_2(\R)$ matrices, where $I\subseteq \R$ is an interval containing $0$,  
\begin{align}\label{eq:020622.7}
    \frac{d}{dt} A_t\hat{v}\biggr\rvert_{t=0} = \frac{
        ({A}_0v\wedge  \dot A_0v)
    }{\norm{A_0v}^2}z(A_0v) .
\end{align}

More generally, given a one  parameter family of cocycles
$A_t:X\to\SL_2(\R)$, defined in some interval $I\subseteq \R$,
\begin{proposition}
\label{winding derivative formula}
If $\bvec_j(t):=A^j_t(x)\, v/\|A^j_t(x)\, v\| $
for $j=0,1,\ldots, n$,
then
 $$ \frac{d}{dt} \frac{A^n_t(x)\, v}{\norm{A^n_t(x)\, v}} 
 =  \sum_{j=0}^{n-1}\frac{1}{\norm{A^{j}_t(T^{n-j} x)\, \bvec_{n-j}}^2} \,  \left( \bvec_{n-j}  \wedge  (\dot A_t\, A_t^{-1})(T^{n-j-1} x) \bvec_{n-j} \right) \, z(A^n_t \, v) $$
\end{proposition}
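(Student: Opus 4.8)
Proposition~\ref{winding derivative formula} is a chain-rule statement: the projective map $\hat A^n_t$ factors as $\hat A_t(T^{n-1}x)\circ\cdots\circ\hat A_t(x)$, and each factor depends on $t$ both through the matrix and through the point it is applied to. The plan is to differentiate this composition using the product rule, organized so that the $j$-th term accounts for the explicit $t$-dependence of the $(n-j)$-th factor $\hat A_t(T^{n-j-1}x)$ while the later factors are frozen at $t$ and only transport the resulting tangent vector forward.

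First I would set up notation: write $\bvec_j(t)$ for the unit vector along $A^j_t(x)v$, and note $\hat A^n_t\hat v$ equals the image under $\hat A_t(T^{n-1}x)\circ\cdots\circ\hat A_t(T^{n-j}x)$ of the point $\hat\bvec_{n-j}(t)$. I would then write the total derivative as a telescoping sum over $j=0,\dots,n-1$, where the $j$-th summand is obtained by differentiating only the innermost exposed factor $\hat A_t$ at the point $T^{n-j-1}x$ (holding $\bvec_{n-j-1}$ fixed at its time-$t$ value, so that its contribution is the derivative of $t\mapsto \hat A_t(T^{n-j-1}x)\,\hat\bvec_{n-j-1}$ via formula~\eqref{eq:020622.7}), and then pushing that tangent vector through the remaining $j$ frozen projective maps. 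The cross-term contributions cancel in pairs because $\hat A_t(T^{n-j-1}x)\,\hat\bvec_{n-j-1}(t)=\hat\bvec_{n-j}(t)$ identically in $t$.

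Next I would compute the two ingredients of each summand. By~\eqref{eq:020622.7}, $\frac{d}{dt}\hat A_t(T^{n-j-1}x)\,\hat\bvec_{n-j-1}$ has magnitude $\norm{A_t(T^{n-j-1}x)\,\bvec_{n-j-1}}^{-2}\,(\bvec_{n-j-1}\wedge (\dot A_t A_t^{-1})(T^{n-j-1}x)\,\bvec_{n-j-1})$ — here I would rewrite $A_0 v\wedge \dot A_0 v$ in the form $\bvec\wedge(\dot A_t A_t^{-1})\bvec$ by substituting $v=A_t^{-1}w$ and using $\det A_t=1$, which turns the wedge of images into a wedge involving the logarithmic derivative $\dot A_t A_t^{-1}$ evaluated at the unit vector $\bvec$. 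For the transport, by the remark following~\eqref{eq:020622.6} the differential $D\hat B(\hat w)$ acting on a positively oriented unit tangent vector has norm $\norm{Bw}^{-2}$, so composing the $j$ frozen factors $\hat A_t(T^{n-j}x),\dots,\hat A_t(T^{n-1}x)$ multiplies the magnitude by $\prod$ of the reciprocal squared norms, which by the cocycle identity collapses to $\norm{A^{j}_t(T^{n-j}x)\,\bvec_{n-j}}^{-2}$. Care with orientation signs (all the wedge/area factors are consistently positive once one fixes $z(\cdot)$ as in the statement) gives exactly the claimed formula, with common direction $z(A^n_t v)$ since every term is a scalar multiple of the unit tangent vector at the terminal point $\hat A^n_t\hat v$.

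The main obstacle I expect is bookkeeping rather than conceptual: making the telescoping cancellation of the mixed partial-derivative terms fully rigorous, and tracking the index shifts and the base-point arguments $T^{n-j-1}x$ versus $T^{n-j}x$ so that the product of contraction factors telescopes correctly to $\norm{A^{j}_t(T^{n-j}x)\,\bvec_{n-j}}^{2}$ in the denominator. A clean way to avoid sign headaches is to prove the formula by induction on $n$: the case $n=1$ is~\eqref{eq:020622.7}, and the inductive step writes $A^{n}_t(x)=A_t(T^{n-1}x)\,A^{n-1}_t(x)$, applies the chain rule once, uses~\eqref{eq:020622.6} for the differential of the outer factor (contributing the $j=0$ term and the factor $\norm{A^{n-1}_t(x)\bvec_0}^{-2}=\norm{A_t(T^{n-1}x)\bvec_{n-1}}^{-2}$... — more precisely the factor that updates every surviving denominator), and folds in the inductive hypothesis for $\frac{d}{dt}A^{n-1}_t(x)v$. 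This reduces the whole proof to one application of the chain rule plus one routine algebraic identity relating $A^{j+1}_t$ norms to $A^j_t$ norms along the orbit.
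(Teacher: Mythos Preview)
Your approach is correct and essentially the same as the paper's: both are the product/chain rule for the $n$-fold composition, the paper carrying it out at the matrix level (first writing $\frac{d}{dt}A^n_t(x)=\sum_{j} A^{n-j}_t(T^j x)\,\dot A_t(T^{j-1}x)\,A^{j-1}_t(x)$ and then plugging into~\eqref{eq:020622.7}), while you apply the chain rule directly at the projective level using~\eqref{eq:020622.6} for the transport and~\eqref{eq:020622.7} for the partial derivatives. One bookkeeping slip to fix: when you rewrite $A_t w\wedge\dot A_t w$ in the form $\bvec\wedge(\dot A_t A_t^{-1})\bvec$, the unit vector $\bvec$ is the \emph{image} $\bvec_{n-j}$, not $\bvec_{n-j-1}$, and in that rewriting the factor $\norm{A_t(T^{n-j-1}x)\bvec_{n-j-1}}^{-2}$ is absorbed rather than appearing alongside it; also, there is no telescoping or cancellation of cross-terms needed --- the sum over $j$ \emph{is} the multivariable chain rule.
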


\begin{proof}
Since
$$  \frac{d}{dt} A^n_t(x) 
=\sum_{j=1}^n A^{n-j}_t(T^{j} x)\, \dot A_t(T^{j-1} x)\, A_t^{j-1}(x)  $$
we have
\begin{align*}
  \frac{d}{dt} \frac{A^n_t(x)\, v}{\norm{A^n_t(x)\, v}}  &=
  \sum_{j=1}^n \, \frac{A_t^n(x)\, v \, \wedge\, 
  A^{n-j}_t(T^{j} x)\, \dot A_t(T^{j-1} x)\, A_t^{j-1}(x)\, v}{\norm{A_t^n(x)\, v}^2}\, z(A^n_t\, v)\\
  &=
  \sum_{j=1}^n \, \frac{A_t^{j}(x)\, v \, \wedge\, 
  \dot A_t(T^{j-1} x)\, A_t^{j-1}(x)\, v}{\norm{A_t^n(x)\, v}^2}\, z(A^n_t\, v)\\
    &=
  \sum_{j=1}^n \,  \frac{\norm{A_t^{j-1}(x)\,v}^2\, A_t (T^{j-1} x)\, \bvec_{j-1} \, \wedge\, 
  \dot A_t(T^{j-1} x)\,\bvec_{j-1}}{\norm{A_t^n(x)\, v}^2}\, z(A^n_t\, v)\\
    &=
  \sum_{j=1}^n \, \frac{\norm{A_t^j(x)\,v}^2}{\norm{A_t^n(x)\, v}^2}\, \frac{  A_t (T^{j-1} x)\, \bvec_{j-1} \, \wedge\, 
  \dot A_t(T^{j-1} x)\,\bvec_{j-1}}{\norm{A_t(T^{j-1} x)\, \bvec_{j-1}}^2}\, z(A^n_t\, v)\\
    &=
  \sum_{j=1}^n \, \frac{1}{\norm{A_t^{n-j}(T^{j}x)\, \bvec_j }^2}\, \left( \bvec_{j} \, \wedge\, 
 ( \dot A_t\, A_t^{-1})(T^{j-1} x)\,\bvec_{j}  \right)\, z(A^n_t\, v)\\
  & =  \sum_{j=0}^{n-1}\frac{1}{\norm{A^{j}_t(T^{n-j} x)\, \bvec_{n-j}}^2} \,  \left( \bvec_{n-j}  \wedge  (\dot A_t\, A_t^{-1})(T^{n-j-1} x) \bvec_{n-j} \right) \, z(A^n_t \, v)
\end{align*}
\end{proof}

\begin{lemma}\label{lem:020822.1}
    Given a compact interval $I\subset \R$, there exist $C,\, c>0$ such that for every $x\in X$, $\hv\in \bP^1$ and $n\in \N$
    \begin{align*}
        \left|
            \frac{d}{dt}A^n_t(x)\, \hv
        \right|\leq C e^{cn}.
    \end{align*}
\end{lemma}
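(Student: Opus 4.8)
The plan is to feed the explicit formula of Proposition~\ref{winding derivative formula} into an elementary term-by-term estimate and then sum a geometric series.

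First I would record the constants that control the estimate. Fix the compact interval $I$. Since the family $(t,x)\mapsto A_t(x)$ is $C^1$ on the compact set $I\times X$, the quantities
\[
    K:=\sup_{t\in I,\ x\in X}\norm{A_t(x)}
    \quand
    M:=\sup_{t\in I,\ x\in X}\norm{\dot A_t(x)\,A_t(x)^{-1}}
\]
are finite (using that $A_t$, $A_t^{-1}$ and $\dot A_t$ are continuous in $(t,x)$); moreover $K\geq 1$, because every $B\in\SL_2(\R)$ satisfies $\norm{B}\geq 1$, as $\norm{B}=\norm{B^{-1}}$ and $\norm{B}\,\norm{B^{-1}}\geq 1$.

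Next I would apply Proposition~\ref{winding derivative formula}. Identifying, as the paper does, the scalar derivative of the projective curve $t\mapsto A^n_t(x)\,\hv$ with the coefficient appearing in that formula, and writing $\bvec_j:=A^j_t(x)\,v/\norm{A^j_t(x)\,v}$, one has
\[
    \left|\frac{d}{dt}A^n_t(x)\,\hv\right|
    =\left|\sum_{j=0}^{n-1}\frac{\bvec_{n-j}\wedge(\dot A_t\,A_t^{-1})(T^{n-j-1}x)\,\bvec_{n-j}}{\norm{A^{j}_t(T^{n-j}x)\,\bvec_{n-j}}^2}\right|
    \leq\sum_{j=0}^{n-1}\frac{M}{\norm{A^{j}_t(T^{n-j}x)\,\bvec_{n-j}}^2}.
\]
Each numerator is at most $M$ since $\bvec_{n-j}$ is a unit vector and $|a\wedge b|\leq\norm{a}\,\norm{b}$. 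For the denominators I would use that, for any unit vector $\bvec$ and any $y\in X$, $\norm{A^{j}_t(y)\,\bvec}\geq\norm{A^{j}_t(y)^{-1}}^{-1}=\norm{A^{j}_t(y)}^{-1}\geq K^{-j}$, the last inequality by submultiplicativity along the $j$-step product. Hence the $j$-th term is at most $M\,K^{2j}$, so that
\[
    \left|\frac{d}{dt}A^n_t(x)\,\hv\right|\leq M\sum_{j=0}^{n-1}K^{2j}\leq C\,e^{c\,n},
\]
with $c:=2\log(K+1)$ and $C:=M/(1-K^{-2})$ when $K>1$ (and with any $c>0$ and $C=M$ when $K=1$). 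Since $K$ and $M$ depend only on $I$ and the given family, so do $C$ and $c$, and the bound is uniform in $x$, $\hv$ and $n$.

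There is no substantive obstacle here; the statement is a routine consequence of Proposition~\ref{winding derivative formula}. The only points that deserve a word of care are the finiteness of $K$ and $M$ (compactness of $I\times X$ together with continuity of $A_t$, $A_t^{-1}$ and $\dot A_t$), the elementary bound $\norm{Bv}\geq\norm{B}^{-1}$ valid for $B\in\SL_2(\R)$ and unit $v$, and the observation that Proposition~\ref{winding derivative formula} is formulated for an arbitrary $C^1$ one-parameter family of cocycles and so applies verbatim in the present setting.
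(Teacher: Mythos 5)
Your proof is correct and follows essentially the same route as the paper's: both substitute the explicit sum from Proposition~\ref{winding derivative formula}, bound each numerator by $\sup\norm{\dot A_t\,A_t^{-1}}$, bound each denominator from below via $\norm{Bv}\geq\norm{B}^{-1}\geq K^{-j}$ for $B\in\SL_2(\R)$, and sum the resulting geometric series. If anything you are slightly more careful than the paper, whose proof writes the geometric sum with ratio $M_1^{j}$ where the squared norm in the denominator actually yields $M_1^{2j}$ (and divides by $M_1-1$ without excluding $M_1=1$) --- harmless slips, since only the exponential order of the bound matters.
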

\begin{proof}
    Let $M_0 = \sup_{t\in I}\norm{\dot A_t\, A_t^{-1}}_{\infty}$ and $M_1 = \sup_{t\in I}\norm{A_t}_{\infty}$. Then, for every $\hw\in \bP^1$, $\norm{A^j(x)\, \hw} \geq M_1^{-j}$. Therefore, by Proposition \ref{winding derivative formula},
    \begin{align*}
        \left|
            \frac{d}{dt}A^n_t(x)\, \hv
        \right| \leq M_0\sum_{j=0}^{n-1}M_1^j \leq M_0 \leq \frac{M_0}{M_1-1}M_1^n = Ce^{cn},
    \end{align*}
    where $C = \frac{M_0}{M_1-1}$ and $c = \log M_1$.
\end{proof}

For Schr\"odinger cocycles more can be said.

\begin{lemma}
\label{Schrodinger winding}
Given a Schr\"odinger cocycle $A_E: X\to \SL_2(\R)$ with continuous potential $\phi:X\to \R$ and generated by the dynamical system $(X, T, \xi)$, there exists a constant $c_*>0$ such that for all $n\geq 2$,
all $x\in X$, all $E\in \R$ and all $\hat v\in\bP^1$,
\begin{align*}
    \left|
        \frac{d}{dE}A_E^n(x)\, \hv
    \right| = \frac{ A_E^n(x)\, v \, \wedge \, \frac{d}{dE} A_E^n(x)\, v}{\norm{A_E^n(x)\, v}^2} \, \geq \, c_* ,
\end{align*}
and
\begin{align*}
    \left|
        \frac{d}{dE}A_E^{-n}(x)\, \hv
    \right| = -\, \frac{ A_E^{-n}(x)\, v \, \wedge \, \frac{d}{dE} A_E^{-n}(x)\, v}{\norm{A_E^{-n}(x)\, v}^2} \, \geq \,   c_* .
\end{align*}
\end{lemma}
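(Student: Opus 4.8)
The plan is to reduce the statement to the winding formula of Proposition~\ref{winding derivative formula}, after computing the logarithmic derivative $(\dot A_E\,A_E^{-1})(x)$ explicitly. Since $A_E(x)=S(\phi(x)-E)$ we have $\tfrac{d}{dE}A_E(x)=\bigl(\begin{smallmatrix}-1&0\\0&0\end{smallmatrix}\bigr)$ and $A_E(x)^{-1}=\bigl(\begin{smallmatrix}0&1\\-1&\phi(x)-E\end{smallmatrix}\bigr)$, so $(\dot A_E\,A_E^{-1})(x)=N:=\bigl(\begin{smallmatrix}0&-1\\0&0\end{smallmatrix}\bigr)$ is a fixed nilpotent matrix, independent of $x$ and $E$, and for any unit vector $\mathbf v=(p,q)$ one has $\mathbf v\wedge N\mathbf v=q^{2}=\langle\mathbf v,e_2\rangle^{2}\ge 0$. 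Feeding $N$ into Proposition~\ref{winding derivative formula} and using the cocycle identity $A_E^{j}(T^{n-j}x)A_E^{n-j}(x)=A_E^{n}(x)$ to rewrite $\norm{A_E^{j}(T^{n-j}x)\mathbf v_{n-j}}=\norm{A_E^{n}(x)v}/\norm{A_E^{n-j}(x)v}$, all the norm factors cancel and one is left with
\begin{align*}
  \frac{d}{dE}A_E^{n}(x)\,\hv=\frac{1}{\norm{A_E^{n}(x)\,v}^{2}}\sum_{m=1}^{n}\langle A_E^{m}(x)\,v,\,e_2\rangle^{2}\,\ge\,0 .
\end{align*}

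Next I would rewrite this ratio in terms of the solution $(u_j)_{j\in\Z}$ of the eigenvalue recursion~\eqref{260122.1} with $(u_0,u_{-1})=v$: by~\eqref{190122.1} one has $A_E^{m}(x)v=(u_m,u_{m-1})$, hence $\langle A_E^{m}(x)v,e_2\rangle=u_{m-1}$ and the identity above reads
\begin{align*}
  \frac{d}{dE}A_E^{n}(x)\,\hv=\frac{u_0^{2}+u_1^{2}+\dots+u_{n-1}^{2}}{u_n^{2}+u_{n-1}^{2}} .
\end{align*}
This already gives the sign/monotonicity used in Lemma~\ref{lem:windingProperty}. For the positive lower bound I would use the recursion once more: $u_n=(\phi(T^{n-1}x)-E)u_{n-1}-u_{n-2}$, so with $M:=|\phi(T^{n-1}x)-E|$,
\begin{align*}
  u_n^{2}+u_{n-1}^{2}\le(2M^{2}+3)\bigl(u_{n-1}^{2}+u_{n-2}^{2}\bigr)\le(2M^{2}+3)\sum_{j=0}^{n-1}u_j^{2},
\end{align*}
where the last inequality requires $n\ge 2$, so that the index $n-2$ is $\ge0$ and $u_{n-2}$ actually occurs in the controlling sum. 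Hence $\frac{d}{dE}A_E^{n}(x)\hv\ge(2M^{2}+3)^{-1}$; since $\phi$ is bounded and, in every place where the lemma is applied, $E$ varies in a fixed compact interval, $M$ is bounded and we take $c_*$ to be the reciprocal of the resulting constant. (A single $c_*$ cannot work for all $E\in\R$ — the ratio tends to $0$ as $E\to\infty$ — so this compactness is what is implicitly used.)

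For the backward cocycle I would run the same argument for $\tilde A_E(y):=A_E(T^{-1}y)^{-1}$, a cocycle over $(X,T^{-1},\xi)$ satisfying $\tilde A_E^{n}=A_E^{-n}$; its logarithmic derivative is the constant nilpotent matrix $-A_E(T^{-1}y)^{-1}\dot A_E(T^{-1}y)=\bigl(\begin{smallmatrix}0&0\\-1&0\end{smallmatrix}\bigr)$, for which $\mathbf v\wedge(\cdot)\mathbf v=-\langle\mathbf v,e_1\rangle^{2}\le0$. With $A_E^{-m}(x)v=(u_{-m},u_{-m-1})$ the winding formula yields $-\,\bigl(A_E^{-n}(x)v\wedge\tfrac{d}{dE}A_E^{-n}(x)v\bigr)/\norm{A_E^{-n}(x)v}^{2}=(u_{-1}^{2}+\dots+u_{-n}^{2})/(u_{-n}^{2}+u_{-n-1}^{2})$, and the same estimate, now using $u_{-n-1}=(\phi(T^{-n}x)-E)u_{-n}-u_{-n+1}$ and again $n\ge2$, gives the lower bound $c_*$. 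The only genuinely delicate points are the bookkeeping in the winding formula that produces the clean sum-of-squares identity, and the observation that the strict lower bound really needs $n\ge 2$; the rest is routine.
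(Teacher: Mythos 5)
Your proof is correct and follows essentially the same route as the paper's: both feed the explicit logarithmic derivative of the Schr\"odinger matrix into Proposition~\ref{winding derivative formula}, note that every term of the resulting sum is non-negative, and extract the strict lower bound from the last term(s) via the one-step recursion, which is exactly where the hypothesis $n\ge 2$ enters. Your closing caveat is also well taken: the constant produced by either argument depends on a bound for $|\phi-E|$ (for $n=2$ and $v=e_1$ the ratio is of order $E^{-2}$ as $E\to\infty$), so the uniformity over all $E\in\R$ claimed in the statement is not literally correct, and one must restrict $E$ to a compact interval --- which is how the lemma is in fact used throughout the paper.
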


\begin{proof}
For $n=1$ a simple calculation gives
$$ \frac{ A_E(x)\, v \, \wedge \, \dot  A_E(x)\, v}{\norm{A_E(x)\, v}^2} =\frac{v_1^2}{\norm{A_E(x)\, v}^2}  $$
while using Proposition~\ref{winding derivative formula} with $n=2$
$$ \frac{ A_E^2(x)\, v \, \wedge \, \frac{d}{dE}  A_E^2(x)\, v}{\norm{A_E^2(x)\, v}^2} =   \frac{v_1^2}{\norm{A_E^2(x)\, v}^2} +
  \frac{((\phi(x)-E)\, v_1-v_2)^2}{\norm{A_E(T x)\, v}^2}  $$
  is positive and bounded away from $0$ because the denominators are bounded and the numerators add up to a positive definite quadratic form  $v_1^2 + ((\phi(x)-E)\, v_1-v_2)^2$, for any $x\in X$ and $E\in\R$.
 The general case follows also from Proposition~\ref{winding derivative formula} neglecting all terms but the last two.
 
 Finally, since
 $A_E^{-n}(x)=A_E^n(T^{-n}x)^{-1}$, if
 $w=A_E^{-n}( x)\, v$ \, then
 \begin{align*}
 A_E^{-n}(x)\, v \, \wedge \, \frac{d}{dE} A_E^{-n}(x)\, v
&= A_E^{-n}(x)\, v \, \wedge \,  A_E^{-n}(x)\,  \left( - \frac{d}{dE} A_E^{n}(T^{-n} x)\right)\,  A_E^{-n}(x)\, v\\ 
&=-  \, v \, \wedge \,   \left( \frac{d}{dE} A_E^{n}(T^{-n} x)\right)\,  A_E^{-n}(x)\, v\\ 
&=-  \, A_E^{n}(T^{-n} x)\, w \, \wedge \,    \frac{d}{dE} A_E^{n}(T^{-n} x)\,  w \, <\, 0 .
 \end{align*}
Therefore the projective curve $E\mapsto A_E^{-n}(x)\, \hat v$ winds in the opposite direction, and a  similar argument gives that $\left|\frac{d}{dE} A_E^{-n}(x)\, \hv \right|$   is bounded away from $0$.
\end{proof}

The space $\bP^1$ has a natural orientation. We say that a curve $\hat v:I\to\bP^1$ winds positively, resp. negatively, when it is positively, resp. negatively, oriented.
\begin{proposition}
\label{speed of stable and unstable directions}
If $M_E$ is hyperbolic with $\lambda(M_E)\geq \lambda_0>1$ for all $E$ in some compact interval $I$, then there exists a positive constant $K<\infty$ such that  the projective curves 
$E\mapsto \hat u(M_E)$ and $E\mapsto \hat s(M_E)$ wind around $\bP^1$ in opposite directions with non-zero speed bounded from above by
$K$.
The unstable curve $E\mapsto \hat u(M_E)$ winds positively, while
 the stable curve $E\mapsto \hat s(M_E)$ winds negatively.
 Moreover
\begin{align*}
 \frac{d}{dE}\hat{u}(M_E) = \frac{
    \lambda(E)\, ( u(E)  \wedge  \dot M_E\, u(E) )
}{ 
    \lambda(E)^2-1  
}\, z(\hat u(M_E))
\end{align*}
and
\begin{align*}
 \frac{d}{dE}\hat{s}(M_E) = -\, \frac{
    \lambda(E)\, ( s(E)  \wedge  \dot M_E^{-1}\, s(E) )
}{ 
    \lambda(E)^2-1  
}\, z(\hat s(M_E)).
\end{align*}
\end{proposition}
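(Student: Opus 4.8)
The plan is to obtain the two displayed formulas by implicit differentiation of the fixed‑point relation characterizing the eigendirections, and then to read off the winding signs and the bound $K$ from Lemma~\ref{Schrodinger winding}. Note first that the two ``Moreover'' formulas hold for any $C^1$ family of hyperbolic matrices, whereas the assertion that the curves wind with nowhere‑vanishing speed in opposite directions uses that $M_E$ is an iterate $A_E^n(x)$ of a Schrödinger cocycle (the case in all applications), so that Lemma~\ref{Schrodinger winding} is available. Since $M_E$ is hyperbolic with $\lambda(M_E)\ge\lambda_0>1$ throughout the compact interval $I$, the attracting fixed point $\hat u(M_E)$ of the circle map $\hat M_E$ on $\bP^1$ depends smoothly on $E$: this follows from the implicit function theorem applied to $(E,\hat v)\mapsto\hat M_E\hat v-\hat v$, because by the remark after~\eqref{eq:020622.6} the $\hat v$‑derivative at $\hat v=\hat u(M_E)$ has absolute value $\norm{M_E\,u(E)}^{-2}=\lambda(M_E)^{-2}<1$, so $1-D\hat M_E(\hat u(M_E))\ne0$. (Alternatively one can parametrize a unit eigenvector $u(E)$ explicitly as a kernel vector of $M_E-\mu(E)\,\id$, with $\mu(E)=\tfrac12\bigl(\tr M_E+\sqrt{(\tr M_E)^2-4}\bigr)$ smooth since $|\tr M_E|>2$, and differentiate directly; this avoids the sign issues below.)

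Differentiating $\hat M_E\,\hat u(M_E)=\hat u(M_E)$ in $E$ and applying the chain rule gives
\[
\bigl(1-D\hat M_E(\hat u(M_E))\bigr)\,\frac{d}{dE}\hat u(M_E)=\frac{\partial}{\partial E}\bigl(\hat M_E\,\hat v\bigr)\Big|_{\hat v=\hat u(M_E)} .
\]
Working consistently in the frame attached to the smooth choice $u(E)$, formula~\eqref{eq:020622.6} identifies $D\hat M_E(\hat u(M_E))$ with multiplication by $\lambda(M_E)^{-2}$ (since $M_E u=\pm\lambda(M_E)u$ and $\norm{M_E u}=\lambda(M_E)$), and formula~\eqref{eq:020622.7} evaluates the right‑hand side as $\dfrac{u(E)\wedge\dot M_E\,u(E)}{\lambda(M_E)}\,z(\hat u(M_E))$ (the two ambiguous eigenvalue‑signs cancel). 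Dividing by $1-\lambda(M_E)^{-2}$ and simplifying $\tfrac{1}{1-\lambda^{-2}}\cdot\tfrac1\lambda=\tfrac{\lambda}{\lambda^2-1}$ yields the stated expression for $\tfrac{d}{dE}\hat u(M_E)$. The formula for $\tfrac{d}{dE}\hat s(M_E)$ is then the previous one applied to the family $M_E^{-1}$, using $\hat s(M_E)=\hat u(M_E^{-1})$, $\lambda(M_E^{-1})=\lambda(M_E)$ and $\tfrac{d}{dE}M_E^{-1}=-M_E^{-1}\dot M_E M_E^{-1}$; rearranging with the orientation conventions for $z(\cdot)$ produces the overall minus sign.

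For the qualitative part, observe that the right‑hand side of the differentiated identity is exactly $\tfrac{d}{dE}\bigl(A_E^n(x)\,\hat v\bigr)$ with the direction $\hat v=\hat u(M_E)$ frozen; by Lemma~\ref{Schrodinger winding} this is $\ge c_*>0$ (positive winding), and since $\bigl(1-\lambda(M_E)^{-2}\bigr)^{-1}>0$ the curve $E\mapsto\hat u(M_E)$ winds positively, with speed bounded below by $c_*$. Applying the second part of Lemma~\ref{Schrodinger winding} to $M_E^{-1}$ (an iterate $A_E^{-n}(\cdot)$) shows the corresponding driving term for $\hat s(M_E)=\hat u(M_E^{-1})$ is $\le-c_*<0$, so $E\mapsto\hat s(M_E)$ winds negatively; hence the two curves wind in opposite directions. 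The upper bound $K$ is immediate, since on the compact interval $I$ the speed $\dfrac{\lambda(M_E)\,|u(E)\wedge\dot M_E\,u(E)|}{\lambda(M_E)^2-1}$ is a continuous function of $E$ with denominator $\ge\lambda_0^2-1>0$, hence bounded. The only genuinely delicate point is the bookkeeping in the middle paragraph: verifying $1-D\hat M_E(\hat u(M_E))\ne0$ so that the implicit differentiation is legitimate (this is precisely where hyperbolicity enters and where the denominator $\lambda^2-1$ comes from), and keeping track of the orientation signs hidden in $z(\cdot)$ when passing from $M_E$ to $M_E^{-1}$; everything else is a routine computation.
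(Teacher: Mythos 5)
Your proof is correct, and it reaches the formulas by a genuinely different mechanism than the paper. The paper realizes $\hat u(M_E)$ as $\lim_{k\to\infty} M_E^k\,\hat e_1$ and differentiates the iterates via Proposition~\ref{winding derivative formula}, obtaining the geometric series $\sum_{j\ge 0}\lambda(E)^{-(2j+1)}=\lambda/(\lambda^2-1)$; you instead differentiate the fixed-point equation $\hat M_E\,\hat u(M_E)=\hat u(M_E)$ implicitly, with the factor $(1-\lambda^{-2})^{-1}$ arising as the inverse of $1-D\hat M_E(\hat u(M_E))$ --- which is precisely the closed form of the paper's series, so the two computations are dual to one another. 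Your route has two small advantages: it makes the smoothness of $E\mapsto\hat u(M_E)$ explicit via the implicit function theorem (the paper asserts uniform convergence of the derivatives of $M_E^k\hat e_1$ without justification), and it isolates exactly where hyperbolicity enters (the invertibility of $1-D\hat M_E$, i.e.\ $\lambda>1$). You also correctly flag that the ``Moreover'' formulas hold for any $C^1$ family while the sign and non-vanishing of the speed require $M_E$ to be an iterate of a Schr\"odinger cocycle; this is implicit in the paper, which likewise invokes Lemma~\ref{Schrodinger winding} for the winding directions. Your sign bookkeeping checks out: $D\hat M_E(\hat u)=+\lambda^{-2}$ since projective actions of $\SL_2(\R)$ preserve orientation, the eigenvalue-sign ambiguities in \eqref{eq:020622.7} cancel as you note, and the treatment of the stable curve via $\hat s(M_E)=\hat u(M_E^{-1})$ is as terse as, but no more terse than, the paper's own ``obtained in a similar way.''
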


\begin{proof}
Breaking the interval $I$ into finitely many sub-intervals, if necessary, we can take $\hat e_1\in\bP^1$ such  that $\hat e_1\neq \hat s(M_E)$, for all $E\in I$. Hence $\hat u(M_E)= \lim_{k\to\infty} M_E^k\, \hat e_1$, with uniform convergence of the functions and their derivatives over the compact interval $I$.

As $k\to \infty$, the unit vectors
$\bvec_{k}(E):=\frac{M_E^k\, e_1}{\norm{M_E^k e_1}}$ converge geometrically and uniformly to an eigenvector $u(E)$
of $M_E$ in $\hat u(M_E)$ and, denoting by $\lambda(E)$ the absolute value of the corresponding eigenvalue, by Proposition~\ref{winding derivative formula}
we have
\begin{align*}
\frac{d}{dE} \hat u(M_E)&=  \lim_{k\to \infty}\sum_{j=0}^{k-1}  \frac{1}{\norm{M_E^{j}\, \bvec_{k-j}}^2}\,
\left(  \bvec_{k-j}  \wedge  \dot M_E\, M_E^{-1}\, \bvec_{k-j} \right)\, z(M_E^k \, \hat e_1)\\
&=\sum_{j=0}^\infty 
\frac{1}{\norm{M_E^{j}\, u(E)}^2}\,
\left(  u(E)  \wedge  \dot M_E\, M_E^{-1}\, u(E) \right)\, z(\hat u(M_E))\\
&=\sum_{j=0}^\infty 
\frac{1}{\lambda(E)^{2j+1}}\,
  u(E)  \wedge  \dot M_E\, u(E)  \, z(\hat u(M_E)) \\
  &= \frac{ \lambda(E)\, ( u(E)  \wedge  \dot M_E\, u(E) )}{ \lambda(E)^2-1  } \, z(\hat u(M_E)) .
\end{align*}
This concludes the argument for the unstable curve $\hat u(M_E)$.
The stable curve $\hat s(M_E)$ winds negatively because $\hat s(M_E)= \lim_{k\to\infty} M_E^{-k}\, \hat e_2$, for any vector $e_2$ such that $\hat e_2\neq \hat u(M_E)$, for all $E\in I$.  See Lemma~\ref{Schrodinger winding}.
The bound for the derivative of this stable curve is obtained in a similar way.
\end{proof}


\bibliographystyle{abbrv}
\bibliography{bib.bib}

\information

\end{document}